\begin{document}

\newtheorem{tm}{Theorem}[section]
\newtheorem{pp}{Proposition}[section]
\newtheorem{lm}{Lemma}[section]
\newtheorem{df}{Definition}[section]
\newtheorem{tl}{Corollary}[section]
\newtheorem{re}{Remark}[section]
\newtheorem{eap}{Example}[section]

\newcommand{\pof}{\noindent {\bf Proof} }
\newcommand{\ep}{$\quad \Box$}

\newcommand{\al}{\alpha}
\newcommand{\be}{\beta}
\newcommand{\var}{\varepsilon}
\newcommand{\la}{\lambda}
\newcommand{\de}{\delta}
\newcommand{\str}{\stackrel}

\renewcommand{\proofname}{\bf Proof}
\setcounter{section}{-1}

\allowdisplaybreaks

\begin{frontmatter}

\title{Characterizations of
 endograph metric and $\Gamma$-convergence on fuzzy sets
\tnoteref{t1}
 }
\tnotetext[t1]{Project supported by
 National Natural Science
Foundation of China (No. 61103052), and by Natural Science Foundation of Fujian Province of China(No. 2016J01022)}
\author{Huan Huang}
 \ead{hhuangjy@126.com }
\address{Department of Mathematics, Jimei
University, Xiamen 361021, China}


\date{}

\begin{abstract}

 This
 paper is devoted to the relationships and properties of the endograph metric and the $\Gamma$-convergence.
The main contents can be divided into three closely related parts.
Firstly, on the class of upper semi-continuous fuzzy sets with bounded $\alpha$-cuts, we find
that an endograph metric convergent sequence is exactly a
$\Gamma$-convergent sequence satisfying the condition that
the union of $\alpha$-cuts of all its elements is a bounded set in $\mathbb{R}^m$ for each $\alpha > 0$.
Secondly, based on investigations of level characterizations of fuzzy sets themselves,
 we present
level characterizations (level decomposition properties) of
the endograph metric and the $\Gamma$-convergence.
 It is worth mentioning that, using the condition and the level characterizations given above, we discover the fact:
the endograph metric and the $\Gamma$-convergence
are compatible
on
a large class of general fuzzy sets which do not have any assumptions of normality, convexity or star-shapedness.
Its subsets
 include
 common particular fuzzy
sets such as fuzzy numbers (compact and noncompact), fuzzy star-shaped
numbers (compact and noncompact), and general fuzzy star-shaped numbers (compact and noncompact).
Thirdly,
 on
the
basis
of the
conclusions presented above,
we study various
subspaces of the space of upper semi-continuous fuzzy sets with bounded $\alpha$-cuts
equipped with the endograph metric. We present characterizations
  of
   total boundedness, relative compactness and compactness in these fuzzy set spaces
and
 clarify
 relationships
 among these fuzzy set spaces.
It
is
pointed out
that
the fuzzy set spaces of noncompact type   are exactly the completions of their compact counterparts under the endograph metric.

\end{abstract}

\begin{keyword}
Endograph metric; $\Gamma$-convergence; Compactness; Total boundedness; Hausdorff metric; Fell topology
\end{keyword}

\end{frontmatter}

\section{Basic notions} \label{bns}

Let $\mathbb{N}$ be the set of all natural numbers,
$\mathbb{Q}$
 be the set of all rational numbers,
$\mathbb{R}^m$
be the $m$-dimensional Euclidean space,
$K(  \mathbb{R}^m  )$
be
the
set of all nonempty compact sets in $\mathbb{R}^m$,
$K_C(  \mathbb{R}^m  )$
be
the
set of all nonempty compact convex sets in $\mathbb{R}^m$
and
$C(  \mathbb{R}^m  )$
be
the
set of all nonempty closed sets in $\mathbb{R}^m$.

A set $K\in K(\mathbb{R}^m)$ is said to be star-shaped relative to a point $x\in K$
if
for each $y\in K$, the line $\overline{xy}$ joining $x$ to $y$
is contained in $K$.
The kernel ker\,$K$ of $K$ is the set of all points $x\in K$
such that
$\overline{xy}\subset K$
for each $y\in K$.
The symbol $K_S(\mathbb{R}^m)$
is used to
denote
all the star-shaped sets in $\mathbb{R}^m$.

Obviously,
$K_C(\mathbb{R}^m)\subsetneq K_S(\mathbb{R}^m)$.
It can be checked that
$\hbox{ker}\; K \in K_C(\mathbb{R}^m) $ for all $K\in K_S(\mathbb{R}^m)$.

A fuzzy set $u$ on   $\mathbb{R}^m$ is in fact a function $u$ from $\mathbb{R}^m$ to [0,1].
The symbol
  $F(\mathbb{R}^m)$
is used to represent all
fuzzy sets on $\mathbb{R}^m$
(see \cite{wu, da} for details).
$
\mathrm{2}^{\mathbb{R}^m} := \{  S: S\subseteq \mathbb{R}^m      \}
$
can be embedded in $F (\mathbb{R}^m)$, as any $S \subset \mathbb{R}^m$ can be
seen as its characterization function, i.e. the fuzzy set
\[
\widehat{S}(x)=\left\{
\begin{array}{ll}
1,x\in S, \\
0,x\notin S.
\end{array}
\right.
\]
Specially, $\widehat{\emptyset}_m$ represents the fuzzy set on $\mathbb{R}^m$ which is defined by
$
\widehat{\emptyset}_m (x) \equiv 0
$
for any     $x\in \mathbb{R}^m$.
For simplicity,
we
denote $\widehat{\emptyset}_m $ by $\widehat{\emptyset}$
if there is no confusion.

For
$u\in F(\mathbb{R}^m)$, let $\{u>\al\}$
denote
the strong $\al$-cut of $u$, i.e.
$\{u>\al\} = \{x\in \mathbb{R}^m: u(x) > \al \}$, and let $[u]_{\al}$ denote the $\al$-cut of
$u$, i.e.
\[
[u]_{\al}=\begin{cases}
\{x\in \mathbb{R}^m : u(x)\geq \al \}, & \ \al\in(0,1],
\\
{\rm supp}\, u=\overline{    \{ u > 0 \}    }, & \ \al=0.
\end{cases}
\]

For
$u\in F(\mathbb{R}^m)$,
we suppose that
\\
(\romannumeral1) \ $u$ is upper semi-continuous;
\\
(\romannumeral2) \ $u$ is normal: there exists at least one $x_{0}\in \mathbb{R}^m$
with $u(x_{0})=1$;
\\
(\romannumeral3-1) \ $u$ is fuzzy convex: $u(\la x+(1-\la)y)\geq {\rm min} \{u(x),u(y)\}$
for $x,y \in \mathbb{R}^m$ and $\la \in [0,1]$;
\\
(\romannumeral3-2) \ $u$ is fuzzy star-shaped, i.e.,   there exists $x\in \mathbb{R}^m$
such that
$u$ is fuzzy star-shaped with respect to $x$, namely,
$u(\lambda y + (1-\lambda) x)  \geq  u(y)$
for all $y\in \mathbb{R}^m$ and $\la \in [0,1]$;
\\
(\romannumeral3-3) \ If $\lambda\in (0,1]$, then there exists
$x_\lambda \in [u]_\lambda$
such that
$\overline{x_\lambda y} \in [u]_\lambda$
for all $y\in [u]_\lambda$;
\\
(\romannumeral3-4) \  $[u]_\alpha$ is a connected set in $\mathbb{R}^m$ for each $\al\in (0,1]$;
\\
(\romannumeral4-1) \ $[u]_0$ is a bounded set in $\mathbb{R}^m$;
\\
(\romannumeral4-2) \  $[u]_\alpha$ is a bounded set in $\mathbb{R}^m$ for each $\al\in (0,1]$.
\begin{itemize}
 \item  We use the symbol $F_{USC} (\mathbb{R}^m)$
   to
   denote the set of all fuzzy sets $u$ on $\mathbb{R}^m$ with $u$ satisfying (\romannumeral1).

  \item  We use the symbol $F_{USCG} (\mathbb{R}^m)$
   to
   denote the set of all fuzzy sets $u$ on $\mathbb{R}^m$ with $u$ satisfying
   (\romannumeral1) and (\romannumeral4-2).

  \item  We use the symbol $F_{USCB} (\mathbb{R}^m)$
   to
   denote the set of all fuzzy sets $u$ on $\mathbb{R}^m$ with $u$ satisfying
   (\romannumeral1) and (\romannumeral4-1).

\item We use the symbol $F_{USCGCON} (\mathbb{R}^m)$
   to
   denote the set of all fuzzy sets $u$ on $\mathbb{R}^m$ with $u$ satisfying
    (\romannumeral1), (\romannumeral3-4) and (\romannumeral4-2).
    So
   $u \in F_{USCGCON} (\mathbb{R}^m)$
   if and only if
   $u \in F_{USCG} (\mathbb{R}^m)$ and each cut-set of $u$ is a connected set in $\mathbb{R}^m$.

\end{itemize}
It's easy to see
that
$
F_{USCGCON} (\mathbb{R}^m),   F_{USCB}   \subsetneq   F_{USCG} (\mathbb{R}^m)   \subsetneq F_{USC} (\mathbb{R}^m) $.

 $F_{USC} (\mathbb{R}^m)$,  $F_{USCG} (\mathbb{R}^m)$ and    $F_{USCB} (\mathbb{R}^m)$  are said to be      \textbf{general} fuzzy sets.
The other types of
fuzzy sets mentioned in this paper
are
called
\textbf{particular} fuzzy sets.
The $\al$-cuts of general fuzzy sets have \textbf{no} assumptions of
normality,    convexity, starshapedness or even connectedness.

Here
we list
several common subsets \cite{da,du,wu,qiu} of $F_{USCGCON} (\mathbb{R}^m)$.
\begin{itemize}
  \item
  If
     $u$ satisfies (\romannumeral1), (\romannumeral2), (\romannumeral3-1) and (\romannumeral4-1),
  then
   $u$ is a (compact) fuzzy number. The set of all fuzzy numbers is denoted by $E^m$.

  \item
  If $u$ satisfies (\romannumeral1), (\romannumeral2), (\romannumeral3-2) and (\romannumeral4-1),
  then
   $u$ is a (compact) fuzzy  star-shaped number. The set of all fuzzy star-shaped  numbers is denoted by
  $S^m$.

\item
If $u$ satisfies (\romannumeral1), (\romannumeral2), (\romannumeral3-3) and (\romannumeral4-1),
  then
   $u$ is a (compact) general fuzzy star-shaped number. The set of all  general fuzzy  star-shaped  numbers is denoted by
  $\widetilde{S}^m$.

\end{itemize}
Fuzzy
 number has been exhaustively studied in both theory and applications \cite{wang,wang2,wa,da,wu,huang11}.
$\mathbb R^m$ can be embedded in $E^m$, as any $r \in  \mathbb{R}^m$ can be
viewed as the fuzzy number
\[
r(x)=\left\{
\begin{array}{ll}
1,x=r, \\
0,x\not=r.
\end{array}
\right.
\]

Noncompact counterparts of the three types of compact fuzzy sets mentioned above
are
listed below.
\begin{itemize}
  \item   If
     $u$ satisfies (\romannumeral1), (\romannumeral2), (\romannumeral3-1) and (\romannumeral4-2),
  then
   $u$ is a noncompact fuzzy number. The set of all noncompact fuzzy numbers is denoted by $E^m_{nc}$.

 \item
  If $u$ satisfies (\romannumeral1), (\romannumeral2), (\romannumeral3-2) and (\romannumeral4-2),
  then
   $u$ is a noncompact fuzzy star-shaped number. The set of all noncompact fuzzy star-shaped  numbers is denoted by
  $S^m_{nc}$.

  \item
If $u$ satisfies (\romannumeral1), (\romannumeral2), (\romannumeral3-3) and (\romannumeral4-2),
  then
   $u$ is a noncompact general fuzzy star-shaped number. The set of all noncompact
     general fuzzy  star-shaped  numbers is denoted by
  $\widetilde{S}^m_{nc}$.
\end{itemize}
Clearly,
$
E^m \subsetneq    E^{m}_{nc}
$,
$
  S^m \subsetneq    S^{m}_{nc}
$,
$\widetilde{S}^{m} \subsetneq    \widetilde{S}^{m}_{nc}$,
$ E^{m} \subsetneq  S^{m}   \subsetneq   \widetilde{S}^{m} $
and
$ E^{m}_{nc} \subsetneq  S^{m}_{nc}   \subsetneq   \widetilde{S}^{m}_{nc} $.
For
simplicity, we call
these compact and noncompact common particular fuzzy sets
\textbf{common fuzzy sets} in the sequel.
That is to say, in this paper, common fuzzy sets are refer to
$
E^m $, $  E^{m}_{nc}$,
$
  S^m $, $   S^{m}_{nc}$,
$\widetilde{S}^{m} $, and $   \widetilde{S}^{m}_{nc}$.

A
kind of noncompact fuzzy set may be obtained using
the
weaker assumption (iv-2) instead of (iv-1) on the corresponding kind of compact fuzzy set. Therefore the latter is a subset of the former.
We can see that $F_{USCG} (\mathbb{R}^m)$ is the noncompact counterpart of $F_{USCB} (\mathbb{R}^m)$.

Most of the metrics or topologies on fuzzy set
are
based on the Hausdorff metric or the Kuratowski convergence.

The
well-known
\emph{\textbf{Hausdorff metric}} \bm{$H$} on
   $C(\mathbb{R}^m)$ is defined by:
$$H(U,V)=\max\{H^{*}(U,V),\ H^{*}(V,U)\}$$
for arbitrary $U,V\in K(\mathbb{R}^m)$,
where
  $$
H^{*}(U,V)=\sup\limits_{u\in U}\,d\, (u,V) =\sup\limits_{u\in U}\inf\limits_{v\in
V}d\, (u,v).
$$
The Hausdorff metric can be extended to a metric on $C(\mathbb{R}^m) \cup \{\emptyset\}$
with
\[  H(M_1, M_2) =\left\{
                \begin{array}{ll}
                 H(M_1, M_2) , & \hbox{if} \  M_1, M_2 \in C(\mathbb{R}^m),
                   \\
                  +\infty, & \hbox{if} \  M_1=\emptyset \   \hbox{and} \ M_2 \in C(\mathbb{R}^m),
                   \\
                  0, & \hbox{if}  \  M_1=M_2=\emptyset.
                \end{array}
              \right.
 \]

Let
$(X,d)$ be a metric space.
We say that a sequence of sets $\{C_n\}$ \emph{\textbf{Kuratowski converges}} to
$C\subseteq X$, if
$$
C
=
\liminf_{n\rightarrow \infty} C_{n}
=
\limsup_{n\rightarrow \infty} C_{n},
$$
where
\begin{gather*}
\liminf_{n\rightarrow \infty} C_{n}
 =
 \{x\in X: \  x=\lim\limits_{n\rightarrow \infty}x_{n},    x_{n}\in C_{n}\},
\\
\limsup_{n\rightarrow \infty} C_{n}
=
\{
 x\in X : \
 x=\lim\limits_{j\rightarrow \infty}x_{n_{j}},x_{n_{j}}\in C_{n_j}
\}
 =
 \bigcap\limits_{n=1}^{\infty}   \overline{   \bigcup\limits_{m\geq n}C_{m}    }.
\end{gather*}
In this case, we'll write
\bm{  $C=\lim_{n\to\infty}C_n  (Kuratowski)$    } or \bm{  $C=\lim_{n\to\infty}C_n  (K)$    } for simplicity.

The \textbf{\emph{Fell topology }}\bm{$\tau_f$} was introduced by Fell \cite{fell}.
It
is
known
that
the Fell topology $\tau_f$
is
 compatible with the Kuratowski convergence
on
 $C(\mathbb{R}^m) \cup \{\emptyset\}$. In general,
the Fell topology convergence is weaker than
the Hausdorff metric convergence.

Given a fuzzy set $u$ on $\mathbb{R}^m$,
define
$${\rm end}\, u:= \{ (x, t)\in \mathbb{R}^m\times [0,1]: u(x) \geq t\}.$$
${\rm end}\, u$
is called the \textbf{\emph{endograph}} of $u$, which is also called the \emph{hypograph} of $u$ in many references.
A fuzzy set
can
be
identified with its endograph.
Given $u$ in $F (\mathbb{R}^m)$,
it is known
that
 $u$ is in
$F_{USC} (\mathbb{R}^m)$ if and only if
${\rm end}\, u$
is
 in $C(\mathbb{R}^m \times [0,1])$ (see \cite{kloeden} for details).

Kloeden \cite{kloeden2} introduced the \emph{\textbf{endograph metric}} \bm{$H_{\rm end}$} on upper semi-continuous fuzzy sets,
which
is defined by
$$
H_{\rm end}(u,v): =  H({\rm end}\, u,  {\rm end}\, v )
$$
for all $u,v \in F_{USC} (\mathbb{R}^m)$.

Rojas-Medar and Rom\'{a}n-Flores \cite{rojas} introduced
the
$\Gamma$- convergence
on
upper semi-continuous fuzzy sets:

  Let $u$, $u_n$, $n=1,2,\ldots$, be fuzzy sets in $F_{USC} (\mathbb{R}^m)$.
  Then $u_n$ \bm{$\Gamma$}\textbf{-converges}
  to
  $u$ (\bm{$u_n \str{\Gamma}{\longrightarrow} u$})
  if
  $${\rm end}\, u= \lim_{n\to \infty} {\rm end}\, u_n (K).$$

We can see
that the
endograph metric and the $\Gamma$-convergence on $F_{USC} (\mathbb{R}^m)$ are
in fact
Hausdorff metric and Fell topology convergence (Kuratowski convergence)
equipped
on the set of endographs of fuzzy sets in $F_{USC} (\mathbb{R}^m)$, respectively.

Diamond and Kloeden \cite{da} introduced
an $L_p$-type metric:
 the \bm{$d_p$} \textbf{metric} ($1\leq p<\infty$)
which is defined by
\begin{equation*}\label{dpm}
d_p\, (u,v)=\left(     \int_0^1 H([u]_\al, [v]_\al) ^p  \; d\alpha  \right)^{1/p}
\end{equation*}
for all $u,v\in F_{USC} (\mathbb{R}^m)$.

\section{Introduction}

Kloeden
\cite{kloeden2}
introduced the endograph metric $H_{\rm end}$    on
fuzzy sets.
Rojas-Medar and Rom\'{a}n-Flores \cite{rojas} introduced
the
$\Gamma$-convergence
on
fuzzy sets.
These two
convergence structures are based on Hausdorff metric and Fell topology convergence (Kuratowski convergence), respectively.
 The
  Hausdorff metric and the Fell topology have been exhaustively studied \cite{wan, ngu, yang5, matheron}.
In recent years, studies
involve
the endograph metric $H_{\rm end}$ and the $\Gamma$-convergence have received considerable attention
 from points of view of theory and applications
 \cite{fan2, kloeden, kloeden2, huang7, yang, greco2, kupka, can, pedraza, rojas}.
However,
research on these two convergence structures
is
not much.
In this paper, we find that these two kinds of convergence structures on $F_{USC} (\mathbb{R}^m)$
have
 deep links and discuss their properties.
This paper
has
 three closely related themes.

The work of
Wang and Wei \cite{wan} indicated
 that
 the
$\Gamma$-convergence
on $F_{USC}(\mathbb{R}^m)$
 is metrizable.
On the other hand,
it is easy
to
check
that
the $\Gamma$-convergence is weaker than
the
 endograph metric convergence
on
$F_{USC}(\mathbb{R}^m)$.
However,
Huang and Wu \cite{huang7}
found the interesting fact
that
the $\Gamma$-convergence is equivalent to the endograph metric convergence on $E^1$, the set of one-dimensional compact fuzzy numbers.
Since
the endograph metric is intuitive and is increasingly used in fuzzy systems \cite{kloeden,kloeden2,can},
it is
worthwhile
to
consider the following questions:
the relation of
  these two types of convergence; on what kinds of fuzzy sets these two types of convergence are equivalent; whether these two types of convergence can be equivalent on common fuzzy sets.
This is the first theme of this paper.

The study
of
level characterizations of convergence structures on fuzzy sets
is basic and important
and
can help us to see the original convergence structures more deeply
and
clearly.
 Rojas-Medar and Rom\'{a}n-Flores
have pointed out a
level characterization
of
$\Gamma$-convergence on fuzzy numbers (Proposition 3.5 in \cite{rojas}).
Based on this,
Huang and Wu \cite{huang7}
have realized and proven
 that
  the $\Gamma$-convergence (endograph metric convergence, resp.) can be decomposed to the Fell topology convergence (Hausdorff metric convergence, resp.) on some $\al$-cuts.
  At
 the same time and independently, Fan \cite{fan3} also obtained the same level characterizations (level decomposition properties) of the endograph metric.
These
level characterizations
were used
to
discuss the relationships among
level convergence, $d_p$ convergence and $\Gamma$-convergence
 \cite{huang7, fan3}.
 The level characterizations in \cite{huang7,fan3}
 are given in
 the setting of
 one-dimensional noncompact fuzzy numbers.
  Trutschnig \cite{trutschnig}
 have
 proven an important fact that the platform points of $u$ is countable when $u\in E^m_{nc}$.
Use this fact and proceed according to \cite{huang7} or \cite{fan3},
it follows
that
 these level characterizations are still true in the settings of $m$-dimensional fuzzy numbers.
  However,
in theory and applications \cite{kloeden, chanussot, ba, du}, fuzzy set classes much larger than fuzzy numbers are often used,
such as
 fuzzy star-shaped numbers, upper semi-continuous fuzzy sets, etc.
Hence it is
natural and important to consider
the problem
 whether
these level characterizations still hold on larger fuzzy set classes,
for instance,
 general fuzzy sets whose $\al$-cuts may not have assumptions of
normality,    convexity, starshapedness or even connectedness.
The study of this problem will provide a more clear understanding
of
the level characterizations in the setting of fuzzy numbers given in previous works.
This
problem
involves
 level structural characteristics of fuzzy sets themselves
including the number of platform points of a general fuzzy set,
the Fell topology continuity and the endograph metric continuity of
the cut-set functions of
general fuzzy sets.
This is the second theme of this paper.

Compactness is one of the central concepts in topology and analysis (see \cite{kelley}). Characterizations of compactness
are useful
in theoretical research and practical applications \cite{bu,gh3,huang3,huang6,huang8,kloeden,wa,zeng}.
Many researches are devoted to characterizations
of
compactness in a variety of fuzzy set spaces endowed with different topologies \cite{da,fan,greco,greco3,huang,huang7,huang9,ma,roman,trutschnig,wu2,zhao}.
In recent years, the endograph metric
has
 attracted more and more attention. For instance,
 Kloeden
and Lorenz \cite{kloeden} have   established
a Peano theorem for
fuzzy differential equations based on the use of endograph metric.
 Kupka \cite{kupka}
gave
an approximation procedure
for
Zadeh's extension
of a
continuous map
 in the sense of endograph metric.
These works
indicate that the endograph metric has
 significant advantages in many situations.
So
 it
is
needed in both theory and applications to present
characterizations
of
compactness for various kinds of fuzzy set spaces endowed with the endograph metric
and consider
relations of these fuzzy set spaces.
This is the third theme
of
this paper.

These three themes are, in fact, interrelation. The investigations of these themes
 are
carried out alternately and the relationships of these themes are summarized in the last section.

 The organization of this paper is as follows.
 In
 Sections 2 and 3,
 we
introduce and discuss
Hausdorff metric, Fell topology, Kuratowski convergence and fuzzy sets.
In
Section 4,
 we
  investigate the relationship between the endograph metric convergence and the $\Gamma$-convergence on $F_{USCG}(\mathbb{R}^m)$.
In
Section 5,
we study level structural characteristics of fuzzy sets themselves.
Then, in Section 6,
we
present
level characterizations of the $\Gamma$-convergence and that of the endograph metric convergence.
As an application, we discuss the relationships among $d_p$ metric, endograph metric and $\Gamma$-convergence on $F_{USC} (\mathbb{R}^m)$.
Based
 on
  results given above, in Sections 7 and 8,
we
give characterizations of relative
compactness, total boundedness, and compactness
in
$(F_{USCG} (\mathbb{R}^m), H_{\rm end})$
and
$(F_{USCB} (\mathbb{R}^m), H_{\rm end})$.
It is shown
that
the former is the completion of the latter.
In Section 9,
we
reconsider
the
level characterizations
given in Section 6 when
 restricted
 to
  $F_{USCGCON} (\mathbb{R}^m) \backslash \widehat{\emptyset}$ and when restricted to common fuzzy sets mentioned here, respectively.
It is
found
that
the
endograph metric convergence
 and the $\Gamma$-convergence
are equivalent on
 $F_{USCGCON} (\mathbb{R}^m) \backslash \widehat{\emptyset}$, which includes any common fuzzy set mentioned here.
By using these conclusions, we revisit the results in previous works.
Then,
we
clarify the relationships among
various
subspaces
of
$(F_{USCG} (\mathbb{R}^m), H_{\rm end})$.
Finally,
we
 give
characterizations of relative compactness, total boundedness, and compactness
in
these subspaces.
At last, we draw our conclusions in Section 10.

\section{Hausdorff metric, Fell topology and Kuratowski convergence \label{hfkc}}

Hausdorff metric, Fell topology and Kuratowski convergence are used widely and have been exhaustively studied.
All
of
these three structures can be equipped on  $C(\mathbb{R}^m) \cup \{\emptyset\}$.

The Fell topology $\tau_f$ was introduced by Fell \cite{fell}. It is
also known as H-topology, hit-or-miss topology, Choquet-Matheron
topology, or weak Vietoris topology \cite{wan}.

The Fell topology   $\tau_f$
 on
 $C(\mathbb{R}^m) \cup \{\emptyset\}$ is metrizable.
The Fell topology $\tau_f$ is compatible with the Kuratowski convergence
on
 $C(\mathbb{R}^m) \cup \{\emptyset\}$. In general,
the Fell topology convergence is weaker than
the Hausdorff metric convergence.
The
readers can see
\cite{trutschnig, wan, ngu, matheron}
or references therein
for details.

Some known results about the Hausdorff metric are listed below.

\begin{pp} \cite{da} \label{kcs}
 $(C(\mathbb{R}^m), H)$ is a complete metric space, in which $K(\mathbb{R}^m)$ and $K_C(  \mathbb{R}^m  )$
are closed subsets.
Hence $(K(\mathbb{R}^m), H)$ and $(K_C(\mathbb{R}^m), H)$ are also complete metric spaces.
\end{pp}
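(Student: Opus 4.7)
The plan is to prove completeness of $(C(\mathbb{R}^m), H)$ first, then deduce closedness of the two subsets via short limit arguments; the completeness of the subsets will then follow from the general fact that a closed subset of a complete metric space is complete.

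For completeness of $(C(\mathbb{R}^m), H)$, given a Cauchy sequence $\{C_n\}$, the natural candidate limit is
\[
C := \limsup_{n\to\infty} C_n = \bigcap_{n=1}^{\infty} \overline{\bigcup_{m\geq n} C_m},
\]
which is automatically closed. I would extract a subsequence $\{C_{n_k}\}$ with $H(C_{n_k}, C_{n_{k+1}}) < 2^{-k}$ and show (i) $H^*(C, C_n) \to 0$ by telescoping: every $x \in C$ is the limit of a sequence with $x_{n_k} \in C_{n_k}$ and $|x_{n_k} - x_{n_{k+1}}| < 2^{-k}$, Cauchy in $\mathbb{R}^m$; and (ii) $H^*(C_n, C) \to 0$ by a similar telescoping chain, pushing any $y \in C_n$ through the subsequence to produce a Cauchy sequence whose limit lies in $C$. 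The Cauchy condition automatically removes the issue that $H$ can be extended-valued on unbounded closed sets, since after some index all pairwise distances are finite.

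For $K(\mathbb{R}^m)$ closed in $(C(\mathbb{R}^m), H)$: if $C_n \in K(\mathbb{R}^m)$ and $H(C_n, C) \to 0$ with $C \in C(\mathbb{R}^m)$, fix $n_0$ with $H(C_{n_0}, C) < 1$. Then $C$ lies in the closed $1$-neighborhood of the compact set $C_{n_0}$, hence is bounded; being closed and bounded in $\mathbb{R}^m$, it is compact. For $K_C(\mathbb{R}^m)$ closed: compactness of the limit is already given, so it remains to check convexity. Pick $x, y \in C$ and $\lambda \in [0,1]$. Since $H^*(C, C_n) \to 0$, choose $x_n, y_n \in C_n$ with $|x - x_n|, |y - y_n| \to 0$; then $z_n := \lambda x_n + (1-\lambda) y_n \in C_n$ by convexity of $C_n$, and $z_n \to z := \lambda x + (1-\lambda) y$. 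From $d(z, C) \leq |z - z_n| + H(C_n, C) \to 0$ and closedness of $C$, we get $z \in C$.

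The only substantive step is the completeness of $(C(\mathbb{R}^m), H)$, where both inclusions $C \subseteq C_n + \varepsilon$-ball and $C_n \subseteq C + \varepsilon$-ball must be verified carefully via the telescoping/Cauchy-in-$\mathbb{R}^m$ argument; the rest (closedness of $K(\mathbb{R}^m)$ and $K_C(\mathbb{R}^m)$, and completeness of these closed subspaces) then reduces to very short routine observations.
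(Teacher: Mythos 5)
The paper offers no proof of this proposition at all: it is quoted as a known result from Diamond and Kloeden's book (the citation \cite{da} attached to the statement), so there is no internal argument to compare yours against. Judged on its own, your proof is the standard one and is essentially sound: the candidate limit $\limsup_{n\to\infty}C_n$ together with the $2^{-k}$-telescoping subsequence is exactly the classical completeness argument for the Hausdorff (extended) metric, and your remark that Cauchyness eventually forces all pairwise distances to be finite correctly disposes of the extended-valuedness on unbounded closed sets. Two small points deserve explicit mention rather than being left implicit. First, since $C(\mathbb{R}^m)$ consists of \emph{nonempty} closed sets, you should note that the candidate limit is nonempty; this does follow from your step (ii), because pushing any point of a nonempty $C_n$ through the telescoping chain produces a point of $C$, but it is a hypothesis of membership in the space and should be stated. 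Second, in step (i) a point $x\in\limsup_n C_n$ is a priori only a limit of points $x_{m_j}\in C_{m_j}$ along \emph{some} subsequence, not along your chosen subsequence $\{n_k\}$; the estimate $d(x,C_n)\le |x-x_{m_j}|+H(C_{m_j},C_n)$ combined with the Cauchy condition repairs this immediately, but as written the claim is slightly imprecise. The closedness arguments for $K(\mathbb{R}^m)$ (boundedness via a $1$-neighborhood of a compact set, then Heine--Borel) and for $K_C(\mathbb{R}^m)$ (convex combinations of approximating points) are correct, and the final passage from closed subsets of a complete space to complete subspaces is routine, as you say.
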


\begin{pp}
 \cite{da, roman} \label{sca}
A
nonempty subset $U$ of $(   K(\mathbb{R}^m),    H     )$
is
compact
if and only if
it is closed and bounded in $(   K(\mathbb{R}^m),    H     )$.
 \end{pp}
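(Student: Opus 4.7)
The plan is to prove the two directions of the equivalence separately, with the forward direction being essentially immediate and the reverse direction carrying the real content.

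For the forward direction (compact implies closed and bounded), I would simply invoke the general metric space fact that compact subsets of any metric space are closed and bounded: closedness follows because $(K(\mathbb{R}^m), H)$ is Hausdorff, and boundedness follows because a compact set has finite diameter (cover by finitely many balls of radius $1$ and use the triangle inequality).

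For the reverse direction (closed and bounded implies compact), the strategy is to combine Proposition \ref{kcs} with a total-boundedness argument. Since $(K(\mathbb{R}^m), H)$ is complete by Proposition \ref{kcs}, a closed subset is itself complete, so it suffices to prove that $U$ is totally bounded. First I would use boundedness to confine $U$: fix any $A_0 \in U$ and let $M = \sup_{A \in U} H(A, A_0) < \infty$. Then every $A \in U$ lies in the closed $M$-neighborhood of $A_0$, which is a closed bounded subset of $\mathbb{R}^m$ and therefore compact by Heine--Borel in $\mathbb{R}^m$. Call this compact set $K^{\ast}$, so that $\bigcup_{A \in U} A \subset K^{\ast}$.

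Next I would verify total boundedness. Given $\varepsilon > 0$, cover $K^{\ast}$ by finitely many open balls $B(x_1, \varepsilon), \ldots, B(x_N, \varepsilon)$. To each $A \in U$ associate the index set $I(A) = \{i : A \cap B(x_i, \varepsilon) \neq \emptyset\} \subset \{1, \ldots, N\}$. There are at most $2^N$ possible values of $I(A)$; for each such value $I$ that is actually attained, pick one representative $A_I \in U$ with $I(A_I) = I$. This gives a finite subset $\{A_I\} \subset U$. Given any $A \in U$, let $I = I(A)$; for each $x \in A$, some $B(x_i, \varepsilon)$ with $i \in I$ contains $x$, and that same ball contains a point of $A_I$, so $d(x, A_I) < 2\varepsilon$. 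The symmetric argument gives the other inequality, so $H(A, A_I) \leq 2\varepsilon$. This exhibits a finite $2\varepsilon$-net for $U$, and since $\varepsilon$ was arbitrary, $U$ is totally bounded.

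The main obstacle is the total-boundedness step, and specifically the observation that the $2^N$ index sets $I(A)$ provide a finite approximation scheme; the estimate $H(A, A_I) \leq 2\varepsilon$ requires the care described above but is otherwise routine. The rest reduces to Proposition \ref{kcs} (completeness) and elementary Heine--Borel in $\mathbb{R}^m$.
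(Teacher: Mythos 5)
Your proof is correct. Note that the paper itself gives no proof of this proposition: it is quoted as a known result from the cited references (Diamond--Kloeden and Rom\'{a}n-Flores), so there is no internal argument to compare against. Your route is the standard one: the forward direction is the general metric-space fact, and the reverse direction combines completeness of $(K(\mathbb{R}^m),H)$ (Proposition \ref{kcs}) with a total-boundedness argument. The key step --- confining $\bigcup_{A\in U}A$ to a single compact set $K^{\ast}$ via the Hausdorff bound, covering $K^{\ast}$ by finitely many $\varepsilon$-balls, and classifying the sets $A\in U$ by the index set of balls they meet to extract a finite $2\varepsilon$-net --- is exactly the classical Blaschke-selection-type argument, and your verification that $H(A,A_I)\leq 2\varepsilon$ (each point of $A$ lies in some ball $B(x_i,\varepsilon)$ with $i\in I$, which also meets $A_I$, giving distance at most $2\varepsilon$, plus the symmetric estimate) is sound. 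The only implicit point worth making explicit is that ``bounded in $(K(\mathbb{R}^m),H)$'' yields $M=\sup_{A\in U}H(A,A_0)<\infty$, which is immediate from the definition of a bounded subset of a metric space; everything else checks out.
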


\begin{pp}\cite{da} \label{mce}
  Let $\{u_n\} \subset K(\mathbb{R}^m)$ satisfy
$ u_1 \supseteq u_2 \supseteq \ldots \supseteq u_n \supseteq \ldots.  $
Then
$u= \bigcap_{n=1}^{+\infty}  u_n \in K(\mathbb{R}^m)$ and
$H(u_n, u) \to 0  \ \hbox{as} \ n\to \infty$.

On the other hand, if $ u_1 \subseteq u_2 \subseteq \ldots \subseteq u_n \subseteq \ldots  $
and
$u= \overline{\bigcup_{n=1}^{+\infty}  u_n }\in K(\mathbb{R}^m)$,
then
$H(u_n, u) \to 0  \ \hbox{as} \ n\to \infty$.
\end{pp}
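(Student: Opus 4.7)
The plan is to treat the two monotone cases separately and reduce each to showing the appropriate one-sided Hausdorff distance vanishes. For the decreasing case, my first step is to establish that $u=\bigcap_n u_n$ is a nonempty compact set: nonemptiness follows from the finite intersection property applied to the nested sequence of nonempty compact sets $\{u_n\}$ sitting inside the compact $u_1$, and compactness follows because $u$ is closed in the compact set $u_1$. Since $u\subseteq u_n$ for every $n$, we immediately get $H^*(u,u_n)=0$, so the only thing left is $H^*(u_n,u)\to 0$.

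For the remaining one-sided estimate in the decreasing case, I would argue by contradiction. If $H^*(u_n,u)\not\to 0$, there exist $\varepsilon>0$, a subsequence $\{n_k\}$, and points $x_{n_k}\in u_{n_k}$ with $d(x_{n_k},u)\geq \varepsilon$. Because $\{x_{n_k}\}\subseteq u_1$ is contained in a compact set, a further subsequence converges to some $x$. The monotonicity $u_n\supseteq u_{n+1}$ forces, for each fixed $N$, the tail of this subsequence to lie in the closed set $u_N$, hence $x\in u_N$ for every $N$, so $x\in u$. This contradicts $d(x_{n_k},u)\geq\varepsilon$ along the convergent subsequence.

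The increasing case is dual in spirit. Here $u_n\subseteq u$ gives $H^*(u_n,u)=0$ for free, so I need $H^*(u,u_n)\to 0$. The key tool is that every point of $u=\overline{\bigcup_n u_n}$ can be approximated by some point of a fixed $u_N$. Concretely, for any $\varepsilon>0$ and any $x\in u$, I can choose $y\in\bigcup_n u_n$ with $d(x,y)<\varepsilon$; say $y\in u_N$, and then $y\in u_n$ for all $n\geq N$ by monotonicity. A standard compactness argument---cover $u$ by finitely many $\varepsilon$-balls centered at points of $u$, pick a common $N$ large enough so that each center is within $\varepsilon$ of $u_N$---yields $H^*(u,u_n)<2\varepsilon$ for all $n\geq N$.

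I do not anticipate a genuine obstacle; both parts are classical consequences of compactness combined with monotonicity. The one step that requires the most care is the contradiction in the decreasing case, where one must use that the subsequence $x_{n_k}$ eventually lies in every $u_N$ (which exploits $n_k\geq N$ eventually, together with $u_{n_k}\subseteq u_N$). The argument in the increasing case can be streamlined either by the finite-cover approach above or by another subsequence-and-contradiction argument essentially mirroring the decreasing case.
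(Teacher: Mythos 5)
The paper does not prove this proposition at all: it is quoted as a known result from the Diamond--Kloeden reference \cite{da}, so there is no in-paper argument to compare against. Your proof is correct and complete as a self-contained argument --- the Cantor intersection theorem for nonemptiness and compactness of $u$, the trivial one-sided bounds $H^*(u,u_n)=0$ (decreasing case) and $H^*(u_n,u)=0$ (increasing case), the subsequence-plus-closedness contradiction for $H^*(u_n,u)\to 0$, and the finite $\varepsilon$-net argument for $H^*(u,u_n)\to 0$ are all sound and are the standard way this classical fact is established.
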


The following two known propositions reveal the relation of the Hausdorff metric convergence
and
the
 Kuratowski convergence. The readers can see \cite{greco2} for details\footnote{We are not able to obtain reference \cite{greco2}.  Propositions \ref{hms} and \ref{klhe} were introduced as known statements in another paper, and this paper claimed that these two propositions come from \cite{greco2}. There are some other references that we can not obtain, such as \cite{fell, pr}.}.

\begin{pp} \label{hms}
  Suppose that $C$, $C_n$, $n=1,2,\ldots$, are nonempty compact sets
  in $\mathbb{R}^m$.
  Then
  $H(C_n, C) \to 0$ as $n\to \infty$
  implies that
    $C=\lim_{n\to\infty}C_n  (K)$.
\end{pp}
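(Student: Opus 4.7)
The plan is to unpack both sides of the desired equality $C=\liminf_{n\to\infty}C_n=\limsup_{n\to\infty}C_n$ directly from the definition of the Hausdorff metric, using that each $C_n$ and $C$ are nonempty and compact (hence closed and bounded) in $\mathbb{R}^m$. Recall that $H(C_n,C)\to 0$ is equivalent to the two one-sided conditions $H^{*}(C,C_n)=\sup_{y\in C}d(y,C_n)\to 0$ and $H^{*}(C_n,C)=\sup_{y\in C_n}d(y,C)\to 0$. I will use the first to obtain $C\subseteq\liminf_{n\to\infty}C_n$ and the second to obtain $\limsup_{n\to\infty}C_n\subseteq C$.

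For the inclusion $C\subseteq\liminf_{n\to\infty}C_n$, fix $x\in C$. Since $H^{*}(C,C_n)\to 0$, in particular $d(x,C_n)\to 0$. Because each $C_n$ is compact (hence closed), the infimum in $d(x,C_n)=\inf_{y\in C_n}d(x,y)$ is attained, so I can pick $x_n\in C_n$ with $d(x,x_n)=d(x,C_n)$. Then $x_n\to x$, and by definition $x\in\liminf_{n\to\infty}C_n$.

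For the inclusion $\limsup_{n\to\infty}C_n\subseteq C$, take $x\in\limsup_{n\to\infty}C_n$, so there exist indices $n_j\to\infty$ and $x_{n_j}\in C_{n_j}$ with $x_{n_j}\to x$. From $H^{*}(C_n,C)\to 0$ I get $d(x_{n_j},C)\to 0$. The triangle inequality gives
\[
d(x,C)\ \leq\ d(x,x_{n_j})+d(x_{n_j},C)\ \longrightarrow\ 0,
\]
so $d(x,C)=0$. Since $C$ is closed, $x\in C$, completing the proof.

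This is essentially a routine unpacking of definitions, so I do not anticipate a serious obstacle. The only small points that must not be skipped are the appeal to compactness of $C_n$ to realize the distance $d(x,C_n)$ by an actual point $x_n\in C_n$, and the appeal to closedness of $C$ to conclude $x\in C$ from $d(x,C)=0$; both of these are automatic from the hypothesis that $C$ and each $C_n$ lie in $K(\mathbb{R}^m)$.
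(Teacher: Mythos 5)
Your proof is correct and follows essentially the same route as the paper: the paper states Proposition \ref{hms} without proof as a known result, but its own Theorem \ref{hpkc} generalizes it to closed sets and is proved by exactly your two inclusions, deducing $C\subseteq\liminf_{n\to\infty}C_n$ from $d(x,C_n)\to 0$ and $\limsup_{n\to\infty}C_n\subseteq C$ from closedness of $C$ (via Lemma \ref{infe} rather than explicitly selecting nearest points, but that is immaterial). Your appeal to compactness to realize $d(x,C_n)$ by a point of $C_n$ is not even needed — points $x_n\in C_n$ with $d(x,x_n)\leq d(x,C_n)+1/n$ suffice — which is why the paper's version works for arbitrary nonempty closed sets.
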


\begin{pp}  \label{klhe}
  Suppose that $C$, $C_n$, $n=1,2,\ldots$, are nonempty compact sets
  in $\mathbb{R}^m$
  and
  that $C_n$, $n=1,2,\ldots$, are connected sets.
  If
    $C=\lim_{n\to\infty}C_n (K)$,
  then
   $H(C_n, C) \to 0$ as $n\to \infty$.
\end{pp}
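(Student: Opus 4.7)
The plan is to prove the two inclusions $H^\ast(C,C_n)\to 0$ and $H^\ast(C_n,C)\to 0$ separately, using different parts of the Kuratowski equality $C=\liminf_n C_n=\limsup_n C_n$, together with compactness of $C$ for one direction and connectedness of the $C_n$ for the other.

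For $H^\ast(C,C_n)\to 0$ I would use only $C\subseteq\liminf_n C_n$ and compactness of $C$. For each $x\in C$ the definition of $\liminf$ gives $x_n\in C_n$ with $x_n\to x$, hence $d(x,C_n)\to 0$ pointwise on $C$. The required uniformity comes from a standard finite-cover argument: given $\varepsilon>0$, cover $C$ by finitely many $\varepsilon/2$-balls centred at $x_1,\ldots,x_k\in C$, choose $N_i$ with $d(x_i,C_n)<\varepsilon/2$ for $n\geq N_i$, and conclude $\sup_{x\in C}d(x,C_n)<\varepsilon$ for $n\geq\max_i N_i$. This step is entirely routine and does not yet use connectedness.

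For $H^\ast(C_n,C)\to 0$ I would argue by contradiction, and this is where connectedness enters. Suppose there are $\varepsilon>0$, a subsequence (still written $C_n$), and $y_n\in C_n$ with $d(y_n,C)\geq\varepsilon$. Pick any $x_0\in C$; since $x_0\in\liminf_n C_n$, take $x_n\in C_n$ with $x_n\to x_0$, so $\{x_n\}$ is bounded. Split into cases. If $\{y_n\}$ is bounded, pass to a convergent subsequence $y_{n_j}\to y$; then $y\in\limsup_n C_n=C$, contradicting $d(y,C)\geq\varepsilon$. If $\{y_n\}$ is unbounded, pass to a subsequence with $|y_n|\to\infty$, fix $R>\sup_{x\in C}|x|+1$ so that the sphere $S_R=\{z\in\mathbb{R}^m:|z|=R\}$ is disjoint from $C$, and note that for large $n$ we have $|x_n|<R<|y_n|$. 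Connectedness of $C_n$ in $\mathbb{R}^m$ forces $C_n\cap S_R\neq\emptyset$, so pick $z_n\in C_n\cap S_R$. The sequence $\{z_n\}$ sits on the bounded set $S_R$, hence has a convergent subsequence $z_{n_j}\to z$ with $|z|=R$, so $z\notin C$; but $z\in\limsup_n C_n=C$, again a contradiction.

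The main obstacle is precisely the unbounded sub-case of the second step. Without connectedness the $C_n$ could carry a stray component drifting to infinity while the bulk of $C_n$ converges to $C$, so that $\limsup_n C_n\subseteq C$ would still hold but $H^\ast(C_n,C)\not\to 0$; connectedness of $C_n$ is exactly what enables an intermediate-value-type argument on the sphere $S_R$ to produce a bounded sequence of witnesses forcing a contradiction with $\limsup_n C_n\subseteq C$. Everything else is either elementary topology or a direct unpacking of the Kuratowski convergence definitions.
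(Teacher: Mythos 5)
Your proof is correct. Note first that the paper itself offers no proof of this proposition: it is quoted as a known result attributed to a reference the authors state they could not obtain, so there is nothing in the paper to compare your argument against --- your write-up actually supplies a self-contained proof that the paper lacks. Both halves of your argument are sound. The direction $H^{*}(C,C_n)\to 0$ follows, as you say, from $C\subseteq\liminf_n C_n$ plus a finite $\varepsilon/2$-net of the compact set $C$; this matches Lemma \ref{infe}(i) of the paper and needs no connectedness. The direction $H^{*}(C_n,C)\to 0$ is where the hypothesis on the $C_n$ is genuinely needed, and your intermediate-value argument is the right one: if $C_n$ is connected and contains a point $x_n$ with $|x_n|<R$ and a point $y_n$ with $|y_n|>R$, the continuous map $z\mapsto|z|$ has connected image, so $C_n$ meets the sphere $S_R$; the resulting bounded witnesses $z_n$ accumulate at a point of $\limsup_n C_n=C$ lying on $S_R$, contradicting $S_R\cap C=\emptyset$. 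Your closing remark correctly identifies why connectedness cannot be dropped (a component of $C_n$ escaping to infinity is consistent with $\limsup_n C_n\subseteq C$ but not with Hausdorff convergence), which is exactly the phenomenon of the paper's Example \ref{gse}. One could shorten the bounded/unbounded case split by first proving that $\bigcup_n C_n$ is eventually contained in a fixed ball (essentially condition (ii) of Corollary \ref{nfsghe}), but your version is equally valid.
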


We need the following conclusions, which will be used in the sequel of this paper.

\begin{lm} \label{infe}
Let $(X,d)$ be a metric space,
and
$C_{n}$, $n=1,2,\ldots$, be a sequence of sets in $X$.
Suppose that $x\in X$.
Then
\\
(\romannumeral1) \
  $x \in \liminf_{n\rightarrow \infty} C_{n}$
  if and only if
$\lim_{n\to \infty} d(x, C_n) = 0$,
\\
(\romannumeral2) \
  $x \in \limsup_{n\rightarrow \infty} C_{n}$
  if and only if there is a subsequence $\{C_{n_k}\}$ of $\{C_n\}$
such
that
$\lim_{k\to \infty} d(x, C_{n_k}) = 0$.
\end{lm}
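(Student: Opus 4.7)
The plan is to unravel the definitions of $\liminf$ and $\limsup$ of sets and match them against the distance function $d(x, C_n) = \inf_{v \in C_n} d(x,v)$. Both directions of both parts reduce to a one-line argument plus a careful choice of an approximating sequence inside each $C_n$.

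For part (\romannumeral1), the forward implication is immediate: if $x = \lim_{n\to\infty} x_n$ with $x_n \in C_n$, then $d(x, C_n) \le d(x, x_n) \to 0$. For the converse, assume $d(x, C_n) \to 0$. Then eventually $C_n$ is nonempty (recall that $d(x, \emptyset) = +\infty$ by convention), and for each such $n$ the definition of infimum lets me pick $x_n \in C_n$ with $d(x, x_n) \le d(x, C_n) + 1/n$ (for the remaining finitely many $n$ with $C_n$ empty, the choice is irrelevant since it concerns only the tail). This yields $d(x, x_n) \to 0$, i.e.\ $x = \lim_{n\to\infty} x_n$ with $x_n \in C_n$, so $x \in \liminf_{n\to\infty} C_n$.

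For part (\romannumeral2), the argument is the same up to extracting a subsequence. If $x = \lim_{j\to\infty} x_{n_j}$ with $x_{n_j} \in C_{n_j}$, set $C_{n_k}$ to be the subsequence $\{C_{n_j}\}$ itself; then $d(x, C_{n_j}) \le d(x, x_{n_j}) \to 0$. Conversely, given a subsequence $\{C_{n_k}\}$ with $d(x, C_{n_k}) \to 0$, I pick $x_{n_k} \in C_{n_k}$ with $d(x, x_{n_k}) \le d(x, C_{n_k}) + 1/k$ (discarding finitely many terms if necessary so that $C_{n_k} \ne \emptyset$), which gives $x_{n_k} \to x$ and hence $x \in \limsup_{n\to\infty} C_n$ by the sequential characterization in the definition.

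There is essentially no obstacle here beyond remembering that the infimum defining $d(x, C_n)$ need not be attained when $C_n$ is not closed, so one must choose $x_n$ via a $1/n$-slack, and that empty $C_n$ force $d(x, C_n) = +\infty$, which is automatically handled since the hypothesis $d(x, C_n) \to 0$ forces the $C_n$'s to be nonempty from some index on.
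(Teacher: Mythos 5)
Your proof is correct and takes the same route the paper intends: the paper simply states that the lemma "follows from the definitions" of $\liminf$ and $\limsup$, and your argument is exactly the straightforward unpacking of those definitions (with the appropriate $1/n$-slack to handle infima that need not be attained). No discrepancy with the paper's approach.
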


\begin{proof}
  \ The desired results follow from the definitions of $\liminf_{n\rightarrow \infty} C_{n}$ and $\limsup_{n\rightarrow \infty} C_{n}$.
\end{proof}

\begin{tm} \label{infc}
Let $(X,d)$ be a metric space,
and
$C_{n}$, $n=1,2,\ldots$, be a sequence of sets in $X$.
  Then
    $\liminf_{n\rightarrow \infty} C_{n}$ and $\limsup_{n\rightarrow \infty} C_{n}$ are closed sets.
\end{tm}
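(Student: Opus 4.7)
The plan is to treat the two cases separately, using the cleanest description of each set already given in the excerpt.

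For $\limsup_{n\to\infty}C_n$, I would simply invoke the equality
\[
\limsup_{n\to\infty}C_n \;=\; \bigcap_{n=1}^{\infty}\overline{\bigcup_{m\geq n}C_m},
\]
stated in the definition section. Each term $\overline{\bigcup_{m\geq n}C_m}$ is closed by construction, and an arbitrary intersection of closed sets is closed. So this side is immediate and requires no real work.

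For $\liminf_{n\to\infty}C_n$, I would use the characterization given by Lemma~\ref{infe}(i): $x\in\liminf_{n\to\infty}C_n$ iff $\lim_{n\to\infty}d(x,C_n)=0$. I would take an arbitrary sequence $(x_k)\subseteq\liminf_{n\to\infty}C_n$ converging to some $x\in X$ and show $\lim_{n\to\infty}d(x,C_n)=0$. The key tool is the triangle inequality for the point-to-set distance function, namely $d(x,C_n)\leq d(x,x_k)+d(x_k,C_n)$. Given $\varepsilon>0$, I would first choose $k$ with $d(x,x_k)<\varepsilon/2$, then use $x_k\in\liminf_{n\to\infty}C_n$ to find $N$ so that $d(x_k,C_n)<\varepsilon/2$ for all $n\geq N$, and conclude $d(x,C_n)<\varepsilon$ for all $n\geq N$. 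Thus $d(x,C_n)\to 0$, and Lemma~\ref{infe}(i) gives $x\in\liminf_{n\to\infty}C_n$.

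There is essentially no obstacle; the only potential pitfall is trying to rewrite $\liminf_{n\to\infty}C_n$ as a similar countable intersection/union of closed sets (which does not directly yield closedness because one has a union over $N$ of intersections of closed sets). The sequence-based argument via Lemma~\ref{infe}(i) and the triangle inequality is the cleanest route and avoids that trap.
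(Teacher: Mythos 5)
Your proof is correct and your argument for $\liminf_{n\rightarrow \infty} C_{n}$ (closure point, triangle inequality for $d(x,C_n)$, and Lemma~\ref{infe}(i)) is essentially identical to the paper's proof. For $\limsup_{n\rightarrow \infty} C_{n}$ the paper merely says the argument is similar, whereas you invoke the identity $\limsup_{n\to\infty}C_n=\bigcap_{n=1}^{\infty}\overline{\bigcup_{m\geq n}C_m}$ from the definition, which is a legitimate and even cleaner shortcut.
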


\begin{proof}
 \  We only show $\liminf_{n\rightarrow \infty} C_{n}$ is a closed set.
The closedness of $\limsup_{n\rightarrow \infty} C_{n}$ can be proved in a similar manner.

Denote $C:  = \liminf_{n\rightarrow \infty} C_{n}$.
If $C=\emptyset$,
then, obviously, $C$ is a closed set.
If
$C \not=  \emptyset$.
Suppose that $x\in \overline{C}$, then there exists $x_n \in C$, $n=1,2,\ldots$, such that
$x=\lim_{n\to \infty} x_n$. So for each $\varepsilon>0$,
we can find a $x_k$ such
that
$$d(x, x_k) < \varepsilon/2.$$
Since $x_k\in C= \liminf_{n\rightarrow \infty} C_{n}$,
by Lemma \ref{infe},
there is an
$N(x_k, \varepsilon) \in \mathbb{N}$
satisfying
that
 $$d(x_k, C_n) < \varepsilon/2$$
  for all $n\geq N$.
 Thus
$$d(x, C_n) < \varepsilon$$
  for all $n\geq N$.
From the arbitrariness of $\varepsilon>0$ and Lemma \ref{infe},
we know
$x\in C$.
\end{proof}

\begin{tm} \label{ksc}
$K_S(\mathbb{R}^m)$ is a closed set in $( K(\mathbb{R}^m),   H )$.
\end{tm}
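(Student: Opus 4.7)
The plan is to fix a Hausdorff-convergent sequence $\{K_n\}\subseteq K_S(\mathbb{R}^m)$ with limit $K\in K(\mathbb{R}^m)$ and to produce a point $x\in K$ relative to which $K$ is star-shaped. Since each $K_n$ is star-shaped, $\ker K_n\neq\emptyset$, so I can pick $x_n\in\ker K_n$ for every $n$. Hausdorff convergence forces $\bigcup_n K_n\cup K$ to be a bounded subset of $\mathbb{R}^m$, so $\{x_n\}$ is a bounded sequence, and Bolzano--Weierstrass yields a subsequence $x_{n_k}\to x$ in $\mathbb{R}^m$. By Proposition 3.3, $K=\lim_{n\to\infty}K_n$ in the Kuratowski sense, and since $x_{n_k}\in K_{n_k}$ converges to $x$, Lemma 3.1(ii) gives $x\in\limsup_n K_n=K$.

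It remains to check that $\overline{xy}\subseteq K$ for every $y\in K$. Fix $y\in K=\liminf_n K_n$. By Lemma 3.1(i), $d(y,K_n)\to 0$, so I may choose $y_n\in K_n$ with $y_n\to y$; in particular $y_{n_k}\to y$. For any $\lambda\in[0,1]$, using that $K_{n_k}$ is star-shaped relative to $x_{n_k}$, the point
\[
z_{n_k}:=\lambda x_{n_k}+(1-\lambda)y_{n_k}
\]
lies in $K_{n_k}$, and $z_{n_k}\to z:=\lambda x+(1-\lambda)y$. Applying Lemma 3.1(ii) once more, $z\in\limsup_n K_n=K$. Since $\lambda\in[0,1]$ was arbitrary, $\overline{xy}\subseteq K$, so $K$ is star-shaped relative to $x$, i.e.\ $K\in K_S(\mathbb{R}^m)$.

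The main obstacle in presenting the argument cleanly is bookkeeping of subsequences: the kernel points $x_n$ only converge along the subsequence $\{n_k\}$, while the approximating points $y_n$ for a given $y\in K$ are extracted along the full sequence. The fix is simply to restrict the star-shapedness inclusion to indices $n_k$ before passing to the Kuratowski limsup. Everything else is a direct application of Propositions 3.3--3.4 and Lemma 3.1.
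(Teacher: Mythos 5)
Your proof is correct. The paper itself gives no argument for this theorem --- it simply cites Theorem 2.1 of \cite{huang9} --- so there is no in-paper proof to compare against; your self-contained argument (pick kernel points $x_n\in\mbox{ker}\,K_n$, extract a convergent subsequence $x_{n_k}\to x$ using the boundedness forced by Hausdorff convergence, then push segments $\lambda x_{n_k}+(1-\lambda)y_{n_k}$ into the Kuratowski limit) is the standard and natural one, and your handling of the subsequence bookkeeping is sound. One small remark: the detour through Proposition \ref{hms} and Lemma \ref{infe} is not strictly needed, since $z_{n_k}\in K_{n_k}$ and $z_{n_k}\to z$ already give $d(z,K)\le d(z,z_{n_k})+H^{*}(K_{n_k},K)\to 0$, whence $z\in K$ by closedness of $K$; but using the paper's stated lemmas as you do is equally valid.
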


\begin{proof}
 This is Theorem 2.1 in \cite{huang9}.
\end{proof}

\begin{tl} \label{kef}
  Let $C$, $C_n$ be star-shaped sets, $n=1,2,\ldots$.
 If $H(C_n, C) \to 0$,
then
$\limsup_{n\to \infty}\mbox{ker}\; C_n \subset \mbox{ker}\; C$.
\end{tl}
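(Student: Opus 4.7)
The plan is to combine three ingredients from earlier in the paper: Proposition \ref{hms}, which upgrades the Hausdorff convergence $H(C_n,C)\to 0$ to the Kuratowski convergence $C=\liminf_n C_n=\limsup_n C_n$; Theorem \ref{ksc}, which guarantees $C\in K_S(\mathbb{R}^m)$ so that $\ker C$ is well-defined; and Lemma \ref{infe}, which translates membership in the Kuratowski limits into sequential statements.

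First I would take an arbitrary $x\in\limsup_{n\to\infty}\ker C_n$ and apply Lemma \ref{infe}(ii) to extract a subsequence $\{C_{n_k}\}$ and points $x_{n_k}\in\ker C_{n_k}$ with $x_{n_k}\to x$. Since $\ker C_{n_k}\subset C_{n_k}$, this already gives $x\in\limsup_n C_n=C$ by the Kuratowski convergence. So the nontrivial task reduces to verifying that for every $y\in C$, the segment $\overline{xy}$ is contained in $C$.

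To do this, fix $y\in C=\liminf_n C_n$. By Lemma \ref{infe}(i) there exist $y_n\in C_n$ with $y_n\to y$; in particular $y_{n_k}\to y$. For an arbitrary $z\in\overline{xy}$, write $z=\lambda x+(1-\lambda)y$ with $\lambda\in[0,1]$, and set $z_{n_k}:=\lambda x_{n_k}+(1-\lambda)y_{n_k}$. Because $x_{n_k}\in\ker C_{n_k}$, we have $\overline{x_{n_k}y_{n_k}}\subset C_{n_k}$, hence $z_{n_k}\in C_{n_k}$. Since $z_{n_k}\to z$, Lemma \ref{infe}(ii) gives $z\in\limsup_n C_n=C$. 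Thus $\overline{xy}\subset C$, and since $y\in C$ was arbitrary, $x\in\ker C$.

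There is no real obstacle here; the only subtlety is bookkeeping with the subsequence---one must use a single subsequence $\{n_k\}$ coming from $x$, while the points $y_n$ approximating $y$ are defined for every $n$, so their restriction to indices $n_k$ still converges to $y$. The argument hinges on the elementary but crucial fact that a convex combination of sequences $x_{n_k}\to x$ and $y_{n_k}\to y$ converges to the same convex combination $\lambda x+(1-\lambda)y$, which is what lets the star-shapedness (segment containment) pass to the limit.
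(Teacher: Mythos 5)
Your argument is correct and complete. The paper itself does not prove this corollary---it simply cites Corollary 2.1 of \cite{huang9}---so there is no internal proof to compare against; your self-contained argument is a valid substitute. The chain of reasoning is sound: Proposition \ref{hms} gives $C=\lim_{n\to\infty}C_n\,(K)$; an arbitrary $x\in\limsup_n \mbox{ker}\,C_n$ comes with approximants $x_{n_k}\in \mbox{ker}\,C_{n_k}$ along a subsequence, which already places $x$ in $\limsup_n C_n=C$; and for each $y\in C=\liminf_n C_n$ with approximants $y_n\in C_n$, the convex combinations $\lambda x_{n_k}+(1-\lambda)y_{n_k}$ lie on the segments $\overline{x_{n_k}y_{n_k}}\subset C_{n_k}$ and converge to $\lambda x+(1-\lambda)y$, which therefore lies in $\limsup_n C_n=C$. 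Two minor remarks: invoking Theorem \ref{ksc} to ensure $C\in K_S(\mathbb{R}^m)$ is unnecessary, since the corollary's hypothesis already assumes $C$ is star-shaped (and in any case $\mbox{ker}\,C$ is well-defined for an arbitrary set); and your bookkeeping concern about restricting the full sequence $\{y_n\}$ to the subsequence $\{n_k\}$ is handled exactly as you say, so there is no gap.
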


\begin{proof}
  \ This is Corollary 2.1 in \cite{huang9}.
\end{proof}

\begin{re}
{\rm
  We do not know whether Theorems \ref{infc}, \ref{ksc} and Corollary \ref{kef} are known conclusions,
so
we give our proofs at here or in \cite{huang9}.
}
\end{re}

\section{Fuzzy set spaces \label{fuzset}}

$F_{USC}(\mathbb{R}^m)$
is
the most often used kind of fuzzy sets in theory and applications.
For different practical needs,
people
present
various types of common fuzzy sets on $\mathbb{R}^m$ by attaching some additional assumptions of normality, fuzzy convexity, fuzzy star-shapedness and so on.
 Common fuzzy sets
include
 fuzzy numbers (compact and noncompact),
  fuzzy   star-shaped   numbers (compact and noncompact),
and
general fuzzy star-shaped numbers (compact and noncompact).
All of the common fuzzy sets mentioned above
are
subsets
 of
  $F_{USCGCON} (\mathbb{R}^m)$.

The following representation theorem is used
widely in the theory and applications of fuzzy numbers.

\begin{pp}\cite{nr} \label{nr}\
Given $u\in E^m$ ($u\in E^m_{nc}$). Then
\\
(\romannumeral1) \ $[u]_\la\in K_C(\mathbb{R}^m)$ for all $\la\in [0,1]$ ($\la\in (0,1]$);
\\
(\romannumeral2) \ $[u]_\la=\bigcap_{\gamma<\lambda}[u]_\gamma$ for all $\la\in (0,1]$;
\\
(\romannumeral3) \ $[u]_0=\overline{\bigcup_{\gamma>0}[u]_\gamma}$.

Moreover, if the family of sets $\{v_\al:\al\in [0,1]\}$ satisfies
conditions $(\romannumeral1)$ through $(\romannumeral3)$, then there exists a unique $u\in E^m$ ($u\in E^m_{nc}$)
such that $[u]_{\la}=v_\lambda$ for each $\la\in [0,1].$
\end{pp}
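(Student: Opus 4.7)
The plan is to handle the forward (``only if'') direction and the reverse (representation) direction separately, and then dispose of uniqueness. The forward direction is routine: given $u \in E^m$ or $u \in E^m_{nc}$, property (i) follows from $[u]_\lambda$ being closed (by upper semi-continuity of $u$), bounded (by (iv-1) in the compact case, or (iv-2) for $\lambda>0$ in the noncompact case), and convex (since $x,y\in[u]_\lambda$ and fuzzy convexity give $u(\mu x+(1-\mu)y)\geq \min\{u(x),u(y)\}\geq \lambda$); property (ii) is a direct consequence of USC, because $u(x)\geq \gamma$ for every $\gamma<\lambda$ forces $u(x)\geq\lambda$; property (iii) is essentially the definition of $[u]_0$, since $\{u>0\}=\bigcup_{\gamma>0}[u]_\gamma$.

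For the reverse direction, given a family $\{v_\alpha:\alpha\in[0,1]\}$ satisfying (i)--(iii), I would build $u$ by
\[
u(x):=\sup\{\alpha\in[0,1]:x\in v_\alpha\},
\]
with the convention $\sup\emptyset=0$. The first auxiliary step is to extract nestedness from (ii): for $0<\beta<\alpha$, taking $\lambda=\alpha$ in (ii) yields $v_\alpha\subseteq v_\beta$. Next I would prove $[u]_\lambda=v_\lambda$ for every $\lambda$. The inclusion $v_\lambda\subseteq[u]_\lambda$ is immediate from the definition. For the reverse inclusion at $\lambda\in(0,1]$, if $u(x)\geq\lambda$ then for each $\gamma<\lambda$ there is some $\alpha>\gamma$ with $x\in v_\alpha\subseteq v_\gamma$, whence $x\in\bigcap_{\gamma<\lambda}v_\gamma=v_\lambda$ by (ii). For $\lambda=0$ I would use (iii), together with the identity $\{u>0\}=\bigcup_{\gamma>0}v_\gamma$.

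Once the level-set identity is in place, I would verify that the constructed $u$ belongs to the correct space: USC follows because each $\{u\geq\lambda\}=v_\lambda$ is closed by (i); normality follows from $v_1\neq\emptyset$ (any $x\in v_1$ satisfies $u(x)=1$); fuzzy convexity is inherited from the convexity of every level set; the required boundedness of $[u]_0$ (in the compact case) or of $[u]_\lambda$ for $\lambda>0$ (in the noncompact case) comes from (i). Uniqueness is general: any $u\in F_{USC}(\mathbb{R}^m)$ with values in $[0,1]$ satisfies $u(x)=\sup\{\alpha:x\in[u]_\alpha\}$, so two fuzzy sets sharing all level sets must coincide pointwise.

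The main obstacle I foresee is the reverse inclusion $[u]_\lambda\subseteq v_\lambda$ at limit levels and the proper coordination of conditions (ii) and (iii): (ii) delivers the identity at $\lambda\in(0,1]$ via a left-continuity-of-level-sets argument, while (iii) is exactly what is needed to pin down $\lambda=0$ in the compact setting; in the noncompact setting no constraint is imposed at $\lambda=0$, and one must instead confirm directly that the supremum construction does not create points in $[u]_0\setminus\overline{\{u>0\}}$.
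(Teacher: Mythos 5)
Your proposal is essentially correct, but there is nothing in the paper to compare it against: Proposition \ref{nr} is stated as a known representation theorem and attributed to Negoita and Ralescu \cite{nr}; the paper supplies no proof of it (nor of its analogues, Theorems \ref{smre}--\ref{fuscbchre}). Your argument is the standard one --- the sup-construction $u(x)=\sup\{\alpha : x\in v_\alpha\}$, nestedness extracted from (\romannumeral2), the level-set identity $[u]_\lambda=v_\lambda$ split into the easy inclusion and the left-continuity argument at $\lambda\in(0,1]$, then verification of (\romannumeral1), (\romannumeral2), (\romannumeral3-1), (\romannumeral4-1)/(\romannumeral4-2) for the constructed $u$, and uniqueness from $u(x)=\sup\{\alpha\in(0,1]:x\in[u]_\alpha\}$ --- and it goes through. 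Three small remarks. First, in the forward direction you should say explicitly that $[u]_\lambda\neq\emptyset$ (membership in $K_C(\mathbb{R}^m)$ requires nonemptiness), which follows from normality since $[u]_\lambda\supseteq[u]_1$; and for $\lambda=0$ convexity of $[u]_0=\overline{\bigcup_{\gamma>0}[u]_\gamma}$ comes from it being the closure of a nested union of convex sets rather than directly from the pointwise inequality. Second, (\romannumeral2) in the forward direction is not ``a consequence of USC'': it holds for an arbitrary function, since $u(x)\geq\gamma$ for all $\gamma<\lambda$ forces $u(x)\geq\lambda$. Third, the ``obstacle'' you flag at $\lambda=0$ in the noncompact case is illusory: condition (\romannumeral3) is imposed on the family $\{v_\alpha\}$ in both cases (only (\romannumeral1) is weakened), and $[u]_0$ is \emph{defined} as $\overline{\{u>0\}}$, so $[u]_0\setminus\overline{\{u>0\}}$ is empty by definition and $[u]_0=v_0$ follows from (\romannumeral3) once the positive levels are matched.
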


Similarly, we can obtain representation theorems for   $S^m$,
$S^m_{nc}$,
$\widetilde{S}^m$,
$\widetilde{S}^{m}_{nc}$, $F_{USCB} (\mathbb{R}^m)$ and $F_{USCG} (\mathbb{R}^m)$.

\begin{tm} \label{smre}\
Given $u\in S^m$ ($u\in S^m_{nc}$). Then
\\
(\romannumeral1) \ $[u]_\la\in K_S(\mathbb{R}^m)$ for all $\la\in [0,1]$ ($\la\in (0,1]$),  and $\bigcap_{\lambda \in (0,1]} \mbox{ker}\, [u]_\lambda \not= \emptyset$;
\\
(\romannumeral2) \ $[u]_\la=\bigcap_{\gamma<\lambda}[u]_\gamma$ for all $\la\in (0,1]$;
\\
(\romannumeral3) \ $[u]_0=\overline{\bigcup_{\gamma>0}[u]_\gamma}$.

Moreover, if the family of sets $\{v_\al:\al\in [0,1]\}$ satisfies
conditions $(\romannumeral1)$ through $(\romannumeral3)$, then there exists a unique $u\in S^m$ ($u\in S^m_{nc}$)
such that $[u]_{\la}=v_\lambda$ for each $\la\in [0,1].$
\end{tm}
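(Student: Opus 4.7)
The plan is to prove this in parallel with the Negoita--Ralescu representation theorem recorded as Proposition \ref{nr}, but with star-shapedness replacing convexity and with the kernel condition tracked throughout.

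For the forward direction, given $u \in S^m$ (respectively $u \in S^m_{nc}$), I would first extract a point $x_0 \in \mathbb{R}^m$ witnessing the fuzzy star-shapedness of $u$. Applying the defining inequality $u(\lambda y + (1-\lambda)x_0) \geq u(y)$ at $\lambda = 0$ to some $y$ with $u(y) = 1$ (which exists by normality) gives $u(x_0) = 1$, so $x_0 \in [u]_\lambda$ for every $\lambda \in [0,1]$. Then, for any $\lambda \in (0,1]$ and $y \in [u]_\lambda$, the inequality applied at arbitrary $\mu \in [0,1]$ yields $u(\mu y + (1-\mu)x_0) \geq \lambda$, so $\overline{x_0 y} \subset [u]_\lambda$, i.e.\ $x_0 \in \mbox{ker}\,[u]_\lambda$. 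Upper semi-continuity forces closedness of each $[u]_\lambda$, and boundedness comes from (iv-1) in the $S^m$ case or (iv-2) in the $S^m_{nc}$ case, producing $[u]_\lambda \in K_S(\mathbb{R}^m)$ in the required range; since $x_0$ lies in every kernel, the intersection is nonempty. Statements (ii) and (iii) are standard cut-set identities for upper semi-continuous functions and have no interaction with star-shapedness: (ii) follows from $u(x) \geq \lambda \Leftrightarrow u(x) \geq \gamma$ for all $\gamma < \lambda$, and (iii) is immediate from $\{u > 0\} = \bigcup_{\gamma > 0}[u]_\gamma$ together with the definition $[u]_0 = \overline{\{u>0\}}$. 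An additional small step is needed in the $S^m$ case to ensure $[u]_0 \in K_S(\mathbb{R}^m)$: I would approximate $y \in [u]_0$ by $y_n \in [u]_{\alpha_n}$ with $\alpha_n > 0$, use $\overline{x_0 y_n} \subset [u]_0$, and pass to the limit via closedness of $[u]_0$ and continuity of the segment map in the Hausdorff metric.

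For the reverse direction, given a family $\{v_\alpha\}_{\alpha \in [0,1]}$ satisfying (i)--(iii), I set
\[
u(x) := \sup\{\alpha \in [0,1] : x \in v_\alpha\}, \qquad \sup\emptyset := 0.
\]
Condition (ii) delivers the standard identification $[u]_\alpha = v_\alpha$ for $\alpha \in (0,1]$: the inclusion $v_\alpha \subset [u]_\alpha$ is immediate, and if $u(x) \geq \alpha$ then $x \in v_\gamma$ for every $\gamma < \alpha$, so (ii) forces $x \in v_\alpha$. Condition (iii) then yields $[u]_0 = v_0$. Upper semi-continuity of $u$ is equivalent to closedness of all cuts, which is inherited from (i); normality holds because $v_1$ is nonempty by (i) and any $z \in v_1$ satisfies $u(z) = 1$. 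For fuzzy star-shapedness I choose any $x_0 \in \bigcap_{\lambda \in (0,1]} \mbox{ker}\,v_\lambda$, which is nonempty by (i); then for $y$ with $u(y) = \alpha > 0$ and any $\beta \in (0,\alpha)$, we have $y \in v_\beta$ and $x_0 \in \mbox{ker}\,v_\beta$, hence $\mu y + (1-\mu)x_0 \in v_\beta$ for all $\mu \in [0,1]$, giving $u(\mu y + (1-\mu)x_0) \geq \beta$; letting $\beta \uparrow \alpha$ yields the star-shaped inequality. The case $u(y) = 0$ is trivial. Boundedness of the cuts transfers from $v_\alpha$, and uniqueness is the standard observation that a fuzzy set is determined by its $\alpha$-cuts.

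The main obstacle I anticipate is the reverse-direction verification of star-shapedness, where a \emph{single} $x_0$ must serve as a global star-center for $u$; this is precisely why condition (i) demands a nonempty intersection of kernels rather than only asking each $v_\lambda$ to be star-shaped individually. The $\beta \uparrow \alpha$ passage in that step hinges on $u(z) \geq \beta$ for all $\beta < \alpha$ implying $u(z) \geq \alpha$, which is condition (ii) applied once more. The forward-direction extension of star-shapedness down to $\lambda = 0$ in the $S^m$ case is only mildly delicate and is handled by the closure approximation sketched above.
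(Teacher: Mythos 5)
Your proof is correct and takes exactly the route the paper intends: the paper states this theorem without proof, remarking only that it is obtained ``similarly'' to the Negoita--Ralescu representation theorem (Proposition \ref{nr}), and your argument is precisely that adaptation. You also correctly isolate the one genuinely new ingredient relative to the convex case, namely that the nonempty intersection of kernels in condition (\romannumeral1) is what supplies a single global star-center $x_0$ in both directions.
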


\begin{tm} \label{gsmre}\
Given $u\in \widetilde{S}^m$ ($u\in \widetilde{S}^m_{nc}$). Then
\\
(\romannumeral1) \ $[u]_\la\in K_S(\mathbb{R}^m)$ for all $\la\in [0,1]$ ($\la\in (0,1]$);
\\
(\romannumeral2) \ $[u]_\la=\bigcap_{\gamma<\lambda}[u]_\gamma$ for all $\la\in (0,1]$;
\\
(\romannumeral3) \ $[u]_0=\overline{\bigcup_{\gamma>0}[u]_\gamma}$.

Moreover, if the family of sets $\{v_\al:\al\in [0,1]\}$ satisfies
conditions $(\romannumeral1)$ through $(\romannumeral3)$, then there exists a unique $u\in \widetilde{S}^m$ ($u\in \widetilde{S}^m_{nc}$)
such that $[u]_{\la}=v_\lambda$ for each $\la\in [0,1].$
\end{tm}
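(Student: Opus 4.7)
The proof naturally splits into a necessity direction and a sufficiency (representation) direction, with the skeleton essentially parallel to Proposition \ref{nr} and Theorem \ref{smre}. The plan is to reuse those standard arguments wherever possible and focus the work on what is genuinely new, namely the star-shapedness of the cut-sets under the weaker hypothesis (iii-3), which does not guarantee a common kernel point across levels.

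For the necessity direction, each $[u]_\lambda$ is closed by upper semi-continuity (i), nonempty by normality (ii), and bounded by (iv-1) in the compact case or by (iv-2) for $\lambda\in(0,1]$ in the noncompact case; thus $[u]_\lambda$ is compact. Star-shapedness of $[u]_\lambda$ for $\lambda\in(0,1]$ is a direct translation of hypothesis (iii-3): the point $x_\lambda$ given there lies in $\mbox{ker}\,[u]_\lambda$, so $[u]_\lambda\in K_S(\mathbb{R}^m)$. Properties (ii) and (iii) are standard consequences of upper semi-continuity and of the definition $[u]_0=\overline{\{u>0\}}=\overline{\bigcup_{\gamma>0}[u]_\gamma}$, exactly as in the proofs of Proposition \ref{nr} and Theorem \ref{smre}; I would simply invoke them.

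The only nontrivial new step is showing $[u]_0\in K_S(\mathbb{R}^m)$ in the compact case $\widetilde{S}^m$, since the kernels $\mbox{ker}\,[u]_\lambda$ are not assumed to have a common point. The plan is a limit/selection argument: choose any $x_{1/n}\in\mbox{ker}\,[u]_{1/n}\subseteq [u]_0$, which is compact, so along a subsequence $x_{1/n_k}\to x_*\in [u]_0$. To see $x_*\in\mbox{ker}\,[u]_0$, fix $y\in[u]_0$. If $y\in\bigcup_{\gamma>0}[u]_\gamma$, say $y\in[u]_\gamma$, then for $1/n_k<\gamma$ we have $y\in[u]_{1/n_k}$, hence $\overline{x_{1/n_k}y}\subseteq[u]_{1/n_k}\subseteq[u]_0$; passing to the limit in the segment and using closedness of $[u]_0$ yields $\overline{x_*y}\subseteq[u]_0$. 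The general $y\in[u]_0=\overline{\{u>0\}}$ is handled by approximating $y$ from $\{u>0\}$ and applying the previous case together with Hausdorff continuity of $z\mapsto\overline{x_*z}$ and the closedness of $[u]_0$. This limit argument is the main obstacle of the theorem; once it is in place, (i) is proved at every level. In the noncompact case $\widetilde{S}^m_{nc}$ no argument at $\lambda=0$ is needed, since (i) is only claimed for $\lambda\in(0,1]$.

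For the sufficiency direction, given $\{v_\lambda\}$ satisfying (i)--(iii), define $u(x):=\sup\{\alpha\in[0,1]:x\in v_\alpha\}$ with $\sup\emptyset=0$. The verification that $[u]_\lambda=v_\lambda$ for every $\lambda\in[0,1]$, that $u$ is upper semi-continuous, and that $u$ is normal is a verbatim repetition of the corresponding part in Proposition \ref{nr}, using (ii) at positive levels and (iii) at level $0$. What must be checked anew is condition (iii-3) and the relevant boundedness clause: for $\lambda\in(0,1]$, since $[u]_\lambda=v_\lambda\in K_S(\mathbb{R}^m)$, the kernel is nonempty, and any $x_\lambda\in\mbox{ker}\,[u]_\lambda$ satisfies $\overline{x_\lambda y}\subseteq[u]_\lambda$ for all $y\in[u]_\lambda$, which is exactly (iii-3). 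Boundedness (iv-1) or (iv-2) follows from (i) of the hypothesis. Uniqueness is immediate since two fuzzy sets with identical $\alpha$-cuts coincide pointwise. Overall, the writing can be compressed by quoting Proposition \ref{nr}/Theorem \ref{smre} for the standard pieces and presenting the compact-case $\lambda=0$ star-shapedness as the substantive new ingredient.
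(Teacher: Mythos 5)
Your proposal is correct, and it is essentially the argument the paper intends: the paper states Theorem \ref{gsmre} without proof, merely asserting that it follows ``similarly'' to Proposition \ref{nr} and Theorem \ref{smre}, so the real content of your write-up is that you correctly isolate the one step that is \emph{not} a verbatim repetition, namely star-shapedness of $[u]_0$ in the compact case, where (iii-3) gives a kernel point at each positive level but no common kernel point. Your selection/limit argument for that step is sound: the points $x_{1/n}\in\mbox{ker}\,[u]_{1/n}$ live in the compact set $[u]_0$, a subsequential limit $x_*$ exists, segments $\overline{x_{1/n_k}y}$ converge in Hausdorff distance to $\overline{x_*y}$, and closedness of $[u]_0$ passes the inclusion to the limit, first for $y$ in some positive cut and then for general $y\in\overline{\{u>0\}}$ by a second approximation. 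One remark worth making: this step can be obtained more cheaply from results already in the paper. Since $[u]_{1/n}$ is an increasing sequence in $K_S(\mathbb{R}^m)$ with $\overline{\bigcup_n [u]_{1/n}}=[u]_0\in K(\mathbb{R}^m)$, Proposition \ref{mce} gives $H([u]_{1/n},[u]_0)\to 0$, and Theorem \ref{ksc} (closedness of $K_S(\mathbb{R}^m)$ in $(K(\mathbb{R}^m),H)$) then yields $[u]_0\in K_S(\mathbb{R}^m)$ directly; your hands-on argument is in effect a specialization of the proof of Theorem \ref{ksc}. The sufficiency direction and the uniqueness claim are handled correctly, including the two points that genuinely need re-checking under the weaker hypothesis: that any kernel point of $v_\lambda$ serves as the $x_\lambda$ required by (iii-3), and that normality follows from $v_1\neq\emptyset$ because $K_S(\mathbb{R}^m)$ consists of nonempty compact sets.
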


\begin{tm} \label{fuscbchre}\
Given $F_{USCB} (\mathbb{R}^m)$ ($u\in F_{USCG} (\mathbb{R}^m)$). Then
\\
(\romannumeral1) \ $[u]_\la\in K(\mathbb{R}^m) \cup \{\emptyset\}$ for all $\la\in [0,1]$ ($\la\in (0,1]$);
\\
(\romannumeral2) \ $[u]_\la=\bigcap_{\gamma<\lambda}[u]_\gamma$ for all $\la\in (0,1]$.
\\
(\romannumeral3) \ $[u]_0=\overline{\bigcup_{\gamma>0}[u]_\gamma}$.

Moreover, if the family of sets $\{v_\al:\al\in [0,1]\}$ satisfies
conditions $(\romannumeral1)$ through $(\romannumeral3)$, then there exists a unique $u\in F_{USCB}(\mathbb{R}^m)$ ($u\in F_{USCG}(\mathbb{R}^m)$)
such that $[u]_{\la}=v_\lambda$ for each $\la\in [0,1].$
\end{tm}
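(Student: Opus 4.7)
The plan is to adapt the standard Negoita–Ralescu representation argument (Proposition 3.1) to the broader class of $F_{USCB}(\mathbb{R}^m)$ and $F_{USCG}(\mathbb{R}^m)$, where the only distinguishing structural features compared with $E^m$/$E^m_{nc}$ are the dropping of normality, convexity and the admission of empty cuts. The proof splits into a forward ``cut-system is well-behaved'' part and a reverse ``reconstruct $u$ from its cuts'' part, together with uniqueness.

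For the forward direction, suppose $u$ lies in the relevant space. Condition (ii) is a purely set-theoretic fact about strong/weak cuts: $x \in [u]_\lambda$ iff $u(x) \geq \lambda$ iff $u(x) \geq \gamma$ for every $\gamma < \lambda$, so $[u]_\lambda = \bigcap_{\gamma<\lambda}[u]_\gamma$ holds for any fuzzy set. Condition (iii) follows from the defining equality $[u]_0 = \overline{\{u>0\}}$ together with the obvious $\{u>0\} = \bigcup_{\gamma>0}[u]_\gamma$. For (i), upper semi-continuity of $u$ gives that every $[u]_\lambda$ is closed in $\mathbb{R}^m$; boundedness in (iv-1) for $F_{USCB}$, respectively (iv-2) for $F_{USCG}$ on positive levels, then forces each relevant cut to be either a nonempty compact set or the empty set. (The fact that $u \in F_{USC}(\mathbb{R}^m)$ iff $\mathrm{end}\, u \in C(\mathbb{R}^m\times[0,1])$, stated earlier, gives a clean way to see the closedness if preferred.)

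For the reverse direction, given a family $\{v_\alpha:\alpha\in[0,1]\}$ satisfying (i)–(iii), define
\[
u(x) := \sup\{\alpha\in[0,1] : x \in v_\alpha\},
\]
with the convention $\sup\emptyset = 0$. The central step, and the place that consumes condition (ii), is to prove $[u]_\lambda = v_\lambda$ for every $\lambda \in (0,1]$: the inclusion $v_\lambda \subseteq [u]_\lambda$ is immediate from the definition of $u$; for the reverse inclusion, if $u(x) \geq \lambda$ then for every $\gamma < \lambda$ there exists $\beta > \gamma$ with $x \in v_\beta \subseteq v_\gamma$ (using that the family is nested by (ii)), hence $x \in \bigcap_{\gamma<\lambda} v_\gamma = v_\lambda$. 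For $\lambda = 0$, in the $F_{USCB}$ case one combines $\{u>0\}=\bigcup_{\gamma>0}[u]_\gamma = \bigcup_{\gamma>0} v_\gamma$ with condition (iii) to conclude $[u]_0 = v_0$; in the $F_{USCG}$ case no constraint is imposed on $v_0$, and one simply takes $[u]_0 = \overline{\{u>0\}}$ by definition. Upper semi-continuity of $u$ is then immediate because each $[u]_\lambda$ is closed (being equal to a compact or empty $v_\lambda$), and the boundedness condition appropriate to the target space is inherited from (i).

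Uniqueness is routine: two upper semi-continuous fuzzy sets with identical $\lambda$-cuts for all $\lambda \in [0,1]$ must coincide pointwise, since $u(x) = \sup\{\lambda : x \in [u]_\lambda\}$. The main obstacle, as in all such representation theorems, is the verification $[u]_\lambda = v_\lambda$ at positive levels — specifically the nontrivial inclusion $[u]_\lambda \subseteq v_\lambda$, which is precisely where the nested-intersection condition (ii) is needed; once that is in place, the $F_{USCB}$ and $F_{USCG}$ cases differ only in whether $v_0$ is prescribed or defined from the strong-cut closure, and the rest of the argument is bookkeeping. No assumption of connectedness, convexity, star-shapedness or normality is used anywhere in this scheme, which is why the result extends cleanly from $E^m/E^m_{nc}$ to the general spaces in question.
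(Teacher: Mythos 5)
Your proof is correct and follows exactly the route the paper intends: the paper gives no proof of Theorem 3.4, merely asserting that it is obtained ``similarly'' to the Negoita--Ralescu representation theorem (Proposition 3.1), and your argument is precisely that standard adaptation (closedness plus boundedness for (i), the set-theoretic identities for (ii)--(iii), the supremum reconstruction with the nested-intersection step for the converse, and the supremum formula for uniqueness). One small correction: in the $F_{USCG}(\mathbb{R}^m)$ case condition (iii) is still imposed on $v_0$ by the theorem statement, and the conclusion requires $[u]_0=v_0$, so rather than saying $v_0$ is unconstrained you should close this level by the same one-line argument you used for $F_{USCB}(\mathbb{R}^m)$, namely $[u]_0=\overline{\{u>0\}}=\overline{\bigcup_{\gamma>0}v_\gamma}=v_0$.
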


\begin{re} \label{nce}
{\rm
Suppose that
 $u\in \widetilde{S}^{m}_{nc}$.
 Denote $\mbox{ker}\, u: =\bigcap_{\alpha \in (0,1]} \mbox{ker}\, [u]_\al$
  (also see \cite{da, qiu}).
  Then,
from Theorems \ref{smre} and \ref{gsmre},
 $u\in S^{m}_{nc}$
 if and only if
 $\mbox{ker}\, u \not= \emptyset$.
}
\end{re}

In this paper, we discuss the relationships and properties of the endograph metric convergence
 and
 the
 $\Gamma$-convergence
on
$F_{USC} (\mathbb{R}^m)$.
These two convergence structures
   can
 be induced from metrics on $F_{USC} (\mathbb{R}^m)$.
In fact,
since ${\rm end}\, u \in    C(  \mathbb{R}^{m+1}  )$ for each $u \in F_{USC} (\mathbb{R}^m)$,
 then from the
conclusions of   Kuratowski convergence
listed
 in
 Section \ref{hfkc},
it follows immediately
that
the
 $\Gamma$-convergence on
$F_{USC} (\mathbb{R}^m)$
 is metrizable.

\section{The relationship between   endograph metric convergence and $\Gamma$-convergence on fuzzy sets \label{rghc}}

We \cite{huang7}
found the interesting fact
that
the $\Gamma$-convergence and the endograph metric convergence coincide on $E^1$.

\begin{pp} \label{hfm} (Remark 1.3 and its proof in Huang and Wu \cite{huang7})
  If $u_n$, $u$ is in $E^1$, $n=1,2,\ldots$, then  $u_n\str{\Gamma}{\longrightarrow}u$
  is
  equivalent to $H_{\rm end} (u_n, u) \to 0$.
\end{pp}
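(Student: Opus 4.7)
The plan is to prove the two directions separately, with the backward direction ($\Gamma$-convergence implies $H_{\rm end}$-convergence) doing most of the work, since the forward direction is general while the backward direction crucially uses the normality and fuzzy convexity built into $E^1$.

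For the forward direction, I would argue directly from the definitions using Lemma \ref{infe}, without needing to invoke Proposition \ref{hms} (which is stated for compact sets, whereas endographs contain all of $\mathbb{R}\times\{0\}$ and are merely closed). If $H(\mathrm{end}\,u_n,\mathrm{end}\,u)\to 0$, then for any $p\in\mathrm{end}\,u$ we have $d(p,\mathrm{end}\,u_n)\leq H(\mathrm{end}\,u_n,\mathrm{end}\,u)\to 0$, giving $p\in\liminf_n\mathrm{end}\,u_n$ by Lemma \ref{infe}(i); conversely, for any $p\in\limsup_n\mathrm{end}\,u_n$, a subsequential limit argument together with the closedness of $\mathrm{end}\,u$ (Section 1) gives $p\in\mathrm{end}\,u$. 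So $\mathrm{end}\,u=\liminf_n\mathrm{end}\,u_n=\limsup_n\mathrm{end}\,u_n$.

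For the backward direction, the key intermediate step will be to show that under $\Gamma$-convergence in $E^1$, the $\al$-cuts $\{[u_n]_\al\}$ are uniformly bounded for each fixed $\al>0$. Fix such an $\al$, pick $y_0\in[u]_1$ (normality), and use $\Gamma$-convergence with Lemma \ref{infe}(i) to produce $(y_n,s_n)\in\mathrm{end}\,u_n$ with $(y_n,s_n)\to(y_0,1)$; so eventually $s_n>\al$ and $y_n$ stays near $y_0$. Suppose for contradiction that along some subsequence there exist $z_n\in[u_n]_\al$ with $z_n\to+\infty$ (the $-\infty$ case is symmetric). Since $u_n\in E^1$ is fuzzy convex, $[u_n]_\al$ is an interval containing both $y_n$ and $z_n$, so for any fixed $M>y_0+1$ we get $M\in[u_n]_\al$ for all large $n$, i.e.\ $(M,\al)\in\mathrm{end}\,u_n$. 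The constant sequence $(M,\al)$ then lies in $\limsup_n\mathrm{end}\,u_n=\mathrm{end}\,u$, forcing $u(M)\geq\al>0$. As $M>y_0+1$ was arbitrary, this contradicts boundedness of $[u]_0$.

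With uniform boundedness in hand, I would finish by contradiction. Suppose $H(\mathrm{end}\,u_n,\mathrm{end}\,u)\not\to 0$; then after passing to a subsequence, some $\var>0$ witnesses either $H^*(\mathrm{end}\,u_n,\mathrm{end}\,u)>\var$ or $H^*(\mathrm{end}\,u,\mathrm{end}\,u_n)>\var$ for all $n$. Because $\mathbb{R}\times\{0\}$ is contained in every endograph, the offending points $(x_n,t_n)$ must have $t_n>\var$. In the first case, $x_n\in[u_n]_{t_n}\subseteq[u_n]_\var$, which is uniformly bounded by the previous step, so $\{(x_n,t_n)\}$ has a cluster point that lies in $\mathrm{end}\,u$ by the limsup condition, contradicting $d((x_n,t_n),\mathrm{end}\,u)>\var$. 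In the second case, $x_n\in[u]_\var$ (bounded since $u\in E^1$), so $(x_n,t_n)\to(x,t)\in\mathrm{end}\,u$ along a subsequence; by the liminf condition there are $(x_n',t_n')\in\mathrm{end}\,u_n$ with $(x_n',t_n')\to(x,t)$, and the triangle inequality then contradicts $d((x_n,t_n),\mathrm{end}\,u_n)>\var$ for large $n$.

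The main obstacle is the uniform boundedness step. It is exactly here that fuzzy convexity (interval $\al$-cuts) and normality of fuzzy numbers in $E^1$ are essential; without them, cuts could have ``holes'' or miss the target peak $y_0$, and the bridging argument using $(M,\al)$ would break down. This is precisely why the later sections of the paper, which aim to extend the equivalence beyond $E^1$, must replace fuzzy convexity with a weaker connectedness hypothesis and develop level-characterization machinery.
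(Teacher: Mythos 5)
Your proof is correct, but it is worth noting that the paper itself gives no proof of Proposition \ref{hfm}: it is quoted from \cite{huang7}, and the nearest in-paper derivation is the much more general Theorem \ref{gchefbcon} (equivalence on $F_{USCGCON}(\mathbb{R}^m)\backslash\widehat{\emptyset}$, of which $E^1$ is a subset). Your forward direction and your final compactness-plus-contradiction step match the paper's Theorems \ref{hpkc}/\ref{uschpm} and \ref{uscbgche} almost verbatim. Where you genuinely diverge is the key uniform-boundedness lemma. The paper obtains it (Lemma \ref{bconedu}) by first proving the level decomposition of $\Gamma$-convergence (Theorem \ref{ltgc}), then invoking Proposition \ref{klhe} --- Kuratowski convergence of compact \emph{connected} sets to a compact set implies Hausdorff convergence --- to get $H([u_n]_\beta,[u]_\beta)\to 0$ at some level $\beta<\al$, whence $\bigcup_n[u_n]_\beta$ is bounded. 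You instead run a direct one-dimensional bridging argument: normality plants a point of $[u_n]_\al$ near $[u]_1$, convexity makes $[u_n]_\al$ an interval, and any escaping endpoint $z_n\to+\infty$ forces every intermediate $(M,\al)$ into $\limsup_n\mathrm{end}\,u_n=\mathrm{end}\,u$, contradicting boundedness of $[u]_0$. Your route is more elementary and self-contained (no level characterizations, no cited Kuratowski--Hausdorff equivalence), at the price of being tied to $m=1$ and to fuzzy convexity; the paper's heavier machinery is what lets it replace convexity by mere connectedness of cuts and work in $\mathbb{R}^m$, exactly as you anticipate in your closing remark.
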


In this section, first we show that the   endograph metric   convergence is stronger than
the
$\Gamma$-convergence
on
 $F_{USC} (\mathbb{R}^m)$.
Then
it is proved
that
an  endograph metric   convergent sequence in $F_{USCG} (\mathbb{R}^m)$ is exactly
a
$\Gamma$-convergent sequence satisfying the condition that
 the union of $\al$-cuts of all its elements is a bounded set in $\mathbb{R}^m$ for each $\al>0$.
As a consequence,
 we
 deduce
that
 a Hausdorff metric convergent
sequence in $K(\mathbb{R}^m) \cup \{\emptyset\}$
is exactly
a Fell topology convergent (Kuratowski convergent) sequence with the union of all its elements being a bounded set in $\mathbb{R}^m$.

In Section \ref{pfuscg}, we will show that the $\Gamma$-convergence and the endograph metric convergence coincide
on
 $F_{USCGCON} (\mathbb{R}^m) \backslash \widehat{\emptyset}$.

\begin{tm} \label{hpkc}
  Suppose that $C$, $C_n$, $n=1,2,\ldots$, are closed sets in $\mathbb{R}^{m}$.
  Then
  $H(   C_n,    C    ) \to 0$ as $n\to \infty$
  implies that
   $\lim_{n\to \infty} C_n (K)=C $.
\end{tm}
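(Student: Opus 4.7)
The plan is to verify the two inclusions $C \subseteq \liminf_{n\to\infty} C_n$ and $\limsup_{n\to\infty} C_n \subseteq C$ directly from the definitions, using Lemma \ref{infe} as the bridge between set-theoretic limits and pointwise distance conditions. Since the hypothesis allows $C$ or the $C_n$ to be noncompact (unlike Proposition \ref{hms}), I cannot just quote the compact case; but the estimates one needs are in fact simpler because the Hausdorff distance directly controls $d(\,\cdot\,,\cdot\,)$ regardless of boundedness.

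First I would dispose of the trivial case $C = \emptyset$: by the extended definition of $H$, $H(C_n,C)\to 0$ forces $C_n = \emptyset$ for all sufficiently large $n$, whence both $\liminf_{n\to\infty} C_n$ and $\limsup_{n\to\infty} C_n$ are $\emptyset$. Henceforth assume $C \neq \emptyset$, so $C_n \neq \emptyset$ eventually, and $H(C_n,C)$ is a genuine finite quantity for large $n$.

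Next I would prove $C \subseteq \liminf_{n\to\infty} C_n$. Given $x \in C$, the inequality $d(x, C_n) \le H^{*}(C, C_n) \le H(C_n, C) \to 0$ gives $\lim_{n\to\infty} d(x, C_n) = 0$, so by Lemma \ref{infe}(i) we have $x \in \liminf_{n\to\infty} C_n$. For the reverse inclusion $\limsup_{n\to\infty} C_n \subseteq C$, take $x \in \limsup_{n\to\infty} C_n$. By Lemma \ref{infe}(ii), there is a subsequence $\{C_{n_k}\}$ with $\lim_{k\to\infty} d(x, C_{n_k}) = 0$. Choose $y_k \in C_{n_k}$ with $d(x, y_k) < d(x, C_{n_k}) + 1/k$, and note $d(y_k, C) \le H^{*}(C_{n_k}, C) \le H(C_{n_k}, C) \to 0$. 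The triangle inequality then yields $d(x, C) \le d(x, y_k) + d(y_k, C) \to 0$, and closedness of $C$ forces $x \in C$. Combining this with the always-valid inclusion $\liminf_{n\to\infty} C_n \subseteq \limsup_{n\to\infty} C_n$ gives $C = \liminf_{n\to\infty} C_n = \limsup_{n\to\infty} C_n$, i.e.\ $\lim_{n\to\infty} C_n(K) = C$.

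There is no real obstacle here; the only subtlety to watch out for is the handling of the empty set under the extended Hausdorff metric (making sure $H(C_n,C)\to 0$ really does pin down emptiness eventually), and the fact that the argument for $\limsup C_n \subseteq C$ must invoke the closedness of $C$ explicitly, since $x$ itself need not lie in any $C_{n_k}$ and we only know $d(x,C)=0$ a priori.
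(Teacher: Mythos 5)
Your proof is correct and follows essentially the same route as the paper's: dispose of the case $C=\emptyset$ via the extended Hausdorff metric, then use Lemma \ref{infe} to get $C\subseteq\liminf_{n\to\infty}C_n$ from $d(x,C_n)\le H(C_n,C)\to 0$ and $\limsup_{n\to\infty}C_n\subseteq C$ from the subsequence characterization together with the closedness of $C$. Your version merely spells out the triangle-inequality step with the auxiliary points $y_k$, which the paper leaves implicit.
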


\begin{proof}
 \  Suppose that $H( C_n,  C) \to 0$ as $n\to \infty$. If $C=\emptyset$, then
there is an $N$ such that $C_n = \emptyset$ when $n\geq N$.
Thus
$\lim_{n\to \infty} C_n (K)=C $.

If  $C \not=\emptyset$.
Given
$x\in \limsup_{n\rightarrow \infty} C_{n}$.
Then from Lemma \ref{infe},
there exists a subsequence $\{C_{n_i}\}$ of $\{C_n\}$ such that
$ d(x, C_{n_i}) \to 0$.
Note that
$H( C_{n_i},   C) \to 0$,
hence
$d(x,  C)=0$.
It then follows from the closedness of $C$ that $x\in  C$.
So we know
\begin{equation}\label{supc}
  \limsup_{n\rightarrow \infty}   C_{n} \subseteq  C.
\end{equation}
On
 the other hand, given $x\in   C$, then $d(x,  C_n) \to 0$ as $n\to \infty$.
So by Lemma \ref{infe}, we have
 \begin{equation}\label{cinf}
  C\subseteq \liminf_{n\to\infty} C_n.
 \end{equation}
 Combined with \eqref{supc} and \eqref{cinf}, we know that
 $$ C=\lim_{n\to \infty} C_n (K). $$
\end{proof}

\begin{re}{\rm
  From
the basic facts about the Fell topology,
the Kuratowski convergence
and
the Hausdorff metric
stated in Section \ref{bns}, the statement expressed in Theorem \ref{hpkc}
 is a known conclusion.
Here we give the proof for the self-containing of this paper.
One of the referees also kindly reminded us to pay attention to references \cite{matheron,beer, wei} and pointed out that Theorem \ref{hpkc} can be deduced from results in \cite{beer, wei}. However
we are not able to obtain references \cite{matheron,beer, wei}.
}
\end{re}

\begin{tm} \label{uschpm}
   Suppose that $u$, $u_n$, $n=1,2,\ldots$,
   are
   fuzzy sets in $F_{USC}(\mathbb{R}^m)$.
     Then
  $H_{\rm end}   (  u_n,     u   ) \to 0$ as $n\to \infty$
  implies that
   $u_n\str{\Gamma}{\longrightarrow} u $.
\end{tm}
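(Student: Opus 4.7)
The plan is to reduce the statement directly to Theorem \ref{hpkc} by translating the premise and conclusion from the fuzzy-set level to the endograph level. By the definition of the endograph metric, $H_{\rm end}(u_n, u) = H(\mathrm{end}\, u_n, \mathrm{end}\, u)$, so the hypothesis $H_{\rm end}(u_n, u) \to 0$ is literally Hausdorff convergence of the sequence of endographs. Dually, by the definition of $\Gamma$-convergence, the conclusion $u_n \stackrel{\Gamma}{\longrightarrow} u$ is literally the Kuratowski convergence $\mathrm{end}\, u = \lim_{n\to\infty} \mathrm{end}\, u_n\,(K)$. Hence the theorem is exactly the specialization of ``Hausdorff convergence implies Kuratowski convergence'' to the particular sequence $\{\mathrm{end}\, u_n\}$ in $\mathbb{R}^{m+1}$.

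First I would invoke the characterization recalled in Section \ref{bns}: for $v \in F(\mathbb{R}^m)$, $v \in F_{USC}(\mathbb{R}^m)$ if and only if $\mathrm{end}\, v$ is a closed subset of $\mathbb{R}^m \times [0,1] \subset \mathbb{R}^{m+1}$. Applying this to $u$ and to each $u_n$, I obtain a sequence $\{\mathrm{end}\, u_n\}$ together with a set $\mathrm{end}\, u$, all of which lie in $C(\mathbb{R}^{m+1}) \cup \{\emptyset\}$. This puts us precisely in the setting to which Theorem \ref{hpkc} applies.

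Next I would rewrite the hypothesis as $H(\mathrm{end}\, u_n, \mathrm{end}\, u) \to 0$ and invoke Theorem \ref{hpkc} with the ambient metric space taken to be $\mathbb{R}^{m+1}$ equipped with its Euclidean metric (restricted to the slab $\mathbb{R}^m \times [0,1]$ if one prefers). The conclusion of that theorem gives $\mathrm{end}\, u = \lim_{n\to\infty} \mathrm{end}\, u_n\,(K)$, i.e. the Kuratowski convergence needed. Translating back through the definition of $\Gamma$-convergence yields $u_n \stackrel{\Gamma}{\longrightarrow} u$, completing the argument.

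There is essentially no obstacle here beyond bookkeeping: once one recognizes that both structures on $F_{USC}(\mathbb{R}^m)$ are transported versions of the Hausdorff metric and Kuratowski convergence on closed subsets of $\mathbb{R}^{m+1}$ via the endograph embedding, the theorem is an immediate corollary of Theorem \ref{hpkc}. The only minor care needed is to verify the closedness of $\mathrm{end}\, u$ (handled by upper semi-continuity) and to note that the case $u \equiv \widehat{\emptyset}$, corresponding to $\mathrm{end}\, u = \emptyset$, is covered by the $C = \emptyset$ branch of Theorem \ref{hpkc}.
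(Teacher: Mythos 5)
Your proposal is correct and is essentially identical to the paper's own proof: observe that the endographs are closed sets in $\mathbb{R}^{m+1}$ and apply Theorem \ref{hpkc}. The only (harmless) quibble is that the case $\mathrm{end}\, u=\emptyset$ never actually occurs, since every fuzzy set in $F_{USC}(\mathbb{R}^m)$ -- including $\widehat{\emptyset}$ -- has $\mathrm{end}\, u \supseteq \mathbb{R}^m\times\{0\}$.
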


\begin{proof}
 \ Note that ${\rm end}\, u$, ${\rm end}\, u_n$, $n=1,2,\ldots$, are closed sets in $\mathbb{R}^{m+1}$,
hence
the desired result
follows from Theorem \ref{hpkc}.
\end{proof}

\begin{eap}\label{gse}
{\rm
  Consider $u,\ u_n \in F_{USC}(\mathbb{R}^1)$, $n=1,2,\ldots$, defined as follows:
  \[u_n(x)=\left\{
      \begin{array}{ll}
        1, & x=0 \hbox{ or } n, \\
        0, & \hbox{otherwise.}
      \end{array}
    \right.
  \]
\[
u(x)=\left\{
      \begin{array}{ll}
        1, & x=0,  \\
        0, & \hbox{otherwise.}
      \end{array}
    \right.
\]
Then we can see
$u_n \str{\Gamma}{\longrightarrow} u $.
We can also check
that
$H_{\rm end}   (  u_n,     u   ) \geq 1$,
and
hence
$H_{\rm end}   (  u_n,     u   ) \not\to 0$.
}
\end{eap}

Combined with
Theorem \ref{uschpm} and Example \ref{gse},
we know that
the
endograph metric convergence is stronger than
the $\Gamma$-convergence
on
 $F_{USC} (\mathbb{R}^m)$.

For reading and writing convenience, we use the symbol
${\rm end}_\varepsilon\, u$
to denote the set
  ${\rm end}\, u   \cap   \{(y_1, y_2, \ldots, y_{m+1}): y_i\in \mathbb{R}, \ i=1,2,\ldots, m+1, \  y_{m+1}\geq \var   \}$, where $u$ is a fuzzy set on $\mathbb{R}^m$.

\begin{tm}  \label{uscbgche}
  Suppose that $u$, $u_n$,
$n=1,2,\ldots$, are  fuzzy sets in $F_{USCG} (\mathbb{R}^m)$.
   If
\\
  (\romannumeral1)  $u_n\str{\Gamma}{\longrightarrow}u$, and
\\
 (\romannumeral2)  given $\alpha\in (0,1]$,
$ \bigcup_{n=1}^{+\infty}  [u_n]_\al $ is a bounded set in $\mathbb{R}^m $,
\\
  then
   $H_{\rm end}(u_n, u) \to 0$ as $n\to \infty$.
\end{tm}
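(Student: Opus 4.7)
The plan is to bound both one-sided Hausdorff quantities separately for each $\varepsilon>0$, reducing each to compact ``upper slabs'' where hypothesis (ii) controls the horizontal spread, and then extracting convergent subsequences. First I would fix $\varepsilon>0$ and observe the following trivial but crucial reduction: for any fuzzy set $v$ on $\mathbb{R}^m$ we have $\mathbb{R}^m\times\{0\}\subseteq {\rm end}\, v$, so every point $(x,t)$ with $t<\varepsilon$ satisfies $d((x,t),{\rm end}\, v)\leq t<\varepsilon$. Consequently, to establish $H({\rm end}\, u_n,{\rm end}\, u)<\varepsilon$ for large $n$ it suffices to control the two one-sided suprema after restricting the source sets to the upper slabs ${\rm end}_\varepsilon\, u_n$ and ${\rm end}_\varepsilon\, u$.

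For the direction $H^{*}({\rm end}_\varepsilon\, u_n,{\rm end}\, u)\to 0$, note that $(x,t)\in {\rm end}_\varepsilon\, u_n$ forces $x\in [u_n]_t\subseteq [u_n]_\varepsilon$, so hypothesis (ii) places ${\rm end}_\varepsilon\, u_n$ inside the fixed compact set $B\times[\varepsilon,1]$, where $B:=\overline{\bigcup_n [u_n]_\varepsilon}$. Arguing by contradiction, if the required bound failed, then along a subsequence there would exist $(x_k,t_k)\in{\rm end}_\varepsilon\, u_{n_k}$ with $d((x_k,t_k),{\rm end}\, u)\geq \varepsilon$; compactness of $B\times[\varepsilon,1]$ yields a convergent subsequence $(x_k,t_k)\to (x^{*},t^{*})$, and hypothesis (i) together with the $\limsup$-clause of Kuratowski convergence puts $(x^{*},t^{*})\in {\rm end}\, u$. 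This contradicts the closedness condition $d((x_k,t_k),{\rm end}\, u)\geq\varepsilon$.

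For the reverse direction $H^{*}({\rm end}_\varepsilon\, u,{\rm end}\, u_n)\to 0$ I would exploit that $u\in F_{USCG}(\mathbb{R}^m)$ makes $[u]_\varepsilon$ compact, hence ${\rm end}_\varepsilon\, u\subseteq [u]_\varepsilon\times[\varepsilon,1]$ is compact. Assuming failure, extract $(x_k,t_k)\in {\rm end}_\varepsilon\, u$ with $d((x_k,t_k),{\rm end}\, u_{n_k})\geq \varepsilon$ and $(x_k,t_k)\to (x^{*},t^{*})\in {\rm end}_\varepsilon\, u$. Since $(x^{*},t^{*})\in {\rm end}\, u\subseteq \liminf_{n\to\infty} {\rm end}\, u_n$ by hypothesis (i), Lemma \ref{infe}(\romannumeral1) gives $d((x^{*},t^{*}),{\rm end}\, u_n)\to 0$. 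The triangle inequality
\[
d((x_k,t_k),{\rm end}\, u_{n_k}) \leq d((x_k,t_k),(x^{*},t^{*})) + d((x^{*},t^{*}),{\rm end}\, u_{n_k})
\]
then forces the left-hand side to zero, contradicting the assumed lower bound.

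The main obstacle (and the reason hypothesis (ii) appears at all) is precisely the possibility illustrated by Example \ref{gse}: horizontal ``mass'' of $u_n$ escaping to infinity at heights bounded away from zero. The ``low height'' reduction in the first paragraph confines the difficulty to the slab $\{t\geq\varepsilon\}$, and hypothesis (ii) confines that slab to a uniformly compact horizontal window; after this, the extraction-and-contradiction arguments are standard. I expect the cleanest write-up to treat both directions in parallel after recording the low-height reduction, and to emphasize that hypothesis (ii) is used only through the uniform compactness of $B$.
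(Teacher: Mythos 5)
Your proposal is correct and follows essentially the same route as the paper: both arguments split $H_{\rm end}$ into the two one-sided suprema, observe that any point of height below $\varepsilon$ is automatically within $\varepsilon$ of any endograph so only the upper slabs ${\rm end}_\varepsilon\,u_n$ and ${\rm end}_\varepsilon\,u$ matter, use hypothesis (\romannumeral2) (resp.\ $u\in F_{USCG}(\mathbb{R}^m)$) to get the compactness needed to extract convergent subsequences, and contradict the $\limsup$- and $\liminf$-clauses of the Kuratowski convergence respectively. The only cosmetic difference is that you state the low-height reduction explicitly up front, whereas the paper uses it implicitly when noting that the offending points lie in ${\rm end}_{\varepsilon_0}\,u_{n_i}$.
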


\begin{proof}
  \  Suppose that $u_n\str{\Gamma}{\longrightarrow}u$, i.e.
${\rm end}\,u=\lim_{n\to \infty}{\rm end}\, u_n (K)$.
We proceed by contradiction to show
$H({\rm end}\,u_n, {\rm end}\,u)\to 0$ as $n\to \infty$.
Note that $ {\rm end}\,u \not= \emptyset$ and ${\rm end}\,u_n \not= \emptyset$, $n=1,2,\ldots$,
hence
if $H({\rm end}\,u_n,  {\rm end}\, u) \not\to 0$,
then
$H^*({\rm end}\,u_n, {\rm end}\,u) \not\to 0$
or
$H^*({\rm end}\,u, {\rm end}\,u_n) \not\to 0$.

If $H^*({\rm end}\,u_n, {\rm end}\,u) \not\to 0$, then there is an $\varepsilon_0 \in (0,1)$
and a subsequence $\{ u_{n_i} \}$ of $\{u_n\}$
such that
$H^*({\rm end}\,u_{n_i}, {\rm end}\,u) > \varepsilon_0$.
Hence there is
a sequence $\{z_{n_i}\}$
satisfying
that
$z_{n_i} \in {\rm end}\,u_{n_i}$
and
\begin{equation}
  d(z_{n_i}, {\rm end}\, u) >  \varepsilon_0.      \label{znie}
\end{equation}
Note
that
$z_{n_i} \in {\rm end}_{\varepsilon_0}\,u_{n_i}$,
and
that
$ \bigcup_{n=1}^{+\infty}  {\rm end}_{\varepsilon_0}\, u_{n} \subset  \bigcup_{n=1}^{+\infty} [u_n]_{\var_0}  \times [\varepsilon_0, 1]$
is a bounded set.
Thus we know
that
$\{z_{n_i} \} $ has a convergent subsequence $ \{z_k^{(1)}, k=1,2,\ldots\}$.
So
\begin{equation*}
  z_0  :=  \lim_{k\to\infty} z_k^{(1)}   \in   \limsup_{n\to\infty} {\rm end}\, u_n =   {\rm end}\, u.
\end{equation*}
This is in contradiction with \eqref{znie}.

If
$H^*({\rm end}\,u,    {\rm end}\,u_n) \not\to 0$, then there exists $\varepsilon_1>0$
and
$\{x_{n_i} \} \subset {\rm end}\,u$
such that
$$d(x_{n_i}, {\rm end}\, u_{n_i}) \geq \varepsilon_1.$$
Notice that $\{ x_{n_i} \}  \in {\rm end}_{\varepsilon_1}\, u$
and
that
${\rm end}_{\varepsilon_1}\, u  $ is compact.
Therefore  $\{ x_{n_i} \}$ has
 a
convergent
subsequence.
With no loss of generality,
we may suppose that
 $\{ x_{n_i} \}$ converges.
Thus
$$x_0 : = \lim_{i\to \infty} x_{n_i} \in {\rm end}_{\varepsilon_1}\, u.$$

So there exists an $M$ such that for all $n_i\geq M$,
$d(x_0,  x_{n_i} ) < \varepsilon_1/2$.
Thus, for all $n_i\geq M$,
$$d(x_0,   {\rm end}\, u_{n_i}) \geq d(x_{n_i},  {\rm end}\, u_{n_i}) - d(x_0,   x_{n_i} ) \geq \varepsilon_1/2.$$
This means that
$x_0   \notin \liminf_{n\to \infty} {\rm end}\, u_n$,
which contradicts the fact that
${\rm end}\, u=\liminf_{n\to\infty} {\rm end}\, u_n$.
\end{proof}

\begin{lm} \label{hbn}
  If $\{u_n, n=1,2,\ldots\}$ is a Cauchy sequence in $(F_{USCG}(\mathbb{R}^m), H_{\rm end} )$,
then
$ \bigcup_{n=1}^{+\infty}  [u_n]_\al $ is a bounded set in $\mathbb{R}^m $ for each $\al\in (0,1]$.
\end{lm}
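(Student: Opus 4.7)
The plan is to exploit the Cauchy property in the endograph metric to compare all tails of the sequence with a single fixed $u_N$, and then use the boundedness of $\alpha$-cuts of $u_N \in F_{USCG}(\mathbb{R}^m)$ to bound $\bigcup_{n \geq N} [u_n]_\alpha$. The remaining finitely many terms are trivially bounded by property (iv-2).

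More concretely, fix $\alpha \in (0,1]$ and set $\varepsilon := \alpha/2 > 0$. By the Cauchy property I choose $N \in \mathbb{N}$ with $H_{\rm end}(u_n, u_N) < \varepsilon$ for all $n \geq N$. The key step is the following projection-in-height argument: given any $n \geq N$ and any $x \in [u_n]_\alpha$, the point $(x, \alpha)$ lies in ${\rm end}\, u_n$, so by $H^*({\rm end}\, u_n, {\rm end}\, u_N) < \varepsilon$ there is $(y, t) \in {\rm end}\, u_N$ with $\|(x,\alpha) - (y,t)\| < \varepsilon$. Hence $t > \alpha - \varepsilon = \alpha/2 > 0$, so $y \in [u_N]_{\alpha/2}$, and $\|x - y\| < \varepsilon$. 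This shows
\[
\bigcup_{n \geq N} [u_n]_\alpha \;\subseteq\; [u_N]_{\alpha/2} + B(0, \varepsilon),
\]
where the right-hand side is bounded because $u_N \in F_{USCG}(\mathbb{R}^m)$ forces $[u_N]_{\alpha/2}$ to be bounded by (iv-2). Note this argument is valid even if some $[u_N]_{\alpha/2}$ is empty, as then the conclusion is that $[u_n]_\alpha = \emptyset$ for $n \geq N$.

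For the finitely many remaining indices $n \in \{1, 2, \ldots, N-1\}$, each $[u_n]_\alpha$ is bounded by (iv-2) applied to $u_n$, so $\bigcup_{n < N} [u_n]_\alpha$ is a finite union of bounded sets and hence bounded. Combining both parts yields the boundedness of $\bigcup_{n=1}^{+\infty} [u_n]_\alpha$.

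The only subtlety, which I would flag as the main point requiring care, is the choice of height threshold: one must avoid the trivial witness $(x, 0) \in {\rm end}\, u_N$ that is always available (since ${\rm end}\, u_N \supseteq \mathbb{R}^m \times \{0\}$) and instead use the fact that the Cauchy radius $\varepsilon$ can be chosen strictly less than $\alpha$ to force the witness $(y,t)$ to have $t > 0$, putting $y$ in a nontrivial (and by hypothesis bounded) cut of $u_N$. Apart from this, the proof is a direct unpacking of definitions.
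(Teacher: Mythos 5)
Your proof is correct, but it takes a genuinely different route from the paper. The paper's proof is indirect: it observes that a Cauchy sequence is totally bounded and then invokes the argument of the necessity part of Theorem \ref{fusbtoundcn}, which is a proof by contradiction built on an $\alpha/2$-net $\{w_1,\dots,w_k\}$ and a recursively constructed subsequence whose $\alpha$-cuts escape to infinity faster than the net can track. Your proof instead exploits the specific structure of a Cauchy sequence --- all but finitely many terms lie within $\varepsilon=\alpha/2$ of one fixed $u_N$ --- and gives a direct containment
$\bigcup_{n\ge N}[u_n]_\alpha \subseteq [u_N]_{\alpha/2}+B(0,\varepsilon)$
via the projection-in-height argument; the threshold $\varepsilon<\alpha$ correctly rules out the trivial witnesses on $\mathbb{R}^m\times\{0\}$, which is exactly the point where care is needed. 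Each step checks out: the strict inequality $H^*(\mathrm{end}\,u_n,\mathrm{end}\,u_N)<\varepsilon$ does yield a witness $(y,t)$ with $t>\alpha/2$ and hence $y\in[u_N]_{\alpha/2}$, and the empty-cut case is handled consistently. What your approach buys is a short, constructive, self-contained argument that avoids both contradiction and the machinery of nets; what the paper's approach buys is generality, since the same net argument covers arbitrary totally bounded subsets of $(F_{USCG}(\mathbb{R}^m),H_{\rm end})$, which the paper needs anyway for its compactness characterizations. It is worth noting that your one-center argument adapts verbatim to a finite $\alpha/2$-net and would therefore also give a direct proof of the necessity part of Theorem \ref{fusbtoundcn}.
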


\begin{proof}
  Note that $\{u_n, n=1,2,\ldots\}$ is a totally bounded set in $(F_{USCG}(\mathbb{R}^m), H_{\rm end} )$. By proceeding according to the necessity part
of Theorem \ref{fusbtoundcn}, we can obtain the desired result.
\end{proof}

The following theorem gives the relationship of
the endograph metric
and
the $\Gamma$-convergence on $F_{USCG} (\mathbb{R}^m)$,
which
improves Theorem \ref{uscbgche}.

\begin{tm} \label{hgce}
   Suppose that $u$, $u_n$,
$n=1,2,\ldots$, are  fuzzy sets in $F_{USCG} (\mathbb{R}^m)$.
  Then $H_{\rm end}(u_n, u) \to 0$ as $n\to \infty$
is equivalent to
\\
  (\romannumeral1)  $u_n\str{\Gamma}{\longrightarrow}u$, and
\\
 (\romannumeral2)  given $\alpha\in (0,1]$,
$ \bigcup_{n=1}^{+\infty}  [u_n]_\al $ is a bounded set in $\mathbb{R}^m $.
   \end{tm}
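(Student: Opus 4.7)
The plan is to prove the equivalence by cleanly separating the two directions and routing each through a result already available in the excerpt. The sufficiency direction $(\mathrm{i}) \wedge (\mathrm{ii}) \Rightarrow H_{\rm end}(u_n,u)\to 0$ is literally the content of Theorem \ref{uscbgche}, so I would simply cite that and be done with that half.

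For the necessity direction, assume $H_{\rm end}(u_n,u)\to 0$. First, Theorem \ref{uschpm} immediately gives $u_n\str{\Gamma}{\longrightarrow}u$, which is condition $(\mathrm{i})$. For condition $(\mathrm{ii})$, I would observe that a convergent sequence in any metric space is Cauchy; in particular $\{u_n\}$ is a Cauchy sequence in $(F_{USCG}(\mathbb{R}^m), H_{\rm end})$. Lemma \ref{hbn} then yields precisely the boundedness of $\bigcup_{n=1}^{+\infty}[u_n]_\alpha$ in $\mathbb{R}^m$ for each $\alpha\in (0,1]$, which is condition $(\mathrm{ii})$.

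So the argument is essentially a short bookkeeping step combining Theorems \ref{uschpm}, \ref{uscbgche} and Lemma \ref{hbn}. The main conceptual work has already been discharged elsewhere: Theorem \ref{uscbgche} handles the nontrivial sufficiency via a contradiction argument that extracts convergent subsequences from the $\varepsilon$-truncated endographs, while Lemma \ref{hbn} is where the boundedness of level sets is forced out of the Cauchy condition via the total-boundedness characterization proved later in Theorem \ref{fusbtoundcn}.

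The only point that deserves a sentence of caution is the logical dependency: Lemma \ref{hbn} is stated in the excerpt as relying on the necessity part of Theorem \ref{fusbtoundcn}, so I would want to make sure there is no circularity — i.e., that Theorem \ref{fusbtoundcn} is not itself proved using Theorem \ref{hgce}. Assuming that ordering is respected (which the excerpt's structure suggests), the proof of Theorem \ref{hgce} reduces to: $(\Leftarrow)$ apply Theorem \ref{uscbgche}; $(\Rightarrow)$ apply Theorem \ref{uschpm} for $(\mathrm{i})$ and Lemma \ref{hbn} to the Cauchy sequence $\{u_n\}$ for $(\mathrm{ii})$.
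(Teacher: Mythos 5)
Your proposal matches the paper's own proof exactly: the paper also derives Theorem \ref{hgce} by combining Theorem \ref{uschpm}, Theorem \ref{uscbgche} and Lemma \ref{hbn}, with the necessity of (\romannumeral2) obtained from the Cauchy property via Lemma \ref{hbn}. Your caution about circularity is well placed but harmless here, since the necessity part of Theorem \ref{fusbtoundcn} is proved by a direct contradiction argument that does not invoke Theorem \ref{hgce}.
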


\begin{proof} \
The desired result follows immediately from Theorems \ref{uschpm} and \ref{uscbgche} and Lemma \ref{hbn}.
\end{proof}

\begin{tl}  \label{gchrbsn}
  Suppose that $u$, $u_n$,
$n=1,2,\ldots$, are  fuzzy sets in $F_{USCG} (\mathbb{R}^m)$.
   If, given $\alpha\in (0,1]$,
$ \bigcup_{n=1}^{+\infty}  [u_n]_\al $ is a bounded set in $\mathbb{R}^m $,
then $u_n\str{\Gamma}{\longrightarrow}u$
is
equivalent
to
   $H_{  {\rm end}    }(u_n, u) \to 0$ as $n\to \infty$.
\end{tl}

\begin{proof}
 \ The desired result follows immediately from Theorem \ref{hgce}.
\end{proof}

As a consequence of the results given above, we discuss the relationship of
the
 Hausdorff metric convergence and Fell topology convergence (Kuratowski convergence) on $K(\mathbb{R}^m) \cup \{ \emptyset \}$.

\begin{lm} \label{rhe}
  Let $D$, $G$, and $D_n$, $n=1,2,\ldots$, be sets in $C(\mathbb{R}^m) \cup  \{\emptyset\}  $. Then
$\widehat{D}$, $\widehat{G}$, and $\widehat{D_n}$, $n=1,2,\ldots$, are fuzzy sets in  $F_{USC} (\mathbb{R}^m)$,
and
\\
(\romannumeral1) \ $\lim_{n\to \infty} D_n (K) =D$ is equivalent to $\widehat{D_n} \str{\Gamma}{\longrightarrow}  \widehat{D}$,
\\
(\romannumeral2) \ $\min \{H(D,G), 1\} = H_{\rm end} (\widehat{D}, \widehat{G})$.
\end{lm}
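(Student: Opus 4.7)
The plan is to reduce both parts to direct statements about $D$, $G$, $D_n$ by computing the endograph $\mathrm{end}\,\widehat{D}$ explicitly. The key identity is
\[
\mathrm{end}\,\widehat{D} \;=\; \bigl(\mathbb{R}^m \times \{0\}\bigr) \,\cup\, \bigl(D \times [0,1]\bigr),
\]
with the convention $D \times [0,1] = \emptyset$ when $D = \emptyset$. Since $D$ is closed, this is a closed subset of $\mathbb{R}^{m+1}$, so by the characterization of upper semi-continuity via closed endographs recalled in Section \ref{bns}, each of $\widehat{D}$, $\widehat{G}$, $\widehat{D_n}$ lies in $F_{USC}(\mathbb{R}^m)$.

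For part (i), I would apply Lemma \ref{infe} slice-by-slice in the height coordinate $t$. For any $x \in \mathbb{R}^m$, the point $(x, 0)$ lies in every $\mathrm{end}\,\widehat{D_n}$, hence in both $\liminf_n \mathrm{end}\,\widehat{D_n}$ and $\mathrm{end}\,\widehat{D}$, contributing nothing nontrivial. For $t > 0$, any approximating sequence $(x_n, t_n) \in \mathrm{end}\,\widehat{D_n}$ with $(x_n, t_n) \to (x, t)$ forces $t_n > 0$ eventually and therefore $x_n \in D_n$ eventually; conversely, given $x_n \in D_n$ with $x_n \to x$, the pairs $(x_n, t) \in \mathrm{end}\,\widehat{D_n}$ converge to $(x, t)$. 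Thus $(x, t) \in \liminf_n \mathrm{end}\,\widehat{D_n}$ iff $x \in \liminf_n D_n$, and the analogous equivalence holds for $\limsup$. Combining with the decomposition above yields $\lim_n \mathrm{end}\,\widehat{D_n}(K) = \mathrm{end}\,\widehat{D}$ iff $\lim_n D_n(K) = D$.

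For part (ii), I would compute $H_{\mathrm{end}}(\widehat{D}, \widehat{G}) = H(\mathrm{end}\,\widehat{D}, \mathrm{end}\,\widehat{G})$ by hand. The common piece $\mathbb{R}^m \times \{0\}$ lies in both endographs, so it contributes zero to the one-sided Hausdorff distances. For $(x, t) \in D \times (0, 1]$, the closest point in $\mathrm{end}\,\widehat{G}$ is either $(y, t)$ with $y \in G$, giving $\|x - y\|$, or $(x, 0)$, giving $t$, so $d((x, t), \mathrm{end}\,\widehat{G}) = \min\{d(x, G), t\}$. Supping over $x \in D$ and $t \in (0, 1]$, noting that the $t$-supremum is attained at $t = 1$, gives $H^*(\mathrm{end}\,\widehat{D}, \mathrm{end}\,\widehat{G}) = \min\{H^*(D, G), 1\}$. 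Symmetric treatment of the reverse inclusion and taking the maximum produces $H_{\mathrm{end}}(\widehat{D}, \widehat{G}) = \min\{H(D, G), 1\}$.

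The only real subtlety is the empty-set bookkeeping: if $D = \emptyset$ then $\mathrm{end}\,\widehat{D} = \mathbb{R}^m \times \{0\}$, and one checks directly that $H(\mathrm{end}\,\widehat{\emptyset}, \mathrm{end}\,\widehat{G}) = 1$ when $G \neq \emptyset$, matching $\min\{+\infty, 1\}$ under the convention from Section \ref{bns}; while for (i), the inclusion $\mathbb{R}^m \times \{0\} \subset \mathrm{end}\,\widehat{D_n}$ holds for all $n$, so the reduction to the $t > 0$ slice handles the case $D = \emptyset$ without extra work. I expect this case-splitting to be the only mildly annoying step; everything else is a direct unpacking of the definitions.
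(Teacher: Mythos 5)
Your proposal is correct, and in fact supplies the verification that the paper omits entirely (its proof of Lemma \ref{rhe} reads only ``The conclusions are easy to be checked''). The explicit decomposition $\mathrm{end}\,\widehat{D}=(\mathbb{R}^m\times\{0\})\cup(D\times[0,1])$, the slice-by-slice use of Lemma \ref{infe} for (i), the formula $d((x,t),\mathrm{end}\,\widehat{G})=\min\{d(x,G),t\}$ for (ii), and the empty-set cases matching the convention $H(\emptyset,M)=+\infty$ are all accurate, so this is exactly the direct check the author had in mind.
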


\begin{proof}
 \ The conclusions are easy to be checked.
\end{proof}

\begin{tl} \label{nfsghe}
   Let $D$, $D_n$,  $n=1,2,\ldots    $ be sets in $K ( \mathbb{R}^m  ) \cup  \{\emptyset\} $.
  Then $H( D_n, D) \to 0$ as $n\to \infty$
is
equivalent
to
\\
(\romannumeral1) \ $\lim_{n\to \infty} D_n (K) = D$, and
\\
(\romannumeral2) \ $\bigcup_{n=1}^{+\infty}  D_n$ is a bounded set in $\mathbb{R}^m$.
\end{tl}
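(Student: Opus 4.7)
The plan is to reduce this corollary on crisp sets to Theorem \ref{hgce} (the analogous equivalence for fuzzy sets in $F_{USCG}(\mathbb{R}^m)$) via the characteristic-function embedding described in Lemma \ref{rhe}. The key observation is that for any $D\in K(\mathbb{R}^m)\cup\{\emptyset\}$, the characteristic fuzzy set $\widehat{D}$ lies in $F_{USCG}(\mathbb{R}^m)$, since $[\widehat{D}]_\alpha = D$ for every $\alpha\in(0,1]$, which is compact (hence bounded), and $\widehat{D}$ is trivially upper semi-continuous.

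First, I would replace each $D_n$ and $D$ by their characteristic fuzzy sets $\widehat{D_n}$ and $\widehat{D}$, so that the whole problem transfers to $F_{USCG}(\mathbb{R}^m)$ endowed with the endograph metric. Under this embedding: (a) by Lemma \ref{rhe}(ii), $H_{\rm end}(\widehat{D_n},\widehat{D})=\min\{H(D_n,D),1\}$, so $H(D_n,D)\to 0$ if and only if $H_{\rm end}(\widehat{D_n},\widehat{D})\to 0$; (b) by Lemma \ref{rhe}(i), the Kuratowski convergence $\lim_{n\to\infty}D_n(K)=D$ is equivalent to $\widehat{D_n}\str{\Gamma}{\longrightarrow}\widehat{D}$; (c) for every $\alpha\in(0,1]$, $\bigcup_{n=1}^{\infty}[\widehat{D_n}]_\alpha = \bigcup_{n=1}^{\infty}D_n$, so the boundedness hypothesis in Theorem \ref{hgce} collapses to a single boundedness condition, matching condition (ii) of the corollary.

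Then a direct application of Theorem \ref{hgce} to $\widehat{D}$ and $\{\widehat{D_n}\}$ yields that $H_{\rm end}(\widehat{D_n},\widehat{D})\to 0$ is equivalent to the conjunction $\widehat{D_n}\str{\Gamma}{\longrightarrow}\widehat{D}$ and $\bigcup_n [\widehat{D_n}]_\alpha$ bounded for each $\alpha\in(0,1]$. Translating both sides back through the three correspondences above gives exactly the equivalence asserted in the corollary.

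There is no real obstacle; the only point worth checking carefully is the empty-set case, which is absorbed by the extended definition of $H$ and by noting that $\widehat{\emptyset}\in F_{USCG}(\mathbb{R}^m)$ with $[\widehat{\emptyset}]_\alpha=\emptyset$ for $\alpha\in(0,1]$, so conditions (i) and (ii) and the corresponding fuzzy-set conditions still match in the degenerate case. Thus the proof is essentially a two-line reduction, once Lemma \ref{rhe} and Theorem \ref{hgce} are in place.
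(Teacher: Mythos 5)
Your proposal is correct and follows exactly the paper's route: the paper's own proof is precisely the reduction to Theorem \ref{hgce} via the characteristic-function embedding of Lemma \ref{rhe}, and your elaboration of the three correspondences (endograph metric vs.\ Hausdorff metric, $\Gamma$-convergence vs.\ Kuratowski convergence, and the collapse of the level-wise boundedness condition since $[\widehat{D_n}]_\alpha = D_n$ for all $\alpha\in(0,1]$) fills in the details the paper leaves implicit.
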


\begin{proof} \
The desired result follows from Theorem \ref{hgce}
and Lemma \ref{rhe}.
\end{proof}

\begin{re} \label{esc}{\rm
  Suppose that $D$, $D_n$,  $n=1,2,\ldots    $ are sets in $C ( \mathbb{R}^m  ) \cup  \{\emptyset\} $,
and
$\lim_{n\to \infty} D_n (K) = D$.
\begin{enumerate}
\renewcommand{\labelenumi}{(\roman{enumi})}
  \item If
$D \not= \emptyset$,
then
there exists an $N$ such that for all $ n\geq N $,
$
  D_n \not= \emptyset.
$
Otherwise $\liminf_{n\to \infty} D_n = \emptyset$, which contradicts $D \not= \emptyset$.

  \item If $D=\emptyset$ and $\bigcup_{n=1}^{+\infty}  D_n$ is a bounded set in $\mathbb{R}^m$,
then
there exists an $N$ such that for all $ n\geq N $,
\begin{equation} \label{dne}
  D_n= \emptyset.
\end{equation}
Otherwise
there exists a strictly increasing sequence $\{n_j: j=1,2,\ldots\}$ such
that
$D_{n_j} \not= \emptyset$. Take $x_j\in D_{n_j}$, $j=1,2,\ldots$.
Note that $\bigcup_{n=1}^{+\infty}  D_n$ is a bounded set in $\mathbb{R}^m$,
hence the sequence
$\{x_j: j=1,2,\ldots\}$
has a cluster point
$x_0$.
Thus
\begin{equation*}
  x_0 \in \limsup_{n \to \infty} D_n  =D.
\end{equation*}
This contradicts $D=\emptyset$.
So \eqref{dne} holds. This implies that $H(D_n, D) \to 0$ as $n\to \infty$.

\end{enumerate}
}

\end{re}

\section{Level characterizations of fuzzy sets} \label{lcfu}

Level characterizations are important properties
 of
 fuzzy sets, which help us to see fuzzy sets more clearly.
 Propositions
  \ref{nr}, a widely used fuzzy numbers representation theorem,
  characterizes
 level cut-sets of fuzzy numbers.
 In
this section, we investigate level characterizations of fuzzy sets.
We
show
that
$D(u)$ (defined in Theorem \ref{fusdctn}) is at most countable
for each
fuzzy set
 $u$ on $\mathbb{R}^m$.
From this fact, it follows that
the platform point (introduced below) of
$u$
is
at most countable when $u\in F_{USC} (\mathbb{R}^m)$.
 By using this result,
we discuss Fell topology continuity and endograph metric continuity of
the cut-set functions of
fuzzy sets in $F_{USC} (\mathbb{R}^m)$ and $F_{USCG} (\mathbb{R}^m)$, respectively.
 These conclusions
are
basis of the results in subsequent sections.

 Suppose that $u$ is a upper semi-continuous fuzzy set on $\mathbb{R}^m$. A number $\al$ in (0,1) is
called a platform point of $u$ if
$\overline{\{u>\al\}}\subsetneqq [u]_\alpha$.
 The set of all
platform points of $u$ is denoted by $P(u)$.

\begin{tm}
\label{fusdctn}
  Let $u$ be a fuzzy set on $\mathbb{R}^m$.
  Then $D(u):=\{\al\in (0,1):  [u]_\al \nsubseteq  \overline{ \{u> \al\}}  \;\}$ is at most countable.
\end{tm}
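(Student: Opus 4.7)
The plan is to use a rational-ball coding argument: associate to each $\alpha \in D(u)$ an element of the countable set $\mathbb{Q}^m \times \mathbb{Q}^+$, and show this association is injective.

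First I would unpack what it means for $\alpha$ to lie in $D(u)$. Since $\{u > \alpha\} \subseteq [u]_\alpha$ trivially, the assumption $[u]_\alpha \nsubseteq \overline{\{u > \alpha\}}$ yields a witness point $x_\alpha \in [u]_\alpha \setminus \overline{\{u > \alpha\}}$. If $u(x_\alpha) > \alpha$ then $x_\alpha \in \{u > \alpha\} \subseteq \overline{\{u > \alpha\}}$, a contradiction; hence $u(x_\alpha) = \alpha$. Also, $x_\alpha \notin \overline{\{u > \alpha\}}$ provides some $r_\alpha > 0$ with $B(x_\alpha, r_\alpha) \cap \{u > \alpha\} = \emptyset$, i.e. $u(y) \leq \alpha$ for all $y \in B(x_\alpha, r_\alpha)$.

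Next I would refine the ball to a rational one. Using density of $\mathbb{Q}^m$ in $\mathbb{R}^m$ and of $\mathbb{Q}$ in $\mathbb{R}$, pick $q_\alpha \in \mathbb{Q}^m$ and $s_\alpha \in \mathbb{Q} \cap (0, r_\alpha)$ so that $|q_\alpha - x_\alpha| < r_\alpha/3$ and $r_\alpha/3 < s_\alpha < 2r_\alpha/3$. Then $x_\alpha \in B(q_\alpha, s_\alpha)$ and $B(q_\alpha, s_\alpha) \subseteq B(x_\alpha, r_\alpha)$, so in particular $u \leq \alpha$ on $B(q_\alpha, s_\alpha)$ while $x_\alpha \in B(q_\alpha, s_\alpha)$ satisfies $u(x_\alpha) = \alpha$. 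This defines a map $\Phi: D(u) \to \mathbb{Q}^m \times \mathbb{Q}^+$ by $\Phi(\alpha) = (q_\alpha, s_\alpha)$.

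Finally I would verify $\Phi$ is injective, which is the one real step but still brief. Assume $\alpha, \beta \in D(u)$ are distinct with $\Phi(\alpha) = \Phi(\beta) = (q, s)$, and say $\alpha < \beta$. The construction from the side of $\alpha$ forces $u(y) \leq \alpha$ for every $y \in B(q, s)$. On the other hand, from the side of $\beta$, the point $x_\beta$ lies in $B(q, s)$ and satisfies $u(x_\beta) = \beta > \alpha$, a contradiction. Thus $\Phi$ is injective and $D(u)$ is at most countable. There is no real obstacle here; the only mild subtlety is choosing the ratios in the ball-shrinking step so that both $x_\alpha \in B(q_\alpha, s_\alpha)$ and $B(q_\alpha, s_\alpha) \subseteq B(x_\alpha, r_\alpha)$ hold simultaneously, which makes the contradiction in the injectivity step go through for any $\beta > \alpha$ sharing the same coded ball.
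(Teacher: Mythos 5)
Your proof is correct, and it takes a genuinely different --- and substantially more elementary --- route than the paper's. You code each $\al \in D(u)$ by a rational ball $B(q_\al, s_\al)$ on which $u \leq \al$ holds everywhere while the value $\al$ is attained at some point of the ball; two distinct levels cannot share a code, since the witness for the larger level would be a point of the shared ball at which $u$ exceeds the smaller level. All the steps check out: the witness $x_\al$ necessarily satisfies $u(x_\al)=\al$, the ratio bookkeeping $|q_\al - x_\al| < r_\al/3 < s_\al < 2r_\al/3$ does give both $x_\al \in B(q_\al,s_\al)$ and $B(q_\al,s_\al)\subseteq B(x_\al,r_\al)$ simultaneously, and injectivity of the resulting map into $\mathbb{Q}^m\times(\mathbb{Q}\cap(0,\infty))$ finishes the argument. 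The paper proceeds quite differently: it introduces, for each rational center $t$ and rational radius $r$, the directional support-type functions $S_{u,t,r}(e,\cdot)$ on $[0,1]$, uses their monotonicity to see that each discontinuity set $D_{u,t,r,e}$ is countable, reduces the union over all directions $e\in\mathbb{S}^{m-1}$ to a union over a countable dense set of directions via a Lipschitz estimate in $e$, and finally shows by a law-of-cosines computation that every $\al\in D(u)$ is a discontinuity point of some $S_{u,q,r}(e,\cdot)$ with rational $q$ and $r$. That machinery parallels Trutschnig's support-function argument for platform points of fuzzy numbers, where convexity of the level sets makes support functions natural, but for the present statement it buys nothing extra: your argument is shorter, avoids the inner-product structure of $\mathbb{R}^m$ entirely, and in fact works verbatim for fuzzy sets on any separable metric space.
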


\begin{proof}
  \ See the Appendix.
\end{proof}

\begin{tm} \label{puc}
  Suppose that $u$ is a fuzzy set in $F_{USC} (\mathbb{R}^m)$.
Then
$P(u)$ is at most countable.
\end{tm}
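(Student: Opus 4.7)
The plan is to reduce Theorem \ref{puc} to the already-established Theorem \ref{fusdctn} by showing that $P(u)$ is contained in (in fact equal to) $D(u)$ whenever $u$ is upper semi-continuous. The set $D(u)$ is defined for an arbitrary fuzzy set with no semi-continuity hypothesis, so the role of the assumption $u\in F_{USC}(\mathbb{R}^m)$ is precisely to force one of the two inclusions that turn the two defining conditions into each other.

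First I would record the easy inclusion $\overline{\{u>\alpha\}}\subseteq [u]_\alpha$ for every $\alpha\in(0,1)$. The inclusion $\{u>\alpha\}\subseteq [u]_\alpha$ is immediate from the definitions of strong cut and cut, and the upper semi-continuity of $u$ implies that $[u]_\alpha=\{x\in\mathbb{R}^m:u(x)\geq\alpha\}$ is closed in $\mathbb{R}^m$, so taking closures on the left preserves the inclusion.

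Next I would unpack the condition $\alpha\in P(u)$. By definition $\overline{\{u>\alpha\}}\subsetneqq[u]_\alpha$, which is the conjunction of $\overline{\{u>\alpha\}}\subseteq[u]_\alpha$ and $\overline{\{u>\alpha\}}\neq[u]_\alpha$. Combined with the inclusion from the previous paragraph, the second clause is equivalent to the existence of a point in $[u]_\alpha$ not lying in $\overline{\{u>\alpha\}}$, i.e.\ to $[u]_\alpha\nsubseteq\overline{\{u>\alpha\}}$. This is exactly the defining condition of $D(u)$, so $P(u)=D(u)$ (in particular $P(u)\subseteq D(u)$, which is all I really need).

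Finally I would invoke Theorem \ref{fusdctn}, which asserts that $D(u)$ is at most countable for every fuzzy set on $\mathbb{R}^m$, to conclude that $P(u)$ is at most countable. I do not anticipate a main obstacle here: once the easy consequence of upper semi-continuity is observed, the statement is essentially a corollary of the previous theorem, and the only thing to verify carefully is that the proper-subset relation in the definition of $P(u)$ really translates into the non-inclusion condition in the definition of $D(u)$, which the inclusion $\overline{\{u>\alpha\}}\subseteq [u]_\alpha$ supplies.
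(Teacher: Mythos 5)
Your proposal is correct and follows essentially the same route as the paper: the paper's proof simply observes $P(u)\subseteq D(u)$ and invokes Theorem \ref{fusdctn}. You additionally spell out why upper semi-continuity gives $\overline{\{u>\alpha\}}\subseteq[u]_\alpha$ and hence $P(u)=D(u)$, which is a correct elaboration of the inclusion the paper takes for granted.
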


\begin{proof}
 Notice that
$P(u) \subseteq D(u)$.
By
Theorem \ref{fusdctn},
$D(u)$ is at most countable,
hence we know
$P(u)$ is at most countable.
\end{proof}

Let  $u(\cdot)$ be a function from $[0,1]$ to $  C(\mathbb{R}^m) \cup \{\emptyset\}$.
For convenience,
 we write
 $  \lim_{\beta \to \al} u (\beta) (K)=    B$
($  \lim_{\beta \to \al+} u (\beta) (K)=    B$,
$  \lim_{\beta \to \al-} u (\beta) (K)=     B $)
if for each $\beta_n \to \al$ ($\beta_n \to \al+$,   $\beta_n \to \al-$), it holds that
$  \lim_{n\to \infty} u (\beta_n) (K)=    B $.

Suppose that
$u(\cdot)$
is a function
from $[0,1]$ to $ (  C(\mathbb{R}^m) \cup \{\emptyset\}, \tau_f  )$.
Since the Fell topology
$\tau_f$
on $C(\mathbb{R}^m) \cup \{\emptyset\}$
is metrizable,
we know
that
 $\alpha $ is a continuous point of
 $u(\cdot)$
if and only if
$u(\beta_n)$ converges to $u(\al)$ in $(  C(\mathbb{R}^m) \cup \{\emptyset\}, \tau_f  )$
whenever
$\beta_n \to \al$.
Note that the Kuratowski convergence on $C(\mathbb{R}^m) \cup \{\emptyset\}$
is compatible with
the
Fell topology $\tau_f$ on $C(\mathbb{R}^m) \cup \{\emptyset\}$.
So
$\alpha$ is a continuous point of
 $u(\cdot)$
is equivalent to
$  \lim_{\beta \to \al} u(\beta) (K)=   u(\alpha) $.
Similarly,
$\alpha$ is a left(right)-continuous point of function
 $u(\cdot)$
is equivalent to
$  \lim_{\beta \to \al-} u(\beta) (K)=   u(\alpha) $ ($  \lim_{\beta \to \al+} u(\beta) (K)=   u(\alpha) $).

\begin{tm} \label{fusbpe}
    Let $u$ be a fuzzy set in $F_{USC} (\mathbb{R}^{m})$. Then $u$ naturally derives a function
    $u(\cdot): [0,1] \to (  C(\mathbb{R}^m) \cup \{\emptyset\}, \tau_f  ) $
    which is defined by $u(\alpha) = [u]_\alpha$.
    Suppose that $\al\in (0,1)$. Then the following statements are true.
     \begin{enumerate}
     \renewcommand{\labelenumi}{(\roman{enumi})}
   \item  $  \lim_{\beta \to \al+} [u]_{\beta}(K)=     \overline{   \{u>\alpha\}     } $.

  \item  $u(\cdot)$ is left-continuous at $\al$, i.e.
  $  \lim_{\beta \to \al-} [u]_{\beta}(K) =  [u]_\al $.

  \item  $\alpha \in P(u)$ if and only if
 $\alpha$ is a discontinuous point of function
 $u(\cdot)$.

\end{enumerate}

\end{tm}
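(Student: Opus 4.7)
My plan is to prove (i) and (ii) by computing the Kuratowski lower and upper limits directly from the characterization in Lemma \ref{infe}, and then obtain (iii) as a simple comparison of the one-sided limits. Since the Fell topology on $C(\mathbb{R}^m)\cup\{\emptyset\}$ is metrizable and compatible with the Kuratowski convergence, continuity of $u(\cdot)$ at $\alpha$ reduces to checking that $\lim_{\beta\to \alpha-}[u]_\beta(K)$ and $\lim_{\beta\to \alpha+}[u]_\beta(K)$ both equal $[u]_\alpha$, so (iii) will follow automatically once (i) and (ii) are in hand.

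For (i), I would fix an arbitrary sequence $\beta_n \to \alpha+$ and show both inclusions $\overline{\{u>\alpha\}}\subseteq \liminf_n [u]_{\beta_n}$ and $\limsup_n [u]_{\beta_n}\subseteq \overline{\{u>\alpha\}}$. The upper inclusion is immediate: if $x_{n_j}\in [u]_{\beta_{n_j}}$ and $x_{n_j}\to x$, then $u(x_{n_j})\geq \beta_{n_j}>\alpha$ puts $x_{n_j}\in\{u>\alpha\}$, so $x\in\overline{\{u>\alpha\}}$. For the lower inclusion, take $x\in\overline{\{u>\alpha\}}$ and pick $y_k\to x$ with $u(y_k)>\alpha$; for each $n$, since $\beta_n\to \alpha$ and each $u(y_k)>\alpha$, a diagonal selection yields an index $k(n)\to\infty$ with $\beta_n<u(y_{k(n)})$ and $d(y_{k(n)},x)\to 0$, so $y_{k(n)}\in [u]_{\beta_n}$ and $y_{k(n)}\to x$, giving $x\in\liminf_n[u]_{\beta_n}$. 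The case $\{u>\alpha\}=\emptyset$ is trivial because then $[u]_\beta=\emptyset$ for every $\beta>\alpha$.

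For (ii), I would exploit that upper semi-continuity of $u$ gives $[u]_\alpha=\bigcap_{\beta<\alpha}[u]_\beta$, i.e. the family $\{[u]_\beta\}_{\beta<\alpha}$ is nested and decreases to $[u]_\alpha$ as $\beta\to \alpha-$. Fix $\beta_n\to \alpha-$. If $x\in [u]_\alpha$, then $x\in [u]_{\beta_n}$ for every $n$, so $x\in\liminf_n[u]_{\beta_n}$ via the constant sequence. Conversely, if $x_{n_j}\to x$ with $x_{n_j}\in [u]_{\beta_{n_j}}$, then $u(x_{n_j})\geq \beta_{n_j}$, and upper semi-continuity gives $u(x)\geq \limsup_j u(x_{n_j})\geq \limsup_j \beta_{n_j}=\alpha$, so $x\in [u]_\alpha$; hence $\limsup_n [u]_{\beta_n}\subseteq [u]_\alpha$.

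Finally, (iii) comes by comparing (i) and (ii): the function $u(\cdot)$ is continuous at $\alpha$ iff both one-sided Kuratowski limits equal $[u]_\alpha$, which by (i) and (ii) is equivalent to $\overline{\{u>\alpha\}}=[u]_\alpha$, i.e. $\alpha\notin P(u)$. The only nonroutine step I anticipate is the diagonal selection in the lower-inclusion of (i), where one must simultaneously use $\beta_n\to \alpha+$ and $u(y_k)>\alpha$ to get $y_{k(n)}\in [u]_{\beta_n}$; this is the key place where the strictness $\beta_n>\alpha$ is used, and care must also be taken with the degenerate cases where the relevant cut-sets are empty, but these are handled separately and cause no real difficulty.
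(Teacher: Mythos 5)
Your proposal is correct and follows essentially the same route as the paper: both compute the one-sided Kuratowski limits via the sequential characterization (Lemma \ref{infe}), use a diagonal selection of approximating points from $\{u>\alpha\}$ for the right-hand limit, use upper semi-continuity together with the nested intersection $[u]_\alpha=\bigcap_{\beta<\alpha}[u]_\beta$ for the left-hand limit, and deduce (iii) by comparing the two limits with $[u]_\alpha$. The only cosmetic difference is that you treat the cases $u(x)>\alpha$ and $u(x)=\alpha$ uniformly in the lower inclusion of (i), whereas the paper separates them.
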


\begin{proof} \  Note that
  $[u]_\al \subseteq [u]_\beta$    whenever $\alpha > \beta$.
So
$u(\cdot): (0,1] \to    C(\mathbb{R}^m) \cup \{\emptyset\}$ is a monotone function in this sense.
Let
$\al \in (0,1)$.
If $ \overline{   \{u>\alpha\}     }=\emptyset$,
then
 $  \lim_{\beta \to \al+} [u]_{\beta}(K)=     \overline{   \{u>\alpha\}     } $ holds obviously.

If $ \overline{   \{u>\alpha\}     } \not= \emptyset$.
Take
 $\beta_n \to \al+$, $n=1,2,\ldots$.
 Note that $\beta_n >\al$, hence
\begin{equation} \label{supcon}
  \limsup_{n\to\infty} [u]_{\beta_n} \subseteq   \overline{   \{u>\alpha\}     } .
\end{equation}
On the other hand, given $x\in \overline{   \{u>\alpha\}     } $, if $u(x) >\al$, then,
clearly, $x\in  \liminf_{n\to\infty} [u]_{\beta_n} $.
If $u(x)=\al$,
then there exists an sequence $\{x_m\} \subset   \{u>\alpha\}  $
such that $x=\lim_{m\to\infty } x_m$.
Hence
$\lim_{m\to\infty }  u( x_m ) = \al$.
With no loss of generality,
we can suppose that
$\beta_1= \max\{\beta_n: n=1,2,\ldots\}$
and
$u(x_1)=\beta_1$.
Set
\[
y_n= x_m,  \ \hbox{where}\  m=\max\{ l: x_l \in [u]_{\beta_n}     \}.
 \]
Then
$y_n \in [u]_{\beta_n}$.
It can be checked that
$\lim_{n\to \infty} y_n = x$. From
the arbitrariness of $x \in  \overline{   \{u>\alpha\}     }$, we thus know
\begin{equation} \label{infcon}
  \overline{   \{u>\alpha\}     }  \subset   \liminf_{n\to\infty} [u]_{\beta_n}.
\end{equation}
Combined with \eqref{supcon}
and
\eqref{infcon},
we obtain
\begin{equation*}
   \lim_{n \to \infty} [u]_{\beta_n}(K)    =       \overline{   \{u>\alpha\}     }.
\end{equation*}
Since $\{\beta_n\}$ is an arbitrary sequence satisfying $\beta_n \to \alpha+$,
this implies that
  $  \lim_{\beta \to \al+} [u]_{\beta}(K)=     \overline{   \{u>\alpha\}     }.$
So statement (\romannumeral1) is proved.

Given $\beta_n  \to \al-$, $n=1,2,\ldots$. Then
\begin{equation*}
[u]_\al  \subset   \liminf_{n\to\infty} [u]_{\beta_n}.
\end{equation*}
Since $u$ is upper semi-continuous, it can be checked that
\begin{equation*}
  [u]_\al  \supset   \limsup_{n\to\infty} [u]_{\beta_n}.
\end{equation*}
Thus we know
\begin{equation*}
  [u]_\al  =  \lim_{n\to\infty} [u]_{\beta_n} (K).
\end{equation*}
So
\begin{equation*}
  [u]_\al  =  \lim_{\beta \to \al-} [u]_{\beta} (K).
\end{equation*}
This is statement (\romannumeral2).

$\al\notin P(u) $ is equivalent to $\overline{\{ u> \al\}} =  [u]_\al$.
From statements (\romannumeral1)
and
(\romannumeral2),
$\overline{\{ u> \al\}} =  [u]_\al$
if and only if
\begin{equation*}
  [u]_\al=  \lim_{\beta \to \al} [u]_{\beta} (K),
\end{equation*}
which means
that
$\al$ is a continuous point of $u(\cdot)$.
So
we obtain statement (\romannumeral3).
\end{proof}

Suppose that $u\in  F_{USCG}(\mathbb{R}^m)$ and $u\not= \widehat{\emptyset}$.
Then
we can check that
there
exists a unique \bm{$\lambda_u} \in (0,1]$
such that $\beta \in [0, \lambda_u]$
 if and only if
 $[u]_\beta \not= \emptyset$.

\begin{tm} \label{smncbpe}
    Suppose that $u\in  F_{USCG}(\mathbb{R}^m)$. Then $u$ naturally derives a function
    $u(\cdot): (0,   1] \to (K(\mathbb{R}^m)  \cup \{\emptyset\}, H)$
    which is defined by $u(\alpha) = [u]_\alpha$. Let $\al\in (0,1)$. Then the following statements are true.
     \begin{enumerate}
     \renewcommand{\labelenumi}{(\roman{enumi})}
   \item  $ H(   [u]_{\beta},     \overline{   \{u>\alpha\}     }      ) \to 0 $ as $\beta \to \al+$.

  \item  $u(\cdot)$ is left-continuous at $\al$, i.e. $ H(   [u]_{\beta},    [u]_\al     ) \to 0 $ as $\beta \to \al-$.

  \item  $\alpha \in P(u)$ if and only if
 $\alpha$ is a discontinuous point of function
 $u(\cdot)$.

\end{enumerate}

\end{tm}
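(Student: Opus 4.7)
The plan is to establish this as the Hausdorff-metric refinement of Theorem \ref{fusbpe}. The Kuratowski-to-Hausdorff passage given by Proposition \ref{klhe} is unavailable here since $F_{USCG}(\mathbb{R}^m)$ imposes no connectedness on cut sets. On the other hand, the boundedness assumption (iv-2) makes every nonempty $[u]_\beta$ a compact set, and the family $\{[u]_\beta\}_{\beta\in(0,1]}$ is monotone in $\beta$. This is precisely the setting of Proposition \ref{mce}, which I plan to apply along monotone sequences and then extend to arbitrary sequences via a short inclusion sandwich.

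For (ii), assuming first that $[u]_\alpha \neq \emptyset$, I would take an increasing $\gamma_k \uparrow \alpha$; the sets $\{[u]_{\gamma_k}\}$ form a decreasing chain of compact sets whose intersection is $[u]_\alpha$ by the representation theorem \ref{fuscbchre}(ii), so Proposition \ref{mce} yields $H([u]_{\gamma_k}, [u]_\alpha) \to 0$. For a general $\beta_n \to \alpha-$, given $\varepsilon > 0$ pick such a $\gamma < \alpha$ with $H([u]_\gamma, [u]_\alpha) < \varepsilon$; for $n$ large, $\beta_n > \gamma$, so $[u]_\alpha \subseteq [u]_{\beta_n} \subseteq [u]_\gamma$, and this enclosing chain gives $H([u]_{\beta_n}, [u]_\alpha) \leq H([u]_\gamma, [u]_\alpha) < \varepsilon$. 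For (i) the symmetric argument uses a decreasing $\gamma_k \downarrow \alpha$: $\{[u]_{\gamma_k}\}$ is an increasing chain with $\bigcup_k [u]_{\gamma_k} = \{u > \alpha\}$, so its closure is $\overline{\{u > \alpha\}}$, which lies in the compact set $[u]_\alpha$ and is therefore compact; the second part of Proposition \ref{mce} then yields $H([u]_{\gamma_k}, \overline{\{u > \alpha\}}) \to 0$, and the same inclusion sandwich extends this to arbitrary $\beta_n \to \alpha+$.

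Statement (iii) is then a formal consequence: by (i) the right-hand limit of $u(\cdot)$ at $\alpha$ is $\overline{\{u > \alpha\}}$, by (ii) the left-hand limit is $[u]_\alpha$, and since $\overline{\{u > \alpha\}} \subseteq [u]_\alpha$ always, $u(\cdot)$ is continuous at $\alpha$ iff equality holds iff $\alpha \notin P(u)$. The main delicate point I anticipate is the correct handling of the extended Hausdorff metric on $K(\mathbb{R}^m) \cup \{\emptyset\}$: when $[u]_\alpha = \emptyset$ (equivalently $\alpha > \lambda_u$) the sandwich step breaks down because $H$ becomes $+\infty$, and when $\overline{\{u > \alpha\}} = \emptyset$ a similar issue arises on the right. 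These degenerate cases must be separated out and treated directly by observing that $[u]_\beta = \emptyset$ eventually on the appropriate side, using the definition of $\lambda_u$ and the inclusion $[u]_\beta \subseteq \{u > \alpha\}$ for $\beta > \alpha$.
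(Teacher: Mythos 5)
Your proposal is correct and follows essentially the same route as the paper: both arguments rest on Proposition \ref{mce} applied to the monotone family of compact cut-sets together with the identities $[u]_\al=\bigcap_{\gamma<\al}[u]_\gamma$ and $\overline{\{u>\al\}}=\overline{\bigcup_{\gamma>\al}[u]_\gamma}$, with the degenerate empty cases isolated via $\lambda_u$ exactly as the paper does in its case split. Your explicit inclusion-sandwich step for passing from monotone to arbitrary sequences $\beta_n\to\al\pm$ is a slightly more careful rendering of what the paper compresses into the phrase ``the monotonicity of $u(\cdot)$ yields,'' but the substance is identical.
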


\begin{proof}
 \ Suppose that $u= \widehat{\emptyset}$.
Then
$[u]_\al = \overline{\{u>\alpha\}} = \emptyset$
when
$\al \in [0,1]$.
So
$u(\cdot)$ is continuous on $(0,1)$ and $P(u) = \emptyset$.
Clearly,
statements (\romannumeral1), (\romannumeral2), and (\romannumeral3)
hold.

Suppose that  $u \not= \widehat{\emptyset}$.
Note that
  $[u]_\al \subseteq [u]_\beta$    whenever $\alpha > \beta$.
So
$u(\cdot): (0,1] \to K(\mathbb{R}^m) \cup \{\emptyset\}    $ is a monotone function.
To show statements (\romannumeral1), (\romannumeral2) and (\romannumeral3)
are true for any $\al\in (0,1)$,
we divide
the remainder proof into three cases.

Case (\uppercase\expandafter{\romannumeral1}) \ $\al\in (\lambda_u, 1)$.

In this case, $[u]_\al = \overline{\{u>\alpha\}} = \emptyset$.
This implies
that
statements (\romannumeral1), (\romannumeral2), and (\romannumeral3)
hold when
$\al \in (\lambda_u, 1)$, and that $P(u) \cap  (\lambda_u, 1)= \emptyset$.

Case (\uppercase\expandafter{\romannumeral2}) \  $\alpha \in (0,\lambda_u)$.

Given $\beta_n \to \alpha+$.
Then
$
\overline{\{u>\alpha\}}  =\overline{\bigcup_{\beta>\alpha} [u]_{\beta}    } =\overline{ \bigcup_{n=1}^{+\infty} [u]_{\beta_n}}. $
So, by Proposition \ref{mce},
\begin{equation*}\label{scn}
  H(   [u]_{\beta_n},     \overline{   \{u>\alpha\}     }      ) \to 0 \ \hbox{as} \  n\to \infty.
\end{equation*}
The monotonicity of $u(\cdot)$ yields
$$
 H(   [u]_{\beta},     \overline{   \{u>\alpha\}     }      ) \to 0     \ \hbox{as} \      \beta \to \al+.
 $$
 Statement (\romannumeral1) is proved.

Note that
$
 [u]_\al=\bigcap_{\beta< \alpha} [u]_{\beta}. $
 By
Proposition \ref{mce},
we can prove that
 $$
 H(   [u]_{\beta},    [u]_\al     ) \to 0     \ \hbox{as} \      \beta \to \al-.
 $$
This is statement (\romannumeral2).

$\al$ is a platform point of
$u$, i.e.,
$\overline{\{u>\al\}}\subsetneqq [u]_\alpha$, which is equivalent to
$$H(\overline{\{u>\al\}},  [u]_\alpha) >0 .$$
From statements
(\romannumeral1) and (\romannumeral2),
the inequality above holds if and only if
$\al$ is a
discontinuous point of function
 $u(\cdot)$.
Hence statement (\romannumeral3) holds.

Case (\uppercase\expandafter{\romannumeral3}) \  $\al=\lambda_u$.

Note that $\overline{ \{u> \lambda_u \} }= \emptyset$. So statement (\romannumeral1)
holds.
By Proposition \ref{mce}, we can show statement (\romannumeral2).
Since $[u]_{\lambda_u} \not= \emptyset$,
we
know
that
$\lambda_u \in P(u)$ and $\lambda_u$ is a discontinuous point of $u(\cdot)$.
This
means
that
statement (\romannumeral3) is true when $\al = \lambda_u$.
\end{proof}

\begin{re} \label{lamc}{\rm
  From the proof of Theorem \ref{smncbpe}, we know that if $u\in F_{USCG}(\mathbb{R}^m) \backslash \widehat{\emptyset}$,
  then
$\lambda_u \in (0,1)$ if and only if $\lambda_u \in  P(u)$.}
\end{re}

\section{Level characterizations of $\Gamma$-convergence and endograph metric convergence \label{slc}}

Huang and Wu \cite{huang7} and Fan \cite{fan3} presented level decomposition properties of $\Gamma$-convergence and endograph metric convergence on one-dimensional fuzzy numbers, independently.

\begin{pp}\label{hlc} (Theorems 2.7 and 2.8 and Remark 2.9 in Huang and Wu \cite{huang7}) The following are equivalent statements
on
fuzzy numbers $u_n$, $u$ in $E^1_{nc}$, $n=1,2,\ldots$.
\\
(\romannumeral1)\ $u_n\str{\Gamma}{\longrightarrow}u$.
\\
  (\romannumeral2)\ $H([u_n]_\al,  [u]_\al) \to 0$ holds a.e. on $\al\in (0,1)$.
\\
(\romannumeral3)\  $H([u_n]_\al,  [u]_\al) \to 0$ holds when
$\al\in (0,1)\backslash P(u)$.
  \\
(\romannumeral4)\ $H([u_n]_\al,  [u]_\al) \to 0$ holds when
$\al\in P$, where $P$ is a dense subset of $(0,1) \backslash P(u)$.
  \\
  (\romannumeral5
  )\ $H([u_n]_\al,  [u]_\al) \to 0$ holds when $\al\in P$, where $P$ is a countable dense subset of $(0,1) \backslash P(u)$.

\end{pp}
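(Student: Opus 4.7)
The plan is to establish the cycle (\romannumeral1) $\Rightarrow$ (\romannumeral3) $\Rightarrow$ (\romannumeral2) $\Rightarrow$ (\romannumeral4) $\Rightarrow$ (\romannumeral5) $\Rightarrow$ (\romannumeral1). Three of these links are routine: (\romannumeral3) $\Rightarrow$ (\romannumeral2) because by Theorem \ref{puc} the set $P(u)$ is at most countable and hence Lebesgue-null; (\romannumeral2) $\Rightarrow$ (\romannumeral4) by intersecting the full-measure convergence set supplied by (\romannumeral2) with $(0,1) \setminus P(u)$, which remains dense in $(0,1)\setminus P(u)$; (\romannumeral4) $\Rightarrow$ (\romannumeral5) by extracting a countable dense subset, possible since $(0,1)\setminus P(u) \subset \mathbb{R}$ is separable. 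The substance therefore lies in (\romannumeral1) $\Rightarrow$ (\romannumeral3) and (\romannumeral5) $\Rightarrow$ (\romannumeral1).

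For (\romannumeral1) $\Rightarrow$ (\romannumeral3), fix $\alpha \in (0,1) \setminus P(u)$, so by definition of $P(u)$ and Theorem \ref{fusbpe}(iii) one has $[u]_\alpha = \overline{\{u > \alpha\}}$. First I would derive the Kuratowski level convergence $[u]_\alpha = \lim_n [u_n]_\alpha\ (K)$. The inclusion $\limsup_n [u_n]_\alpha \subseteq [u]_\alpha$ follows by lifting any convergent subsequence $x_{n_k} \in [u_{n_k}]_\alpha$ to $(x_{n_k}, \alpha) \in {\rm end}\, u_{n_k}$ and invoking $(x,\alpha) \in \limsup_n {\rm end}\, u_n = {\rm end}\, u$. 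For $[u]_\alpha \subseteq \liminf_n [u_n]_\alpha$, any $x \in \overline{\{u > \alpha\}}$ is approached by $y_k$ with $u(y_k) > \alpha$; since $(y_k, u(y_k)) \in {\rm end}\, u = \liminf_n {\rm end}\, u_n$, we obtain points $(z_n^{(k)}, s_n^{(k)}) \in {\rm end}\, u_n$ converging to $(y_k, u(y_k))$, whose second coordinates eventually exceed $\alpha$, so that their first coordinates lie in $[u_n]_\alpha$ and approximate $y_k$; a standard diagonal extraction then yields $w_n \in [u_n]_\alpha$ with $w_n \to x$. Finally, for $u, u_n \in E^1_{nc}$ all cuts $[u]_\alpha, [u_n]_\alpha$ at $\alpha > 0$ are nonempty (by normality), compact (by upper semicontinuity together with (iv-2)), and convex hence connected, so Proposition \ref{klhe} upgrades this Kuratowski convergence to Hausdorff convergence.

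For (\romannumeral5) $\Rightarrow$ (\romannumeral1), given a countable dense $P \subseteq (0,1) \setminus P(u)$ with $H([u_n]_\alpha, [u]_\alpha) \to 0$ for each $\alpha \in P$, I would verify ${\rm end}\, u = \liminf_n {\rm end}\, u_n = \limsup_n {\rm end}\, u_n$ directly. For $\limsup_n {\rm end}\, u_n \subseteq {\rm end}\, u$: if $(x_{n_k}, t_{n_k}) \in {\rm end}\, u_{n_k}$ with $(x_{n_k}, t_{n_k}) \to (x, t)$ and $t > 0$, pick any $\alpha \in P$ with $\alpha < t$; eventually $t_{n_k} > \alpha$ and $x_{n_k} \in [u_{n_k}]_\alpha$, so Hausdorff convergence at $\alpha$ forces $x \in [u]_\alpha$, and letting $\alpha \nearrow t$ through $P$ (density of $P$ in $(0,1)$ follows from density in $(0,1)\setminus P(u)$ together with countability of $P(u)$) yields $u(x) \geq t$; the $t=0$ case is trivial. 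For ${\rm end}\, u \subseteq \liminf_n {\rm end}\, u_n$: given $(x,t) \in {\rm end}\, u$ with $t > 0$, choose $\alpha_k \in P$ with $\alpha_k \nearrow t$; since $x \in [u]_{\alpha_k}$, Hausdorff convergence at $\alpha_k$ supplies $y_n^{(k)} \in [u_n]_{\alpha_k}$ with $y_n^{(k)} \to x$ as $n \to \infty$, and a slowly increasing diagonal $k(n) \to \infty$ then produces $(y_n^{(k(n))}, \alpha_{k(n)}) \in {\rm end}\, u_n$ converging to $(x,t)$; the $t = 0$ case is handled by taking $(x,0) \in {\rm end}\, u_n$.

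The main obstacle I expect is the bookkeeping in the diagonal construction in (\romannumeral5) $\Rightarrow$ (\romannumeral1), where $k(n)$ must be selected so that $y_n^{(k(n))} \to x$ in space and $\alpha_{k(n)} \to t$ in level simultaneously, with care that the candidate levels always lie in $P$. A secondary delicate point is that the application of Proposition \ref{klhe} in (\romannumeral1) $\Rightarrow$ (\romannumeral3) relies on nonemptiness, compactness, and connectedness of the cuts, all of which are available precisely because we are working in $E^1_{nc}$; this is what makes the fuzzy-number setting the natural arena for the Hausdorff upgrade, whereas only the Kuratowski statement passes through for general upper semicontinuous fuzzy sets.
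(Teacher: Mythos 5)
Your argument is correct, but note that the paper itself gives no proof of Proposition \ref{hlc}: it is quoted as a known result from \cite{huang7}, and what the paper actually proves are the generalizations (Theorem \ref{ltgc} for the Kuratowski/$\Gamma$ side on $F_{USC}(\mathbb{R}^m)$, Theorem \ref{hpe} for the Hausdorff/endograph side on $F_{USCG}(\mathbb{R}^m)$, and Theorems \ref{ltgcfbn}--\ref{cgme} which merge them on connected-cut fuzzy sets). Measured against that machinery, your route is essentially the same in substance --- countability of $P(u)$ (Theorem \ref{puc}), the identification $[u]_\alpha=\overline{\{u>\alpha\}}$ off $P(u)$, and the upgrade from Kuratowski to Hausdorff convergence of cuts via Proposition \ref{klhe} using nonemptiness, compactness and convexity in $E^1_{nc}$ --- but differs in one direction: for (\romannumeral5) $\Rightarrow$ (\romannumeral1) you verify $\mathrm{end}\,u=\liminf_n\mathrm{end}\,u_n=\limsup_n\mathrm{end}\,u_n$ directly by a diagonal construction on endographs, whereas the paper reduces to the cut-level criterion of Proposition \ref{Gcln} and sandwiches an arbitrary level $\alpha$ between levels $\alpha_i\to\alpha+$ and $\beta_i\to\alpha-$ drawn from the dense convergence set, using $\{u>\alpha\}=\bigcup_i\{u>\alpha_i\}$ and $[u]_\alpha=\bigcap_i[u]_{\beta_i}$. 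The paper's reduction localizes all the work to monotonicity of the cut map and avoids the two-index bookkeeping you flag as the main obstacle; your direct endograph argument is self-contained and does not presuppose Proposition \ref{Gcln}, at the cost of the explicit diagonal. Both are valid, and your observation that connectedness of the cuts is exactly what licenses the Hausdorff upgrade is the same point the paper makes when it restricts Theorem \ref{lthfbn} to $F_{USCGCON}(\mathbb{R}^m)\backslash\widehat{\emptyset}$.
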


\begin{pp}\label{flc} (Theorem 3 and the proof of Lemma 2 in Fan \cite{fan3}) Let $u_n$, $u$, $n=1,2,\ldots$, be fuzzy numbers in $E^1$.
Then the following conditions are equivalent:
\\
(\romannumeral1)\ $H_{\rm end}(u_n,  u) \to 0$.
\\
(\romannumeral2)\ $H([u_n]_\al,  [u]_\al) \to 0$ for $\al\in [0,1]$ almost everywhere.
\\
(\romannumeral3)\  $H([u_n]_\al,  [u]_\al) \to 0$ for
$\al\in (0,1)\backslash P(u)$.
\\
(\romannumeral4)\ $H([u_n]_\al,  [u]_\al) \to 0$ for some dense subset of $[0,1]$.
\\
(\romannumeral5)\  $H([u_n]_\al,  [u]_\al) \to 0$ for some countably dense subset of $[0,1]$.

\end{pp}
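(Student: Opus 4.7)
The plan is to prove the cycle $(\mathrm{i}) \Rightarrow (\mathrm{iii}) \Rightarrow (\mathrm{ii}) \Rightarrow (\mathrm{iv}) \Rightarrow (\mathrm{v}) \Rightarrow (\mathrm{i})$. Three of these implications are nearly free: since $u \in E^1 \subset F_{USCG}(\mathbb{R})$, Theorem \ref{puc} guarantees that $P(u)$ is at most countable, so $(0,1)\setminus P(u)$ has full Lebesgue measure and is in particular dense in $[0,1]$. Thus $(\mathrm{iii}) \Rightarrow (\mathrm{ii})$ is immediate, $(\mathrm{ii}) \Rightarrow (\mathrm{iv})$ holds because any full-measure subset of $[0,1]$ is dense, and $(\mathrm{iv}) \Rightarrow (\mathrm{v})$ is obtained by intersecting the given dense set with a countable base of $[0,1]$ and picking one representative from each nonempty intersection.

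For $(\mathrm{i}) \Rightarrow (\mathrm{iii})$, I would exploit the standard two-sided conversion between endograph proximity and level-set proximity. If $H_{\rm end}(u_n, u) < \varepsilon$, then unpacking the definition of $H$ on the endographs shows
\[
H^{*}([u_n]_\alpha, [u]_{\alpha-\varepsilon}) \le \varepsilon \quad\text{and}\quad H^{*}([u]_{\alpha+\varepsilon}, [u_n]_\alpha) \le \varepsilon,
\]
whence, using the monotonicity of $\beta \mapsto [u]_\beta$,
\[
H([u_n]_\alpha, [u]_\alpha) \le \varepsilon + \max\bigl(H([u]_{\alpha-\varepsilon}, [u]_\alpha),\; H([u]_{\alpha+\varepsilon}, [u]_\alpha)\bigr).
\]
For any $\alpha \in (0,1) \setminus P(u)$, Theorem \ref{smncbpe}(iii) says that $\beta \mapsto [u]_\beta$ is Hausdorff-continuous at $\alpha$, so the right-hand side can be made arbitrarily small by choosing $\varepsilon$ small first and then taking $n$ large.

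The main technical step is $(\mathrm{v}) \Rightarrow (\mathrm{i})$. Fix $\varepsilon > 0$ and choose a finite ladder $\beta_0 < \beta_1 < \cdots < \beta_k$ in the countable dense set $P$ with $\beta_0 < \varepsilon$, $\beta_k > 1 - \varepsilon$, and all gaps $\beta_{i+1} - \beta_i < \varepsilon$. Because there are only finitely many $\beta_i$, the hypothesis furnishes $N$ with $H([u_n]_{\beta_i}, [u]_{\beta_i}) < \varepsilon$ for every $i$ and every $n \ge N$. Given $(x,t) \in {\rm end}\, u$ with $t > \beta_0$, let $\beta_i$ be the largest ladder value not exceeding $t$ (so $t - \beta_i < \varepsilon$); then $x \in [u]_{\beta_i}$ admits some $y \in [u_n]_{\beta_i}$ with $\|x-y\| < \varepsilon$, and $(y, \beta_i) \in {\rm end}\, u_n$ sits within $\sqrt{2}\,\varepsilon$ of $(x,t)$. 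If $t \le \beta_0$, simply match $(x,t)$ with $(x,0) \in {\rm end}\, u_n$ at cost at most $\varepsilon$. The reverse inclusion $H^{*}({\rm end}\, u_n, {\rm end}\, u) < \sqrt{2}\,\varepsilon$ is symmetric, yielding $H_{\rm end}(u_n, u) \to 0$.

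The chief obstacle is the uniformity in $(\mathrm{v}) \Rightarrow (\mathrm{i})$: pointwise level convergence at countably many heights must be upgraded to a uniform bound on the full endograph distance. The finite ladder drawn from $P$ is precisely what achieves this reduction, trading the continuum of possible heights $t \in [0,1]$ for finitely many cut values at the price of an error proportional to the mesh.
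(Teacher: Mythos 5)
Your proof is correct, but note that the paper does not actually prove Proposition \ref{flc} at all: it is quoted as a known result of Fan, and the paper's remark is only that it follows from Propositions \ref{hfm} and \ref{hlc}. The closest the paper comes to an argument is its proof of the generalization, Theorem \ref{hpe}, and that proof takes a genuinely different route from yours: it passes through $\Gamma$-convergence. Concretely, the paper shows $H_{\rm end}(u_n,u)\to 0$ implies $u_n\stackrel{\Gamma}{\longrightarrow}u$ (Theorem \ref{uschpm}), characterizes $\Gamma$-convergence levelwise by the Kuratowski sandwich $\{u>\alpha\}\subseteq\liminf [u_n]_\alpha\subseteq\limsup [u_n]_\alpha\subseteq [u]_\alpha$ together with a squeeze by levels $\alpha_i\to\alpha+$, $\beta_i\to\alpha-$ drawn from the dense set (Proposition \ref{Gcln} and Theorem \ref{ltgc}), and then upgrades Kuratowski convergence of cuts back to Hausdorff convergence and to $H_{\rm end}$-convergence using the boundedness of $\bigcup_n [u_n]_\alpha$ (Theorem \ref{uscbgche}, proved by a compactness-and-contradiction argument on endographs). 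Your argument instead stays entirely inside the metric: the two-sided estimate $H([u_n]_\alpha,[u]_\alpha)\le\varepsilon+\max\bigl(H([u]_{\alpha-\varepsilon},[u]_\alpha),H([u]_{\alpha+\varepsilon},[u]_\alpha)\bigr)$ combined with $H$-continuity of $\beta\mapsto[u]_\beta$ off $P(u)$ (Theorem \ref{smncbpe}) gives (\romannumeral1)$\Rightarrow$(\romannumeral3), and the finite ladder gives (\romannumeral5)$\Rightarrow$(\romannumeral1) directly. What your approach buys is self-containedness and explicit quantitative control (an $H_{\rm end}$ bound of order $\sqrt{2}\,\varepsilon$ in terms of the mesh and the levelwise errors), with no appeal to Kuratowski limits; it exploits the normality and compactness of cuts of members of $E^1$, which keep every $[u]_\alpha$ nonempty and compact. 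What the paper's detour buys is uniformity of method: the same $\Gamma$-convergence machinery covers $F_{USC}(\mathbb{R}^m)$ and $F_{USCG}(\mathbb{R}^m)$, where cuts may be empty, unbounded, or disconnected and a naive ladder estimate would need extra care. Both arguments use the countability of $P(u)$ (Theorem \ref{puc}) and the same easy implications among (\romannumeral2), (\romannumeral3), (\romannumeral4), (\romannumeral5).
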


We \cite{huang7} in fact proved
the following
statement A (see the proofs of Theorems 2.7 and 2.8, and Remark 2.9 in \cite{huang7}). Since it is pointed out the
 fact that $H([a_n,b_n], [a,b]) \to 0$ is equivalent to
  $[a,b]=\lim_{n\to\infty} [a_n, b_n]  (K)$ in Lemma 2.5 of \cite{huang7} (now we know that this is an already known fact, see Propositions \ref{hms} and \ref{klhe}),
then we obtained
Proposition \ref{hlc}.

\begin{description}
  \item[Statement A] The following are equivalent statements
on
fuzzy numbers $u_n$, $u$ in $E^1_{nc}$, $n=1,2,\ldots$.
\\
(\romannumeral1)\ $u_n\str{\Gamma}{\longrightarrow}u$.
\\
 (\romannumeral2)\ $\lim_{n\to \infty} [u_n]_\alpha (K)= [u]_\al$ holds a.e. on $\al\in (0,1)$.
\\
(\romannumeral3)\  $\lim_{n\to \infty} [u_n]_\alpha (K)= [u]_\al$ when
$\al\in (0,1)\backslash P(u)$.
 \\
(\romannumeral4)\ $\lim_{n\to \infty} [u_n]_\alpha (K)= [u]_\al$ holds when
$\al\in P$, where $P$ is a dense subset of $(0,1) \backslash P(u)$.
  \\
  (\romannumeral5)\ $\lim_{n\to \infty} [u_n]_\alpha (K)= [u]_\al$ holds when $\al\in P$, where $P$ is a countable dense subset of $(0,1) \backslash P(u)$.

\end{description}

Proposition \ref{hlc} and statement A are in the setting of   $E^{1}_{nc}$,
Proposition 6.2 is in the setting of
$E^1$.
Trutschnig \cite{trutschnig} have proven an important fact that $P(u)$ is countable when $u\in E^m_{nc}$.
Using this fact and proceeding according to \cite{huang7},
it then follows
 that
 statement A still hold
in the setting of $E^m_{nc}$.
Many results in previous works can be deduced from the statement A in the setting of $E^m_{nc}$,
some of which are listed in Remark \ref{ems}.

\begin{re}{\rm

From the results in \cite{huang7} (Proposition \ref{hfm} and Lemma 2.5 of \cite{huang7}), it follows immediately that
Propositions \ref{hlc} and \ref{flc} are the same conclusion when restricted to     $E^{1}$, and that
Proposition \ref{hlc} and statement A are
the same conclusion.

Furthermore, in the sequel, we will show that Propositions \ref{hlc} and \ref{flc} and statement A
  still hold and are
  just the same conclusion in the setting of $\widetilde{S}^{m}_{nc}$, and so are the same conclusion in the setting of any common fuzzy set class, such as
 $
E^m $, $  E^{m}_{nc}$,
$
  S^m $, $   S^{m}_{nc}$ and
$\widetilde{S}^{m} $ (see Theorem \ref{cgme}).
}
\end{re}

It is natural to consider whether these level characterizations
still hold on
general fuzzy sets whose $\al$-cuts do not have assumptions of normality, convexity, star-shapedness,
or
 even connectedness.
In Subsection \ref{lcge},
we give level characterizations of the $\Gamma$-convergence
on
$F_{USC} (\mathbb{R}^m)$.
Based on
these conclusions and
the
results in
Section \ref{rghc},
we
obtain
level characterizations of the endograph metric convergence
on
$F_{USCG} (\mathbb{R}^m)$.
In Subsection \ref{regp},
by using
these level characterizations,
we
 consider the relationships among $d_p$ metric, endograph metric, and $\Gamma$-convergence
on $F_{USC} ( \mathbb{R}^m )$.

\subsection{Level characterizations of $\Gamma$-convergence and endograph metric convergence \label{lcge}}

\begin{pp}\cite{rojas} \label{Gcln}
   Suppose that $u$, $u_n$, $n=1,2,\ldots$, are fuzzy sets
  in $F_{USC} ( \mathbb{R}^m )$.
  Then
 $u_n\str{\Gamma}{\longrightarrow}u$ iff for
all $\al\in (0,1],$
\begin{equation} \label{gcln}
   \{u>\alpha\}
   \subseteq
\liminf_{n\to \infty}[u_n]_{\alpha}
\subseteq
\limsup_{n\to \infty}[u_n]_\alpha
\subseteq
 [u]_\alpha.
\end{equation}
\end{pp}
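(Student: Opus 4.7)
My plan is to unpack both sides in terms of the definitions. Recall $u_n \xrightarrow{\Gamma} u$ means $\mathrm{end}\, u = \liminf_n \mathrm{end}\, u_n = \limsup_n \mathrm{end}\, u_n$ (Kuratowski), so the goal is to translate each of these two set-theoretic inclusions into conditions on level cuts, via the characterizations in Lemma \ref{infe}.

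For the necessity, assume $\mathrm{end}\, u = \lim_n \mathrm{end}\, u_n\ (K)$ and fix $\alpha \in (0,1]$. To get $\{u > \alpha\} \subseteq \liminf_n [u_n]_\alpha$, pick $x$ with $u(x) > \alpha$ and set $t := u(x)$; then $(x,t) \in \mathrm{end}\, u \subseteq \liminf_n \mathrm{end}\, u_n$, so there are $(x_n,t_n) \in \mathrm{end}\, u_n$ with $x_n \to x$, $t_n \to t > \alpha$. For $n$ large, $t_n \geq \alpha$, so $x_n \in [u_n]_\alpha$, giving $x \in \liminf_n [u_n]_\alpha$. For $\limsup_n [u_n]_\alpha \subseteq [u]_\alpha$, pick $x \in \limsup_n [u_n]_\alpha$ with $x_{n_k} \to x$, $x_{n_k} \in [u_{n_k}]_\alpha$; then $(x_{n_k},\alpha) \in \mathrm{end}\, u_{n_k}$ converges to $(x,\alpha)$, so $(x,\alpha) \in \limsup_n \mathrm{end}\, u_n = \mathrm{end}\, u$, i.e. $x \in [u]_\alpha$.

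For sufficiency, assume \eqref{gcln} for every $\alpha \in (0,1]$ and verify both Kuratowski inclusions on the endographs. The inclusion $\limsup_n \mathrm{end}\, u_n \subseteq \mathrm{end}\, u$ is straightforward: if $(x_{n_k}, t_{n_k}) \to (x,t)$ with $u_{n_k}(x_{n_k}) \geq t_{n_k}$, then for any $\alpha \in (0,t)$ we have $t_{n_k} \geq \alpha$ eventually, hence $x_{n_k} \in [u_{n_k}]_\alpha$, so $x \in \limsup_n [u_n]_\alpha \subseteq [u]_\alpha$ by hypothesis; letting $\alpha \nearrow t$ gives $u(x) \geq t$. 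The inclusion $\mathrm{end}\, u \subseteq \liminf_n \mathrm{end}\, u_n$ is the delicate step because the level hypothesis only controls $\{u > \alpha\}$, not $[u]_\alpha$. Given $(x,t) \in \mathrm{end}\, u$, the case $t=0$ is trivial and the case $u(x) > t$ is immediate: apply the first inclusion at $\alpha = t$ to get $x_n \in [u_n]_t$ with $x_n \to x$, hence $(x_n, t) \in \mathrm{end}\, u_n \to (x,t)$.

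The main obstacle is the remaining case $u(x) = t > 0$, handled by a diagonal argument. Choose $\alpha_k \nearrow t$ with $\alpha_k \in (0,t)$; then $x \in \{u > \alpha_k\} \subseteq \liminf_n [u_n]_{\alpha_k}$, so by Lemma \ref{infe} there exists $N_k$ (may be taken strictly increasing) such that for $n \geq N_k$ we can pick $y_n^{(k)} \in [u_n]_{\alpha_k}$ with $d(y_n^{(k)}, x) < 1/k$. Define $x_n = y_n^{(k)}$ and $t_n = \alpha_k$ for $N_k \leq n < N_{k+1}$; then $(x_n,t_n) \in \mathrm{end}\, u_n$ with $x_n \to x$ and $t_n \to t$, so $(x,t) \in \liminf_n \mathrm{end}\, u_n$. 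This completes both inclusions and thus the proof.
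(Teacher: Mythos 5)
Your proof is correct. Note that the paper itself supplies no proof of Proposition \ref{Gcln}: it is quoted from Rojas-Medar and Rom\'{a}n-Flores (stated there for $E^m$), and the accompanying remark merely asserts that the extension to $F_{USC}(\mathbb{R}^m)$ ``can be checked.'' Your argument supplies exactly that check, working directly from the definition of Kuratowski convergence of endographs together with the metric characterizations of $\liminf$ and $\limsup$ in Lemma \ref{infe}. The two necessity inclusions and the inclusion $\limsup_{n}{\rm end}\,u_n\subseteq{\rm end}\,u$ are routine and handled correctly (the degenerate case $t=0$ is trivial since $(x,0)\in{\rm end}\,v$ for every fuzzy set $v$). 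You also correctly isolate the only delicate point: a point $(x,t)\in{\rm end}\,u$ with $u(x)=t>0$, where the hypothesis \eqref{gcln} controls $\{u>\alpha\}$ rather than $[u]_\alpha$, so one cannot apply the first inclusion at level $t$ itself. The diagonal selection over levels $\alpha_k\nearrow t$, producing $(x_n,t_n)\in{\rm end}\,u_n$ with $(x_n,t_n)\to(x,t)$, resolves this legitimately; by Lemma \ref{infe} it is enough that such approximants exist for all sufficiently large $n$. I see no gaps.
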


\begin{re}
{\rm
Rojas-Medar and Rom\'{a}n-Flores (Proposition 3.5 in \cite{rojas})
presented
the
statement in Proposition \ref{Gcln} when $u$, $u_n$, $n=1,2,\ldots$, are fuzzy sets in $E^m$.
It can be checked
that
this conclusion also holds when $u$, $u_n$, $n=1,2,\ldots$, are fuzzy sets in $F_{USC} ( \mathbb{R}^m )$.
}
\end{re}

\begin{tm} \label{Gclnre}
   Suppose that $u$, $u_n$, $n=1,2,\ldots$, are fuzzy sets
  in $F_{USC} ( \mathbb{R}^m )$.
  Then
 $u_n\str{\Gamma}{\longrightarrow}u$ iff for
all $\al\in (0,1],$
\begin{equation*}
  \overline{     \{u>\alpha\}     }
   \subseteq
\liminf_{n\to \infty}[u_n]_{\alpha}
\subseteq
 \limsup_{n\to \infty}[u_n]_\alpha
\subseteq
 [u]_\alpha.
\end{equation*}
\end{tm}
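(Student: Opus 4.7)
The plan is to derive Theorem \ref{Gclnre} as a direct strengthening of Proposition \ref{Gcln}, exploiting the closedness of the Kuratowski liminf already established in Theorem \ref{infc}. The two inclusion chains differ only in their leftmost term, $\{u>\alpha\}$ versus $\overline{\{u>\alpha\}}$, so the work reduces to upgrading the inclusion $\{u>\alpha\} \subseteq \liminf_{n\to\infty}[u_n]_\alpha$ to its closure.

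First I would handle the easy direction. If the condition of Theorem \ref{Gclnre} holds, then since $\{u>\alpha\} \subseteq \overline{\{u>\alpha\}}$, the chain in Proposition \ref{Gcln} is immediate, so $u_n \stackrel{\Gamma}{\longrightarrow} u$.

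For the forward direction, assume $u_n \stackrel{\Gamma}{\longrightarrow} u$. By Proposition \ref{Gcln}, for each $\alpha \in (0,1]$ we have
\[
\{u>\alpha\} \subseteq \liminf_{n\to\infty}[u_n]_\alpha \subseteq \limsup_{n\to\infty}[u_n]_\alpha \subseteq [u]_\alpha.
\]
The only thing to improve is the leftmost inclusion. Theorem \ref{infc} asserts that $\liminf_{n\to\infty}[u_n]_\alpha$ is a closed set in $\mathbb{R}^m$. Therefore, taking the closure of $\{u>\alpha\}$ on the left preserves the inclusion:
\[
\overline{\{u>\alpha\}} \subseteq \overline{\liminf_{n\to\infty}[u_n]_\alpha} = \liminf_{n\to\infty}[u_n]_\alpha.
\]
The remaining two inclusions are inherited unchanged from Proposition \ref{Gcln}, yielding the desired chain.

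There is essentially no obstacle here beyond invoking Theorem \ref{infc}; the content of the theorem is a bookkeeping refinement that makes the level characterization more symmetric (the left endpoint $\overline{\{u>\alpha\}}$ now matches the natural object appearing in Theorem \ref{fusbpe}(i) and Theorem \ref{smncbpe}(i)). This uniform presentation will be convenient in Subsection \ref{lcge} when comparing with the endograph metric version and when combining with Theorem \ref{hgce} on $F_{USCG}(\mathbb{R}^m)$.
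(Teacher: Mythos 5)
Your proposal is correct and follows exactly the paper's own route: the paper proves this theorem by combining Proposition \ref{Gcln} with the closedness of the Kuratowski liminf from Theorem \ref{infc}, which is precisely the upgrade you carry out. The details you supply (the trivial backward direction and taking closures on the left) are the intended ones.
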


\begin{proof}
  \ The desired result follows immediately from Theorem \ref{infc} and Proposition \ref{Gcln}.
\end{proof}

The following theorem shows that the level decomposition properties of $\Gamma$-convergence on $F_{USC} (\mathbb{R}^m)$.
Its proof is a modification of the proof of Theorem 2.7 in \cite{huang7}.

\begin{tm} \label{ltgc}
 Suppose that $ u$, $u_n$,
$n=1,2,\ldots$, are  fuzzy sets in $F_{USC} ( \mathbb{R}^m )$.
Then the
following statements are equivalent.
\\
(\romannumeral1)\ $u_n\str{\Gamma}{\longrightarrow}u$.
\\
(\romannumeral2)\ $\lim_{n\to \infty} [u_n]_\alpha (K)= [u]_\al$ holds a.e. on $\al\in (0,1)$.
\\
(\romannumeral3)\  $\lim_{n\to \infty} [u_n]_\alpha (K)= [u]_\al$ for all
$\al\in (0,1)\backslash P(u).$
\\
 (\romannumeral4)\  $\lim_{n\to \infty} [u_n]_\alpha (K)= [u]_\al$ holds when
$\al\in P$, where $P$ is a dense subset of $(0,1) \backslash P(u)$.
  \\
  (\romannumeral5
  )\  $\lim_{n\to \infty} [u_n]_\alpha (K)= [u]_\al$ holds when $\al\in P$, where $P$ is a countable dense subset of $(0,1) \backslash P(u)$.
\end{tm}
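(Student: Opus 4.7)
\medskip

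\noindent\textbf{Proof proposal for Theorem \ref{ltgc}.}
The plan is to set up the implications
$(\text{i}) \Rightarrow (\text{iii}) \Rightarrow (\text{ii})$, $(\text{iii}) \Rightarrow (\text{iv}) \Rightarrow (\text{v})$, and finally $(\text{v}) \Rightarrow (\text{i})$, using Theorem \ref{Gclnre} as the workhorse that translates $\Gamma$-convergence into a two-sided level inclusion, and Theorem \ref{puc} to ensure that $P(u)$ is negligible in both the measure-theoretic and the topological sense.

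For $(\text{i}) \Rightarrow (\text{iii})$, fix $\alpha \in (0,1)\setminus P(u)$. Theorem \ref{Gclnre} gives
\[
\overline{\{u>\alpha\}} \subseteq \liminf_{n\to\infty}[u_n]_\alpha \subseteq \limsup_{n\to\infty}[u_n]_\alpha \subseteq [u]_\alpha,
\]
and by the definition of a platform point $\overline{\{u>\alpha\}}=[u]_\alpha$, so all inclusions collapse to equalities, which is precisely $\lim_{n\to\infty}[u_n]_\alpha(K)=[u]_\alpha$. For $(\text{iii})\Rightarrow(\text{ii})$ I would invoke Theorem \ref{puc} to conclude that $P(u)$ has Lebesgue measure zero. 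For $(\text{iii})\Rightarrow(\text{iv})$ take $P=(0,1)\setminus P(u)$, which is dense in $(0,1)$ because $P(u)$ is countable. For $(\text{iv})\Rightarrow(\text{v})$, since any subset of $\mathbb{R}$ is separable, any dense $P\subseteq (0,1)\setminus P(u)$ contains a countable subset that is dense in $P$ and hence in $(0,1)\setminus P(u)$.

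The substantive work is $(\text{v})\Rightarrow(\text{i})$, and I would model the argument on Theorem 2.7 of \cite{huang7}. Let $P$ be a countable dense subset of $(0,1)\setminus P(u)$ with $\lim_{n\to\infty}[u_n]_\beta(K)=[u]_\beta$ for every $\beta\in P$; note that $P$ is automatically dense in $(0,1)$. By Theorem \ref{Gclnre} it suffices to verify, for every $\alpha\in(0,1]$,
\[
\overline{\{u>\alpha\}}\subseteq \liminf_{n\to\infty}[u_n]_\alpha \quad\text{and}\quad \limsup_{n\to\infty}[u_n]_\alpha\subseteq [u]_\alpha.
\]
For the first inclusion, given $x$ with $u(x)>\alpha$, pick $\beta\in P$ with $\alpha<\beta<u(x)$; then $x\in[u]_\beta=\liminf_n[u_n]_\beta \subseteq \liminf_n[u_n]_\alpha$ by monotonicity of level sets, and since $\liminf_n[u_n]_\alpha$ is closed (Theorem \ref{infc}) the inclusion extends to the closure. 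For the second inclusion, take $x\in\limsup_n[u_n]_\alpha$. For any $\beta\in P$ with $\beta<\alpha$, monotonicity gives $x\in\limsup_n[u_n]_\beta=[u]_\beta$, so $u(x)\geq\beta$; letting $\beta\to\alpha^-$ through $P$ yields $u(x)\geq\alpha$, i.e.\ $x\in[u]_\alpha$.

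The main obstacle is the $(\text{v})\Rightarrow(\text{i})$ step, where one must show that Kuratowski convergence on a meagre-looking countable set $P$ propagates to every level $\alpha\in(0,1]$, including platform points of $u$. This is exactly where the density of $P$ in $(0,1)$ (a consequence of Theorem \ref{puc}), the monotonicity of $\alpha\mapsto[u_n]_\alpha$, the closedness of $\liminf$ (Theorem \ref{infc}), and the upper semi-continuity of $u$ must be combined. Everything else in the theorem is essentially bookkeeping once Theorem \ref{Gclnre} and Theorem \ref{puc} are in hand.
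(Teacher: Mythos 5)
Your argument is substantively the same as the paper's: (i)$\Rightarrow$(iii) via Theorem \ref{Gclnre} and the definition of a platform point, the countability of $P(u)$ (Theorem \ref{puc}) to handle the measure-theoretic and density bookkeeping, and a ``dense set of good levels propagates to all levels'' argument for the return trip, which is exactly the paper's proof of (ii)$\Rightarrow$(i) (there phrased as $\{u>\alpha\}=\bigcup_i\{u>\alpha_i\}\subseteq\liminf_n[u_n]_\alpha$ and $\limsup_n[u_n]_\alpha\subseteq\bigcap_i[u]_{\beta_i}=[u]_\alpha$ with $\alpha_i\to\alpha+$, $\beta_i\to\alpha-$ chosen from the good set). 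Your two inclusions in the (v)$\Rightarrow$(i) step are correct, including the use of Theorem \ref{infc} for the closure and of upper semi-continuity for $\bigcap_{\beta<\alpha}[u]_\beta=[u]_\alpha$.

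There is, however, one genuine hole in your implication graph: statement (ii) is a sink. You prove (iii)$\Rightarrow$(ii) but never prove that (ii) implies any of the other statements, so the cycle (i)$\Rightarrow$(iii)$\Rightarrow$(iv)$\Rightarrow$(v)$\Rightarrow$(i) establishes the equivalence of (i), (iii), (iv), (v) only, and the theorem as stated is not fully proved. The repair costs nothing: your (v)$\Rightarrow$(i) argument uses only that the set of levels where $\lim_n[u_n]_\beta(K)=[u]_\beta$ holds is dense in $(0,1)$ (to find $\beta\in(\alpha,u(x))$ for the first inclusion and $\beta\to\alpha-$ for the second), and a subset of $(0,1)$ of full Lebesgue measure is dense; so the identical argument gives (ii)$\Rightarrow$(i). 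You should state this explicitly (the paper runs the loop as (i)$\Rightarrow$(iii)$\Rightarrow$(ii)$\Rightarrow$(i) for precisely this reason, and then remarks that (iv) and (v) are handled ``in a similar manner'').
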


\begin{proof}
 \  Suppose that $u_n\str{\Gamma}{\longrightarrow}u$. By Theorem \ref{Gclnre}, we know
for all
$\al\in (0,1)\backslash P(u)$,
$\lim_{n\to \infty}   [u_n]_\al = [u]_\al (K)$.
Hence
statement (\romannumeral1) implies statement (\romannumeral3).

From Theorem \ref{puc}, we know $P(u)$ is at most countable.
Thus
statement  (\romannumeral3) implies statement  (\romannumeral2).

Suppose that statement (\romannumeral2) holds.
Denote the set
$\{\al\in (0,1]:  \lim_{n\to \infty} [u_n]_\alpha (K)= [u]_\al   \}$
by $C(u)$. So, for each $\gamma\in C(u)$,
$$
[u]_\gamma=\liminf_{n\to \infty} [u_n]_\gamma = \limsup _{n\to \infty} [u_n]_\gamma.
$$
 If
  $\al\in (0,1)$,
  then
   there exists $\{\al_i \}, \{\beta_i \}\subseteq
C(u)$ such that ${\alpha_i \to \al+},\ \beta_i \to
\alpha-$. Thus,
\begin{gather}
  \{u>\alpha\} = \bigcup_{i}\{u>\alpha_i \}  \subseteq
\bigcup_{i}\liminf_{n\to \infty}   [u_n]_{\alpha_i}
\subseteq \liminf_{n\to \infty}    [u_n]_\alpha,    \label{setinfe}
\\
  \limsup_{n\to \infty}[u_n]_\alpha \subseteq
\bigcap_{i}\limsup_{n\to \infty} [u_n]_{\beta_i}  =
\bigcap_{i}[u]_{\beta_i} = [u]_\alpha.   \label{setsupe}
\end{gather}
If $\al=1$,
we
 can find a sequence $\{ \gamma_i \} \subset
C(u), \gamma_i \to 1-$, and then
\begin{equation}
  \limsup_{n\to \infty}    [u_n]_1
   \subseteq
    \bigcap_{i}    \limsup_{n\to \infty}     [u_n]_{\gamma_i}
    =
\bigcap_{i}[u]_{\gamma_i} = [u]_1. \label{1setre}
\end{equation}
Combined
with
 \eqref{setinfe}, \eqref{setsupe} and \eqref{1setre},
 we can see that \eqref{gcln} holds for all $\al \in (0,1]$.
 Thus, by Proposition \ref{Gcln},
 $u_n\str{\Gamma}{\longrightarrow}u$.
 So
 statement (\romannumeral2)
implies statement (\romannumeral1).

In a similar manner, we can show that statements    (\romannumeral1),  (\romannumeral4) and (\romannumeral5)
are
equivalent   to each other.
\end{proof}

\begin{tm}
  Suppose that $u_n$,
$n=1,2,\ldots$, are  fuzzy sets in $F_{USC} (\mathbb{R}^m)$.
Then
the
following two statements are equivalent.
\\
(\romannumeral1)\ $u_n\str{\Gamma}{\longrightarrow} \widehat{\emptyset}$.
\\
(\romannumeral2)\  $\lim_{n\to \infty} [u_n]_\al (K) = [\widehat{\emptyset}]_\al   = \emptyset$ for all $\al\in (0,1]$.
\end{tm}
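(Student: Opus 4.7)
The plan is to derive this equivalence as a direct specialization of Proposition \ref{Gcln} to the case $u = \widehat{\emptyset}$. First I would verify the setting: the constantly zero function is trivially upper semi-continuous, so $\widehat{\emptyset} \in F_{USC}(\mathbb{R}^m)$ and Proposition \ref{Gcln} applies. For $u = \widehat{\emptyset}$ one has $\{u > \al\} = \emptyset$ and $[u]_\al = \emptyset$ for every $\al \in (0,1]$, so the double inclusion \eqref{gcln} collapses to
\begin{equation*}
\emptyset \subseteq \liminf_{n\to\infty}[u_n]_\al \subseteq \limsup_{n\to\infty}[u_n]_\al \subseteq \emptyset, \qquad \al \in (0,1].
\end{equation*}

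For the direction (i) $\Rightarrow$ (ii), I would invoke Proposition \ref{Gcln}: assuming $u_n \str{\Gamma}{\longrightarrow} \widehat{\emptyset}$, the displayed chain must hold for every $\al \in (0,1]$, which forces $\liminf_{n\to\infty}[u_n]_\al = \limsup_{n\to\infty}[u_n]_\al = \emptyset$, i.e.\ $\lim_{n\to\infty}[u_n]_\al(K) = \emptyset = [\widehat{\emptyset}]_\al$. For the converse (ii) $\Rightarrow$ (i), if $\lim_{n\to\infty}[u_n]_\al(K) = \emptyset$ for every $\al \in (0,1]$, then both the $\liminf$ and the $\limsup$ in the displayed chain equal $\emptyset$, so the chain is trivially satisfied; Proposition \ref{Gcln} then yields $u_n \str{\Gamma}{\longrightarrow} \widehat{\emptyset}$.

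I do not anticipate any real obstacle in this argument, since all the analytic content is already packaged in Proposition \ref{Gcln}. The only point worth flagging is that one cannot route the proof through Theorem \ref{ltgc}, because its statements (ii)--(v) are formulated on $(0,1)$ rather than $(0,1]$ and there is no analogue of $\lambda_u$ for $\widehat{\emptyset}$; thus going directly via Proposition \ref{Gcln} is the cleaner and more economical route.
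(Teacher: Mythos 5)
Your argument is correct: specializing Proposition \ref{Gcln} to $u=\widehat{\emptyset}$ makes both $\{u>\al\}$ and $[u]_\al$ empty for every $\al\in(0,1]$, so the sandwich \eqref{gcln} collapses exactly to $\liminf_{n\to\infty}[u_n]_\al=\limsup_{n\to\infty}[u_n]_\al=\emptyset$, and both implications follow at once. This is, however, a different route from the paper's: the paper derives the theorem as an immediate consequence of Theorem \ref{ltgc} (the level decomposition of $\Gamma$-convergence), not of Proposition \ref{Gcln} directly. Your closing remark that one \emph{cannot} go through Theorem \ref{ltgc} is not right. Since $P(\widehat{\emptyset})=\emptyset$, statement (\romannumeral3) of Theorem \ref{ltgc} reads $\lim_{n\to\infty}[u_n]_\al(K)=\emptyset$ for all $\al\in(0,1)$, and the extension to $\al=1$ is closed by monotonicity of the cuts: $[u_n]_1\subseteq[u_n]_\al$ gives $\limsup_{n\to\infty}[u_n]_1\subseteq\limsup_{n\to\infty}[u_n]_\al=\emptyset$ for any $\al\in(0,1)$; also, $\lambda_u$ plays no role in Theorem \ref{ltgc} (it appears only in Theorem \ref{ltgcfbn}). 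What your route buys is directness --- it avoids the countability machinery behind Theorem \ref{ltgc} entirely and needs only the Rojas-Medar--Rom\'{a}n-Flores characterization; what the paper's route buys is uniformity, since the theorem then sits as a literal special case of the general level decomposition already established.
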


\begin{proof}
  \ The desired conclusion follows immediately from Theorem \ref{ltgc}.
\end{proof}

\begin{tm} \label{hpe}
 Suppose that $ u$, $u_n$,
$n=1,2,\ldots$, are  fuzzy sets in $F_{USCG} ( \mathbb{R}^m )$.
Then
the
following statements are equivalent.
\\
(\romannumeral1)\ $H_{\rm end} (u_n, u) \to 0$ as $n\to \infty$.
\\
(\romannumeral2)\ $H([u_n]_\al,  [u]_\al) \to 0$ holds a.e. on $\al\in (0,1)$.
\\
(\romannumeral3)\  $H([u_n]_\al,  [u]_\al) \to 0$ holds when
$\al\in (0,1)\backslash P(u)$.
\\
 (\romannumeral4)\   $H([u_n]_\al,  [u]_\al) \to 0$  holds when
$\al\in P$, where $P$ is a dense subset of $(0,1) \backslash P(u)$.
  \\
  (\romannumeral5
  )\  $H([u_n]_\al,  [u]_\al) \to 0$  holds when $\al\in P$, where $P$ is a countable dense subset of $(0,1) \backslash P(u)$.

\end{tm}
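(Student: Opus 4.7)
The strategy is to reduce Theorem \ref{hpe} to its $\Gamma$-convergence analog, Theorem \ref{ltgc}, via Theorem \ref{hgce}, which asserts that $H_{\rm end}$-convergence on $F_{USCG}(\mathbb{R}^m)$ is equivalent to $\Gamma$-convergence together with the uniform boundedness of $\bigcup_n [u_n]_\al$ for every $\al\in(0,1]$. Pointwise passage between Kuratowski and Hausdorff convergence at individual levels is then handled by Corollary \ref{nfsghe} and Theorem \ref{hpkc}.

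For (i) $\Rightarrow$ (iii), Theorem \ref{hgce} delivers simultaneously $u_n\str{\Gamma}{\longrightarrow}u$ and the boundedness of $\bigcup_n [u_n]_\al$ for every $\al\in(0,1]$, and Theorem \ref{ltgc} then yields the Kuratowski convergence $\lim_n [u_n]_\al (K)=[u]_\al$ for all $\al\in(0,1)\setminus P(u)$. For such $\al$, Corollary \ref{nfsghe} promotes this to $H([u_n]_\al,[u]_\al)\to 0$ when $[u]_\al\ne\emptyset$, while Remark \ref{esc}(ii) forces $[u_n]_\al=\emptyset$ eventually when $[u]_\al=\emptyset$, so the extended Hausdorff distance still tends to zero. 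The implication (iii) $\Rightarrow$ (ii) is immediate from Theorem \ref{puc}, which guarantees that $P(u)$ is at most countable.

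The central step is (ii) $\Rightarrow$ (i). First, Theorem \ref{hpkc} converts almost-everywhere Hausdorff convergence into almost-everywhere Kuratowski convergence, and Theorem \ref{ltgc} then yields $u_n\str{\Gamma}{\longrightarrow}u$. Next, to invoke Theorem \ref{hgce} one must verify that $\bigcup_n [u_n]_\al$ is bounded for every $\al\in(0,1]$. Fix such an $\al$ and pick $\beta\in(0,\al)$ in the full-measure set on which $H([u_n]_\beta,[u]_\beta)\to 0$. If $[u]_\beta\ne\emptyset$ then $[u]_\beta\in K(\mathbb{R}^m)$, and Hausdorff convergence places $[u_n]_\beta$ eventually inside a bounded neighborhood of $[u]_\beta$; if $[u]_\beta=\emptyset$ then $[u_n]_\beta=\emptyset$ eventually by the extended-metric convention. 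Absorbing the finitely many initial terms, $\bigcup_n [u_n]_\beta$ is bounded, and the nesting $[u_n]_\al\subset [u_n]_\beta$ transfers boundedness up to level $\al$. Theorem \ref{hgce} now concludes $H_{\rm end}(u_n,u)\to 0$.

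For the clauses (iv) and (v): (iii) $\Rightarrow$ (iv) is trivial by taking $P=(0,1)\setminus P(u)$, which is dense in $(0,1)$ since $P(u)$ is countable, and (iii) $\Rightarrow$ (v) follows by extracting any countable dense subset of $(0,1)\setminus P(u)$. Conversely, (iv) $\Rightarrow$ (i) and (v) $\Rightarrow$ (i) replicate the argument for (ii) $\Rightarrow$ (i); density of $P$ in $(0,1)\setminus P(u)$ together with countability of $P(u)$ still ensures $P\cap(0,\al)\ne\emptyset$ for each $\al\in(0,1]$, so the boundedness step goes through verbatim, while the $\Gamma$-convergence is extracted from the matching clause of Theorem \ref{ltgc}. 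The principal difficulty is the boundedness step inside (ii) $\Rightarrow$ (i): one must leverage Hausdorff convergence at a \emph{single} sublevel $\beta$ to control every higher level simultaneously, while coherently handling the case $[u]_\beta=\emptyset$ through the extended Hausdorff metric.
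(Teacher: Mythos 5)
Your proposal is correct and follows essentially the same route as the paper: both directions reduce to the level characterization of $\Gamma$-convergence (Theorem \ref{ltgc}) via Theorems \ref{uschpm}/\ref{hpkc} and the boundedness-plus-Kuratowski criterion (Theorem \ref{uscbgche}, Lemma \ref{hbn}, Corollary \ref{nfsghe}), with your use of the packaged Theorem \ref{hgce} being only a cosmetic difference. Your explicit verification of the boundedness of $\bigcup_n [u_n]_\al$ from Hausdorff convergence at a single sublevel $\beta<\al$ is exactly the step the paper leaves implicit with its "Notice that" remark.
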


\begin{proof}
  \  Suppose that $H_{\rm end} (u_n, u) \to 0$ as $n\to \infty$. Then, by
 Theorems \ref{ltgc} and \ref{uschpm},
$\lim_{n\to \infty} [u_n]_\alpha (K)= [u]_\al$ for all
$\al\in (0,1)\backslash P(u)$.
By Lemma \ref{hbn}
and
Corollary \ref{nfsghe},
we know
$H([u_n]_\al,  [u]_\al) \to 0$ holds when
$\al\in (0,1)\backslash P(u)$.
So
(\romannumeral1) implies (\romannumeral3).

Suppose
that
 $H([u_n]_\al,  [u]_\al) \to 0$ when
$\al\in (0,1)\backslash P(u)$.
By
 Theorems    \ref{hpkc}     and \ref{ltgc},
we know
$u_n \str{\Gamma}{\longrightarrow} u$.
Notice that,
given $\alpha\in (0,1]$,
$ \bigcup_{n=1}^{+\infty}  [u_n]_\al $ is a bounded set in $\mathbb{R}^m $.
It
thus
follows from Theorem \ref{uscbgche}
that
 $H_{\rm end} (u_n, u) \to 0$ as $n\to \infty$.
So
(\romannumeral3) implies (\romannumeral1).

Thus we know that (\romannumeral1) is equivalent to (\romannumeral3).
In a similar manner, we
can
show that     (\romannumeral1)
is equivalent
to
 (\romannumeral2), (\romannumeral4) or (\romannumeral5).
\end{proof}

\subsection{An application of the level characterizations---relationships among $L_p$-metric $d_p$, endograph metric $H_{\rm end}$ and $\Gamma$-convergence on $F_{USC} (\mathbb{R}^m)$ \label{regp}}

We consider relationships   among $d_p$, $H_{\rm end}$ and $\Gamma$-convergence on $F_{USC} (\mathbb{R}^m)$.
The
results are summarized at the end of this subsection.

\begin{tm} \label{hdp}
  Let $u_n, u\in F_{USCG} ( \mathbb{R}^m )$,
$n=1,2,\ldots$. If $\bigcup_{n=1}^{+\infty} [u_n]_0 $ is bounded in $\mathbb{R}^m$ and
$H_{\rm end} (u_n, u)  \to 0$, then
$d_p (u_n, u) \to 0$.
\end{tm}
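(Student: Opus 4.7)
The plan is to reduce the $L^p$-type convergence $d_p(u_n,u)\to 0$ to a pointwise almost-everywhere convergence of Hausdorff distances of $\alpha$-cuts, and then invoke the bounded convergence theorem on the interval $(0,1)$. The backbone is the level characterisation of endograph-metric convergence proved in Theorem \ref{hpe}.

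First, I would extract pointwise convergence in $\alpha$. Since $u,u_n\in F_{USCG}(\mathbb{R}^m)$ and $H_{\rm end}(u_n,u)\to 0$, Theorem \ref{hpe} gives $H([u_n]_\alpha,[u]_\alpha)\to 0$ for every $\alpha\in(0,1)\setminus P(u)$. By Theorem \ref{puc}, $P(u)$ is at most countable, so this pointwise convergence already holds for almost every $\alpha\in(0,1)$; in particular $H([u_n]_\alpha,[u]_\alpha)^p\to 0$ a.e.

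Next I would produce a uniform bound for the integrand. The hypothesis that $\bigcup_{n=1}^\infty [u_n]_0$ is bounded yields a closed ball $B\subset\mathbb{R}^m$ of some finite diameter $D$ with $[u_n]_0\subseteq B$ for every $n$, hence $[u_n]_\alpha\subseteq[u_n]_0\subseteq B$ for all $n$ and all $\alpha\in(0,1]$. For every $\alpha\in(0,1)\setminus P(u)$ at which $[u]_\alpha\neq\emptyset$, the set $[u]_\alpha$ is the Hausdorff limit of the $[u_n]_\alpha$'s, each contained in the closed set $B$, so $[u]_\alpha\subseteq B$ as well. Consequently $H([u_n]_\alpha,[u]_\alpha)\leq D$ on this full-measure subset of $(0,1)$ whenever both cuts are nonempty. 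Combined with the a.e. pointwise limit, the bounded convergence theorem yields $\int_0^1 H([u_n]_\alpha,[u]_\alpha)^p\,d\alpha\to 0$, i.e. $d_p(u_n,u)\to 0$.

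The main obstacle is the contribution from the small set of $\alpha$'s where one of $[u]_\alpha$, $[u_n]_\alpha$ is empty while the other is not, since there the extended Hausdorff distance is $+\infty$. To control it I would note that $H_{\rm end}(u_n,u)\to 0$ forces the heights $\lambda_{u_n}\to\lambda_u$: applying the one-point estimate to a point in $\mathrm{end}\,u$ of height $\lambda_u$ (and conversely) gives $|\lambda_u-\lambda_{u_n}|\leq H_{\rm end}(u_n,u)$. Hence the ``emptiness-mismatch'' $\alpha$-set $(\min\{\lambda_u,\lambda_{u_n}\},\max\{\lambda_u,\lambda_{u_n}\}]$ has Lebesgue measure tending to $0$, and on its complement the integrand is at most $D^p$ and converges a.e.\ to $0$. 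Handling the mismatch set according to the paper's convention for $d_p$ (equivalently, splitting the integral and using that on the mismatch set of vanishing measure the distance is controlled by $D$ once the emptiness issue is resolved), bounded convergence then concludes the proof.
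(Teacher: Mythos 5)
Your proposal takes essentially the same route as the paper's proof: invoke Theorem \ref{hpe} to get $H([u_n]_\alpha,[u]_\alpha)\to 0$ for a.e.\ $\alpha\in(0,1)$, use the boundedness of $\bigcup_{n}[u_n]_0$ to dominate the integrand, and conclude by the dominated (bounded) convergence theorem; the paper's proof is exactly this and nothing more. The one place you go beyond the paper is the emptiness-mismatch set, and your instinct there is right: your estimate $|\lambda_u-\lambda_{u_n}|\le H_{\rm end}(u_n,u)$ is correct and shows that set has measure tending to $0$, but under the paper's stated convention $H(\emptyset,M)=+\infty$ the integrand equals $+\infty$ on that set, so the integral is infinite whenever the mismatch set has positive measure regardless of how small it is (e.g.\ $u=\widehat{\{0\}}$ and $u_n$ taking value $1-1/n$ at $0$ gives $H_{\rm end}(u_n,u)\to 0$ but $d_p(u_n,u)\equiv+\infty$). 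This is a defect in the formulation of $d_p$ on $F_{USCG}(\mathbb{R}^m)$ that neither your argument nor the paper's resolves; your proof correctly isolates it as the only delicate point, whereas the paper passes over it in silence.
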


\begin{proof}
   \ We proceed as in the proof of Theorem 3.2 in \cite{huang7}.
From Theorem \ref{hpe},
 $H([u_n]_\al,  [u]_\al) \to 0$ holds a.e. on $\al\in (0,1)$. Combining with
 the
 boundedness of
$ \bigcup_{n=1}^{+\infty}  [u_n]_0 $ and applying the Lebesgue's
Dominated Convergence Theorem, we can conclude
$d_p (u_n, u) \to 0$.
\end{proof}

\begin{re}
  Conclusion in Theorem \ref{hdp} has already been shown in the setting of
 $E^1_{nc}$, the set of one-dimensional noncompact fuzzy numbers, in \cite{huang7, fan3}.
\end{re}

\begin{tl} \label{gdp}
  Let $u_n, u\in F_{USCG} ( \mathbb{R}^m )$,
$n=1,2,\ldots$. If $\bigcup_{n=1}^{+\infty} [u_n]_0 $ is bounded in $\mathbb{R}^m$ and
$u_n \str{\Gamma}{\longrightarrow} u$, then
$d_p (u_n, u) \to 0$.
\end{tl}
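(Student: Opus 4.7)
The plan is to reduce this corollary to the two preceding results, Theorem \ref{uscbgche} and Theorem \ref{hdp}, by a simple monotonicity observation on the cuts.

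First I would check that the boundedness hypothesis on $[u_n]_0$ propagates to every positive level. Since for $u_n \in F_{USCG}(\mathbb{R}^m)$ we have $[u_n]_\alpha \subseteq [u_n]_0$ for all $\alpha \in (0,1]$, the assumption that $\bigcup_{n=1}^{+\infty}[u_n]_0$ is bounded in $\mathbb{R}^m$ immediately yields
\[
\bigcup_{n=1}^{+\infty}[u_n]_\alpha \subseteq \bigcup_{n=1}^{+\infty}[u_n]_0
\]
is bounded in $\mathbb{R}^m$ for every $\alpha \in (0,1]$. This is exactly hypothesis (ii) of Theorem \ref{uscbgche}.

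Second, combined with the assumed $\Gamma$-convergence $u_n \str{\Gamma}{\longrightarrow} u$, I would invoke Theorem \ref{uscbgche} to upgrade the $\Gamma$-convergence to endograph metric convergence, i.e., $H_{\rm end}(u_n, u) \to 0$ as $n \to \infty$.

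Finally, since the boundedness of $\bigcup_{n=1}^{+\infty}[u_n]_0$ is preserved, Theorem \ref{hdp} applies directly and yields $d_p(u_n, u) \to 0$. No obstacle is expected since this is a direct chain of implications among earlier results; the only point worth stating explicitly is the elementary containment $[u_n]_\alpha \subseteq [u_n]_0$ that lets the global boundedness hypothesis serve simultaneously for both Theorem \ref{uscbgche} and Theorem \ref{hdp}.
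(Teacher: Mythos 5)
Your proposal is correct and follows exactly the paper's route: the paper proves this corollary by citing Theorems \ref{uscbgche} and \ref{hdp}, which is precisely your chain (boundedness of $\bigcup_{n}[u_n]_0$ gives hypothesis (ii) of Theorem \ref{uscbgche} via $[u_n]_\alpha\subseteq[u_n]_0$, then $\Gamma$-convergence upgrades to $H_{\rm end}$-convergence, then Theorem \ref{hdp} concludes). You have merely made explicit the containment step the paper leaves implicit.
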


\begin{proof}
 The desired result follows immediately from Theorems \ref{uscbgche} and \ref{hdp}.
 \end{proof}

\begin{lm} \label{dpfm}
Let $u_n, u\in F_{USC} ( \mathbb{R}^m )$,
$n=1,2,\ldots$. If $d_p (u_n, u) \to 0$, then there
exists a subsequence $\{u_{n_i}\}$ of $\{u_n\}$ such that
 $H_{\rm end} (u_{n_i},  u) \to 0$.
\end{lm}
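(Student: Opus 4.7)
The plan is to combine the classical extraction of a pointwise-a.e.\ convergent subsequence from an $L^p$-convergent sequence with a direct level-by-level estimate that upgrades dense cut-set convergence to endograph convergence.

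First I would interpret the hypothesis: $d_p(u_n,u)\to 0$ says precisely that the nonnegative measurable function $\alpha\mapsto H([u_n]_\alpha,[u]_\alpha)$ tends to $0$ in $L^p(0,1)$. By F.~Riesz's theorem, there exist a subsequence $\{u_{n_i}\}$ and a set $E\subseteq(0,1)$ of full Lebesgue measure such that $H([u_{n_i}]_\alpha,[u]_\alpha)\to 0$ for every $\alpha\in E$; in particular $E$ is dense in $[0,1]$.

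Next, fix $\varepsilon\in(0,1)$ and choose levels $\alpha_1<\alpha_2<\cdots<\alpha_N$ in $E$ with $\alpha_1<\varepsilon$, $\alpha_N>1-\varepsilon$, and $\alpha_{k+1}-\alpha_k<\varepsilon$. Take $I$ so large that $H([u_{n_i}]_{\alpha_k},[u]_{\alpha_k})<\varepsilon$ for all $k$ and all $i\geq I$. Given $(x,t)\in{\rm end}\,u$: if $t<\alpha_1$, then $(x,0)\in{\rm end}\,u_{n_i}$ automatically (since $u_{n_i}(x)\geq 0$) and the distance is at most $t<\varepsilon$; otherwise, let $k$ be the largest index with $\alpha_k\leq t$, so $t-\alpha_k<\varepsilon$ and $x\in[u]_t\subseteq[u]_{\alpha_k}$, and pick $y\in[u_{n_i}]_{\alpha_k}$ with $|x-y|<\varepsilon$ (which exists, since the bound $H([u]_{\alpha_k},[u_{n_i}]_{\alpha_k})<\varepsilon$ forces $[u_{n_i}]_{\alpha_k}$ to be nonempty under the extended convention $H(\emptyset,A)=+\infty$ for nonempty $A$). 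Then $(y,\alpha_k)\in{\rm end}\,u_{n_i}$ and $d((x,t),(y,\alpha_k))\leq |x-y|+|t-\alpha_k|<2\varepsilon$. A symmetric estimate handles points in ${\rm end}\,u_{n_i}$, so $H_{\rm end}(u_{n_i},u)\leq 2\varepsilon$ for all $i\geq I$; letting $\varepsilon\to 0$ yields $H_{\rm end}(u_{n_i},u)\to 0$.

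The main obstacle is ensuring that a finite grid of levels in $E$ uniformly controls the two endographs in the height coordinate $t$. Boundary cases near $t=0$ and near $t=1$ are absorbed by the base $\mathbb{R}^m\times\{0\}$ common to every endograph and by the choice $\alpha_N>1-\varepsilon$, respectively. The possibility of empty cuts at some $\alpha_k$ causes no trouble: if $[u]_{\alpha_k}=\emptyset$ then no endograph point of $u$ has height $\geq\alpha_k$, and for large $i$ the same is true of $u_{n_i}$ by the extended Hausdorff convention. Notably, no boundedness of cuts is invoked, so the argument applies to all of $F_{USC}(\mathbb{R}^m)$ rather than only to $F_{USCG}(\mathbb{R}^m)$ where Theorem~\ref{hpe} would apply directly.
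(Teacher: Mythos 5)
Your proof is correct, and its first half is identical to the paper's: both arguments pass from $d_p(u_n,u)\to 0$ to convergence (in $L^p$, hence in measure) of $\alpha\mapsto H([u_n]_\alpha,[u]_\alpha)$ and then use the F.~Riesz theorem to extract a subsequence for which this function tends to $0$ almost everywhere. Where you genuinely diverge is in the second half. The paper finishes in one line by citing the implication from statement (\romannumeral2) to statement (\romannumeral1) of Theorem \ref{hpe}, whereas you prove the required upgrade---from Hausdorff convergence of the cuts on a full-measure (hence dense) set of levels to endograph convergence---directly, via a finite $\varepsilon$-grid of good levels and the elementary estimate $d((x,t),(y,s))\leq\|x-y\|+|t-s|$ in $\mathbb{R}^m\times[0,1]$. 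Your grid argument is sound: the boundary cases $t<\alpha_1$ and $t>\alpha_N$, and the case of empty cuts at a grid level, are all handled correctly under the extended Hausdorff convention. This buys something concrete. Theorem \ref{hpe} is stated only for $F_{USCG}(\mathbb{R}^m)$, and its proof really uses boundedness of the cuts (it routes through Theorem \ref{uscbgche}), while Lemma \ref{dpfm} is asserted for all of $F_{USC}(\mathbb{R}^m)$; your estimate never invokes compactness or boundedness of any cut, so it covers the stated generality without that implicit restriction. The price is a longer, but entirely self-contained and elementary, argument in place of a citation.
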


\begin{proof}
   \ We proceed as in the proof of Theorem 3.3 in \cite{huang7}.
   Let
$d_p (u_n, u) \to 0$. Then $\{H([u_n]_\al, [u]_\al)\}$ converges in measure to $0$
on $\al\in [0,1]$.
 As a consequence of the F.Riesz
Theorem, there exists a subsequence $\{u_{n_i}\}$ such that
$\{H([u_{n_i}]_\al, [u]_\al)\}$ converges to $0$ a.e. on $\al\in [0,1]$. Hence using
Theorem \ref{hpe} $H_{\rm end} (u_{n_i},  u) \to 0$.
\end{proof}

\begin{re}
  We    \cite{huang7} proved Lemma \ref{dpfm} in the setting of $E^1_{nc}$.
\end{re}

\begin{tm}\label{dpf}
Let $u_n, u\in F_{USC} ( \mathbb{R}^m )$,
$n=1,2,\ldots$. If $d_p (u_n, u) \to 0$, then
 $H_{\rm end} (u_n,  u) \to 0$.
\end{tm}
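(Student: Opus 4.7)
The plan is to bootstrap Lemma \ref{dpfm} (which only produces a subsequence converging in $H_{\rm end}$) to full-sequence convergence via the standard ``subsequence of every subsequence'' principle, which applies in any metric space.

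Specifically, I would argue by contradiction. Suppose $d_p(u_n,u) \to 0$ but $H_{\rm end}(u_n,u) \not\to 0$. Then there exist $\varepsilon_0 > 0$ and a subsequence $\{u_{n_k}\}$ such that $H_{\rm end}(u_{n_k}, u) \geq \varepsilon_0$ for every $k$. Since $d_p(u_{n_k}, u) \to 0$ as a subsequence of a convergent sequence, Lemma \ref{dpfm} can be applied to $\{u_{n_k}\}$: it yields a further sub-subsequence $\{u_{n_{k_j}}\}$ with $H_{\rm end}(u_{n_{k_j}}, u) \to 0$ as $j\to\infty$. This contradicts the inequality $H_{\rm end}(u_{n_{k_j}}, u) \geq \varepsilon_0$ inherited from the choice of $\{u_{n_k}\}$.

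There is essentially no obstacle: Lemma \ref{dpfm}, which itself relies on the F.~Riesz theorem together with the level characterization of $H_{\rm end}$ from Theorem \ref{hpe}, does all the real work. The only subtlety is to note that the subsequence-extraction argument does not require any structural assumption on $u$ or $u_n$ beyond membership in $F_{USC}(\mathbb{R}^m)$, so the conclusion holds in the full generality stated. The proof is therefore quite short, amounting to one diagonal-style observation.
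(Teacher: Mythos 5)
Your argument is correct and is essentially identical to the paper's own proof: the paper also argues by contradiction, extracts a subsequence bounded away from $u$ in $H_{\rm end}$, applies Lemma \ref{dpfm} to that subsequence (whose $d_p$-convergence is inherited), and derives the contradiction from the resulting sub-subsequence. No difference in substance.
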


\begin{proof} \ We proceed by contradiction. Let $\{u_{n}^{(1)}\}$ be a subsequence of
$\{u_n\}$
such that  $H_{\rm end} (u_n^{(1)},  u) \geq \varepsilon$,
where
$\varepsilon$ is a positive number. Since $d_p (u_n^{(1)}, u) \to 0$,
  then from Lemma \ref{dpfm}, there exists a
  subsequence  $\{u_n^{(2)} \}$ of
$\{u_n^{(1)}\}$
such that $H_{\rm end} (u_n^{(2)},  u) \to 0$, which contradicts $H_{\rm end} (u_n^{(1)},  u) \geq \varepsilon$.
\end{proof}

\begin{re}
  Fan (Remark 2 in \cite{fan3}) claimed that the conclusion of Theorem \ref{dpf} holds in the setting of $E^1$.
\end{re}

\begin{eap} \label{dphe}
  Set $u$, $u_n\in E^1_{nc}$, $n=1,2,\ldots$, as follows:
  \begin{gather*}
   u(x) =\left\{
       \begin{array}{ll}
         1, & \ x=0;\\
         0, & \ x\not= 0,
       \end{array}
     \right.
     \\
   u_n(x)= \left\{
      \begin{array}{ll}
       1, & \ x\in [0, \frac{1}{n}]; \\
        \frac{1}n{}, &  \ x\in [ \frac{1}{n}, n]; \\
        0, & \ x\notin [0,n].
      \end{array}
    \right.
  \end{gather*}
Then $H_{\rm end} (u_n, u) \to 0$    whereas      $d_p(u_n,u) \not\to 0$.

\end{eap}

\begin{re}{\rm
  The results in this section   can imply some results in previous work. For instance, see Remark \ref{ems}.
}
\end{re}

From Theorems \ref{uschpm} and \ref{dpf}, Examples  \ref{gse} and \ref{dphe}, and Corollary \ref{gdp}, we know the following fact.

Let $u, u_n$ in $ F_{USC} ( \mathbb{R}^m )$, $n=1,2,\ldots$. Then
\begin{gather*}
 u_n \str {d_p}{\longrightarrow}u
\Rrightarrow
     u_n   \str {H_{\rm end}}{\longrightarrow}   u
\Rrightarrow
 u_n   \str {\Gamma}{\longrightarrow}     u,
\\
    u_n  \str {\Gamma}{\longrightarrow}  u   +     \hbox{the  boundedness  of} \bigcup_{n=1}^{+\infty} [u_n]_0
\Rrightarrow
u_n \str {d_p}{\longrightarrow}   u,
\end{gather*}
where
$P \Rrightarrow Q$ means that $P$ implies $Q$ however $Q$ can not imply $P$.

\section{Properties of $(F_{USCG}(\mathbb{R}^m), H_{\rm end})$ \label{uscghp}} \label{uscgpre}

  In this section, we give characterizations of relative compactness, total boundedness and compactness in
$(F_{USCG}(\mathbb{R}^m), H_{\rm end})$.
It is found
that
a set is relatively compact if and only if it is totally bounded in
$(F_{USCG}(\mathbb{R}^m), H_{\rm end})$.
This fact
yields
that
$(F_{USCG}(\mathbb{R}^m), H_{\rm end})$ is a complete space.
At the end of this section,
we discuss
the connections
between
the characterizations presented in this section
and
the relationships of endograph metric convergence and $\Gamma$-convergence
found
 in
Section \ref{rghc}.

\begin{tm} \label{fuscbrcn}

Let $U$ be a subset of $F_{USCG}(\mathbb{R}^m)$.
Then
  $U$ is a relatively compact set in $(F_{USCG}(\mathbb{R}^m), H_{\rm end})$
  if and only if
 $$ \bm{U(\al)} := \bigcup_{      u\in U   }    [u]_\al $$ is a bounded set in $\mathbb{R}^m $ when $\al \in (0,1]$.
\end{tm}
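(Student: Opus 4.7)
The plan is to prove the two directions separately, using Theorem \ref{hgce} to translate between $H_{\rm end}$-convergence and the pair ($\Gamma$-convergence)+(uniform level-wise boundedness), and exploiting sequential compactness of the Fell topology on closed subsets of $\mathbb{R}^{m+1}$ to extract Kuratowski-convergent subsequences of endographs.

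For necessity I would argue by contradiction. Suppose $U$ is relatively compact but $U(\alpha_0)$ is unbounded for some $\alpha_0 \in (0,1]$. Pick $u_n \in U$ and $x_n \in [u_n]_{\alpha_0}$ with $\|x_n\| \to \infty$. Relative compactness then yields a subsequence with $H_{\rm end}(u_{n_k}, u) \to 0$ for some $u \in F_{USCG}(\mathbb{R}^m)$. Since $(x_{n_k}, \alpha_0) \in {\rm end}\, u_{n_k}$, for $k$ large one can pick $(y_k, t_k) \in {\rm end}\, u$ within distance $\alpha_0/2$ of $(x_{n_k}, \alpha_0)$, forcing $t_k > \alpha_0/2$ and hence $y_k \in [u]_{\alpha_0/2}$, which is bounded because $u \in F_{USCG}(\mathbb{R}^m)$. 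Then $\|x_{n_k}\| \le \|y_k\| + \alpha_0/2$ is bounded, contradicting $\|x_n\| \to \infty$.

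For sufficiency, given a sequence $\{u_n\} \subseteq U$ I would first pass to a subsequence whose endographs Kuratowski-converge to some closed $C \subseteq \mathbb{R}^m \times [0,1]$, using sequential compactness of the Fell topology. The central task is to show $C = {\rm end}\, u$ for some $u \in F_{USCG}(\mathbb{R}^m)$. First, $\mathbb{R}^m \times \{0\} \subseteq C$, since the constant choice $(x,0) \in {\rm end}\, u_{n_k}$ lies in $\liminf_k {\rm end}\, u_{n_k}$. Next, $C$ is downward-closed in the height coordinate: if $(x_k, s_k) \to (x, s) \in C$ with $(x_k, s_k) \in {\rm end}\, u_{n_k}$ and $s > 0$, then $(x_k, t) \in {\rm end}\, u_{n_k}$ for $t < s$ and $k$ large, forcing $(x, t) \in C$. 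Setting $u(x) := \sup\{t : (x, t) \in C\}$, the closedness of each vertical slice makes the supremum attained, so $C = {\rm end}\, u$ and, since $C$ is closed, $u$ is upper semi-continuous. For $\alpha \in (0,1]$ and $x \in [u]_\alpha$, the same $\alpha/2$-argument from the necessity direction places $x$ within $\alpha/2$ of $U(\alpha/2)$, hence $[u]_\alpha$ is bounded and $u \in F_{USCG}(\mathbb{R}^m)$. Since Kuratowski convergence of endographs is exactly $u_{n_k} \str{\Gamma}{\longrightarrow} u$, and $\bigcup_k [u_{n_k}]_\alpha \subseteq U(\alpha)$ is bounded for each $\alpha > 0$, Theorem \ref{hgce} delivers $H_{\rm end}(u_{n_k}, u) \to 0$, so $U$ is relatively compact.

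The main obstacle I anticipate lies in the sufficiency direction: identifying the Kuratowski limit $C$ as an endograph of a legitimate element of $F_{USCG}(\mathbb{R}^m)$ rather than some merely closed subset of $\mathbb{R}^m \times [0,1]$. This requires verifying downward-closedness of $C$ in the height coordinate so that $u(x) := \sup\{t : (x, t) \in C\}$ recovers $C$ exactly, and then transferring the level-wise bound on $U$ to boundedness of every positive $\alpha$-cut of the limit $u$ — after which Theorem \ref{hgce} handles the rest mechanically.
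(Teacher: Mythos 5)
Your proof is correct, but the sufficiency direction follows a genuinely different route from the paper's. The paper never touches the hyperspace of endographs directly: it enumerates the rationals in $(0,1]$, uses Proposition \ref{sca} to extract, by a diagonal argument, a subsequence $\{v_n\}$ whose $q_k$-cuts converge in $(K(\mathbb{R}^m)\cup\{\emptyset\},H)$ for every rational $q_k$, assembles the limit $v$ from the sets $v_\al=\bigcap_{q_k<\al}u_{q_k}$ via the representation theorem (Theorem \ref{fuscbchre}), and then verifies $H([v_n]_\al,[v]_\al)\to 0$ at every $\al\notin P(v)$ so that the level characterization Theorem \ref{hpe} closes the argument. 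You instead extract a Kuratowski-convergent subsequence of the endographs themselves, identify the limit set $C$ as ${\rm end}\,u$ for a genuine $u\in F_{USCG}(\mathbb{R}^m)$ (the downward-closedness and attained-supremum checks you describe are exactly what is needed, and your $\al/2$ argument correctly transfers boundedness of $U(\al/2)$ to $[u]_\al$), and then invoke Theorem \ref{uscbgche} (via Theorem \ref{hgce}); this is not circular, since Theorem \ref{uscbgche} is proved before and independently of the compactness results. The trade-off is that your extraction step rests on sequential compactness of the Fell topology on $C(\mathbb{R}^{m+1})\cup\{\emptyset\}$, a classical fact (Fell, Matheron) that the paper records only as metrizability plus compatibility with Kuratowski convergence, so you are importing one external ingredient; in exchange you avoid the diagonalization over rational levels and the delicate level-wise convergence estimates at non-platform points. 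Your necessity argument is also more direct than the paper's, which routes through total boundedness and the $\varepsilon$-net contradiction in Theorem \ref{fusbtoundcn}; both are sound.
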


\begin{proof}
  \   \textbf{ \emph{Necessity.}} \  Note that a relatively compact set is a totally bounded set. By proceeding according to the necessity part
of Theorem \ref{fusbtoundcn}, we
can
prove that
$U(\al)$ is a bounded set in $\mathbb{R}^m $ when $\al \in (0,1]$.

\textbf{\emph{Sufficiency.}} \ Suppose that, for each  $\al\in (0,1]$, $ U(\al) $ is a bounded set in $\mathbb{R}^m $.
Let $\{u_n\}$ be an arbitrarily chosen sequence in $U$.
To show that $U$ is a relatively compact set,
we only need to prove
that
$\{u_n\}$
has a subsequence $\{v_n\}$
which converges to $v$ in $F_{USCG} (\mathbb{R}^m)$,
i.e.,
$H_{\rm end} (v_n, v) \to 0$.
The proof is split into two steps.

\textbf{Step 1.}  \ Construct $\{v_n\}$ and find $v$.

First we affirm the following statement.
\begin{description}
  \item[] \ Given a sequence $\{w_n : n=1,2,\ldots\}$ in $U$ and $\al\in (0,1]$.
Then the corresponding sequence
$\{[w_n]_\al: n=1,2,\ldots\}$
has a convergent subsequence in $(  K(\mathbb{R}^m) \cup \{\emptyset\},  H )$.
\end{description}
In fact,
if
there exists an $N$ such that
$[w_{n}]_\al \equiv \emptyset$
when $n \geq N$,
then, clearly,
$\{ [w_{n}]_\al \} $ converges to $\emptyset$.
Otherwise
$\{[w_n]_\al: n=1,2,\ldots\}$ has a subsequence $\{[w_{n_j}]_\al: j=1,2,\ldots\}$ which is contained in $K(\mathbb{R}^m)$.
Since
 $ U(\al) $ is a bounded set in $\mathbb{R}^m $,
 by Proposition \ref{sca},
$\{ [w_{n_j}]_\alpha: j=1,2,\ldots \}$ is a relatively compact set in
$(K(\mathbb{R}^m), H)$.
Hence
$\{ [w_{n_j}]_\alpha: j=1,2,\ldots \}$ has a convergent subsequence in $(K(\mathbb{R}^m), H)$.

Now,
arrange
all rational numbers in
$(0,1]$
into a sequence
 $q_1,q_2,\ldots, q_n,\ldots$.
Then
 $\{u_n\}$
has a subsequence $\{u_n^{(1)}\}$ such that
$\{[u_n^{(1)}]_{q_1}\}$
converges to $u_{q_1} \in K(\mathbb{R}^m) \cup \{\emptyset\}$,
i.e. $H(   [u_n^{(1)}]_{q_1},     u_{q_1}   )  \to 0$.
If   $\{u_n^{(1)}\}, \ldots, \{u_n^{(k)}\}$ have been chosen,
then
we
can
choose
a subsequence $\{    u_n^{  (k+1)  }    \}$
of
$\{   u_n^ { (k) }   \}$
such that
 $\{[u_n^{(k+1)}]_{   q_{k+1}   }\}$
converges to
$u_{   q_{k+1}    }
 \in
 K (\mathbb{R}^m)\cup \{\emptyset\}
 $.
Thus we obtain $\{ u_{q_k}, k=1,2,\ldots \} $
which
has the following properties:
\begin{enumerate}\renewcommand{\labelenumi}{(\alph{enumi})}
  \item $ u_{q_m} \subseteq   u_{q_l}$  whenever $q_m > q_l $;
  \item   $   u_{q_k}  \in  K(\mathbb{R}^m) \cup \{\emptyset\}$ for each $k=1,2,\ldots$.
\end{enumerate}
In fact,
since
$ H( [u_n^{(k)}]_{ q_{k} },    u_{q_k}) \to 0$ as $n\to \infty$ for $k=1,2,\ldots$,
by Theorem \ref{hpkc}, we
get
property (a).
From the definition of  $  \{ u_{q_k} \} $, we know that property (b) holds.

Put $v_n=\{    u_n^{  (n) }    \}$ for $n=1,2,\ldots$.
Then $\{v_n\}$
is
a subsequence of
$\{u_n\}$
and
\begin{equation}\label{fusbqpc}
  H([v_n]_{q_k},   u_{q_k})  \to 0 \ \ \ \mbox{as} \ \ \ n\to \infty
\end{equation}
for   $k=1,2,\ldots$.
Define $\{v_\al:  \al\in (0,1] \} $ as follows:
\[
v_\alpha
=
 \bigcap_{q_k  <   \alpha}  u_{q_k},  \   \hbox{for each} \  \alpha \in (0,1].
\]
Then      $\{v_\al : \al\in (0,1]\}$ has the following properties:
\begin{enumerate}\renewcommand{\labelenumi}{(\roman{enumi})}

  \item  $v_\la\in K(\mathbb{R}^m) \cup \{  \emptyset  \}$ for all $\la\in (0,1]$;

  \item  $v_\la=\bigcap_{\gamma<\lambda}v_\gamma$ for all $\la\in (0,1]$.

\end{enumerate}
In fact,
from property (b) of $\{   u_{q_k}  \}$, we obtain property (\romannumeral1).
Property (\romannumeral2)
can be deduced from
the definition of $\{   v_\al   \}$.

Define    a function
$v: \mathbb{R}^m \to [0,1]$
by
$$v(x)=\left\{
         \begin{array}{ll}
           \bigvee_{x\in v_\lambda}\lambda, & \ \hbox{if } \{ \lambda:  x\in v_\lambda \} \not= \emptyset,
              \\
                 0, & \ \hbox{otherwise.}
         \end{array}
       \right.
$$
Then $v$ is a fuzzy set on $\mathbb{R}^m$.
From properties (\romannumeral1), (\romannumeral2) of $ \{  v_\al   \}$ and Theorem \ref{fuscbchre},
we know that
$$
v \in F_{USCG} (\mathbb{R}^m) \   \hbox{and} \    [v]_\al=v_\al \   \hbox{for each} \ \al \in (0,1].
$$

\textbf{Step 2. } \  Prove that $H_{\rm end} (v_n,   v) \to 0$ as $n\to \infty$.

By Theorem \ref{hpe}, $H_{\rm end} (v_n,   v) \to 0$ as $n\to \infty$
 is equivalent to
\begin{equation}\label{fusbaehc}
  H([v_n]_\alpha, [v]_\alpha) \to  0  \  \hbox{as} \  n\to \infty
\end{equation}
for all $\alpha\in (0,1)\backslash P(v)$.

Take $\alpha\in (0,1)\backslash P(v)$.
If
$ [v]_\al = \emptyset$,
then there is a $q_k < \alpha$
such that
$u_{q_k}=\emptyset$.
Hence,
by \eqref{fusbqpc},
$H([v_n]_{q_k},   u_{q_k}   )=   H([v_n]_{q_k},   \emptyset   ) \to 0$,
and
therefore
$[v_n]_{q_k} \equiv \emptyset$ when $n$ is greater than a certain $N$.
Note
 that
$[v_n]_{\al} \subseteq [v_n]_{q_k} $,
thus
$H([v_n]_{\alpha},   [v]_{\alpha}   )=   H([v_n]_{\alpha},   \emptyset   ) \to 0$.
So \eqref{fusbaehc} holds.

If
$ [v]_\al \not= \emptyset$.
Since $\alpha\in (0,1)\backslash P(v)$,
we know
that
$\al \in (0, \lambda_u)$
and
that, by Theorem \ref{smncbpe},
$H( [v]_\beta,  [v]_\al  ) \to 0$ as $\beta \to \alpha$.
This is equivalent to
$H( u_q,  [v]_\al  ) \to 0$ as $q \to \alpha$.
Thus,
 given $\varepsilon>0$, we can find a $\delta>0$
such that
$H(u_q, v_\al)<\varepsilon$
for all $q\in \mathbb{Q}$ with $|q-\alpha|<\delta$.
So
\begin{equation*}
  H^*([v_n]_\alpha,   v_\al)  \leq    H^*([v_n]_{q_1},   v_\al)  \leq     H^*([v_n]_{q_1},   u_{q_1}) +\varepsilon
\end{equation*}
when $q_1\in \mathbb{Q} \cap (\alpha-\delta,   \alpha)$.
Hence, by eq. \eqref{fusbqpc} and the arbitrariness of $\varepsilon$,
we obtain
\begin{equation}\label{fusblhc}
   H^*([v_n]_\alpha,   v_\al) \to 0 \ (n\to \infty).
\end{equation}
On the other hand,
\begin{equation*}
  H^*(v_\alpha,   [v_n]_\al)  \leq    H^*(v_\al,   [v_n]_{q_2}   )  \leq     H^*( u_{q_2},   [v_n]_{q_2}  ) +\varepsilon
\end{equation*}
when $q_2\in \mathbb{Q} \cap (\alpha,  \alpha + \delta)$.
Hence, by eq. \eqref{fusbqpc} and the arbitrariness of $\varepsilon$,
we obtain
\begin{equation}\label{fusbrhc}
   H^*(  v_\al ,    [v_n]_\alpha ) \to 0 \ (n\to \infty).
\end{equation}
Combined with \eqref{fusblhc} and \eqref{fusbrhc},
we thus
obtain \eqref{fusbaehc}.
 \end{proof}

\begin{tm} \label{fuscbcn}
$U$ is a compact set in $(F_{USCG}(\mathbb{R}^m), H_{\rm end})$
  if and only if $U$ is a closed set in $(F_{USCG}(\mathbb{R}^m), H_{\rm end})$
  and
   $ U(\al) $ is a bounded set in $\mathbb{R}^m $
 when $\al \in (0,1]$.
\end{tm}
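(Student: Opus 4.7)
The plan is to derive this as an immediate corollary of Theorem \ref{fuscbrcn} together with the standard fact that in any metric space, a set is compact if and only if it is closed and relatively compact.

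For the necessity direction, I would note that any compact subset of the metric space $(F_{USCG}(\mathbb{R}^m), H_{\rm end})$ is automatically closed, and it is of course relatively compact (since $U = \overline{U}$). Applying Theorem \ref{fuscbrcn} then yields the boundedness of $U(\alpha)$ for each $\alpha\in(0,1]$.

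For the sufficiency direction, the hypothesis that $U(\alpha)$ is bounded for every $\alpha\in(0,1]$ gives, via Theorem \ref{fuscbrcn}, that $U$ is relatively compact in $(F_{USCG}(\mathbb{R}^m), H_{\rm end})$. Combined with the closedness of $U$, this forces $U = \overline{U}$ to be compact.

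There is essentially no obstacle here; the theorem is a clean two-line consequence of the relative-compactness characterization already established. The only thing worth being careful about is ensuring that the ambient space $(F_{USCG}(\mathbb{R}^m), H_{\rm end})$ is genuinely a metric space so that the identity ``compact $=$ closed $+$ relatively compact'' applies, but this is already part of the setup, since $H_{\rm end}$ is inherited from the Hausdorff metric on $C(\mathbb{R}^m\times[0,1])$.
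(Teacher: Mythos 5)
Your argument is correct and coincides with the paper's own proof, which likewise derives the theorem as an immediate consequence of Theorem \ref{fuscbrcn} via the standard equivalence ``compact $=$ closed $+$ relatively compact'' in a metric space. You have merely spelled out the details that the paper leaves implicit.
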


\begin{proof}
   \ The desired result follows immediately from Theorem \ref{fuscbrcn}.
\end{proof}

\begin{tm} \label{fusbtoundcn}
Let $U$ be a subset of $ F_{USCG}(\mathbb{R}^m) $.
Then
  $U$ is a totally bounded set in $(F_{USCG}(\mathbb{R}^m), H_{\rm end})$
  if and only if, for each $\al \in (0,1]$,
 $ U(\al) $ is a bounded set in $\mathbb{R}^m $.

\end{tm}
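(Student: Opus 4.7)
My plan is to handle the two directions separately: sufficiency will fall out immediately from Theorem \ref{fuscbrcn}, while necessity will come from a direct $\varepsilon$-net argument carried out at the level of endographs.

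For \emph{sufficiency}, I would observe that if $U(\al)$ is bounded in $\mathbb{R}^m$ for every $\al\in(0,1]$, then Theorem \ref{fuscbrcn} already gives that $U$ is relatively compact in $(F_{USCG}(\mathbb{R}^m), H_{\rm end})$. In any metric space relative compactness implies total boundedness (the closure is compact, hence totally bounded, and $U$ sits inside it), so $U$ itself is totally bounded. There is no further work in this direction.

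For \emph{necessity}, I would fix $\al\in(0,1]$ and choose any $\var\in(0,\al)$. By total boundedness there exist $u_1,\dots,u_N\in F_{USCG}(\mathbb{R}^m)$ forming an $\var$-net for $U$ with respect to $H_{\rm end}$. Given $u\in U$ and $x\in[u]_\al$, the point $(x,\al)$ lies in ${\rm end}\,u$; selecting $u_i$ with $H_{\rm end}(u,u_i)<\var$, the definition of the Hausdorff metric on endographs produces $(y,\beta)\in{\rm end}\,u_i$ with $\|x-y\|<\var$ and $|\al-\beta|<\var$. The choice $\var<\al$ is crucial: it forces $\beta>\al-\var>0$, so $y\in[u_i]_{\beta}\subseteq[u_i]_{\al-\var}$. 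Consequently
\[
U(\al)\ \subseteq\ \bigcup_{i=1}^N \{z\in\mathbb{R}^m:\ d(z,[u_i]_{\al-\var})<\var\},
\]
and since each $u_i\in F_{USCG}(\mathbb{R}^m)$ yields $[u_i]_{\al-\var}\in K(\mathbb{R}^m)\cup\{\emptyset\}$, the right-hand side is a bounded subset of $\mathbb{R}^m$; hence so is $U(\al)$.

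The only subtle point, and what I would regard as the main obstacle, is engineering the necessity argument so that the positive level $\al$ is genuinely used, namely by inserting the strictly smaller but still positive level $\al-\var$; this is exactly what converts closeness in $H_{\rm end}$ (where the height coordinate is allowed to shift) into a bound on the $\al$-cuts. It is also worth noting that the case $u_i=\widehat{\emptyset}$ gives no trouble: for any $u\in U$ with $[u]_\al\neq\emptyset$, a point $(x,\al)\in{\rm end}\,u$ has distance at least $\al>\var$ from ${\rm end}\,\widehat{\emptyset}=\mathbb{R}^m\times\{0\}$, so such a $u_i$ simply never serves as the witness of $H_{\rm end}(u,u_i)<\var$.
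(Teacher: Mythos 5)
Your proposal is correct. The sufficiency direction is exactly the paper's: invoke Theorem \ref{fuscbrcn} to get relative compactness and hence total boundedness. For necessity the underlying idea is the same as the paper's --- namely that $H_{\rm end}$-proximity at a scale $\var$ strictly smaller than the level $\al$ forces points of $[u]_\al$ to lie near a cut of the net element taken at a slightly lower (but still positive) level --- but you execute it as a direct covering argument, whereas the paper argues by contradiction: it assumes $U(\al)$ unbounded, builds a sequence $\{u_n\}$ whose $\al$-cuts escape every enlarged neighbourhood $V_n$, and then shows any putative $\al/2$-net $\{w_i\}$ would have to satisfy $H_{\rm end}(u_{N+3},w_i)\geq \tfrac{2}{3}\al$, using the cuts $[w_i]_{\al/3}$ in the role your $[u_i]_{\al-\var}$ plays. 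Your version is cleaner and yields the explicit inclusion $U(\al)\subseteq\bigcup_{i}\{z: d(z,[u_i]_{\al-\var})<\var\}$, which immediately gives boundedness since each $[u_i]_{\al-\var}$ is compact or empty; the paper's version produces a quantitative lower bound on the net mesh instead. You also correctly identify the one genuinely delicate point (converting a shift in the height coordinate into containment in a lower cut), and your remark that an empty or near-empty net element can never serve as the witness is sound, though it is already absorbed by the inclusion itself (an empty cut contributes an empty neighbourhood to the union).
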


\begin{proof}
 \   \textbf{\emph{Necessity.}} \  We proceed by contradiction. Suppose
that
$U$ is a totally bounded set in $(F_{USCG}(\mathbb{R}^m), H_{\rm end})$.
If
there exists an $\alpha\in (0,1]$ such that
 $ U(\al) $ is not a bounded set in $\mathbb{R}^m $,
then
we can find a sequence $\{u_n\}$ in $U$. It
satisfies that $[u_1]_\alpha \not=  \emptyset$ and
\begin{equation}\label{fsbunc}
  [u_{n+1}]_\al \nsubseteq V_n,
\end{equation}
where
$V_n= \{x\in \mathbb{R}^m :     d(x,   \bigcup_{i=1}^n  [ u_i ]_\al ) \leq n       \}    $.
Assume
that
$\{w_{1}, w_{2},   \ldots,    w_{k}\}$
is
a $\alpha/2$-net of $U$.
Clearly,
$\bigcup_{i=1}^k   [w_{i}]_{\alpha/3}$
is a bounded set in
$  \mathbb{R}^m$.
Hence
there exists an $N \in  \mathbb{N}$ such that
$$  [w_{i}]_{\alpha/3} \subseteq   V_N,   \ \hbox{for }  i=1,2,\ldots k,$$
and
therefore, by \eqref{fsbunc},
$$H( [ u_{N+3}]_\alpha ,      \bigcup_{i=1}^{k}  [w_{i}]_{\alpha/3}       ) >  1. $$
Thus
$$
H (   {\rm end}\,   u_{N+3} ,    {\rm end}\,  w_{i}   ) \geq    \frac{2}{3}  \alpha
\ \hbox{for }  i=1,2,\ldots k.
$$
This contradicts
the assumption
that
$\{w_{1}, w_{2},   \ldots,    w_{k}\}$
is
a $\alpha/2$-net of $U$.

\textbf{\emph{Sufficiency.}} \ Suppose that  $ U(\al) $ is a bounded set in $\mathbb{R}^m $
for
each $\al\in (0,1]$.
Then,
by Theorem \ref{fuscbrcn},
we know
that
$U$ is relatively compact,
and
thus
$U$ is totally bounded.
\end{proof}

\begin{tl} \label{fusbtdrceqn}
  Let $U$ be a set in $(F_{USCG}(\mathbb{R}^m), H_{\rm end})$. The $U$ is totally bounded
  if and only if
  $U$ is relatively compact.
\end{tl}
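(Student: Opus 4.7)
The plan is to derive this corollary immediately from the two preceding theorems, both of which give intrinsic characterizations of the respective notions in terms of the same geometric condition on $U$.

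First I would recall that Theorem \ref{fuscbrcn} shows $U$ is relatively compact in $(F_{USCG}(\mathbb{R}^m), H_{\rm end})$ if and only if $U(\alpha) = \bigcup_{u\in U}[u]_\alpha$ is a bounded subset of $\mathbb{R}^m$ for every $\alpha \in (0,1]$. Then I would recall that Theorem \ref{fusbtoundcn} establishes that $U$ is totally bounded in $(F_{USCG}(\mathbb{R}^m), H_{\rm end})$ under exactly the same condition: $U(\alpha)$ is bounded in $\mathbb{R}^m$ for every $\alpha \in (0,1]$.

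Since the two theorems characterize ``relatively compact'' and ``totally bounded'' by one and the same property of $U$, the equivalence of the two notions follows at once. Thus the proof is a one-line deduction: $U$ is relatively compact $\iff$ $U(\alpha)$ bounded for all $\alpha\in(0,1]$ $\iff$ $U$ is totally bounded.

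There is no serious obstacle in this step, since all the real work has been done in establishing Theorems \ref{fuscbrcn} and \ref{fusbtoundcn}. It is worth noting, however, that the corollary carries a substantive consequence that deserves explicit mention: in a general metric space, relative compactness always implies total boundedness but the converse requires completeness of the ambient space. Hence the equivalence obtained here is tantamount to the statement that $(F_{USCG}(\mathbb{R}^m), H_{\rm end})$ is a complete metric space, which the author in fact announces in the section introduction. I would therefore phrase the brief proof as a direct combination of the two cited theorems, and, if desired, append a remark pointing out that this forces completeness of $(F_{USCG}(\mathbb{R}^m), H_{\rm end})$.
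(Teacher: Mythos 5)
Your proof is correct and is exactly the paper's argument: Corollary \ref{fusbtdrceqn} is obtained by chaining Theorem \ref{fuscbrcn} and Theorem \ref{fusbtoundcn}, both of which characterize their respective property by the boundedness of $U(\al)$ for every $\al\in(0,1]$. Your closing observation about completeness is also consistent with the paper, which records that consequence separately as Theorem \ref{fusbcspe}.
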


\begin{proof}
 The desired result follows immediately
from
Theorems \ref{fuscbrcn} and \ref{fusbtoundcn}.
\end{proof}

\begin{tm} \label{fusbcspe}
  $(F_{USCG}(\mathbb{R}^m), H_{\rm end})$ is a complete space.
\end{tm}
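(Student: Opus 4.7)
The plan is to invoke the standard fact that a Cauchy sequence which has a convergent subsequence must itself converge, together with the relative compactness result established in Theorem \ref{fuscbrcn}.

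First I would take an arbitrary Cauchy sequence $\{u_n\}$ in $(F_{USCG}(\mathbb{R}^m), H_{\rm end})$. Since every Cauchy sequence is totally bounded, Lemma \ref{hbn} (which is already proved as a direct consequence of the necessity half of Theorem \ref{fusbtoundcn}) gives that $\bigcup_{n=1}^{+\infty} [u_n]_\alpha$ is a bounded set in $\mathbb{R}^m$ for each $\alpha\in(0,1]$. In other words, $\{u_n\}(\alpha)$ is bounded for every $\alpha\in(0,1]$.

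Next, I would apply Theorem \ref{fuscbrcn} to conclude that $\{u_n\}$ is a relatively compact subset of $(F_{USCG}(\mathbb{R}^m), H_{\rm end})$. Hence $\{u_n\}$ possesses a subsequence $\{u_{n_k}\}$ converging in the endograph metric to some $v\in F_{USCG}(\mathbb{R}^m)$.

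Finally, I would close the argument using the standard Cauchy-plus-subsequence fact: given $\varepsilon>0$, choose $N$ so that $H_{\rm end}(u_n,u_m)<\varepsilon/2$ for all $n,m\geq N$, then pick $n_k\geq N$ with $H_{\rm end}(u_{n_k},v)<\varepsilon/2$, and use the triangle inequality to obtain $H_{\rm end}(u_n,v)<\varepsilon$ for all $n\geq N$. This gives $H_{\rm end}(u_n,v)\to 0$, so the whole sequence converges in $F_{USCG}(\mathbb{R}^m)$, which is the desired completeness. There is no real obstacle here, since all of the technical work (boundedness of unions of cut-sets for a totally bounded family, plus the construction of a limit fuzzy set via the diagonal/rational-cut argument) has already been carried out in Theorems \ref{fusbtoundcn} and \ref{fuscbrcn}; the present result is essentially a clean corollary of those two theorems.
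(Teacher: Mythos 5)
Your argument is correct and is essentially the same as the paper's: the paper derives completeness from Corollary \ref{fusbtdrceqn} (total boundedness $\Leftrightarrow$ relative compactness in $(F_{USCG}(\mathbb{R}^m), H_{\rm end})$), and your proof simply unpacks that deduction — a Cauchy sequence is totally bounded, hence relatively compact by Lemma \ref{hbn} and Theorem \ref{fuscbrcn}, hence has a convergent subsequence, and a Cauchy sequence with a convergent subsequence converges. No gap; you have just written out the details the paper leaves implicit.
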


\begin{proof}
   The desired result follows
from Corollary \ref{fusbtdrceqn}.
 \end{proof}

\begin{re}\label{trm}{\rm The relationships of endograph metric and $\Gamma$-convergence given in Section \ref{rghc}
have deep connections with
the characterizations of relative compactness and total boundedness obtained
in this section.

In this paper,
we first give
the relationship
of
endograph metric and $\Gamma$-convergence
(Theorem \ref{hgce}), then present
 level characterizations of endograph metric $H_{\rm end}$
of
fuzzy sets in $F_{USCG} ( \mathbb{R}^m )$ (Theorem \ref{hpe}),
and
then obtain
characterizations of relative compactness, compactness and total boundedness
of $(F_{USCG}(\mathbb{R}^m), H_{\rm end})$
(Theorems \ref{fuscbrcn}, \ref{fuscbcn} and \ref{fusbtoundcn}).
The latter results are proved by using
the former
results.

Here, we mention that,
conversely,
by
 using characterization of relatively compact sets in $(F_{USCG}(\mathbb{R}^m), H_{\rm end})$
(Theorem \ref{fuscbrcn}), we can also deduce
the relationship of
 the
endograph metric and the $\Gamma$-convergence
(Theorem \ref{hgce}).

For the convenience of reading, we write
Theorem \ref{hgce}
in the following.

\begin{description}
  \item[] \hspace{6mm}
 Suppose that $u$, $u_n$,
$n=1,2,\ldots$, are  fuzzy sets in $F_{USCG} (\mathbb{R}^m)$.
  Then $H_{\rm end}(u_n, u) \to 0$ as $n\to \infty$
is equivalent to
\\
  (\romannumeral1)  $u_n\str{\Gamma}{\longrightarrow}u$, and
\\
 (\romannumeral2)  given $\alpha\in (0,1]$,
$ \bigcup_{n=1}^{+\infty}  [u_n]_\al $ is a bounded set in $\mathbb{R}^m $.
\end{description}

\begin{proof}
\textbf{Sufficiency}. If $H_{\rm end}(u_n, u) \not\to 0$, then there is an $\varepsilon>0$ and a subsequence $\{u_{n_i}\}$ such that
\begin{equation}\label{evu}
H_{\rm end}(u_{n_i}, u) \geq \varepsilon.
\end{equation}
From Theorem  \ref{fuscbrcn}, $\{u_n\}$ is a relatively compact set in $(F_{USCG}(\mathbb{R}^m), H_{\rm end})$.
Thus
$u$
is
an
accumulation point of $\{u_{n_i}\}$ with respect to $H_{\rm end}$, which contradicts \eqref{evu}.

{\textbf{Necessity}.} (\romannumeral1) follows from Theorem \ref{uschpm}. (\romannumeral2) follows from
the fact $\{u_n\}$ is relatively compact and Theorem \ref{fuscbrcn}.
\end{proof}

}
\end{re}

\section{Properties of $(F_{USCB} (\mathbb{R}^m), H_{\rm end})$     \label{uscbpy}}

  In this section, we point out that $(F_{USCG} (\mathbb{R}^m), H_{\rm end})$
  is the completion
  of
  $(F_{USCB} (\mathbb{R}^m), H_{\rm end})$,
 and
    give characterizations of totally bounded sets, relatively compact sets and compact sets in
$(F_{USCB}(\mathbb{R}^m), H_{\rm end})$.

Ma \cite{ma}
use $u^{(\alpha)}$ to denote the fuzzy set $u^{(\alpha)}$ derived by $u\in F(\mathbb{R}^m)$ which is defined as follows:
\[
u^{(\alpha)}(x)=
\left\{
  \begin{array}{ll}
    u(x), & \hbox{if} \  u(x) \geq\alpha,
\\
    0, & \hbox{if} \   u(x)<\alpha.
  \end{array}
\right.
\]

\begin{lm} \label{csn}
  Let $u\in F_{USCG}(\mathbb{R}^m)$. Then, for each $\al\in (0,1]$,
   $H( [u^{(1/n)}]_\al, [u]_\al) \to 0$
   as
   $n\to \infty$. Thus $H_{\rm end} (u^{(1/n)}, u) \to 0$ as $n\to \infty$.
\end{lm}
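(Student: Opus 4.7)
The plan is to unpack the definition of $u^{(1/n)}$ at the level of $\alpha$-cuts and then read off both conclusions. For a fixed $\alpha \in (0,1]$ and positive integer $n$, a direct computation from the definition of $u^{(1/n)}$ shows
\[
[u^{(1/n)}]_\alpha
=
\begin{cases}
[u]_\alpha, & \text{if } \alpha \ge 1/n, \\
[u]_{1/n}, & \text{if } \alpha < 1/n.
\end{cases}
\]
Indeed, if $\alpha \ge 1/n$ then $u^{(1/n)}(x) \ge \alpha$ forces $u(x) = u^{(1/n)}(x) \ge \alpha$, and conversely $u(x) \ge \alpha \ge 1/n$ gives $u^{(1/n)}(x) = u(x) \ge \alpha$; if $\alpha < 1/n$, then $u^{(1/n)}(x) \ge \alpha > 0$ forces $u^{(1/n)}(x) = u(x) \ge 1/n$, and conversely.

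The first conclusion is then immediate: for fixed $\alpha \in (0,1]$, as soon as $n \ge 1/\alpha$ one has $[u^{(1/n)}]_\alpha = [u]_\alpha$, so $H([u^{(1/n)}]_\alpha, [u]_\alpha) = 0$ for all sufficiently large $n$, which in particular tends to $0$.

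For the second conclusion I would argue directly on the endographs rather than invoke Theorem \ref{hpe}. Since $u^{(1/n)}(x) \le u(x)$ pointwise, we have ${\rm end}\, u^{(1/n)} \subseteq {\rm end}\, u$, and hence $H^*({\rm end}\, u^{(1/n)}, {\rm end}\, u) = 0$. For the other direction, take $(x,t) \in {\rm end}\, u$. If $t \ge 1/n$, then $u(x) \ge t \ge 1/n$, so $u^{(1/n)}(x) = u(x) \ge t$ and already $(x,t) \in {\rm end}\, u^{(1/n)}$. If $t < 1/n$, then $(x,0) \in {\rm end}\, u^{(1/n)}$ (since $u^{(1/n)}(x) \ge 0$) and the Euclidean distance from $(x,t)$ to $(x,0)$ in $\mathbb{R}^{m+1}$ is $t < 1/n$. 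Thus $H^*({\rm end}\, u, {\rm end}\, u^{(1/n)}) \le 1/n$, and combining gives $H_{\rm end}(u^{(1/n)}, u) \le 1/n \to 0$.

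There is no serious obstacle here; the only place one might slip is in the case distinction for the $\alpha$-cut computation, where one must be careful that the boundary case $u(x) = 1/n$ is handled correctly so that $[u^{(1/n)}]_{1/n} = [u]_{1/n}$ agrees with both formulas. As an alternative deduction for the last assertion, one can verify $u^{(1/n)} \in F_{USCG}(\mathbb{R}^m)$ via Theorem \ref{fuscbchre} (the cuts are either $[u]_\alpha$ or $[u]_{1/n}$, both in $K(\mathbb{R}^m) \cup \{\emptyset\}$, and the left-continuity property $[u^{(1/n)}]_\lambda = \bigcap_{\gamma < \lambda} [u^{(1/n)}]_\gamma$ reduces to the corresponding property for $u$), and then apply Theorem \ref{hpe} to the eventual equality $[u^{(1/n)}]_\alpha = [u]_\alpha$ on all of $(0,1]$, which is stronger than any of its equivalent conditions.
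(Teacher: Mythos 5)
Your cut-set computation is exactly the one the paper uses (the paper records $[u^{(\alpha)}]_\beta=[u]_\beta$ for $\beta\ge\alpha$ and $[u^{(\alpha)}]_\beta=[u]_\alpha$ for $\beta<\alpha$), and your first conclusion — eventual equality $[u^{(1/n)}]_\alpha=[u]_\alpha$ once $n\ge 1/\alpha$ — is the same observation. Where you genuinely diverge is the second conclusion. The paper deduces $H_{\rm end}(u^{(1/n)},u)\to 0$ by feeding the levelwise convergence into Theorem \ref{hpe}, i.e.\ it leans on the level characterization of endograph metric convergence established earlier (implicitly using that $u^{(1/n)}\in F_{USCB}(\mathbb{R}^m)\subset F_{USCG}(\mathbb{R}^m)$, as the paper notes in Theorem \ref{bdg}). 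Your direct endograph argument is correct and, I would say, preferable in isolation: since $u^{(1/n)}\le u$ pointwise you get ${\rm end}\,u^{(1/n)}\subseteq{\rm end}\,u$, and since $\mathbb{R}^m\times\{0\}\subseteq{\rm end}\,u^{(1/n)}$ every point $(x,t)\in{\rm end}\,u$ with $t<1/n$ lies within distance $t$ of $(x,0)$, giving the quantitative bound $H_{\rm end}(u^{(1/n)},u)\le 1/n$. This buys you three things the paper's route does not: an explicit rate, independence from the machinery of Theorem \ref{hpe}, and validity on all of $F_{USC}(\mathbb{R}^m)$ rather than only $F_{USCG}(\mathbb{R}^m)$ (the first, levelwise, conclusion of the lemma likewise never uses boundedness of the cuts). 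The paper's route has the advantage of reusing an already-proved theorem and keeping the argument uniform with the surrounding density results, but it is strictly heavier. Your closing alternative via Theorem \ref{fuscbchre} and Theorem \ref{hpe} is essentially the paper's proof, so you have both routes covered; there is no gap.
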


\begin{proof}
 \ Note that, for any $\beta\in [0, 1]$,
\[
[u^{(\alpha)} ]_\beta=
\left\{
  \begin{array}{ll}
   [u]_\beta, & \hbox{if} \ \beta \geq\alpha,
\\
   \mbox{} [u]_\al, & \hbox{if} \   \beta<\alpha.
  \end{array}
\right.
\]
Hence we can see that, given $\al\in (0,1]$,
   $H( [u^{(1/n)}]_\al, [u]_\al) \to 0$
   as
   $n\to \infty$.
    $H( [u^{(1/n)}]_\al, [u]_\al) \to 0$  for each $\al \in (0,1]$.
Hence by Theorem \ref{hpe},
$H_{\rm end} (u^{(1/n)}, u) \to 0$ as $n\to \infty$.
\end{proof}

\begin{tm} \label{bdg}
 $ F_{USCB}(\mathbb{R}^m)$ is dense in  $( F_{USCG}(\mathbb{R}^m), H_{\rm end})$.
\end{tm}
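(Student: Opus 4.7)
The plan is to exploit the truncation $u^{(\alpha)}$ defined just before Lemma \ref{csn}, using $\alpha = 1/n$ to produce an approximating sequence from $F_{USCB}(\mathbb{R}^m)$. Given any $u \in F_{USCG}(\mathbb{R}^m)$, I would set $u_n := u^{(1/n)}$ and show that $\{u_n\} \subset F_{USCB}(\mathbb{R}^m)$ with $H_{\rm end}(u_n,u) \to 0$. The convergence half is immediate from Lemma \ref{csn}, so the only real work is to verify the membership $u^{(1/n)} \in F_{USCB}(\mathbb{R}^m)$.

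For the membership, I would use the cut-set formula established in the proof of Lemma \ref{csn}, namely
\[
[u^{(\alpha)}]_\beta =
\begin{cases}
[u]_\beta, & \beta \geq \alpha,\\
[u]_\alpha, & 0 < \beta < \alpha.
\end{cases}
\]
From this, every level set $[u^{(1/n)}]_\beta$ with $\beta \in (0,1]$ equals some $\alpha$-cut of $u$, hence is closed in $\mathbb{R}^m$ (since $u$ is upper semi-continuous). This yields upper semi-continuity of $u^{(1/n)}$. For the bounded-support condition (iv-1), observe that $\{u^{(1/n)} > 0\} = \{u \geq 1/n\} = [u]_{1/n}$, which is closed; therefore $[u^{(1/n)}]_0 = \overline{\{u^{(1/n)} > 0\}} = [u]_{1/n}$, and this is bounded because $u \in F_{USCG}(\mathbb{R}^m)$. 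Hence $u^{(1/n)} \in F_{USCB}(\mathbb{R}^m)$.

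With the sequence located inside $F_{USCB}(\mathbb{R}^m)$, Lemma \ref{csn} gives $H_{\rm end}(u^{(1/n)},u) \to 0$, so $u$ lies in the $H_{\rm end}$-closure of $F_{USCB}(\mathbb{R}^m)$. Since $u \in F_{USCG}(\mathbb{R}^m)$ was arbitrary, the density follows. There is no substantive obstacle here; the key point is simply that truncating a bounded-cut fuzzy set at a positive threshold $1/n$ collapses its support onto the bounded cut $[u]_{1/n}$, which is exactly the operation that moves $F_{USCG}$ into $F_{USCB}$ while approximating well in the endograph metric.
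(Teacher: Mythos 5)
Your proof is correct and follows essentially the same route as the paper: approximate $u$ by the truncations $u^{(1/n)}$ and invoke Lemma \ref{csn} for the endograph convergence. The only difference is that you spell out the verification that $u^{(1/n)}\in F_{USCB}(\mathbb{R}^m)$ (via $[u^{(1/n)}]_0=[u]_{1/n}$), which the paper simply asserts; that verification is accurate.
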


\begin{proof}
  \ Take $u\in F_{USCG}(\mathbb{R}^m)$. Then $\{u^{(1/n)}, n=1,2,\ldots\}$ is a sequence in   $F_{USCB}(\mathbb{R}^m)$.
By
Lemma \ref{csn},
$H_{\rm end} (u^{(1/n)}, u) \to 0$ as $n\to \infty$.
So
 $F_{USCB}(\mathbb{R}^m)$ is dense in  $(F_{USCG}(\mathbb{R}^m), H_{\rm end})$.
 \end{proof}

\begin{tm} \label{gcde}
 $ (F_{USCG}(\mathbb{R}^m), H_{\rm end})$ is the completion of $( F_{USCB}(\mathbb{R}^m), H_{\rm end})$.
\end{tm}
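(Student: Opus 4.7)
The plan is to invoke directly the two immediately preceding results and appeal to the standard characterization of a completion. Recall that, up to isometric isomorphism, a completion of a metric space $(X,d)$ is any complete metric space $(\widehat X,\widehat d)$ that contains $X$ as a dense isometric subspace. So the whole task reduces to verifying these two properties for the pair $\bigl(F_{USCB}(\mathbb{R}^m), H_{\rm end}\bigr) \subset \bigl(F_{USCG}(\mathbb{R}^m), H_{\rm end}\bigr)$.

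First I would observe that $F_{USCB}(\mathbb{R}^m)$ is a subspace of $F_{USCG}(\mathbb{R}^m)$ in the sense that the endograph metric on the former is the restriction of the endograph metric on the latter (this is immediate from the definition $H_{\rm end}(u,v) = H(\mathrm{end}\,u, \mathrm{end}\,v)$, which does not depend on which ambient space $u$ and $v$ are viewed in). Thus the inclusion is automatically an isometric embedding, and no work is needed here.

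Next I would apply Theorem \ref{fusbcspe} to conclude that $\bigl(F_{USCG}(\mathbb{R}^m), H_{\rm end}\bigr)$ is complete, and then apply Theorem \ref{bdg} to conclude that $F_{USCB}(\mathbb{R}^m)$ is dense in $\bigl(F_{USCG}(\mathbb{R}^m), H_{\rm end}\bigr)$. Together with the isometric embedding observed above, these two facts say exactly that $\bigl(F_{USCG}(\mathbb{R}^m), H_{\rm end}\bigr)$ satisfies the defining properties of a completion of $\bigl(F_{USCB}(\mathbb{R}^m), H_{\rm end}\bigr)$.

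There is no real obstacle: both ingredients are already proved, and the only thing to articulate is that completeness plus isometric dense inclusion is the definition (equivalently, the universal characterization) of completion. If one wanted to be fully explicit, one could add a one-line remark that any completion is uniquely determined up to a canonical isometry extending the identity on $F_{USCB}(\mathbb{R}^m)$, so the statement is unambiguous.
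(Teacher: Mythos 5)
Your proposal is correct and follows exactly the paper's own argument: the paper likewise deduces the result directly from Theorem \ref{fusbcspe} (completeness of $(F_{USCG}(\mathbb{R}^m), H_{\rm end})$) and Theorem \ref{bdg} (density of $F_{USCB}(\mathbb{R}^m)$), with the isometric inclusion being immediate from the definition of $H_{\rm end}$. Your extra remarks on the characterization and uniqueness of completions merely make explicit what the paper leaves implicit.
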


\begin{proof}
\ From Theorems \ref{fusbcspe} and \ref{bdg}, we can obtain the desired result.
\end{proof}

\begin{tm} \label{fusbtdn}
Let $U$ be a subset of $ F_{USCB}(\mathbb{R}^m) $.
Then
  $U$ is a totally bounded set in $(F_{USCB}(\mathbb{R}^m), H_{\rm end})$
  if and only if, for each $\al \in (0,1]$,
 $ U(\al) $ is a bounded set in $\mathbb{R}^m $.

\end{tm}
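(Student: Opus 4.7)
The plan is to reduce this theorem to the already-proven Theorem \ref{fusbtoundcn}, which establishes exactly the same characterization for subsets of the larger space $F_{USCG}(\mathbb{R}^m)$. Since $F_{USCB}(\mathbb{R}^m) \subsetneq F_{USCG}(\mathbb{R}^m)$ and the endograph metric $H_{\rm end}$ on $F_{USCB}(\mathbb{R}^m)$ is literally the restriction of $H_{\rm end}$ on $F_{USCG}(\mathbb{R}^m)$, the set $U \subset F_{USCB}(\mathbb{R}^m)$ also sits inside $F_{USCG}(\mathbb{R}^m)$ with the same distances between its members.

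The key observation is that total boundedness is an intrinsic property of a subset together with its induced metric: $U$ is totally bounded if and only if, for every $\varepsilon>0$, there exists a finite $\varepsilon$-net inside $U$ itself (equivalently, inside any enveloping space containing $U$). Consequently, $U$ is totally bounded in $(F_{USCB}(\mathbb{R}^m), H_{\rm end})$ if and only if $U$ is totally bounded in $(F_{USCG}(\mathbb{R}^m), H_{\rm end})$.

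Given this equivalence, Theorem \ref{fusbtoundcn} immediately yields the conclusion: $U$ is totally bounded in $(F_{USCB}(\mathbb{R}^m), H_{\rm end})$ iff $U(\al)$ is bounded in $\mathbb{R}^m$ for each $\al\in(0,1]$. I do not expect any real obstacle here; the only point requiring a brief remark is the intrinsic nature of total boundedness, so that the ambient-space switch between $F_{USCB}$ and $F_{USCG}$ is justified. If one preferred a self-contained argument, the necessity proof of Theorem \ref{fusbtoundcn} can be copied verbatim since it only constructs sequences inside $U$, and the sufficiency follows by invoking Theorem \ref{fuscbrcn} (relative compactness in $F_{USCG}$) and then restricting back to $F_{USCB}$.
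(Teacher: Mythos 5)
Your proposal is correct and matches the paper's own argument, which likewise deduces the result immediately from Theorem \ref{fusbtoundcn} together with the inclusion $F_{USCB}(\mathbb{R}^m) \subset F_{USCG}(\mathbb{R}^m)$. Your explicit remark that total boundedness is intrinsic to $(U, H_{\rm end}|_U)$ and hence unaffected by the choice of ambient space is a sound (and slightly more careful) justification of the step the paper leaves implicit.
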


\begin{proof}
   \ The desired result follows immediately
from
Theorem \ref{fusbtoundcn}
and
the fact
that
$ F_{USCB}(\mathbb{R}^m) \subset  F_{USCG}(\mathbb{R}^m)  $.
\end{proof}

Given $U$ in  $F_{USCG}(\mathbb{R}^m)$,
the symbol
\bm{$\overline{U}$} is used to denote the closure of $U$ in  $(F_{USCG}(\mathbb{R}^m), H_{\rm end})$.

\begin{tm} \label{brec}
  Let $U$ be a subset of $ F_{USCB}(\mathbb{R}^m) $.
Then
  $U$ is a relatively compact set in $(F_{USCB}(\mathbb{R}^m), H_{\rm end})$
  if and only if
  \\
 (\romannumeral1)  For each $\al \in (0,1]$,
 $ U(\al) $ is a bounded set in $\mathbb{R}^m $, and
 \\
  (\romannumeral2)  $\overline{U} \subset F_{USCB}(\mathbb{R}^m)$.
\end{tm}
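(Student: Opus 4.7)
The plan is to reduce the statement to Theorem \ref{fuscbrcn} by carefully tracking the ambient space in which closures are taken. Since $F_{USCB}(\mathbb{R}^m)$ sits inside the complete space $(F_{USCG}(\mathbb{R}^m), H_{\rm end})$ with the inherited subspace metric (Theorem \ref{fusbcspe}, Theorem \ref{gcde}), a subset $V\subset F_{USCB}(\mathbb{R}^m)$ is compact in $(F_{USCB}(\mathbb{R}^m), H_{\rm end})$ if and only if it is compact in $(F_{USCG}(\mathbb{R}^m), H_{\rm end})$. The crux of the proof is therefore to show that the closure of $U$ inside $F_{USCB}(\mathbb{R}^m)$ coincides with the global closure $\overline{U}$ (taken in $F_{USCG}(\mathbb{R}^m)$) precisely when condition (ii) holds.

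For the necessity, I would assume that $U$ is relatively compact in $(F_{USCB}(\mathbb{R}^m), H_{\rm end})$. The closure of $U$ in $F_{USCB}(\mathbb{R}^m)$ is then a compact subset of $F_{USCG}(\mathbb{R}^m)$, and hence closed in the Hausdorff space $F_{USCG}(\mathbb{R}^m)$. Since it contains $U$, it must contain $\overline{U}$; conversely, it lies in $F_{USCB}(\mathbb{R}^m)$ and is itself closed in $F_{USCG}(\mathbb{R}^m)$, so it equals $\overline{U}$. This gives condition (ii). Condition (i) then follows immediately from Theorem \ref{fuscbrcn} applied to $U$ viewed as a relatively compact subset of $(F_{USCG}(\mathbb{R}^m), H_{\rm end})$.

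For the sufficiency, I would assume (i) and (ii). By Theorem \ref{fuscbrcn}, condition (i) implies that $U$ is relatively compact in $(F_{USCG}(\mathbb{R}^m), H_{\rm end})$, so $\overline{U}$ is compact in that space. Condition (ii) says $\overline{U}\subset F_{USCB}(\mathbb{R}^m)$, so $\overline{U}$ is a compact subset of $(F_{USCB}(\mathbb{R}^m), H_{\rm end})$ containing $U$. Hence $U$ is relatively compact in $(F_{USCB}(\mathbb{R}^m), H_{\rm end})$.

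The only subtle point is the step identifying the $F_{USCB}$-closure with the $F_{USCG}$-closure, which relies on the elementary topological fact that compact sets are closed in Hausdorff spaces; everything else is a direct application of Theorem \ref{fuscbrcn}. No delicate level-wise analysis or extra estimates are needed, since the heavy lifting has already been done in Section \ref{uscghp}.
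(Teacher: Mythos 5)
Your proposal is correct and follows essentially the same route as the paper, whose entire proof is the single line ``The desired result follows immediately from Theorem \ref{fuscbrcn}''; you have simply made explicit the routine point-set topology (compact sets are closed in the ambient metric space, so the $F_{USCB}$-closure and the $F_{USCG}$-closure of $U$ coincide exactly under condition (\romannumeral2)) that the paper leaves to the reader.
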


\begin{proof} \ The desired result follows immediately from Theorems \ref{fuscbrcn}. \end{proof}

\begin{tm} \label{bcn}
  Let $U$ be a subset of $ F_{USCB}(\mathbb{R}^m) $.
Then
  $U$ is a compact set in $(F_{USCB}(\mathbb{R}^m), H_{\rm end})$
  if and only if
  \\
 (\romannumeral1)  For each $\al \in (0,1]$,
 $ U(\al) $ is a bounded set in $\mathbb{R}^m $, and
 \\
  (\romannumeral2)  $\overline{U} = U$.
\end{tm}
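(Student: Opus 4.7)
The plan is to deduce Theorem \ref{bcn} directly from the relative compactness criterion in Theorem \ref{brec}, together with the standard metric-space fact that a set is compact if and only if it is relatively compact and closed in the ambient space. Throughout, $\overline{U}$ denotes closure in $(F_{USCG}(\mathbb{R}^m), H_{\rm end})$, as stipulated just before Theorem \ref{brec}, while compactness of $U$ in Theorem \ref{bcn} is measured inside the subspace $(F_{USCB}(\mathbb{R}^m), H_{\rm end})$.

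For the \emph{necessity} direction, assume $U$ is compact in $(F_{USCB}(\mathbb{R}^m), H_{\rm end})$. Then $U$ is in particular relatively compact in $(F_{USCB}(\mathbb{R}^m), H_{\rm end})$, so Theorem \ref{brec} yields (i) and also $\overline{U}\subset F_{USCB}(\mathbb{R}^m)$. Compactness of $U$ in $(F_{USCB}(\mathbb{R}^m), H_{\rm end})$ forces $U$ to be closed in that subspace, and combined with $\overline{U}\subset F_{USCB}(\mathbb{R}^m)$ this gives $\overline{U}=U$, proving (ii).

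For the \emph{sufficiency} direction, assume (i) and (ii). Condition (ii) first gives $U\subset \overline{U}= U \subset F_{USCB}(\mathbb{R}^m)$, so $\overline{U}\subset F_{USCB}(\mathbb{R}^m)$; combined with (i), Theorem \ref{brec} shows $U$ is relatively compact in $(F_{USCB}(\mathbb{R}^m), H_{\rm end})$. Moreover, $\overline{U}=U$ means $U$ is closed in $(F_{USCG}(\mathbb{R}^m), H_{\rm end})$, and hence also closed in the subspace $(F_{USCB}(\mathbb{R}^m), H_{\rm end})$. A relatively compact closed subset of a metric space is compact, so $U$ is compact in $(F_{USCB}(\mathbb{R}^m), H_{\rm end})$.

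There is no real obstacle; the only thing to be careful about is interpreting $\overline{U}$ consistently as the closure inside the larger space $(F_{USCG}(\mathbb{R}^m), H_{\rm end})$, since that is the convention fixed before Theorem \ref{brec}. Once this is noted, the argument is a two-line chase combining Theorem \ref{brec} with the closed-plus-relatively-compact $\Leftrightarrow$ compact principle.
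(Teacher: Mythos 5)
Your proof is correct, but it reaches Theorem \ref{bcn} by a slightly different route than the paper. The paper's proof is a one-line citation of Theorem \ref{fuscbcn}, the compactness characterization in the larger space $(F_{USCG}(\mathbb{R}^m), H_{\rm end})$: since compactness is an absolute (ambient-independent) property, $U$ is compact in the subspace $(F_{USCB}(\mathbb{R}^m), H_{\rm end})$ iff it is compact in $(F_{USCG}(\mathbb{R}^m), H_{\rm end})$, which by Theorem \ref{fuscbcn} is exactly closedness in $(F_{USCG}(\mathbb{R}^m), H_{\rm end})$ (i.e.\ $\overline{U}=U$) together with condition (\romannumeral1). You instead go through Theorem \ref{brec}, the relative-compactness criterion inside the subspace $(F_{USCB}(\mathbb{R}^m), H_{\rm end})$, and then invoke the general principle that a set is compact iff it is relatively compact and closed; your bookkeeping (deriving $\overline{U}=U$ from closedness in the subspace plus $\overline{U}\subset F_{USCB}(\mathbb{R}^m)$ in one direction, and passing from closedness in the big space to closedness in the subspace in the other) is accurate. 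Since both Theorem \ref{fuscbcn} and Theorem \ref{brec} are themselves immediate consequences of Theorem \ref{fuscbrcn}, the two arguments rest on the same underlying content; the paper's route is marginally shorter because it never has to discuss relative compactness in the subspace, while yours makes explicit exactly where the hypothesis $\overline{U}\subset F_{USCB}(\mathbb{R}^m)$ enters. Either way the result follows, and your handling of the convention that $\overline{U}$ is the closure in $(F_{USCG}(\mathbb{R}^m), H_{\rm end})$ is exactly right.
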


\begin{proof} \ The desired result follows immediately from Theorem \ref{fuscbcn}. \end{proof}

Now, we consider a characterization of relatively compact set $U$ in
$(F_{USCB}   (  \mathbb{R}^m ), H_{\rm end})$
which
do
not
involve the closure of $U$ in $(F_{USCG}(\mathbb{R}^m), H_{\rm end})$.

Let $B_r:=\{ x\in \mathbb{R}^m :     \|x\| \leq r    \}$
and
$\widehat{B_r}$ be the characteristic function
of $B_r$, where $r$ is a positive real number.
Given $u\in F_{USCG} (\mathbb{R}^m)$.
Then
$u \vee \widehat{B_r}  \in F_{USCG} (\mathbb{R}^m)$.
 Define
$$    |  u |_r:  = H_{\rm end}   (  u \vee \widehat{B_r}   ,       \widehat{B_r}    ).
$$
It
 can be checked that, for $u\in F_{B} (\mathbb{R}^m)$, $ |  u |_r=0$ if and only if
$[u]_0 \subseteq  B_r  $.
Note
that
$$  H_{\rm end} (  u \vee \widehat{B_r},      v \vee \widehat{B_r}   ) \leq  H_{\rm end} ( u,   v ),    $$
it thus holds
that
\begin{equation}\label{nep}
  |   |u|_r  - |v|_r   | \leq      H_{\rm end} ( u,   v ).
\end{equation}

\begin{tm} \label{fbsrchra} Let
  $U \subset F_{USCB}   (  \mathbb{R}^m )$.
Then $U$ is relatively   compact in $ (F_{USCB}   (  \mathbb{R}^m ),    H_{\rm end})$
 if and only if
 \\
(\romannumeral1)
 For each $\al \in (0,1]$,
 $ U(\al) $ is a bounded set in $\mathbb{R}^m $, and
 \\
(\romannumeral2$'$)
Given $\{u_n: n=1,2,\ldots\} \subset U$, there exists a $r>0$ and a subsequence $\{v_n\}$ of $\{u_n\}$ such
that
$\lim_{n\to \infty}    |v_n|_r=0$.

\end{tm}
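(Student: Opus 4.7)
The plan is to use Theorem \ref{brec} as the intermediary: relative compactness in $(F_{USCB}(\mathbb{R}^m), H_{\rm end})$ is equivalent to condition (i) together with $\overline{U}\subset F_{USCB}(\mathbb{R}^m)$, so the work is to prove that, granted (i), condition (ii$'$) is equivalent to $\overline{U}\subset F_{USCB}(\mathbb{R}^m)$. Throughout, the key technical ingredient is the functional $|\cdot|_r$ together with the Lipschitz-type inequality \eqref{nep} and the observation that for $u\in F_{USCB}(\mathbb{R}^m)$ one has $|u|_r = 0$ iff $[u]_0\subseteq B_r$.

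For necessity, assume $U$ is relatively compact in $(F_{USCB}(\mathbb{R}^m), H_{\rm end})$. Condition (i) is immediate from Theorem \ref{brec}. To get (ii$'$), take an arbitrary $\{u_n\}\subset U$ and extract an endograph-metric convergent subsequence $v_n \to v$, where $v\in F_{USCB}(\mathbb{R}^m)$. Since $[v]_0$ is bounded, choose $r>0$ large enough that $[v]_0\subseteq B_r$; then $|v|_r=0$, and \eqref{nep} gives $|v_n|_r \to 0$.

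For sufficiency, assume (i) and (ii$'$). By Theorem \ref{brec} it is enough to show $\overline{U}\subset F_{USCB}(\mathbb{R}^m)$, where the closure is taken in $(F_{USCG}(\mathbb{R}^m),H_{\rm end})$ (this ambient space being complete by Theorem \ref{fusbcspe}). Fix $w\in \overline{U}$ and pick $\{u_n\}\subset U$ with $H_{\rm end}(u_n,w)\to 0$. Apply (ii$'$) to $\{u_n\}$ to obtain $r>0$ and a subsequence $\{v_n\}$ with $|v_n|_r \to 0$; since $v_n\to w$ in the endograph metric, \eqref{nep} forces $|w|_r=0$, i.e., $H_{\rm end}(w\vee \widehat{B_r},\widehat{B_r})=0$. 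Because both $w\vee\widehat{B_r}$ and $\widehat{B_r}$ lie in $F_{USC}(\mathbb{R}^m)$ (where $H_{\rm end}$ is a genuine metric), this gives $w\vee\widehat{B_r}=\widehat{B_r}$, so $w\leq \widehat{B_r}$ pointwise and hence $[w]_0=\overline{\{w>0\}}\subseteq B_r$. Thus $w\in F_{USCB}(\mathbb{R}^m)$, finishing the proof.

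The routine portion is necessity; the only delicate point is the final step of sufficiency, namely passing from the scalar quantity $|w|_r=0$ back to the geometric conclusion $[w]_0\subseteq B_r$ when $w$ is a priori only in $F_{USCG}(\mathbb{R}^m)$ rather than $F_{USCB}(\mathbb{R}^m)$. This is precisely what the identification of $H_{\rm end}$ as a metric on $F_{USC}(\mathbb{R}^m)$ allows, turning the vanishing of $|w|_r$ into a pointwise domination by $\widehat{B_r}$.
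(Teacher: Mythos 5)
Your proof is correct, and the sufficiency half coincides with the paper's: apply (ii$'$) to a sequence converging to $w\in\overline{U}$, use \eqref{nep} to get $|w|_r=0$, convert that to $[w]_0\subseteq B_r$ (your explicit justification via $H_{\rm end}$ being a genuine metric on $F_{USC}(\mathbb{R}^m)$ is exactly the point the paper leaves implicit), and invoke Theorem \ref{brec}. Where you diverge is the necessity of (ii$'$): the paper argues by contradiction, running a diagonal extraction over $r=1,2,3,\ldots$ to manufacture a subsequence $\{v_n\}$ with $\liminf_n|v_n|_k\geq\varepsilon_k>0$ for every $k$, so that any accumulation point $v$ has $|v|_k>0$ for all $k$ and hence lies outside $F_{USCB}(\mathbb{R}^m)$. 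You instead argue directly: relative compactness in $(F_{USCB}(\mathbb{R}^m),H_{\rm end})$ hands you a subsequence $v_n\to v$ with $v\in F_{USCB}(\mathbb{R}^m)$, you choose $r$ with $[v]_0\subseteq B_r$ so that $|v|_r=0$, and \eqref{nep} gives $|v_n|_r\to 0$. Your route is shorter and avoids the nested-subsequence bookkeeping entirely; the paper's contradiction argument has the mild virtue of exhibiting concretely how (ii$'$) fails (uniform lower bounds $\varepsilon_k$ at every radius), but for establishing the equivalence your direct argument is the cleaner one. Both rest on the same two ingredients, namely the Lipschitz estimate \eqref{nep} and the characterization of $|u|_r=0$.
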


\begin{proof}
 \ Suppose
that
 $U$ is a relatively compact set which does not satisfy the condition (\romannumeral2$'$).
Take
 $r=1$.
Then
there exists $\varepsilon_1 >0$ and a subsequence $\{u_{n}^{(1)}: n=1,2,\ldots\}$
of
 $\{u_n: n=1,2,\ldots\}$
such
that
$ |u_n^{(1)}|_1 > \varepsilon_1$ for all $n=1,2,\ldots$.
If
  $\{u_n^{(1)}\}, \ldots, \{u_n^{(k)}\}$ and positive numbers $\varepsilon_1, \ldots,  \varepsilon_k$ have been found,
then
we
can
find
a subsequence $\{    u_n^{  (k+1)  }    \}$
of
$\{   u_n^ { (k) }   \}$
and
$\varepsilon_{k+1} >0$
such that
 $ |u_n^{(k+1)}|_{k+1} > \varepsilon_{k+1}$ for all $n=1,2,\ldots$.
Set $v_n=   u_n^{  (n) }    $ for $n=1,2,\ldots$.
Then $\{v_n\}$
is
a subsequence of
$\{u_n\}$
and
\begin{equation}\label{vne}
 \liminf_{n\to \infty}    |v_n|_k    \geq    \varepsilon_k
\end{equation}
for   $k=1,2,\ldots$.
Suppose that $v \in  F_{USCG}   (  \mathbb{R}^m )$ is an accumulation point of $\{v_n\}$.
Then
from \eqref{nep} and \eqref{vne},
we know
that
\begin{equation*}
   |v|_k    \geq   \varepsilon_k > 0
\end{equation*}
for all $k=1,2,\ldots$. Thus $v \notin  F_{USCB}   (  \mathbb{R}^m ) $.
This contradicts
the fact
that
 $U$ is a relatively compact set in $(  F_{USCB}   (  \mathbb{R}^m ), H_{\rm end}  )$.

Suppose that $U \subset  F_{USCB}   (  \mathbb{R}^m )$ satisfies the condition (\romannumeral2$'$).
Given a sequence
$\{u_n\}$ in
$U$ with $\lim_{n\to \infty} u_n = u \in F_{USCG}   (  \mathbb{R}^m )$.
Then, from
\eqref{nep},
there exists a $r>0$ such
that
$\lim_{n\to \infty}    |u_n|_r=|u|_r=0$.
Hence $[u]_0 \subseteq  B_r   $,
i.e.
$u \in F_{USCB}   (  \mathbb{R}^m )$.
So
 $\overline{U} \subset F_{USCB}(\mathbb{R}^m)$.
Thus, by Theorem \ref{brec}, if $U$ meets the conditions (\romannumeral1) and (\romannumeral2$'$),
then
$U$
is a relatively compact set in $(  F_{USCB}   (  \mathbb{R}^m ), H_{\rm end})$.
\end{proof}

\begin{tm}
  Let
  $U \subset F_{USCB}   (  \mathbb{R}^m )$.
  Then the following statements are
  equivalent.
  \\
  (\romannumeral1) \    $U$ is   compact in $ (F_{USCB}   (  \mathbb{R}^m ),    H_{\rm end})$.
  \\
    (\romannumeral2) \    $U$
 satisfies the
 conditions (\romannumeral1) and (\romannumeral2$'$) in Theorem \ref{fbsrchra} and $U$ is   closed in $ (F_{USCB}   (  \mathbb{R}^m ),    H_{\rm end})$.
\end{tm}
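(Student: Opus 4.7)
The plan is to deduce this theorem directly from Theorem \ref{fbsrchra}, using the standard fact that in a metric space a set is compact if and only if it is relatively compact and closed. Since $(F_{USCB}(\mathbb{R}^m), H_{\rm end})$ is a metric space, this general principle applies, and the two implications become almost mechanical.

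First, for the direction (i) $\Rightarrow$ (ii), I would observe that a compact subset $U$ of the metric space $(F_{USCB}(\mathbb{R}^m), H_{\rm end})$ is automatically closed. Moreover, since $U = \overline{U}$ (closure taken in $F_{USCB}(\mathbb{R}^m)$), the compactness of $U$ implies that $U$ is relatively compact in $(F_{USCB}(\mathbb{R}^m), H_{\rm end})$. Applying Theorem \ref{fbsrchra}, the relative compactness of $U$ yields both condition (i) (boundedness of $U(\alpha)$ for every $\alpha \in (0,1]$) and condition (ii$'$) (the extraction property for sequences in $U$ with $|v_n|_r \to 0$ for some $r>0$).

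For the converse (ii) $\Rightarrow$ (i), I would invoke Theorem \ref{fbsrchra} in the opposite direction: if $U$ satisfies conditions (i) and (ii$'$), then $U$ is relatively compact in $(F_{USCB}(\mathbb{R}^m), H_{\rm end})$, i.e., its closure in $F_{USCB}(\mathbb{R}^m)$ is compact. Since we are additionally assuming that $U$ is closed in $(F_{USCB}(\mathbb{R}^m), H_{\rm end})$, we have $U = \overline{U}\cap F_{USCB}(\mathbb{R}^m)$ equal to this compact closure, and hence $U$ is compact.

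There is no real obstacle here: everything reduces to Theorem \ref{fbsrchra} combined with the elementary metric-space fact that compactness equals relative compactness plus closedness. The only minor point to state carefully is that the closure in (ii$'$) context is taken within $F_{USCB}(\mathbb{R}^m)$, matching the subspace topology inherited from $(F_{USCG}(\mathbb{R}^m), H_{\rm end})$, so that the notions of relatively compact and closed used in the argument are consistent with those in Theorem \ref{fbsrchra}. The whole proof should therefore amount to a short paragraph.
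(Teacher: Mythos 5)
Your proposal is correct and follows essentially the same route as the paper, whose entire proof is that the conclusions follow immediately from Theorem \ref{fbsrchra} via the standard fact that compactness in a metric space equals relative compactness plus closedness. Your extra remark that relative compactness forces $\overline{U}\subset F_{USCB}(\mathbb{R}^m)$, so the two notions of closure agree, is a correct and worthwhile clarification but does not change the argument.
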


\begin{proof} \ The desired conclusions follow immediately from Theorem    \ref{fbsrchra}. \end{proof}

\section{Properties and relationship of endograph metric and $\Gamma$-convergence on subsets of $F_{USCG} (\mathbb{R}^m)$ \label{pfuscg}}

\subsection{Relationship and level characterizations of endograph metric and $\Gamma$-convergence on $F_{USCGCON} (\mathbb{R}^m)$} \label{sbp}

In this subsection,
 we present level characterizations of
the $\Gamma$-convergence on $F_{USCGCON} (\mathbb{R}^m)$ based on preceding results.
By using
these level characterizations and the relationships of endograph metric and $\Gamma$-convergence obtained in Section \ref{rghc},
we find that
the endograph metric convergence
 and the $\Gamma$-convergence
\textbf{coincide}
on
 $F_{USCGCON} (\mathbb{R}^m) \backslash \widehat{\emptyset}$.
As a consequence, we give
the forms of the level characterizations of the endograph metric and the $\Gamma$-convergence
when
restricted to    $F_{USCGCON} (\mathbb{R}^m) \backslash \widehat{\emptyset}$ and common fuzzy sets, respectively.
It is found
that
the level characterizations
of
 the endograph metric convergence
 and the $\Gamma$-convergence given in Section 6
are
\emph{in some sense} the same on
 $F_{USCGCON} (\mathbb{R}^m) \backslash \widehat{\emptyset}$
and
are \emph{precisely} the same
on any common fuzzy sets mentioned in this paper.
 At last, based on the conclusions of this section,
we revisit
the results in previous works.

\begin{tm} \label{ltgcfbn}
 Suppose that $u_n$,
$n=1,2,\ldots$, are  fuzzy sets in $F_{USCGCON} (\mathbb{R}^m)$,
and that
$u \not= \widehat{\emptyset} $ is a fuzzy set in $F_{USCG} (\mathbb{R}^m)$.
Then
 the
following statements are equivalent.
\\
(\romannumeral1)\ $u_n\str{\Gamma}{\longrightarrow}u$.
\\
(\romannumeral2)\  $\lim_{n\to \infty} [u_n]_\alpha (K)= [u]_\al$ for all
$\al\in (0,1)\backslash P(u)$.
\\
(\romannumeral3)\  $\lim_{n\to \infty} [u_n]_\al (K)  = [u]_\al = \emptyset$ for each $\al\in (\lambda_u, 1]$,
and
$H([u_n]_\al,  [u]_\al) \to 0$
for all
$\al\in (0,\lambda_u)\backslash P(u)$. (If $\lambda_u = 1$,
then
 $(\lambda_u, 1]=(1, 1] = \emptyset$.)
\\
(\romannumeral4)\ $\lim_{n\to \infty} [u_n]_\alpha (K)= [u]_\al$ holds a.e. on $\al\in (0,1)$.
\\
(\romannumeral5)\ $\lim_{n\to \infty} [u_n]_\al (K)= [u]_\al = \emptyset $ for each $\al\in (\lambda_u, 1]$,
and
$H([u_n]_\al,  [u]_\al) \to 0$ holds a.e. on $\alpha \in (0,\lambda_u)$.
\\
(\romannumeral6)\  $\lim_{n\to \infty} [u_n]_\alpha (K)= [u]_\al$ holds when
$\al\in P$, where $P$ is a dense subset of $(0,1) \backslash P(u)$.
\\
(\romannumeral7)\ $\lim_{n\to \infty} [u_n]_\al (K)= [u]_\al = \emptyset $ for each $\al\in (\lambda_u, 1]$,
 and
$H([u_n]_\al,  [u]_\al) \to 0$ holds when $\alpha \in P  \cap  (0,\lambda_u)$, where $P$ is a dense subset of $(0,1) \backslash P(u)$.
  \\
  (\romannumeral8)\  $\lim_{n\to \infty} [u_n]_\alpha (K)= [u]_\al$ holds when $\al\in P$, where $P$ is a countable dense subset of $(0,1) \backslash P(u)$.
\\
(\romannumeral9)\ $\lim_{n\to \infty} [u_n]_\al (K)= [u]_\al = \emptyset $ for each $\al\in (\lambda_u, 1]$,
 and
$H([u_n]_\al,  [u]_\al) \to 0$ holds when $\alpha \in P  \cap  (0,\lambda_u)$, where $P$ is a countable dense subset of $(0,1) \backslash P(u)$.
\end{tm}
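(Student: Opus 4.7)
The plan is to take Theorem \ref{ltgc} as a backbone and then upgrade Kuratowski convergence of cut sets to Hausdorff convergence by exploiting the connectedness of $[u_n]_\alpha$ provided by $u_n \in F_{USCGCON}(\mathbb{R}^m)$, via Proposition \ref{klhe}.

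First I would note that since $F_{USCGCON}(\mathbb{R}^m) \subset F_{USCG}(\mathbb{R}^m) \subset F_{USC}(\mathbb{R}^m)$, Theorem \ref{ltgc} applies and yields the mutual equivalence of the ``Kuratowski'' statements (i), (ii), (iv), (vi), (viii); in particular statement (ii) here is exactly statement (iii) of Theorem \ref{ltgc}, and by Theorem \ref{puc} the set $P(u)$ is at most countable, so the ``a.e.'', ``dense'', and ``countable dense'' variants in the index sets interchange as in Theorem \ref{ltgc}. It therefore remains to show that each ``Hausdorff'' statement (iii), (v), (vii), (ix) is equivalent to its ``Kuratowski'' counterpart (ii), (iv), (vi), (viii), respectively.

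For the backward direction, Hausdorff convergence implies Kuratowski convergence by Theorem \ref{hpkc}, so each Hausdorff statement implies the corresponding Kuratowski one. The forward direction is where the connectedness hypothesis enters. Given a Kuratowski-type hypothesis, I would split the analysis into two $\alpha$-regions. For $\alpha \in (\lambda_u, 1]$: here $[u]_\alpha = \emptyset$ by definition of $\lambda_u$, and since the Kuratowski hypothesis is equivalent to (i), we can invoke Theorem \ref{Gclnre} to conclude $\limsup_{n\to\infty} [u_n]_\alpha \subseteq [u]_\alpha = \emptyset$, so $\lim_{n\to\infty} [u_n]_\alpha(K) = \emptyset$ for every such $\alpha$, including $\alpha = 1$. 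For $\alpha \in (0,\lambda_u) \setminus P(u)$: here $[u]_\alpha$ is a nonempty compact set; by Remark \ref{esc}(i), $[u_n]_\alpha \neq \emptyset$ for all sufficiently large $n$; since $u_n \in F_{USCGCON}(\mathbb{R}^m)$, each $[u_n]_\alpha$ is a nonempty compact connected subset of $\mathbb{R}^m$ for such large $n$; and the Kuratowski hypothesis gives $\lim_{n\to\infty} [u_n]_\alpha(K) = [u]_\alpha$. Proposition \ref{klhe} then delivers $H([u_n]_\alpha, [u]_\alpha) \to 0$.

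Running this upgrade argument over the appropriate index set---$(0,1) \setminus P(u)$, almost every $\alpha \in (0,1)$, a dense subset of $(0,1) \setminus P(u)$, or a countable dense subset of $(0,1) \setminus P(u)$---produces (iii), (v), (vii), (ix) from (ii), (iv), (vi), (viii), respectively. I expect the main obstacle to be the bookkeeping at the endpoint $\alpha = \lambda_u$: Remark \ref{lamc} ensures $\lambda_u \in P(u)$ whenever $\lambda_u \in (0,1)$, which cleanly excludes this endpoint from the ``good'' index sets and makes the splitting into $(\lambda_u, 1]$ and $(0, \lambda_u) \setminus P(u)$ exhaustive and non-overlapping. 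A minor auxiliary check is that in variants (v), (vii), (ix) the global condition ``$\lim_{n\to\infty} [u_n]_\alpha(K) = \emptyset$ for every $\alpha \in (\lambda_u, 1]$'' is compatible with only an a.e./dense/countable-dense hypothesis on the lower region $(0, \lambda_u)$, and this compatibility comes for free once equivalence with (i) is established, via the same application of Theorem \ref{Gclnre}.
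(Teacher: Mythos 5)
Your proposal is correct and follows essentially the same route as the paper: the paper's proof likewise reduces everything to Theorem \ref{ltgc} and upgrades Kuratowski convergence of the cuts to Hausdorff convergence via Propositions \ref{hms} and \ref{klhe}, using the compactness and connectedness of the $[u_n]_\alpha$ when $[u]_\alpha \neq \emptyset$. Your version merely spells out the bookkeeping (nonemptiness for large $n$, the split at $\lambda_u$) that the paper leaves implicit.
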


\begin{proof}
   \ Let $\al\in (0,1]$. Note that
$[u_n]_\al$, $n=1,2,\ldots$,
are compact and connected sets in $\mathbb{R}^m$, and that
$[u]_\al$
is a compact set in $\mathbb{R}^m$.
If
 $
[u]_\al \not= \emptyset$,
then,
by Propositions \ref{hms} and \ref{klhe},
we know that
$H([u_n]_\al,  [u]_\al) \to 0$
is equivalent to
 $\lim_{n\to \infty} [u_n]_\alpha (K)= [u]_\al$.
So the desired results follow immediately from
Theorem \ref{ltgc}.
\end{proof}

\begin{lm}\label{bconedu}
   Suppose that $u_n$,
$n=1,2,\ldots$, are  fuzzy sets in $F_{USCGCON} (\mathbb{R}^m)$,
and that
$u \not= \widehat{\emptyset}$ is a fuzzy set in $F_{USCG} (\mathbb{R}^m)$.
   If
   $u_n\str{\Gamma}{\longrightarrow}u$, then, for each $\al\in (0,1]$,
$ \bigcup_{n=1}^{+\infty}  [u_n]_\al $ is a bounded set in $\mathbb{R}^m $.
\end{lm}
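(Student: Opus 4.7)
The plan is to combine the level characterization of $\Gamma$-convergence supplied by Theorem \ref{ltgcfbn} with the monotonicity of $\alpha$-cuts. Roughly, Theorem \ref{ltgcfbn} does the heavy lifting by promoting $\Gamma$-convergence to honest Hausdorff convergence at almost every level, and this immediately controls the union of cuts at those levels; monotonicity then transfers the bound to every level.

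First, I would apply the equivalence (i)$\Leftrightarrow$(iii) of Theorem \ref{ltgcfbn}: from $u_n\stackrel{\Gamma}{\longrightarrow}u$ we obtain
\[
H([u_n]_\alpha,[u]_\alpha)\to 0 \quad\text{for every } \alpha\in(0,\lambda_u)\setminus P(u).
\]
Here connectedness of each $[u_n]_\alpha$ is essential — it is exactly the ingredient (through Proposition \ref{klhe}) that upgrades the Kuratowski convergence coming from the $\Gamma$-convergence to Hausdorff convergence.

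Next, fix any such $\alpha\in(0,\lambda_u)\setminus P(u)$. Because $[u]_\alpha\in K(\mathbb{R}^m)$, we have $[u]_\alpha\subseteq B_R$ for some $R>0$. Hausdorff convergence then gives an $N$ with $[u_n]_\alpha\subseteq B_{R+1}$ for all $n\geq N$. For the finitely many indices $n<N$, each $[u_n]_\alpha\in K(\mathbb{R}^m)\cup\{\emptyset\}$ is already bounded, so the total union
\[
\bigcup_{n=1}^{+\infty}[u_n]_\alpha
\]
is a bounded set in $\mathbb{R}^m$.

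Finally, for an arbitrary $\alpha\in(0,1]$, since $P(u)$ is at most countable (Theorem \ref{puc}) and $\min(\alpha,\lambda_u)>0$, I can pick $\gamma_0\in(0,\min(\alpha,\lambda_u))\setminus P(u)$. By the monotonicity of cuts, $\gamma_0\leq\alpha$ yields $[u_n]_\alpha\subseteq[u_n]_{\gamma_0}$ for every $n$, and the previous step gives that $\bigcup_n [u_n]_{\gamma_0}$ is bounded; hence so is $\bigcup_n[u_n]_\alpha$. This covers the three regimes $\alpha<\lambda_u$, $\alpha=\lambda_u$, and $\alpha>\lambda_u$ uniformly.

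The plan does not involve any single hard step; the main conceptual point — and the reason the hypothesis $u_n\in F_{USCGCON}(\mathbb{R}^m)$ and $u\neq\widehat{\emptyset}$ is needed — is that without the connectedness of the cuts $[u_n]_\alpha$ one cannot upgrade $\Gamma$-convergence to Hausdorff convergence at generic levels, and without $u\neq\widehat{\emptyset}$ (so that $\lambda_u>0$) there would be no ``good'' levels from which to start the monotonicity bootstrap.
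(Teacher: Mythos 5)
Your proposal is correct and follows essentially the same route as the paper: both invoke Theorem \ref{ltgcfbn} to find a level $\beta<\alpha$ (your $\gamma_0$, chosen below $\min(\alpha,\lambda_u)$ and off the countable set $P(u)$) at which $\Gamma$-convergence upgrades to Hausdorff convergence, deduce boundedness of $\bigcup_n[u_n]_\beta$ there, and transfer it to level $\alpha$ by monotonicity of cuts. You merely spell out the intermediate step (compactness of $[u]_{\gamma_0}$ plus eventual containment in a ball, with finitely many exceptions) that the paper leaves implicit.
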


\begin{proof}
\ Suppose that $\al\in (0,1]$. Note that $u \not= \widehat{\emptyset}$.
By Theorem \ref{ltgcfbn},
there exists a $\beta < \al$
such that
$H([u_n]_\beta,  [u]_\beta) \to 0$.
Thus
$ \bigcup_{n=1}^{+\infty}  [u_n]_\beta $ is a bounded set in $\mathbb{R}^m $,
and
so
$ \bigcup_{n=1}^{+\infty}  [u_n]_\al $ is also a bounded set in $\mathbb{R}^m $.
\end{proof}

\begin{tm} \label{gchefbcon}
  Suppose that $u_n$,
$n=1,2,\ldots$, are  fuzzy sets in $F_{USCGCON} (\mathbb{R}^m)$,
and
that
$u \not= \widehat{\emptyset}$ is a fuzzy set in $F_{USCG} (\mathbb{R}^m)$.
   Then
   $u_n\str{\Gamma}{\longrightarrow}u$
  is equivalent to
   $H_{\rm end} ( u_n, u) \to 0$ as $n\to \infty$.
\end{tm}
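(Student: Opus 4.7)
The plan is to assemble this equivalence directly from three results that have already been established, so no new analytic work is needed — only a clean invocation of them.

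First I would dispose of the easy direction. If $H_{\rm end}(u_n, u) \to 0$, then since $u, u_n \in F_{USCG}(\mathbb{R}^m) \subseteq F_{USC}(\mathbb{R}^m)$, Theorem \ref{uschpm} applied verbatim gives $u_n \stackrel{\Gamma}{\longrightarrow} u$. This direction uses neither the hypothesis that $u \neq \widehat{\emptyset}$ nor the connectedness of the cuts of $u_n$.

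For the reverse direction, suppose $u_n \stackrel{\Gamma}{\longrightarrow} u$. The key is to produce the uniform boundedness hypothesis of Theorem \ref{uscbgche}, namely that $\bigcup_{n=1}^{+\infty}[u_n]_\alpha$ is bounded in $\mathbb{R}^m$ for every $\alpha \in (0,1]$. But this is precisely Lemma \ref{bconedu}, whose hypotheses match ours exactly: $u_n \in F_{USCGCON}(\mathbb{R}^m)$, $u \in F_{USCG}(\mathbb{R}^m)$ with $u \neq \widehat{\emptyset}$, and $u_n \stackrel{\Gamma}{\longrightarrow} u$. With boundedness in hand, Theorem \ref{uscbgche} then yields $H_{\rm end}(u_n, u) \to 0$.

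So the proof is really just a two-line citation: forward direction from Theorem \ref{uschpm}, reverse direction from the composition of Lemma \ref{bconedu} with Theorem \ref{uscbgche}. The ``main obstacle'' in this theorem has already been overcome in Lemma \ref{bconedu}, where connectedness of the cuts was exploited to upgrade Kuratowski convergence on a single $\beta < \alpha$ to Hausdorff convergence (via Propositions \ref{hms} and \ref{klhe}), thereby forcing boundedness of $\bigcup_n [u_n]_\beta$ and hence of $\bigcup_n [u_n]_\alpha$. The role of the assumption $u \neq \widehat{\emptyset}$ is precisely to guarantee the existence of such a $\beta < \alpha$ in $(0,\lambda_u) \setminus P(u)$ at which Hausdorff convergence can be extracted; without it, Example \ref{gse} shows the equivalence would fail.
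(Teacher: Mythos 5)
Your proof is correct and follows the paper's own route exactly: the paper likewise gets the forward direction from Theorem \ref{uschpm} and the reverse direction by feeding Lemma \ref{bconedu} into Theorem \ref{uscbgche} (the two directions being packaged together as Theorem \ref{gchrbsn}). One small aside: Example \ref{gse} exhibits failure when the $u_n$ lack connected cuts (there $u\neq\widehat{\emptyset}$), so it does not witness the necessity of the hypothesis $u\neq\widehat{\emptyset}$; for that one would instead take, e.g., $u_n=\widehat{\{n\}}$, which $\Gamma$-converges to $\widehat{\emptyset}$ while $H_{\rm end}(u_n,\widehat{\emptyset})=1$ for all $n$.
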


\begin{proof}  The desired conclusion follows from Corollary \ref{gchrbsn} and Lemma \ref{bconedu}.
\end{proof}

\begin{re}{\rm
 Theorem \ref{gchefbcon}
exhibits
a fascinating fact:
the $\Gamma$-convergence can be induced by the endograph metric
on
$F_{USCGCON} (\mathbb{R}^m) \backslash \widehat{\emptyset}$. Namely,
 the Hausdorff metric  \textbf{metrizes} the Fell topology on the set of endographs of fuzzy sets in $F_{USCGCON} (\mathbb{R}^m) \backslash \widehat{\emptyset}$.
 }
\end{re}

\begin{tm} \label{lthfbn}
 Suppose that $u_n$,
$n=1,2,\ldots$, are  fuzzy sets in $F_{USCGCON} (\mathbb{R}^m)$,
and that
$u \not= \widehat{\emptyset}$ is a fuzzy set in $F_{USCG} (\mathbb{R}^m)$.
Then the
following statements are equivalent.
\\
(\romannumeral1)\ $ H_{\rm end} ( u_n, u)  \to 0 $ as $n\to \infty$.
\\
(\romannumeral2)\ $H([u_n]_\al,  [u]_\al) \to 0$ holds a.e. on $\al\in (0,1)$.
\\
(\romannumeral3)\  $H([u_n]_\al,  [u]_\al) \to 0$ holds when
$\al\in (0,1)\backslash P(u)$.
\\
 (\romannumeral4)\   $H([u_n]_\al,  [u]_\al) \to 0$  holds when
$\al\in P$, where $P$ is a dense subset of $(0,1) \backslash P(u)$.
  \\
  (\romannumeral5)\  $H([u_n]_\al,  [u]_\al) \to 0$  holds when $\al\in P$, where $P$ is a countable dense subset of $(0,1) \backslash P(u)$.
\\
(\romannumeral6)\ $u_n\str{\Gamma}{\longrightarrow}u$.
\\
(\romannumeral7)\  $\lim_{n\to \infty} [u_n]_\alpha (K)= [u]_\al$ for all
$\al\in (0,1)\backslash P(u)$.
\\
(\romannumeral8)\  $\lim_{n\to \infty} [u_n]_\al (K)  = [u]_\al = \emptyset$ for each $\al\in (\lambda_u, 1]$,
and
$H([u_n]_\al,  [u]_\al) \to 0$
for all
$\al\in (0,\lambda_u)\backslash P(u)$.
\\
(\romannumeral9)\ $\lim_{n\to \infty} [u_n]_\alpha (K)= [u]_\al$ holds a.e. on $\al\in (0,1)$.
\\
(\romannumeral10)\ $\lim_{n\to \infty} [u_n]_\al (K)= [u]_\al = \emptyset $ for each $\al\in (\lambda_u, 1]$,
and
$H([u_n]_\al,  [u]_\al) \to 0$ holds a.e. on $\alpha \in (0,\lambda_u)$.
\\
(\romannumeral11)\  $\lim_{n\to \infty} [u_n]_\alpha (K)= [u]_\al$ holds when
$\al\in P$, where $P$ is a dense subset of $(0,1) \backslash P(u)$.
\\
(\romannumeral12)\ $\lim_{n\to \infty} [u_n]_\al (K)= [u]_\al = \emptyset $ for each $\al\in (\lambda_u, 1]$,
 and
$H([u_n]_\al,  [u]_\al) \to 0$ holds when $\alpha \in P  \cap  (0,\lambda_u)$, where $P$ is a dense subset of $(0,1) \backslash P(u)$.
  \\
  (\romannumeral13)\  $\lim_{n\to \infty} [u_n]_\alpha (K)= [u]_\al$ holds when $\al\in P$, where $P$ is a countable dense subset of $(0,1) \backslash P(u)$.
\\
(\romannumeral14)\ $\lim_{n\to \infty} [u_n]_\al (K)= [u]_\al = \emptyset $ for each $\al\in (\lambda_u, 1]$,
 and
$H([u_n]_\al,  [u]_\al) \to 0$ holds when $\alpha \in P  \cap  (0,\lambda_u)$, where $P$ is a countable dense subset of $(0,1) \backslash P(u)$.
\end{tm}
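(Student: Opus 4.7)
The plan is to observe that this theorem is essentially a consolidation of three previously established blocks of equivalences, which can be glued together using the key coincidence result.

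First, I would note that statements (i)--(v) are precisely the level characterizations of endograph metric convergence established in Theorem~\ref{hpe}, applied to the ambient space $F_{USCG}(\mathbb{R}^m)$ which contains both $u_n$ and $u$. Since $F_{USCGCON}(\mathbb{R}^m) \subset F_{USCG}(\mathbb{R}^m)$, Theorem~\ref{hpe} applies directly and yields (i) $\Leftrightarrow$ (ii) $\Leftrightarrow$ (iii) $\Leftrightarrow$ (iv) $\Leftrightarrow$ (v) with no extra work.

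Second, the equivalences among (vi)--(xiv) are exactly the content of Theorem~\ref{ltgcfbn}, whose hypotheses match ours verbatim (the $u_n$ lie in $F_{USCGCON}(\mathbb{R}^m)$ and $u \neq \widehat{\emptyset}$ is in $F_{USCG}(\mathbb{R}^m)$). So (vi)--(xiv) are already mutually equivalent.

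Third, to bridge the two blocks, I would invoke Theorem~\ref{gchefbcon}, which asserts that under precisely our hypotheses, $H_{\rm end}(u_n,u)\to 0$ is equivalent to $u_n\str{\Gamma}{\longrightarrow}u$. This is exactly (i) $\Leftrightarrow$ (vi), which links the two blocks of equivalences into a single chain.

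The hardest conceptual step has already been done outside this theorem, namely Theorem~\ref{gchefbcon}, which itself relies on Lemma~\ref{bconedu} (boundedness of $\bigcup_n [u_n]_\alpha$ is automatic from $\Gamma$-convergence on the connected-cuts subclass, provided $u \neq \widehat{\emptyset}$) combined with Theorem~\ref{gchrbsn}. Once that bridge is in place, the proof reduces to a one-line citation of Theorems~\ref{hpe}, \ref{ltgcfbn}, and \ref{gchefbcon}. I would therefore write the proof as a short paragraph stating: by Theorem~\ref{hpe}, (i)--(v) are equivalent; by Theorem~\ref{ltgcfbn}, (vi)--(xiv) are equivalent; and by Theorem~\ref{gchefbcon}, (i) $\Leftrightarrow$ (vi), completing the chain.
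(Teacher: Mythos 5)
Your proposal is correct and matches the paper's proof exactly: the paper also derives the result by citing Theorems \ref{hpe} (for the equivalence of (i)--(v)), \ref{ltgcfbn} (for (vi)--(xiv)), and \ref{gchefbcon} (to bridge (i) and (vi)). You have simply spelled out the decomposition that the paper leaves implicit in its one-line citation.
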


\begin{proof} \    The desired results follow from Theorems \ref{hpe}, \ref{ltgcfbn} and \ref{gchefbcon}. \end{proof}

\begin{re}
 {\rm
 From Theorem \ref{lthfbn}, we can see that
 the level characterizations of the endograph metric and the $\Gamma$-convergence
 given in Theorems \ref{ltgc} and \ref{hpe}
 are in some sense the
 same on $F_{USCGCON} (\mathbb{R}^m) \backslash \widehat{\emptyset}$.
}
\end{re}

\begin{tm} \label{cgme}
 Suppose that $ u$, $u_n$,
$n=1,2,\ldots$, are  fuzzy sets in $\widetilde{S}^{m}_{nc}$.
Then the
following statements are equivalent.
\\
(\romannumeral1) $H_{\rm end} (u_n, u) \to 0$ as $n\to \infty$.
\\
(\romannumeral2)\ $H([u_n]_\al,  [u]_\al) \to 0$ holds a.e. on $\al\in (0,1)$.
\\
(\romannumeral3)\  $H([u_n]_\al,  [u]_\al) \to 0$ holds when
$\al\in (0,1)\backslash P(u)$.
\\
 (\romannumeral4)\   $H([u_n]_\al,  [u]_\al) \to 0$  holds when
$\al\in P$, where $P$ is a dense subset of $(0,1) \backslash P(u)$.
  \\
  (\romannumeral5)\  $H([u_n]_\al,  [u]_\al) \to 0$  holds when $\al\in P$, where $P$ is a countable dense subset of $(0,1) \backslash P(u)$.
\\
(\romannumeral6)\ $u_n\str{\Gamma}{\longrightarrow}u$.
\\
(\romannumeral7)\  $\lim_{n\to \infty} [u_n]_\alpha (K)= [u]_\al$ for all
$\al\in (0,1)\backslash P(u)$.
\\
(\romannumeral8)\ $\lim_{n\to \infty} [u_n]_\alpha (K)= [u]_\al$ holds a.e. on $\al\in (0,1)$.
\\
(\romannumeral9)\  $\lim_{n\to \infty} [u_n]_\alpha (K)= [u]_\al$ holds when
$\al\in P$, where $P$ is a dense subset of $(0,1) \backslash P(u)$.
 \\
  (\romannumeral10)\  $\lim_{n\to \infty} [u_n]_\alpha (K)= [u]_\al$ holds when $\al\in P$, where $P$ is a countable dense subset of $(0,1) \backslash P(u)$.

\end{tm}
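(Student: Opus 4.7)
The plan is to derive this theorem as a direct corollary of Theorem \ref{lthfbn}, the corresponding result on $F_{USCGCON}(\mathbb{R}^m)\setminus\widehat{\emptyset}$. The strategy reduces to two easy structural observations about $\widetilde{S}^{m}_{nc}$ that allow us to feed the hypotheses of Theorem \ref{lthfbn} and then prune away the conditions that become vacuous.

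First I would verify the inclusion $\widetilde{S}^{m}_{nc}\subset F_{USCGCON}(\mathbb{R}^m)\setminus\{\widehat{\emptyset}\}$. Indeed, any $u\in\widetilde{S}^{m}_{nc}$ satisfies (i), (iii-3) and (iv-2); by Theorem \ref{gsmre} each cut $[u]_\alpha$ for $\alpha\in(0,1]$ is star-shaped in $K(\mathbb{R}^m)$, in particular nonempty, compact, and connected. Hence $u$ lies in $F_{USCGCON}(\mathbb{R}^m)$. Moreover, normality of $u$ gives some $x_0$ with $u(x_0)=1$, so $[u]_1\ne\emptyset$; this rules out $u=\widehat{\emptyset}$ and, more importantly, forces $\lambda_u=1$.

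Second, I would feed Theorem \ref{lthfbn} with this $u$ and the sequence $\{u_n\}\subset\widetilde{S}^{m}_{nc}\subset F_{USCGCON}(\mathbb{R}^m)$. Since $\lambda_u=1$, the intervals $(\lambda_u,1]$ appearing in items (viii), (x), (xii), (xiv) of Theorem \ref{lthfbn} are empty, so the clauses of the form ``$\lim_{n\to\infty}[u_n]_\alpha(K)=[u]_\alpha=\emptyset$ for each $\alpha\in(\lambda_u,1]$'' hold trivially. After deleting these vacuous clauses, items (viii)--(x) of Theorem \ref{lthfbn} collapse respectively into ``$H([u_n]_\alpha,[u]_\alpha)\to 0$ for $\alpha\in(0,1)\setminus P(u)$,'' ``$H([u_n]_\alpha,[u]_\alpha)\to 0$ a.e. on $(0,1)$,'' and the analogous statements on dense/countable dense subsets of $(0,1)\setminus P(u)$. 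But these coincide with items (iii)--(v) of Theorem \ref{lthfbn}, which in turn have already been identified with items (iii)--(v) of the target Theorem \ref{cgme}. Likewise, items (xi)--(xiv) of Theorem \ref{lthfbn}, after removing the empty $\al$ clause, reduce to the Kuratowski-level statements (vii)--(x) of Theorem \ref{cgme}.

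Putting these pieces together, every item (i)--(x) of Theorem \ref{cgme} is seen to be one of the equivalent statements in Theorem \ref{lthfbn} (possibly after discarding a vacuous subclause), so the equivalences transfer directly. I do not expect any real obstacle here: the whole content is already packaged in Theorem \ref{lthfbn}, and the only task is the bookkeeping that (a) star-shapedness gives connectedness (hence membership in $F_{USCGCON}$) and (b) normality pushes $\lambda_u$ to $1$, collapsing the conditional statements on $(\lambda_u,1]$.
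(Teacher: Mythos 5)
Your proposal is correct and follows essentially the same route as the paper: the paper's proof likewise observes that $\widetilde{S}^{m}_{nc}\subset F_{USCGCON}(\mathbb{R}^m)$ and that normality gives $[u]_1\neq\emptyset$ (hence $u\neq\widehat{\emptyset}$), and then invokes Theorem \ref{lthfbn}. Your additional bookkeeping that $\lambda_u=1$ makes the $(\lambda_u,1]$ clauses vacuous is exactly the implicit step the paper leaves to the reader.
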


\begin{proof} \ Note that $\widetilde{S}^{m}_{nc} \subset  F_{USCGCON} (\mathbb{R}^m)$ and that for each
$u\in \widetilde{S}^{m}_{nc}$, $[u]_1 \not= \emptyset$ and hence $u \not= \widehat{\emptyset} $. So
the
desired conclusion follows immediately
from
Theorem \ref{lthfbn}.
\end{proof}

\begin{re}
{\rm
From Theorem \ref{cgme}, we know that the $\Gamma$-convergence
and
 the endograph metric convergence
are
equivalent
on common fuzzy sets such as
fuzzy numbers (compact and noncompact),
 fuzzy  star-shaped  numbers (compact and noncompact),
and
 general fuzzy   star-shaped   numbers (compact and noncompact);
 that is to say,
 the Fell
topology is precisely the Hausdorff metric topology on the set of endographs of
these common fuzzy sets.
  }
\end{re}

\begin{re}
{\rm
Suppose that $u, u_n \in\widetilde{S}^{m}_{nc}$, $n=1,2,\ldots$, and that $\al\in (0,1]$.
 From Theorem \ref{cgme}, $ H_{\rm end} ( u_n, u)  \to 0 $ is equivalent to $u_n\str{\Gamma}{\longrightarrow}u$.
From Propositions \ref{hms} and \ref{klhe}, $H([u_n]_\al,  [u]_\al) \to 0$ is equivalent to $\lim_{n\to \infty} [u_n]_\alpha (K)= [u]_\al$.
So
 the level characterizations of the endograph metric and the $\Gamma$-convergence
 given in Theorems \ref{ltgc} and \ref{hpe}
 are precisely the
 same on $\widetilde{S}^{m}_{nc}$.

In fact, it is easy to check
that
 the level characterizations of the endograph metric and the $\Gamma$-convergence
are
precisely the
 same on $F_{USCGCON}^N (\mathbb{R}^m):=\{ u\in F_{USCGCON} : [u]_1\not=\emptyset    \}$.
}
 \end{re}

\begin{re}\label{ems}
{\rm
 These level characterizations help us to see the previous results more clearly.
 Note that for $u$, $u_n$, $n=1,2,\ldots$, in $F_{USC}(\mathbb{R}^m)$,
\begin{gather}
  \lim_{n\to \infty}{\rm send}\, u_n (K)= {\rm send}\, u
\ \hbox{iff} \
u_n\str{\Gamma}{\longrightarrow}u
\ \hbox{and}
\lim_{n\to \infty}  [u_n]_0 (K)=  [u]_0,   \nonumber
 \\
H ({\rm send}\, u_n,  {\rm send}\, u ) \to 0  \ \hbox{iff} \   H_{\rm end} (u_n, u) \to 0 \ \hbox{and} \ H([u_n]_0, [u]_0) \to 0.  \label{se0r}
\end{gather}
So
 the level decomposition properties of the $\Gamma$-convergence
(the three equivalent statements (\romannumeral6), (\romannumeral7) and (\romannumeral8) in
Theorem \ref{cgme})
 can imply some previous results such as Propositions 8, 12, 16 and Theorems 3, 14 in \cite{trutschnig}.

By using \eqref{se0r},
it can be checked that
Theorem \ref{hdp}
can
deduce Propositions 6 in \cite{trutschnig}, and that    Theorems \ref{hdp} and \ref{dpf} can imply Theorem 11 in \cite{trutschnig}.
 }
\end{re}

\begin{re}\label{rpe}
{\rm
By using Propositions \ref{hms} and \ref{klhe}, we can relook some previous conclusions.
For example, Lemma 2.5 in \cite{huang7} is a known conclusion;
Propositions 8 and 12 in \cite{trutschnig} are the same;
 the second conditions of
 Propositions 8 and 16 in \cite{trutschnig}
 are
 the same.

 }
 \end{re}

\subsection{The relationships among subspaces of $( F_{USCG} (\mathbb{R}^m),   H_{\rm end} )$ \label{resucon}}

We discuss the relationships among various subspaces of $( F_{USCG} (\mathbb{R}^m),   H_{\rm end} )$.
One of the results
is
that
the fuzzy set spaces of noncompact type   are exactly the completions of their compact counterparts under the endograph metric.
All the results
obtained
in this subsection
are
summarized in Figure \ref{uscbren}.

\begin{tm} \label{ncduscb}
  $\widetilde{S}^m_{nc}$ is a closed set in $(  F_{USCG} (\mathbb{R}^m), H_{\rm end}   )$.
\end{tm}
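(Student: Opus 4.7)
The plan is to use the representation theorem (Theorem \ref{gsmre}) to reduce closedness of $\widetilde{S}^m_{nc}$ to verifying two conditions on the limit: normality and star-shapedness of every positive cut. Concretely, take any sequence $\{u_n\} \subset \widetilde{S}^m_{nc}$ with $H_{\rm end}(u_n, u) \to 0$, where $u \in F_{USCG}(\mathbb{R}^m)$. Because $u \in F_{USCG}$ automatically satisfies conditions (ii) and (iii) of Theorem \ref{gsmre}, and $[u]_\alpha \in K(\mathbb{R}^m) \cup \{\emptyset\}$ for each $\alpha \in (0,1]$, it will suffice to establish (a) $[u]_1 \neq \emptyset$ (which promotes $u$ to a normal fuzzy set and forces $\lambda_u = 1$, so all positive cuts are nonempty), and (b) $[u]_\alpha \in K_S(\mathbb{R}^m)$ for every $\alpha \in (0,1]$.

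For (a), I would invoke Lemma \ref{hbn} to see that $\bigcup_{n \geq 1} [u_n]_1$ is bounded in $\mathbb{R}^m$. Normality of each $u_n$ lets me choose $x_n \in [u_n]_1$, so $\{x_n\}$ has a subsequence $x_{n_k} \to x_0$, and $(x_{n_k}, 1) \in {\rm end}\, u_{n_k}$ converges to $(x_0, 1)$ in $\mathbb{R}^m \times [0,1]$. Theorem \ref{uschpm} gives $u_n \stackrel{\Gamma}{\longrightarrow} u$, hence ${\rm end}\, u \supseteq \limsup_n {\rm end}\, u_n$, and therefore $(x_0, 1) \in {\rm end}\, u$, i.e.\ $u(x_0) = 1$. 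In particular $\lambda_u = 1$, so $[u]_\alpha \neq \emptyset$ for all $\alpha \in (0,1]$.

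For (b), the level characterization Theorem \ref{hpe} yields $H([u_n]_\alpha, [u]_\alpha) \to 0$ for every $\alpha \in (0,1) \setminus P(u)$. Since $u_n \in \widetilde{S}^m_{nc}$, Theorem \ref{gsmre} tells us $[u_n]_\alpha \in K_S(\mathbb{R}^m)$, and because $[u]_\alpha \in K(\mathbb{R}^m)$ is nonempty by step (a), the closedness of $K_S(\mathbb{R}^m)$ in $(K(\mathbb{R}^m), H)$ supplied by Theorem \ref{ksc} forces $[u]_\alpha \in K_S(\mathbb{R}^m)$ on this set of ``good'' levels. To extend this to an arbitrary $\alpha \in (0,1]$, I exploit Theorem \ref{puc} (so $P(u)$ is at most countable) to pick $\beta_k \uparrow \alpha$ with $\beta_k \in (0,1) \setminus P(u)$, then apply the left-continuity statement Theorem \ref{smncbpe}(ii) (together with Proposition \ref{mce} at $\alpha = 1$) to get $H([u]_{\beta_k}, [u]_\alpha) \to 0$; a second application of Theorem \ref{ksc} closes the argument.

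The main obstacle is the normality step (a): it is the only place where I must genuinely use the Hausdorff convergence of the endographs as sets in $\mathbb{R}^m \times [0,1]$ rather than a cut-wise statement, since normality is a property of level $1$ and the level $1$ set is typically a platform point where the direct level characterization of Theorem \ref{hpe} does not apply. After that, star-shapedness propagates routinely from the closedness of $K_S(\mathbb{R}^m)$ and the left-continuity of the cut function.
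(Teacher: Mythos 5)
Your proof is correct and follows essentially the same route as the paper's: the level characterization of $H_{\rm end}$ (Theorem \ref{hpe}) gives Hausdorff convergence of the $\alpha$-cuts off the countable set $P(u)$, the closedness of $K_S(\mathbb{R}^m)$ in $(K(\mathbb{R}^m),H)$ (Theorem \ref{ksc}) handles those levels, and left-continuity of $\alpha\mapsto[u]_\alpha$ propagates star-shapedness to the remaining levels before invoking the representation theorem. Your separate normality step (a) is sound but not strictly necessary, since nonemptiness of $[u]_1$ already falls out of step (b): each good level $[u]_{\beta_k}$ is a Hausdorff limit of nonempty compacta and hence nonempty, and $[u]_1=\bigcap_k[u]_{\beta_k}$ is then a nested intersection of nonempty compact sets.
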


\begin{proof} \ We only need to show that each limit point of $\widetilde{S}^m_{nc}$ belongs to
itself.
Let   $\{u_n\}$ be a sequence in    $\widetilde{S}^m_{nc}$ with $\lim u_n =u \in F_{USCG} (\mathbb{R}^m)$.
Then by Theorem \ref{cgme},
$H([u_n]_\alpha, [u]_\alpha) \str{   \mbox{a.e.}   }{\to}   0 \ (  [0,1]    )$.
We affirm
that
$[u]_\al \in K_S(\mathbb{R}^m)$ for each $\al\in (0,1]$.
In fact,
take
$\al \in (0,1]$. If $H([u_n]_\alpha, [u]_\alpha) \to   0 $,
then
by Theorem \ref{ksc},
$[u]_\al \in K_S(\mathbb{R}^m)$.
 If $H([u_n]_\alpha, [u]_\alpha) \not\to   0 $,
then
there exists a sequence
$\beta_n  \to \al-$
such that
$ [u]_{\beta_n} \in  K_S(\mathbb{R}^m)$.
Note that
$[u]_\al= \bigcap_{n} [u]_{\beta_n} $,
and
this yields
$ [u]_\alpha \in  K_S(\mathbb{R}^m)$.
From Theorems \ref{gsmre} and \ref{fuscbchre},
we
thus
know that
$u \in \widetilde{S}^m_{nc}$.
\end{proof}

\begin{tm} \label{dinc}
 $\widetilde{S}^m$ is dense in  $(\widetilde{S}^m_{nc}, H_{\rm end})$.
\end{tm}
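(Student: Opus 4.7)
The plan is to mirror the proof of Theorem \ref{bdg} ($F_{USCB}(\mathbb{R}^m)$ dense in $F_{USCG}(\mathbb{R}^m)$), using Ma's truncation $u^{(1/n)}$ from Section \ref{uscbpy} as the approximating sequence. Given $u \in \widetilde{S}^m_{nc}$, I would show that $\{u^{(1/n)}\}_{n=1}^{\infty} \subset \widetilde{S}^m$ and that $H_{\rm end}(u^{(1/n)},u) \to 0$.

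First, I would verify $u^{(1/n)} \in \widetilde{S}^m$ by checking conditions (i), (ii), (iii-3) and (iv-1). From the formula for $[u^{(\alpha)}]_\beta$ recorded in the proof of Lemma \ref{csn},
\[
[u^{(1/n)}]_\beta = \begin{cases} [u]_\beta, & \beta \geq 1/n, \\ [u]_{1/n}, & 0 < \beta < 1/n, \end{cases}
\]
and $[u^{(1/n)}]_0 = \overline{\{u^{(1/n)} > 0\}} = [u]_{1/n}$. Since $u \in \widetilde{S}^m_{nc}$, every $[u]_\lambda$ with $\lambda \in (0,1]$ belongs to $K_S(\mathbb{R}^m)$, so each positive cut of $u^{(1/n)}$ is star-shaped in the sense of (iii-3). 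The support $[u^{(1/n)}]_0 = [u]_{1/n}$ is compact, giving (iv-1). Normality of $u$ (condition (ii)) yields some $x_0$ with $u(x_0) = 1 \geq 1/n$, so $u^{(1/n)}(x_0) = 1$, giving normality (ii) for $u^{(1/n)}$. Upper semi-continuity (i) is immediate since each $\{u^{(1/n)} \geq \beta\}$ is a closed set in $\mathbb{R}^m$ (either a cut of $u$ or the whole space). One then verifies directly, or invokes Theorem \ref{gsmre}, that the family $\{[u^{(1/n)}]_\beta\}_{\beta\in[0,1]}$ satisfies the representation axioms, establishing $u^{(1/n)} \in \widetilde{S}^m$.

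Second, since $u \in \widetilde{S}^m_{nc} \subset F_{USCG}(\mathbb{R}^m)$, Lemma \ref{csn} applies verbatim and yields $H_{\rm end}(u^{(1/n)}, u) \to 0$. Together with the first step, this exhibits a sequence in $\widetilde{S}^m$ converging to $u$ under the endograph metric, proving the density. The only nontrivial point is the preservation of condition (iii-3) under truncation, but this is immediate from the explicit level-set formula above, so no substantive obstacle arises.
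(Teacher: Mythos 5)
Your proposal is correct and follows exactly the paper's route: the paper's own proof takes the truncations $u^{(1/n)}\in\widetilde{S}^m$ and invokes Lemma \ref{csn} to get $H_{\rm end}(u^{(1/n)},u)\to 0$, just as you do. The only difference is that you explicitly verify $u^{(1/n)}\in\widetilde{S}^m$ (conditions (\romannumeral1), (\romannumeral2), (\romannumeral3-3), (\romannumeral4-1)), which the paper leaves implicit; your verification is accurate.
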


\begin{proof} \ Take $u\in \widetilde{S}^m_{nc}$. Then $\{u^{(1/n)}, n=1,2,\ldots\}$ is a sequence in   $\widetilde{S}^m$.
By
Lemma \ref{csn},
$H_{\rm end} (u^{(1/n)}, u) \to 0$ as $n\to \infty$.
So
 $\widetilde{S}^m$ is dense in  $(\widetilde{S}^m_{nc}, H_{\rm end})$.
\end{proof}

\begin{tm} \label{wsnclp}
   $(\widetilde{S}^m_{nc}, H_{\rm end})$ is the completion of  $(\widetilde{S}^m, H_{\rm end})$.
\end{tm}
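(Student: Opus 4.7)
The plan is to follow the same template used for Theorem \ref{gcde}, which established that $(F_{USCG}(\mathbb{R}^m), H_{\rm end})$ is the completion of $(F_{USCB}(\mathbb{R}^m), H_{\rm end})$. To say that a metric space $(Y,d)$ is the completion of a subspace $(X,d|_X)$ amounts to verifying two things: $(Y,d)$ is complete, and $X$ is dense in $Y$. Both ingredients are essentially already available from the preceding theorems, so the proof should reduce to assembling them.

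First, I would establish completeness of $(\widetilde{S}^m_{nc}, H_{\rm end})$. By Theorem \ref{fusbcspe}, the ambient space $(F_{USCG}(\mathbb{R}^m), H_{\rm end})$ is complete. By Theorem \ref{ncduscb}, $\widetilde{S}^m_{nc}$ is a closed subset of this complete metric space. Since a closed subset of a complete metric space is itself complete in the induced metric, this gives completeness of $(\widetilde{S}^m_{nc}, H_{\rm end})$ immediately.

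Second, density of $\widetilde{S}^m$ in $(\widetilde{S}^m_{nc}, H_{\rm end})$ is exactly the content of Theorem \ref{dinc}, so nothing further is needed on that side. Combining the two, $(\widetilde{S}^m_{nc}, H_{\rm end})$ is a complete metric space containing $\widetilde{S}^m$ as a dense subset, which is precisely the definition of a completion of $(\widetilde{S}^m, H_{\rm end})$, and by uniqueness of completions up to isometry we get the stated conclusion.

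There is no real obstacle in the argument itself, since all the structural work has already been done in Sections 7 and 9. The only subtle point worth being explicit about is why the closedness of $\widetilde{S}^m_{nc}$ in $(F_{USCG}(\mathbb{R}^m), H_{\rm end})$ actually transfers completeness: one should note that any Cauchy sequence $\{u_n\} \subset \widetilde{S}^m_{nc}$ is Cauchy in $(F_{USCG}(\mathbb{R}^m), H_{\rm end})$, hence converges there to some $u \in F_{USCG}(\mathbb{R}^m)$ by Theorem \ref{fusbcspe}, and then $u \in \widetilde{S}^m_{nc}$ by Theorem \ref{ncduscb}. Thus the proof can be stated in one or two lines, and I would simply write: \emph{The desired result follows from Theorems \ref{fusbcspe}, \ref{ncduscb} and \ref{dinc}.}
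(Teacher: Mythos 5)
Your proposal is correct and follows exactly the paper's own argument: completeness of $(\widetilde{S}^m_{nc}, H_{\rm end})$ from Theorems \ref{fusbcspe} and \ref{ncduscb} (closed subset of a complete space), plus density from Theorem \ref{dinc}. The paper's proof is precisely the one-line citation of these three theorems that you arrive at.
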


\begin{proof}
\ The desired result follows immediately from Theorems \ref{fusbcspe}, \ref{ncduscb} and \ref{dinc}.
 \end{proof}

\begin{tm} \label{smcg}
   $S^m_{nc}$ is a closed set in $( \widetilde{S}^m_{nc}, H_{\rm end}   )$.
\end{tm}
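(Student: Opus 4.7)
The plan is to reduce the problem via Remark \ref{nce}: since the ambient space $\widetilde{S}^m_{nc}$ is already closed in $(F_{USCG}(\mathbb{R}^m),H_{\rm end})$ by Theorem \ref{ncduscb}, it suffices to show that if $\{u_n\}\subset S^m_{nc}$ converges in $H_{\rm end}$ to some $u\in\widetilde{S}^m_{nc}$, then $\mbox{ker}\, u:=\bigcap_{\alpha\in(0,1]}\mbox{ker}\,[u]_\alpha\neq\emptyset$. So the whole proof is aimed at constructing a single point $x_0$ that serves as a kernel point for every level set of $u$.

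For each $n$, pick $x_n\in\mbox{ker}\,u_n$ (nonempty because $u_n\in S^m_{nc}$); note that $x_n\in[u_n]_\alpha$ for every $\alpha\in(0,1]$. Since $P(u)$ is at most countable by Theorem \ref{puc}, fix some $\alpha_0\in(0,1)\setminus P(u)$. By Theorem \ref{cgme} we have $H([u_n]_{\alpha_0},[u]_{\alpha_0})\to0$; because $x_n\in[u_n]_{\alpha_0}$ and $[u]_{\alpha_0}$ is compact, $\{x_n\}$ is eventually contained in a bounded neighborhood of $[u]_{\alpha_0}$, so we can extract a convergent subsequence $x_{n_k}\to x_0$.

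The key step is then to show $x_0\in\mbox{ker}\,[u]_\alpha$ for every $\alpha\in(0,1]$. For $\alpha\in(0,1)\setminus P(u)$, Theorem \ref{cgme} again gives $H([u_{n_k}]_\alpha,[u]_\alpha)\to0$; the sets $[u_{n_k}]_\alpha$ and $[u]_\alpha$ are star-shaped (because $u_n,u\in\widetilde{S}^m_{nc}$) and $x_{n_k}\in\mbox{ker}\,[u_{n_k}]_\alpha$, so Corollary \ref{kef} yields $x_0\in\mbox{ker}\,[u]_\alpha$. For an arbitrary $\alpha\in(0,1]$, choose $\beta_j\uparrow\alpha$ with $\beta_j\in(0,1)\setminus P(u)$; by Theorem \ref{fuscbchre}(ii), $[u]_\alpha=\bigcap_j[u]_{\beta_j}$, so $x_0\in[u]_\alpha$ and, for any $y\in[u]_\alpha$, $\overline{x_0 y}\subset[u]_{\beta_j}$ for every $j$, hence $\overline{x_0 y}\subset[u]_\alpha$. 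Thus $x_0\in\mbox{ker}\,u$ and $u\in S^m_{nc}$.

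The main obstacle is ensuring that one point $x_0$ works simultaneously at every level; this is resolved by choosing $x_n$ in the joint kernel $\mbox{ker}\,u_n$ (so the same $x_{n_k}$ can be fed into Corollary \ref{kef} at every good level $\alpha$) and by exploiting the left-continuity $[u]_\alpha=\bigcap_{\beta<\alpha}[u]_\beta$ of cut-sets to pass from the dense set of levels $(0,1)\setminus P(u)$ to all of $(0,1]$.
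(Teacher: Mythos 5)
Your proof is correct and follows the same route as the paper: reduce the problem to showing $\mbox{ker}\,u\neq\emptyset$ via Remark \ref{nce}, obtain levelwise Hausdorff convergence off $P(u)$ from Theorem \ref{cgme}, and pass kernel points to the limit. The paper simply defers the kernel-point argument to Theorem 6.3 of \cite{huang9}; your write-up (a convergent subsequence of joint kernel points, Corollary \ref{kef} at levels in $(0,1)\setminus P(u)$, and the left-continuity $[u]_\alpha=\bigcap_{\beta<\alpha}[u]_\beta$ to reach all levels) supplies exactly the details that the paper outsources.
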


\begin{proof} \  To show the closedness of   $S^m_{nc}$ in $( \widetilde{S}^m_{nc}, H_{\rm end}   )$.
Let $\{u_n , n=1,2,\ldots\}$ be a sequence in $S^m_{nc}$
  and
 $\lim_{n\to \infty} u_n=u \in \widetilde{S}^m_{nc}$.
We only need to
prove
  $u \in S^m_{nc}$.
From
Remark \ref{nce}, this
 is equivalent
 to
 prove $\mbox{ker}\,  u \not=\emptyset$.

From
 Theorem \ref{cgme},
 $H_{\rm end}(u_n, u) \to 0$
is equivalent to
\begin{equation*} \label{hue}
  H([u_n]_\al,  [u]_\al) \to 0 \ \hbox{a.e. on } \ [0,1].
\end{equation*}
By
proceeding according to the proof of $\mbox{ker}\,  u \not=\emptyset$ in Theorem 6.3 of \cite{huang9},
we
can
show
 $\mbox{ker}\,  u \not=\emptyset$.
\end{proof}

\begin{tm} \label{dis}
 $S^m$ is dense in  $(S^m_{nc}, H_{\rm end})$.
\end{tm}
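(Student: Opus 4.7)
The plan is to mirror the proofs of Theorem \ref{bdg} and Theorem \ref{dinc} by using the truncation operator $u^{(1/n)}$ defined in Section \ref{uscbpy}. Given $u\in S^m_{nc}$, I would form the sequence $\{u^{(1/n)}\}$ and show two things: (a) each $u^{(1/n)}$ lies in $S^m$, and (b) $H_{\rm end}(u^{(1/n)}, u) \to 0$ as $n\to\infty$. Point (b) is immediate from Lemma \ref{csn}, since $S^m_{nc} \subset F_{USCG}(\mathbb{R}^m)$, so the whole content of the proof is verifying (a).

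For (a), I would use the explicit cut-set description recalled in the proof of Lemma \ref{csn}:
\[
[u^{(1/n)}]_\beta =
\begin{cases}
[u]_\beta, & \beta \geq 1/n,\\
[u]_{1/n}, & \beta < 1/n.
\end{cases}
\]
From this one reads off each of the defining properties of $S^m$. Upper semi-continuity follows because every cut set is either $[u]_\beta$ or $[u]_{1/n}$, both closed. Normality is preserved since $[u^{(1/n)}]_1 = [u]_1 \neq \emptyset$. Boundedness of $[u^{(1/n)}]_0 = [u]_{1/n}$ (i.e.\ property (iv-1)) is exactly the hypothesis (iv-2) applied to $u$ at level $1/n$. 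Finally, for fuzzy star-shapedness, pick $x$ with respect to which $u$ is fuzzy star-shaped; if $u(y) \geq 1/n$ then $u(\lambda y + (1-\lambda)x) \geq u(y) \geq 1/n$, so $u^{(1/n)}(\lambda y + (1-\lambda)x) = u(\lambda y + (1-\lambda)x) \geq u(y) = u^{(1/n)}(y)$, while if $u(y) < 1/n$ the inequality is trivial because $u^{(1/n)}(y) = 0$. Hence $u^{(1/n)} \in S^m$.

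Combining (a) and (b) gives that $\{u^{(1/n)}\} \subset S^m$ and $H_{\rm end}(u^{(1/n)}, u) \to 0$, proving density. There is no real obstacle: the key technical ingredient (the cut-set formula for $u^{(\alpha)}$ and the endograph convergence it produces) is already packaged in Lemma \ref{csn}, and star-shapedness is essentially inherited because the truncation only flattens values below $1/n$ to zero, which does not interfere with inequalities along segments toward a star-center.
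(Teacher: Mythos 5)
Your proposal is correct and follows exactly the paper's route: truncate via $u^{(1/n)}$ and invoke Lemma \ref{csn} for the endograph convergence. The only difference is that you spell out the verification that $u^{(1/n)}\in S^m$ (in particular the preservation of fuzzy star-shapedness under truncation), which the paper simply asserts; your added detail is accurate.
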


\begin{proof} \  Take $u\in S^m_{nc}$. Then $\{u^{(1/n)}, n=1,2,\ldots\}$ is a sequence in   $S^m$.
By
Lemma \ref{csn},
$H_{\rm end} (u^{(1/n)}, u) \to 0$ as $n\to \infty$.
So
 $S^m$ is dense in  $(S^m_{nc}, H_{\rm end})$. \end{proof}

\begin{tm} \label{smnc}
  $( S^m_{nc},    H_{\rm end}  )$ is the completion of $( S^m,   H_{\rm end} )$.
\end{tm}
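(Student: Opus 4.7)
The plan is to combine three ingredients already in hand: completeness of the ambient space, closedness of $S^m_{nc}$ inside it, and density of $S^m$ in $S^m_{nc}$. Recall that a completion of a metric space $(X,d)$ is characterized (up to isometry) as a complete metric space containing $(X,d)$ as a dense isometric subspace.

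First I would establish that $(S^m_{nc}, H_{\rm end})$ is complete. The strategy is to view it as a closed subspace of the complete space $(F_{USCG}(\mathbb{R}^m), H_{\rm end})$. Indeed, Theorem \ref{smcg} says $S^m_{nc}$ is closed in $(\widetilde{S}^m_{nc}, H_{\rm end})$, and Theorem \ref{ncduscb} says $\widetilde{S}^m_{nc}$ is closed in $(F_{USCG}(\mathbb{R}^m), H_{\rm end})$; chaining these two, $S^m_{nc}$ is closed in $(F_{USCG}(\mathbb{R}^m), H_{\rm end})$. Since Theorem \ref{fusbcspe} ensures $(F_{USCG}(\mathbb{R}^m), H_{\rm end})$ is complete, a closed subset inherits completeness, giving completeness of $(S^m_{nc}, H_{\rm end})$.

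Second, the density of $S^m$ in $(S^m_{nc}, H_{\rm end})$ is exactly Theorem \ref{dis}, which provides, for each $u\in S^m_{nc}$, the explicit approximating sequence $\{u^{(1/n)}\}\subset S^m$ with $H_{\rm end}(u^{(1/n)}, u)\to 0$ via Lemma \ref{csn}. Since the inclusion $S^m\hookrightarrow S^m_{nc}$ is tautologically an isometry with respect to $H_{\rm end}$, putting the completeness and density together produces the desired conclusion: $(S^m_{nc}, H_{\rm end})$ is a complete metric space in which $(S^m, H_{\rm end})$ sits isometrically as a dense subspace, hence it is the completion of $(S^m, H_{\rm end})$.

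There is no real obstacle here; the whole content of the theorem has already been absorbed into the structural results of the previous sections (Theorems \ref{fusbcspe}, \ref{ncduscb}, \ref{smcg} and \ref{dis}). The statement is essentially a one-line corollary, so the only care needed is to state the completion characterization precisely and to remark that the inclusion $S^m\subset S^m_{nc}$ is an $H_{\rm end}$-isometry so that the abstract definition of completion is indeed met.
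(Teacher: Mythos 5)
Your proposal is correct and follows essentially the same route as the paper: the paper's proof cites Theorems \ref{wsnclp}, \ref{smcg} and \ref{dis}, which amounts to exactly your chain (completeness of the ambient space via \ref{fusbcspe} and \ref{ncduscb}, closedness of $S^m_{nc}$, and density of $S^m$). The only cosmetic difference is that you unwind the completeness of $(\widetilde{S}^m_{nc}, H_{\rm end})$ back to $(F_{USCG}(\mathbb{R}^m), H_{\rm end})$ explicitly rather than quoting Theorem \ref{wsnclp}.
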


\begin{proof}
\ The desired result follows immediately from Theorems \ref{wsnclp}, \ref{smcg}, \ref{dis}.
\end{proof}

\begin{tm}
  $S^m$ is  a closed set in $( \widetilde{S}^m,    H_{\rm end}   )$.
\end{tm}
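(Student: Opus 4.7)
\smallskip

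\noindent\textbf{Proof proposal.} By the representation theorems for $S^m$ and $\widetilde{S}^m$ (Theorem \ref{smre} versus Theorem \ref{gsmre}), an element $u \in \widetilde{S}^m$ belongs to $S^m$ if and only if $\ker u := \bigcap_{\alpha\in(0,1]} \ker[u]_\alpha \neq \emptyset$. So given a sequence $\{u_n\} \subset S^m$ with $H_{\rm end}(u_n, u) \to 0$ and $u \in \widetilde{S}^m$, the entire task reduces to producing a single point in $\ker u$. The plan closely parallels the proof of Theorem \ref{smcg} for the noncompact case, but is actually simpler because uniform boundedness of the level sets is essentially automatic here.

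First I would choose, for each $n$, a kernel point $x_n \in \ker u_n \subseteq [u_n]_1$. Since $\{u_n\}$ is Cauchy (indeed convergent) in $(F_{USCG}(\mathbb{R}^m), H_{\rm end})$, Lemma \ref{hbn} gives that $\bigcup_{n} [u_n]_1$ is bounded in $\mathbb{R}^m$; hence $\{x_n\}$ is a bounded sequence and admits a convergent subsequence $x_{n_k} \to x_0$.

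Next I would use Theorem \ref{cgme} (applicable because $\widetilde{S}^m \subset F_{USCGCON}(\mathbb{R}^m)\setminus\widehat{\emptyset}$) to deduce that $H([u_n]_\alpha, [u]_\alpha) \to 0$ for all $\alpha$ in some dense subset $P \subseteq (0,1)\setminus P(u)$. For each such $\alpha$, since $x_{n_k} \in \ker[u_{n_k}]_\alpha$ and each $[u_{n_k}]_\alpha$ is star-shaped (as $u_{n_k} \in S^m$), Corollary \ref{kef} yields
\[
x_0 \in \limsup_{k\to\infty}\ker[u_{n_k}]_\alpha \subseteq \ker[u]_\alpha.
\]
Finally, to upgrade membership in $\ker[u]_\alpha$ from $\alpha \in P$ to all $\alpha \in (0,1]$, I would invoke the left-continuity $[u]_\alpha = \bigcap_{\beta<\alpha}[u]_\beta$ (Theorem \ref{fuscbchre}(ii)). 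Choosing $\beta_k \uparrow \alpha$ with $\beta_k \in P$, any $y \in [u]_\alpha$ lies in each $[u]_{\beta_k}$, so $\overline{x_0 y} \subseteq [u]_{\beta_k}$ for every $k$; intersecting gives $\overline{x_0 y} \subseteq [u]_\alpha$, which shows $x_0 \in \ker[u]_\alpha$. Thus $x_0 \in \ker u$ and $u \in S^m$.

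The main obstacle I anticipate is not in this proof itself, which is essentially a template already deployed for $S^m_{nc}$, but in making sure that every step legitimately uses what is available in the compact setting. The extraction of $x_n$ from $\ker u_n$ relies on $u_n \in S^m$, which by Theorem \ref{smre} gives $\bigcap_\lambda \ker[u_n]_\lambda \neq \emptyset$; the boundedness step uses Lemma \ref{hbn} applied at $\alpha = 1$; the Hausdorff convergence on a dense set of levels requires Theorem \ref{cgme} and uses that $u \neq \widehat{\emptyset}$ (which holds since $u$ is normal); and the limsup-of-kernels step uses Corollary \ref{kef}, which demands star-shapedness of the approximating sets — guaranteed by $u_n \in S^m$. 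All ingredients are already in place in the paper, so the argument should go through cleanly.
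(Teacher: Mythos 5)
Your proof is correct, but it takes a different route from the paper. The paper's own proof is a one-line deduction from Theorem \ref{smcg}: since $S^m=S^m_{nc}\cap \widetilde{S}^m$ and $S^m_{nc}$ is closed in $(\widetilde{S}^m_{nc},H_{\rm end})$, its trace on the subspace $\widetilde{S}^m$ is closed there. You instead reprove the closedness directly by extracting a kernel point: picking $x_n\in \ker u_n\subseteq [u_n]_1$, using Lemma \ref{hbn} at level $1$ to get a convergent subsequence, pushing the limit into $\ker[u]_\alpha$ via Theorem \ref{cgme} and Corollary \ref{kef} on a dense set of levels, and then upgrading to all levels by the intersection property $[u]_\alpha=\bigcap_{\beta<\alpha}[u]_\beta$. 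This is essentially the content of the proof of Theorem \ref{smcg}, which the paper itself largely outsources to the proof of $\ker u\neq\emptyset$ in \cite{huang9}; so what your version buys is a self-contained argument (indeed more detailed than anything printed in this paper), at the cost of redoing work that the paper's subspace-restriction observation makes unnecessary. All the individual steps you invoke (normality guaranteeing nonempty cuts, star-shapedness of the $[u_{n_k}]_\alpha$ for Corollary \ref{kef}, density of $(0,1)\setminus P(u)$) check out.
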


\begin{proof} \ The desired conclusion follows immediately from Theorem \ref{smcg}. \end{proof}

\begin{tm} \label{emcg}
 $E^m_{nc} $ is a closed set in $(S^m_{nc},  H_{\rm end}  )$.
\end{tm}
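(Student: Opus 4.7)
The plan is to reduce the closedness of $E^m_{nc}$ in $(S^m_{nc},H_{\rm end})$ to a fiberwise convexity check on the $\alpha$-cuts, using the level characterizations of the endograph metric established in Theorem \ref{cgme} plus the fact that $K_C(\mathbb{R}^m)$ is closed in $(K(\mathbb{R}^m),H)$ from Proposition \ref{kcs}. Specifically, let $\{u_n\}\subset E^m_{nc}$ with $H_{\rm end}(u_n,u)\to 0$ and $u\in S^m_{nc}$. Since $u$ already satisfies (i), (ii) and (iv-2) by virtue of lying in $S^m_{nc}$, I only need to verify (iii-1), i.e.\ fuzzy convexity. For an upper semi-continuous fuzzy set this is equivalent to $[u]_\alpha$ being a convex subset of $\mathbb{R}^m$ for every $\alpha\in(0,1]$, so the task reduces to proving convexity of every cut.

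Because $E^m_{nc}\subset\widetilde{S}^m_{nc}$ and $u\in S^m_{nc}\subset\widetilde{S}^m_{nc}$, Theorem \ref{cgme} applies: convergence $H_{\rm end}(u_n,u)\to 0$ forces $H([u_n]_\alpha,[u]_\alpha)\to 0$ for every $\alpha\in(0,1)\setminus P(u)$. Theorem \ref{puc} tells me that $P(u)$ is at most countable, so $(0,1)\setminus P(u)$ is dense in $(0,1)$. For such an $\alpha$, each $[u_n]_\alpha$ is in $K_C(\mathbb{R}^m)$ (because $u_n\in E^m_{nc}$), and by Proposition \ref{kcs} the limit $[u]_\alpha$ lies in $K_C(\mathbb{R}^m)$ as well; hence $[u]_\alpha$ is convex for every $\alpha\in(0,1)\setminus P(u)$.

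To lift convexity from this dense set of good levels to all levels, I would use left-continuity of the cut map from Theorem \ref{fuscbchre}(ii): for arbitrary $\alpha\in(0,1]$, pick $\gamma_k\uparrow\alpha$ with $\gamma_k\in(0,1)\setminus P(u)$ (possible by density), so that $[u]_\alpha=\bigcap_{k}[u]_{\gamma_k}$ is an intersection of convex sets and hence convex. This covers $\alpha=1$ as well. Therefore every $\alpha$-cut of $u$ is convex, so $u$ is fuzzy convex, and together with the properties inherited from $S^m_{nc}$ this gives $u\in E^m_{nc}$, proving closedness.

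I do not expect a serious obstacle; the only subtlety is making sure that Theorem \ref{cgme} applies to the limit $u$ (which it does because $u\in S^m_{nc}\subset\widetilde{S}^m_{nc}$, so normality $[u]_1\neq\emptyset$ and hence $u\neq\widehat\emptyset$ hold) and that the density argument with $\gamma_k\in(0,1)\setminus P(u)$ handles the platform points of $u$, which are precisely the levels where the Hausdorff convergence could fail.
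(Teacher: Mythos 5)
Your proof is correct and follows essentially the same route as the paper: the paper also reduces the problem to showing $[u]_\al\in K_C(\mathbb{R}^m)$ for all $\al\in(0,1]$, invokes the level characterization of Theorem \ref{cgme} to get $H([u_n]_\al,[u]_\al)\to 0$ off the countable set $P(u)$, uses the closedness of $K_C(\mathbb{R}^m)$ in the Hausdorff metric at those levels, and handles the remaining levels by writing $[u]_\al$ as an intersection of cuts at good levels approaching $\al$ from the left (this last step is carried out explicitly in the proof of Theorem \ref{ncduscb}, to which the proofs of Theorems \ref{smcg} and \ref{emcg} defer). Your write-up just makes explicit what the paper leaves as "similar to the proof of Theorem \ref{smcg}."
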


\begin{proof} \   Let $u_n $, $n=1,2,\ldots$, be fuzzy sets in $E^m_{nc}$
  and
 $\lim_{n\to \infty} u_n = u$ be a fuzzy set in $S^m_{nc}$.
To show the desired result, we only need to prove
  $u \in E^m_{nc}$.
  This is equivalent to show $[u]_\al \in K_C(\mathbb{R}^m)$ for any $\al\in (0,1]$.
From
 Proposition \ref{kcs},
 we know $(K_C(\mathbb{R}^m), H)$
is a
complete space.
The remainder of the proof is similar to
the
proof
of
Theorem \ref{smcg}.
\end{proof}

\begin{tm}
  $E^m$ is a closed set in $(S^m, H_{\rm end})$.
\end{tm}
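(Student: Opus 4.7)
The plan is to deduce this closedness directly from Theorem \ref{emcg}, which has already established that $E^m_{nc}$ is closed in $(S^m_{nc}, H_{\rm end})$. The point is that $E^m$ differs from $E^m_{nc}$ only by the additional requirement (iv-1) that $[u]_0$ be bounded, and this boundedness is automatically supplied by the ambient assumption $u \in S^m$.

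First I would take an arbitrary sequence $\{u_n\} \subset E^m$ with $H_{\rm end}(u_n, u) \to 0$ for some limit $u \in S^m$; the task reduces to verifying $u \in E^m$, which in turn amounts to checking that $u$ is fuzzy convex, as the other defining clauses (i), (ii), (iv-1) are already supplied by $u \in S^m$. Using the inclusions $E^m \subset E^m_{nc}$ and $S^m \subset S^m_{nc}$, the very same sequence $\{u_n\}$ is a sequence in $E^m_{nc}$ that endograph-converges to $u \in S^m_{nc}$.

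Now I would apply Theorem \ref{emcg} to conclude $u \in E^m_{nc}$, i.e., $u$ is upper semi-continuous, normal, fuzzy convex, with each cut $[u]_\alpha$ ($\alpha \in (0,1]$) bounded. Combining this with the already-known membership $u \in S^m$, which guarantees the compact-support condition (iv-1), we see $u$ satisfies (i), (ii), (iii-1) and (iv-1), hence $u \in E^m$, proving the closedness.

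There is no serious obstacle here; all the real work, namely the passage from Hausdorff-convergent convex $\alpha$-cuts (available for a.e.\ $\alpha$ by Theorem \ref{cgme}) to convexity of every $[u]_\alpha$ via the representation theorem and the closedness of $K_C(\mathbb{R}^m)$ in $(K(\mathbb{R}^m), H)$ (Proposition \ref{kcs}), has been carried out inside Theorem \ref{emcg}. Should one prefer a self-contained argument instead of invoking Theorem \ref{emcg}, the same chain of ideas, using Theorem \ref{cgme} to extract $H([u_n]_\alpha, [u]_\alpha) \to 0$ a.e.\ on $(0,1)$, Proposition \ref{kcs} to inherit convexity of $[u]_\alpha$ for a.e.\ $\alpha$, and then Theorem \ref{smre}(ii) to propagate convexity to the remaining cuts as intersections $[u]_\alpha = \bigcap_{\gamma < \alpha}[u]_\gamma$ of convex sets, would work verbatim.
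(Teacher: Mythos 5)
Your proof is correct and follows essentially the same route as the paper, which likewise derives the statement immediately from Theorem \ref{emcg} (the closedness of $E^m_{nc}$ in $(S^m_{nc}, H_{\rm end})$), using the inclusions $E^m \subset E^m_{nc}$, $S^m \subset S^m_{nc}$ and the fact that the bounded-support condition (\romannumeral4-1) is already guaranteed by $u \in S^m$. The extra detail you supply about which defining clauses need checking is exactly what the paper leaves implicit.
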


\begin{proof} \ The desired result follows immediately from Theorem \ref{emcg}. \end{proof}

\begin{tm} \label{emdc}
  $E^m$ is dense in $(E^m_{nc}, H_{\rm end})$.
\end{tm}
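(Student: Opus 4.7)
The plan is to mimic exactly the approach used in the earlier density theorems (Theorems \ref{bdg}, \ref{dinc}, \ref{dis}), namely to approximate an arbitrary $u \in E^m_{nc}$ by the truncated sequence $\{u^{(1/n)}\}$ introduced before Lemma \ref{csn}. The two things I would need to verify are: (a) the sequence actually lies in $E^m$, and (b) the convergence $H_{\rm end}(u^{(1/n)}, u) \to 0$ holds, which is immediate from Lemma \ref{csn}.

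For (b), since $u \in E^m_{nc} \subset F_{USCG}(\mathbb{R}^m)$, Lemma \ref{csn} applies directly and gives $H_{\rm end}(u^{(1/n)}, u) \to 0$ with no further work.

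The substantive step is (a), checking that each $u^{(1/n)}$ is a compact fuzzy number. From the description of the cut-sets of $u^{(\alpha)}$ recorded in the proof of Lemma \ref{csn},
\[
[u^{(1/n)}]_\beta = \begin{cases} [u]_\beta, & \beta \geq 1/n, \\ [u]_{1/n}, & \beta < 1/n, \end{cases}
\]
so one reads off that: upper semi-continuity holds (all cut-sets are closed); normality is inherited because any $x_0$ with $u(x_0) = 1$ still satisfies $u^{(1/n)}(x_0) = 1$; the support $[u^{(1/n)}]_0 = [u]_{1/n}$ is compact since $u \in E^m_{nc}$ forces every $\alpha$-cut with $\alpha > 0$ to be compact convex (Proposition \ref{nr}); and fuzzy convexity of $u^{(1/n)}$ follows from fuzzy convexity of $u$ by a case split (the nontrivial case being when both $u^{(1/n)}(x), u^{(1/n)}(y) \geq 1/n$, in which case $u(\lambda x + (1-\lambda)y) \geq 1/n$ as well, so the truncation is inactive). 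Equivalently, all cut-sets $[u^{(1/n)}]_\beta$ are either $[u]_\beta$ or $[u]_{1/n}$, both compact convex by Proposition \ref{nr}.

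Hence $\{u^{(1/n)}\} \subset E^m$ and $H_{\rm end}(u^{(1/n)}, u) \to 0$, which gives the density. I do not expect any real obstacle; the argument is a carbatim transcription of the earlier proofs with ``$E^m_{nc}$/$E^m$'' in place of ``$S^m_{nc}$/$S^m$'' or ``$F_{USCG}$/$F_{USCB}$'', and the only check specific to the fuzzy-number setting is preservation of fuzzy convexity under the truncation, which is routine.
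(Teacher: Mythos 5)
Your proposal is correct and follows exactly the paper's own proof: the paper also takes the truncations $u^{(1/n)}\in E^m$ and invokes Lemma \ref{csn} to get $H_{\rm end}(u^{(1/n)},u)\to 0$. The only difference is that you explicitly verify $u^{(1/n)}\in E^m$ (normality, convexity, compactness of $[u]_{1/n}$), which the paper asserts without comment; your verification is accurate.
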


\begin{proof} \  Take $u\in E^m_{nc}$. Then $\{u^{(1/n)}, n=1,2,\ldots\}$ is a sequence in   $E^m$.
By
Lemma \ref{csn},
$H_{\rm end} (u^{(1/n)}, u) \to 0$ as $n\to \infty$.
So
 $E^m$ is dense in  $(E^m_{nc}, H_{\rm end})$.
\end{proof}

\begin{tm} \label{emcple}
  $(E^m_{nc}, H_{\rm end})$ is the completion of $(E^m, H_{\rm end})$.
\end{tm}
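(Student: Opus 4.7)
The plan is to follow the same two-ingredient recipe used to prove Theorems \ref{wsnclp} and \ref{smnc}: to identify $(E^m_{nc}, H_{\rm end})$ as the completion of $(E^m, H_{\rm end})$, I would verify that (a) $(E^m_{nc}, H_{\rm end})$ is itself complete, and (b) $E^m$ sits densely inside it. Density is already handed to us by Theorem \ref{emdc}, so the real task is just to establish completeness of $E^m_{nc}$ under $H_{\rm end}$.

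For completeness, the cleanest route is to exhibit $E^m_{nc}$ as a closed subset of an already-known complete space. Theorem \ref{fusbcspe} tells us that $(F_{USCG}(\mathbb{R}^m), H_{\rm end})$ is complete. Chaining the closedness facts
\[
E^m_{nc} \subset S^m_{nc} \subset \widetilde{S}^m_{nc} \subset F_{USCG}(\mathbb{R}^m),
\]
where each inclusion is closed by Theorems \ref{emcg}, \ref{smcg}, and \ref{ncduscb} respectively, we conclude that $E^m_{nc}$ is closed in a complete metric space, hence complete.

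Combining the two ingredients: $(E^m_{nc}, H_{\rm end})$ is a complete metric space in which $E^m$ is dense (Theorem \ref{emdc}), so by the uniqueness of the completion, $(E^m_{nc}, H_{\rm end})$ is the completion of $(E^m, H_{\rm end})$. There is no substantive obstacle here — the theorem is a one-line corollary of Theorems \ref{fusbcspe}, \ref{ncduscb}, \ref{smcg}, \ref{emcg}, and \ref{emdc}, and the proof should simply cite them, exactly parallel to how Theorem \ref{smnc} is deduced from Theorems \ref{wsnclp}, \ref{smcg}, and \ref{dis}.
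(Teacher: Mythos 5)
Your proposal is correct and follows essentially the same route as the paper, which deduces the result from Theorem \ref{smnc} (completeness of $S^m_{nc}$, itself traced back through Theorems \ref{wsnclp} and \ref{ncduscb} to Theorem \ref{fusbcspe}), Theorem \ref{emcg} (closedness of $E^m_{nc}$ in $S^m_{nc}$), and Theorem \ref{emdc} (density of $E^m$). You merely unfold the chain of closed inclusions explicitly down to $F_{USCG}(\mathbb{R}^m)$, which is a harmless expansion of the same argument.
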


\begin{proof} \   The desired result follows immediately from Theorems \ref{smnc}, \ref{emcg} and \ref{emdc}. \end{proof}

We
summarize
the conclusions on the relationships among various subspaces of the complete space $(F_{USCG}(\mathbb{R}^m), H_{\rm end})$
in Fig. \ref{uscbren}.
\begin{figure}
  \centering
  \includegraphics{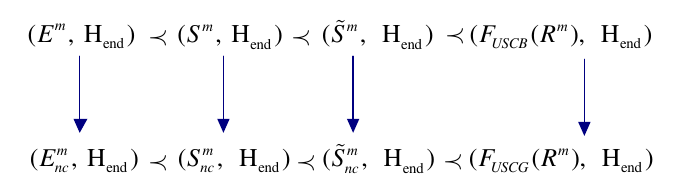}\\
  \caption{The relationships among various subspaces of $(F_{USCG}(\mathbb{R}^m), H_{\rm end})$, where
 $A\prec B$ denotes that $A$ is a closed subspace of $B$ and
$A\longrightarrow B$ means that $B$ is the completion of $A$. }\label{uscbren}
\end{figure}

\subsection{Characterizations of compactness, relative compactness and total boundedness in subspaces
of
$( F_{USCGCON} (\mathbb{R}^m),   H_{\rm end} )$ } \label{csun}

In
this subsection,
based on the conclusions in
Sections \ref{uscghp}, \ref{uscbpy} and Subsection \ref{resucon},
we
give
characterizations of
relatively compact sets,
totally bounded sets,
and
compact sets in subspaces
of
$( F_{USCGCON} (\mathbb{R}^m),   H_{\rm end} )$.

\begin{tm} \label{urction}
Let $U$ be a subset of $\widetilde{S}^m_{nc}$ ($S^m_{nc}$, $E^m_{nc}$).
Then the following statements are equivalent.
\\
(\romannumeral1) $U$ is a relatively compact set in $( \widetilde{S}^m_{nc},   H_{\rm end} )$ ($(S^m_{nc}, H_{\rm end} )$,  $(E^m_{nc}, H_{\rm end} )$).
\\
(\romannumeral2) $U$ is a totally bounded set in $( \widetilde{S}^m_{nc},   H_{\rm end} )$ ($(S^m_{nc}, H_{\rm end} )$,  $(E^m_{nc}, H_{\rm end} )$).
\\
(\romannumeral3) For
 each $\al \in (0,1]$,
  $\bm{U_\al}:=\{ [u]_\al:  u\in U\}$ is a bounded set in $(   K(\mathbb{R}^m),   H     )$.
 \\
 (\romannumeral4) For
 each $\al \in (0,1]$,
  $U_\al$ is a bounded set in $(   K_S(\mathbb{R}^m),   H     )$ ($(   K_S(\mathbb{R}^m),   H     )$,   $(   K_C(\mathbb{R}^m),   H     )$).
\\
(\romannumeral5)
$U(\al)$
is
a bounded set in $\mathbb{R}^m$ for each $\al \in (0,1]$.
\end{tm}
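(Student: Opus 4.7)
The plan is to reduce the theorem to the already-established characterization in the ambient space $(F_{USCG}(\mathbb{R}^m), H_{\rm end})$ via Theorem \ref{fuscbrcn} and Corollary \ref{fusbtdrceqn}, and to handle the cosmetic reformulations (iii), (iv), (v) by elementary observations about the Hausdorff metric on $K(\mathbb{R}^m)$.

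First I would argue that (iii), (iv), (v) are equivalent. The equivalence of (iii) and (iv) is immediate: by Theorems \ref{smre}, \ref{gsmre} and Proposition \ref{nr}, every element of $U_\al$ lies in $K_S(\mathbb{R}^m)$ (respectively $K_C(\mathbb{R}^m)$), and boundedness in a subspace of $(K(\mathbb{R}^m),H)$ agrees with boundedness in the ambient space since the metric is the restriction. For (iii) $\Leftrightarrow$ (v), I would use the elementary fact that a family $U_\al \subset K(\mathbb{R}^m)$ is bounded in the Hausdorff metric if and only if $\bigcup U_\al$ is bounded in $\mathbb{R}^m$: diameter-boundedness of $U_\al$ around any fixed compact set forces all members into a large closed ball, and conversely, if $\bigcup U_\al \subset B_r$, then $H(A,B)\le 2r$ for all $A,B\in U_\al$.

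Next I would handle (i) $\Leftrightarrow$ (v). Theorem \ref{fuscbrcn} gives exactly this equivalence in the larger space $(F_{USCG}(\mathbb{R}^m), H_{\rm end})$. By Theorems \ref{ncduscb}, \ref{smcg} and \ref{emcg}, each of $\widetilde{S}^m_{nc}$, $S^m_{nc}$ and $E^m_{nc}$ is closed in $(F_{USCG}(\mathbb{R}^m), H_{\rm end})$. Therefore, for $U$ contained in one of these subspaces, the $H_{\rm end}$-closure of $U$ lies inside the subspace, so relative compactness in the subspace is the same as relative compactness in $(F_{USCG}(\mathbb{R}^m), H_{\rm end})$, and (i) $\Leftrightarrow$ (v) follows directly from Theorem \ref{fuscbrcn}.

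Finally, (i) $\Leftrightarrow$ (ii) is a standard metric-space fact once completeness is in place: total boundedness and relative compactness coincide in any complete metric space. By Theorem \ref{fusbcspe}, $(F_{USCG}(\mathbb{R}^m), H_{\rm end})$ is complete; since the three subspaces in question are closed in it, they are themselves complete metric spaces under $H_{\rm end}$, so the equivalence transfers. Alternatively, one can invoke Corollary \ref{fusbtdrceqn} in $F_{USCG}(\mathbb{R}^m)$ and observe that total boundedness of $U$ depends only on the intrinsic metric structure of $(U, H_{\rm end})$, which is the same whether $U$ is viewed inside the subspace or inside $F_{USCG}(\mathbb{R}^m)$. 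The only point requiring care is the closedness transfer in the previous paragraph, which I view as the main (though modest) obstacle; everything else is bookkeeping.
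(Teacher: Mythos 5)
Your proposal is correct and follows essentially the same route as the paper: both reduce to the ambient characterizations in $(F_{USCG}(\mathbb{R}^m), H_{\rm end})$ (Theorems \ref{fuscbrcn}/\ref{fusbtoundcn}) using the closedness of $\widetilde{S}^m_{nc}$, $S^m_{nc}$, $E^m_{nc}$ in that complete space, and dispose of (\romannumeral3)--(\romannumeral5) by the same elementary Hausdorff-metric observations. The only cosmetic difference is that the paper passes through total boundedness (which is intrinsic, so no closedness transfer is needed for (\romannumeral2)$\Leftrightarrow$(\romannumeral5)), whereas you go through relative compactness and correctly supply the closedness argument that this requires.
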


\begin{proof} \ We only prove the desired results when $U$ be a subset of $\widetilde{S}^m_{nc}$. The  the rest of situations can be proved in a similar
manner.

Since
$( \widetilde{S}^m_{nc},   H_{\rm end} )$
is a complete
space,
we know
that
$U$ is a relatively compact set
if and only if
$U$ is a totally bounded set in $( \widetilde{S}^m_{nc},   H_{\rm end} )$.
Thus,
by
Theorem
\ref{fusbtoundcn},
statement (\romannumeral1) and statement (\romannumeral2) are
 both
 equivalent
   to
statement (\romannumeral5).

Note the $[u]_\al \in  K_S(\mathbb{R}^m)$ for each $u\in \widetilde{S}^m_{nc}$ and $\al\in (0,1]$.
Hence
 statement (\romannumeral3) and statement (\romannumeral4) are equivalent. Clearly, these two statements
 are
 also
 equivalent
   to
statement (\romannumeral5).
\end{proof}

If $U \subset \widetilde{S}^m_{nc}$ ($S^m_{nc}, E^m_{nc}$),
then
$\overline{U}$ is exactly the closure of $U$ in $( \widetilde{S}^m_{nc},   H_{\rm end} )$ ($(S^m_{nc}, H_{\rm end} )$,  $(E^m_{nc}, H_{\rm end} )$).

\begin{tm}
$U$ is a compact set
in
$( \widetilde{S}^m_{nc},   H_{\rm end} )$ ($(S^m_{nc}, H_{\rm end} )$,  $(E^m_{nc}, H_{\rm end} )$)
  if and only if
  \\
  (\romannumeral1) \ For each $\al \in (0,1]$,
  $U_\al$ is a bounded set in $(   K_S(\mathbb{R}^m),   H     )$ ($(   K_S(\mathbb{R}^m),   H     )$,   $(   K_C(\mathbb{R}^m),   H     )$), and
  \\
  (\romannumeral2) \ $U =  \overline{U}$.
\end{tm}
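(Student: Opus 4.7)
The plan is to derive this compactness characterization as an almost immediate corollary of the preceding theorem (the characterization of relative compactness / total boundedness in $\widetilde{S}^m_{nc}$, $S^m_{nc}$, $E^m_{nc}$), combined with the general fact that in any metric space a subset is compact if and only if it is both relatively compact and closed. So I would not reprove anything about $\al$-cuts directly; I would quote the previous theorem.

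First I would argue the forward direction. Suppose $U$ is compact in $(\widetilde{S}^m_{nc}, H_{\mathrm{end}})$ (or in the two analogous subspaces). Then $U$ is in particular closed, giving $U=\overline{U}$, which is condition (ii). Compactness also implies relative compactness, so by the equivalence of (i) and (iv) in the preceding theorem, for each $\al\in(0,1]$ the set $U_\al=\{[u]_\al:u\in U\}$ is bounded in $(K_S(\mathbb{R}^m),H)$ (or in $(K_C(\mathbb{R}^m),H)$ in the $E^m_{nc}$ case), which is condition (i).

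Conversely, suppose $U$ satisfies (i) and (ii). Condition (i) is exactly statement (iv) of the preceding theorem, so $U$ is relatively compact in the corresponding space. Because these spaces are (closed subspaces of) the complete space $(F_{USCG}(\mathbb{R}^m),H_{\mathrm{end}})$ (by Theorems \ref{fusbcspe}, \ref{ncduscb}, \ref{smcg}, \ref{emcg}), relative compactness means that every sequence in $U$ has a subsequence converging in the ambient subspace to some limit $v$. Condition (ii) says $U$ is closed, so $v\in U$, hence $U$ is sequentially compact, i.e.\ compact.

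I do not expect any real obstacle: the work has all been done in the previous theorem and in the closedness results of Subsection \ref{resucon}. The only thing to be mildly careful about is matching the three cases ($\widetilde{S}^m_{nc}$, $S^m_{nc}$, $E^m_{nc}$) against the correct hyperspace ($K_S$ versus $K_C$) when invoking the previous theorem, and to observe that the closure $\overline{U}$ taken inside each of these subspaces coincides with the closure in $F_{USCG}(\mathbb{R}^m)$ because each of these subspaces is itself closed in $(F_{USCG}(\mathbb{R}^m), H_{\mathrm{end}})$. Given that, the proof is essentially one line: ``The desired result follows immediately from the previous theorem.''
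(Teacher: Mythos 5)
Your proposal is correct and follows essentially the same route as the paper, whose entire proof is ``The desired result follows immediately from Theorem \ref{urction}'': compactness is decomposed as relative compactness (equivalent to condition (\romannumeral1) by that theorem) plus closedness (condition (\romannumeral2)). Your extra care about the closure being taken inside the subspace is the same point the paper handles by noting, just before the statement, that for $U$ contained in one of these closed subspaces the closure $\overline{U}$ in $(F_{USCG}(\mathbb{R}^m), H_{\rm end})$ coincides with the closure in the subspace.
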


\begin{proof} \ The desired result follows immediately from Theorem \ref{urction}. \end{proof}

\begin{tm} \label{umtbction}
Let $U$ be a subset of $\widetilde{S}^m$ ($S^m$, $E^m$).
Then the following statements are equivalent.
\\
(\romannumeral1) \    $U$ is a totally bounded set in $( \widetilde{S}^m,   H_{\rm end} )$ ($( S^m,   H_{\rm end} )$, $( E^m,   H_{\rm end} )$).
\\
(\romannumeral2) \  For each $\al \in (0,1]$,
  $U_\al$ is a bounded set in $(   K_S(\mathbb{R}^m),   H     )$ ($(   K_S(\mathbb{R}^m),   H     )$, $(   K_C(\mathbb{R}^m),   H     )$).

\end{tm}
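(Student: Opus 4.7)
The plan is to reduce this theorem directly to Theorem \ref{urction} by observing that total boundedness is an intrinsic property of a metric space (it depends only on the set $U$ and the metric restricted to $U$, not on the ambient space). Since $\widetilde{S}^m \subseteq \widetilde{S}^m_{nc}$, $S^m \subseteq S^m_{nc}$, and $E^m \subseteq E^m_{nc}$, and the endograph metric on each compact space is the restriction of $H_{\rm end}$ on the corresponding noncompact space, a set $U$ in the compact space is totally bounded if and only if it is totally bounded when viewed in the noncompact space.

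First I would state the observation that for any subset $U$ of a metric subspace $(Y,d) \subseteq (X,d)$, total boundedness of $U$ in $(Y,d)$ is equivalent to total boundedness of $U$ in $(X,d)$ (since the defining $\varepsilon$-nets can always be chosen from $U$ itself after a standard doubling-the-radius argument, and clearly a net in $Y$ is a net in $X$). I would apply this to $U \subseteq \widetilde{S}^m \subseteq \widetilde{S}^m_{nc}$ (respectively for $S^m$ and $E^m$) to conclude that statement (\romannumeral1) is equivalent to $U$ being totally bounded in the relevant noncompact space.

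Next I would invoke Theorem \ref{urction}, which states that total boundedness of $U$ in $(\widetilde{S}^m_{nc},H_{\rm end})$ (resp.\ $(S^m_{nc},H_{\rm end})$, $(E^m_{nc},H_{\rm end})$) is equivalent to $U_\alpha$ being a bounded set in $(K_S(\mathbb{R}^m),H)$ (resp.\ $(K_S(\mathbb{R}^m),H)$, $(K_C(\mathbb{R}^m),H)$) for each $\alpha \in (0,1]$. Composing these two equivalences yields the statement of the theorem.

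I do not anticipate a substantial obstacle: the only point that needs a line of justification is the intrinsic character of total boundedness, and this is standard. No relative-compactness claim is being made here, which is important because the compact spaces $\widetilde{S}^m$, $S^m$, $E^m$ are strictly smaller than their completions $\widetilde{S}^m_{nc}$, $S^m_{nc}$, $E^m_{nc}$ (by the results in Subsection \ref{resucon}), so a totally bounded subset of a compact space need not be relatively compact there; that is precisely why only the ``totally bounded'' characterization survives verbatim at this level, and this is why the theorem restricts attention to total boundedness rather than relative compactness.
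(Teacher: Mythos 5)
Your proposal is correct and follows essentially the same route as the paper: the paper's proof likewise reduces the statement to Theorem \ref{urction} via the inclusions $\widetilde{S}^m \subset \widetilde{S}^m_{nc}$, $S^m \subset S^m_{nc}$, $E^m \subset E^m_{nc}$. Your explicit justification that total boundedness is intrinsic to the metric (so it transfers between subspace and ambient space) is the one-line detail the paper leaves implicit, and your closing remark correctly explains why only total boundedness, not relative compactness, carries over verbatim.
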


\begin{proof} \ The desired results follow from the fact that
$\widetilde{S}^m \subset \widetilde{S}^m_{nc}$
($S^m \subset S^m_{nc}$,
$E^m \subset E^m_{nc}$)
and
Theorem \ref{urction}.
\end{proof}

\begin{tm} \label{umrection}
Let $U$ be a subset of $\widetilde{S}^m$ ($S^m$, $E^m$).
Then
  $U$ is a relatively compact set in $( \widetilde{S}^m,   H_{\rm end} )$ ($( S^m,   H_{\rm end} )$, $( E^m,   H_{\rm end} )$)
  if and only if
  \\
  (\romannumeral1) \ For each $\al \in (0,1]$,
  $U_\al$ is a bounded set in $(   K_S(\mathbb{R}^m),   H     )$  ($(   K_S(\mathbb{R}^m),   H     )$, $(   K_C(\mathbb{R}^m),   H     )$), and
\\
 (\romannumeral2) \  $\overline{U} \subseteq  \widetilde{S}^m$ ($S^m$, $E^m$).

\end{tm}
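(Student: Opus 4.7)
\textbf{Proof plan for Theorem \ref{umrection}.} My approach is to reduce the statement to the already-established noncompact counterpart (Theorem \ref{urction}) together with the closure-containment condition (ii), mimicking the strategy used for Theorem \ref{brec} (which did the same passage from $F_{USCG}$ to $F_{USCB}$). The key point is that $\widetilde{S}^m \subseteq \widetilde{S}^m_{nc}$ (and analogously for the star-shaped and fuzzy-number cases), while the metric $H_{\rm end}$ and the closure operation $\overline{U}$ are all inherited from the ambient complete space $(F_{USCG}(\mathbb{R}^m), H_{\rm end})$.

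For the \emph{necessity} direction I would argue as follows. Suppose $U$ is relatively compact in $(\widetilde{S}^m, H_{\rm end})$. Then $U$ is also relatively compact (viewed as a subset) in the larger space $(\widetilde{S}^m_{nc}, H_{\rm end})$, so Theorem \ref{urction} immediately gives condition (i), namely that $U_\alpha$ is bounded in $(K_S(\mathbb{R}^m),H)$ for each $\alpha\in (0,1]$. For condition (ii), take any $v\in\overline{U}$ (closure in $F_{USCG}(\mathbb{R}^m)$) and pick a sequence $u_n\in U$ with $H_{\rm end}(u_n,v)\to 0$. Relative compactness of $U$ in $\widetilde{S}^m$ yields a subsequence $u_{n_k}$ converging in $H_{\rm end}$ to some $w\in\widetilde{S}^m$; by uniqueness of limits $w=v$, so $v\in\widetilde{S}^m$, proving $\overline{U}\subseteq\widetilde{S}^m$.

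For the \emph{sufficiency} direction, assume (i) and (ii) hold. Condition (i), combined with Theorem \ref{urction}, implies that $U$ is relatively compact in $(\widetilde{S}^m_{nc}, H_{\rm end})$. Since $(\widetilde{S}^m_{nc}, H_{\rm end})$ is complete (Theorem \ref{wsnclp}) and closed in $(F_{USCG}(\mathbb{R}^m), H_{\rm end})$ (Theorem \ref{ncduscb}), the closure $\overline{U}$ computed in $F_{USCG}(\mathbb{R}^m)$ coincides with the closure computed in $\widetilde{S}^m_{nc}$, and it is a compact subset of $\widetilde{S}^m_{nc}$. By condition (ii), $\overline{U}\subseteq\widetilde{S}^m$, so $\overline{U}$ is a compact subset of $\widetilde{S}^m$ containing $U$; hence $U$ is relatively compact in $(\widetilde{S}^m, H_{\rm end})$. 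The proofs for $S^m\subset S^m_{nc}$ and $E^m\subset E^m_{nc}$ are word-for-word identical, invoking Theorems \ref{smnc} and \ref{emcple} (completeness of the noncompact counterpart) and Theorems \ref{smcg} and \ref{emcg} (closedness in the larger space).

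I expect no serious obstacle here: the heavy lifting has been done in Theorem \ref{urction} and in the structural results of Subsection \ref{resucon}. The only subtlety worth flagging explicitly is the coincidence of the closures in the two ambient spaces, which is why the statement of (ii) using $\overline{U}$ (defined in $F_{USCG}(\mathbb{R}^m)$) is meaningful and equivalent to ``every $H_{\rm end}$-limit point of $U$ lies in $\widetilde{S}^m$''. Given this, the whole theorem is a one-line appeal to Theorem \ref{urction} exactly as in the proof of Theorem \ref{brec}.
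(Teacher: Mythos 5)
Your proposal is correct and follows essentially the same route as the paper, which simply states that the result follows immediately from Theorem \ref{urction}; your necessity and sufficiency arguments are the natural unpacking of that one-line reduction. The extra care you take about the closure being the same whether computed in $F_{USCG}(\mathbb{R}^m)$, $\widetilde{S}^m_{nc}$, or $\widetilde{S}^m$ (via the closedness results of Subsection \ref{resucon}) is a reasonable elaboration of a detail the paper leaves implicit.
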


\begin{proof} \ The desired conclusions follow immediately from Theorem \ref{urction}. \end{proof}

\begin{tm}
Let $U$ be a subset of $\widetilde{S}^m$ ($S^m$, $E^m$).
Then
  $U$ is a compact set in $( \widetilde{S}^m,   H_{\rm end} )$ ($(S^m,   H_{\rm end} )$, $(E^m,   H_{\rm end} )$)
  if and only if
  \\
  (\romannumeral1) \ For each $\al \in (0,1]$,
  $U_\al$ is a bounded set in $(   K_S(\mathbb{R}^m),   H     )$ ($(   K_S(\mathbb{R}^m),   H     )$, $(   K_C(\mathbb{R}^m),   H     )$), and
\\
 (\romannumeral2) \  $\overline{U} = U$.

\end{tm}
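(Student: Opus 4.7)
The plan is to obtain this statement as an immediate corollary of the relative-compactness characterization in Theorem \ref{umrection}, by invoking the standard fact that in a Hausdorff space (here the metric subspaces $\widetilde{S}^m$, $S^m$, $E^m$ of $(F_{USCG}(\mathbb{R}^m), H_{\rm end})$), a subset is compact if and only if it is relatively compact and closed. In other words, I will not do any new fuzzy-set analysis; I will just repackage Theorem \ref{umrection}.

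The key observation is that the single equality $\overline{U} = U$ is doing double duty. Recall that in the paper's convention $\overline{U}$ denotes the closure of $U$ taken inside $(F_{USCG}(\mathbb{R}^m), H_{\rm end})$. Thus the condition $\overline{U} = U$ simultaneously forces $\overline{U} \subseteq \widetilde{S}^m$ (resp.\ $S^m$, $E^m$), which is exactly condition (ii) of Theorem \ref{umrection}, and it forces $U$ to be closed in the ambient space $(F_{USCG}(\mathbb{R}^m), H_{\rm end})$, which is what will upgrade relative compactness to compactness.

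For the forward direction, I would argue as follows. Suppose $U$ is compact in $(\widetilde{S}^m, H_{\rm end})$. Since compactness is intrinsic, $U$ is compact, hence closed, in the larger metric space $(F_{USCG}(\mathbb{R}^m), H_{\rm end})$, so $\overline{U} = U$; in particular $\overline{U} \subseteq \widetilde{S}^m$, which is hypothesis (ii) of Theorem \ref{umrection}. Moreover $U$ is relatively compact in $(\widetilde{S}^m, H_{\rm end})$, so by Theorem \ref{umrection} condition (i) here holds as well. The same reasoning applies verbatim with $S^m$ or $E^m$ in place of $\widetilde{S}^m$ (using the relevant cone $K_S(\mathbb{R}^m)$ or $K_C(\mathbb{R}^m)$).

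For the reverse direction, assume (i) and (ii). Condition (i) together with the inclusion $\overline{U} \subseteq \widetilde{S}^m$ (which is part of $\overline{U} = U$) is precisely what Theorem \ref{umrection} requires to conclude that $U$ is relatively compact in $(\widetilde{S}^m, H_{\rm end})$, i.e.\ that the closure of $U$ in $(\widetilde{S}^m, H_{\rm end})$ is compact. But by $\overline{U} = U$ that closure is $U$ itself, so $U$ is compact. I expect no real obstacle in this argument; the only point worth stating explicitly is the double role played by the condition $\overline{U} = U$, since $\overline{U}$ is a closure taken in the ambient space $F_{USCG}(\mathbb{R}^m)$ rather than in the subspace under consideration.
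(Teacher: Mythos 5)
Your proposal is correct and follows essentially the same route as the paper, which likewise obtains this theorem as an immediate repackaging of the relative-compactness characterization (the paper cites Theorem \ref{urction} directly, while you go through its corollary Theorem \ref{umrection}; the content is the same). Your explicit remark that $\overline{U}=U$, with $\overline{U}$ taken in the ambient space $(F_{USCG}(\mathbb{R}^m), H_{\rm end})$, simultaneously supplies $\overline{U}\subseteq \widetilde{S}^m$ and the closedness needed to upgrade relative compactness to compactness is exactly the point the paper leaves implicit.
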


\begin{proof} \ From Theorem \ref{urction}, we can obtain the desired results. \end{proof}

\begin{tm} \label{subfre} Let $U$ be a subset of $\widetilde{S}^m$ ($S^m$, $E^m$).
Then
  $U$ is a relatively compact set in
  $( \widetilde{S}^m,   H_{\rm end} )$ ($( S^m,   H_{\rm end} )$, $( E^m,   H_{\rm end} )$)
 if and only if
 \\
(\romannumeral1)
  For each $\al \in (0,1]$,
  $U_\al$ is a bounded set in $(   K_S(\mathbb{R}^m),   H     )$  ($(   K_S(\mathbb{R}^m),   H     )$, $(   K_C(\mathbb{R}^m),   H     )$), and
 \\
(\romannumeral2$'$)
Given $\{u_n: n=1,2,\ldots\} \subset U$, there exists a $r>0$ and a subsequence $\{v_n\}$ of $\{u_n\}$ such
that
$\lim_{n\to \infty}    |v_n|_r=0$.

\end{tm}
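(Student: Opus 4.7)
The plan is to reduce this theorem to Theorem \ref{fbsrchra} (which gives the analogous characterization inside $(F_{USCB}(\mathbb{R}^m), H_{\rm end})$) together with the closedness facts about $\widetilde{S}^m_{nc}$, $S^m_{nc}$, $E^m_{nc}$ in $(F_{USCG}(\mathbb{R}^m), H_{\rm end})$ established earlier. I will only write the argument for $\widetilde{S}^m$ in detail, since the cases of $S^m$ and $E^m$ proceed identically after replacing $\widetilde{S}^m_{nc}$ by $S^m_{nc}$ (closed in $\widetilde{S}^m_{nc}$ by Theorem \ref{smcg}) or by $E^m_{nc}$ (closed in $S^m_{nc}$ by Theorem \ref{emcg}), and $K_S(\mathbb{R}^m)$ by $K_C(\mathbb{R}^m)$ where appropriate.

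For necessity, I will observe that $\widetilde{S}^m \subseteq F_{USCB}(\mathbb{R}^m)$ with the same metric $H_{\rm end}$, so relative compactness of $U$ in $(\widetilde{S}^m, H_{\rm end})$ immediately yields relative compactness of $U$ in $(F_{USCB}(\mathbb{R}^m), H_{\rm end})$. Theorem \ref{fbsrchra} then delivers both (i) (in the $U(\alpha)$ formulation, which by Theorem \ref{urction} is equivalent to (i) here) and (ii$'$).

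For sufficiency, assume (i) and (ii$'$). The same (ii$'$) condition of Theorem \ref{fbsrchra} applies, so $U$ is relatively compact in $(F_{USCB}(\mathbb{R}^m), H_{\rm end})$; in particular the proof of that theorem shows that for any $\{u_n\}\subseteq U$ with $H_{\rm end}$-limit $u$ in $F_{USCG}(\mathbb{R}^m)$, one has $u\in F_{USCB}(\mathbb{R}^m)$. On the other hand, by Theorem \ref{ncduscb} the set $\widetilde{S}^m_{nc}$ is closed in $(F_{USCG}(\mathbb{R}^m), H_{\rm end})$, and since $U\subseteq \widetilde{S}^m\subseteq \widetilde{S}^m_{nc}$ we obtain $\overline{U}\subseteq \widetilde{S}^m_{nc}$. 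Combining these two inclusions with the identity $\widetilde{S}^m=\widetilde{S}^m_{nc}\cap F_{USCB}(\mathbb{R}^m)$ (immediate from the defining conditions (i), (ii), (iii-3), (iv-1) vs.\ (iv-2)) gives $\overline{U}\subseteq \widetilde{S}^m$. Then Theorem \ref{umrection} applied to $U$ concludes that $U$ is relatively compact in $(\widetilde{S}^m, H_{\rm end})$.

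The main obstacle is essentially a bookkeeping one: one must be sure that ``relative compactness'' in the nested spaces $\widetilde{S}^m \subseteq F_{USCB}(\mathbb{R}^m) \subseteq F_{USCG}(\mathbb{R}^m)$ is tracked consistently, and that the symbol $\overline{U}$ (defined as closure in $F_{USCG}(\mathbb{R}^m)$) is used correctly when invoking Theorem \ref{umrection}; the inclusion $\widetilde{S}^m=\widetilde{S}^m_{nc}\cap F_{USCB}(\mathbb{R}^m)$ is the key identity that allows the closedness of $\widetilde{S}^m_{nc}$ plus the ``bounded support in the limit'' consequence of (ii$'$) from Theorem \ref{fbsrchra} to conclude $\overline{U}\subseteq \widetilde{S}^m$. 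No genuinely new analytic work is required beyond invoking the existing machinery.
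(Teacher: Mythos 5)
Your proposal is correct and follows essentially the same route as the paper: the paper's own proof simply notes the identities $F_{USCB}(\mathbb{R}^m)\cap\widetilde{S}^m_{nc}=\widetilde{S}^m$ (and the analogues for $S^m$, $E^m$) and then invokes Theorem \ref{fbsrchra}, which is exactly the reduction you carry out, with the closedness results (Theorems \ref{ncduscb}, \ref{smcg}, \ref{emcg}) and Theorem \ref{umrection} supplying the details the paper leaves implicit. Your write-up just makes the bookkeeping explicit; no substantive difference.
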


\begin{proof} \ Note that $F_{USCB} (\mathbb{R}^m) \cap \widetilde{S}^m_{nc} = \widetilde{S}^m$,
$
F_{USCB} (\mathbb{R}^m) \cap S^m_{nc} = S^m
$
and
$F_{USCB} (\mathbb{R}^m) \cap E^m_{nc} = E^m$.
So,
by
Theorem \ref{fbsrchra},
we obtain the desired results.
\end{proof}

\begin{tm} \label{subfc} Let $U$ be a subset of $\widetilde{S}^m$ ($S^m$, $E^m$).
Then
  $U$ is a compact set in
  $( \widetilde{S}^m,   H_{\rm end} )$ ($( S^m,   H_{\rm end} )$, $( E^m,   H_{\rm end} )$)
 if and only if
 \\
(\romannumeral1)
  For each $\al \in (0,1]$,
  $U_\al$ is a bounded set in $(   K_S(\mathbb{R}^m),   H     )$  ($(   K_S(\mathbb{R}^m),   H     )$, $(   K_C(\mathbb{R}^m),   H     )$),
 \\
(\romannumeral2$'$)
Given $\{u_n: n=1,2,\ldots\} \subset U$, there exists a $r>0$ and a subsequence $\{v_n\}$ of $\{u_n\}$ such
that
$\lim_{n\to \infty}    |v_n|_r=0$, and
\\
(\romannumeral3)
$U$ is a closed set in  $( \widetilde{S}^m,   H_{\rm end} )$ ($( S^m,   H_{\rm end} )$, $( E^m,   H_{\rm end} )$).
\end{tm}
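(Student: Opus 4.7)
The plan is to observe that in any metric space, a subset is compact if and only if it is both closed and relatively compact. Therefore the proof is essentially a direct combination of Theorem \ref{subfre} (which characterizes relative compactness of $U$ via conditions (i) and (ii$'$)) with the closedness condition (iii).

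More precisely, I would proceed as follows. First, suppose $U$ is compact in $(\widetilde{S}^m, H_{\rm end})$ (resp.\ $(S^m, H_{\rm end})$, $(E^m, H_{\rm end})$). Then $U$ is closed in that space, giving (iii), and $U$ is in particular relatively compact, so by Theorem \ref{subfre} conditions (i) and (ii$'$) hold. Conversely, if (i), (ii$'$) and (iii) hold, then Theorem \ref{subfre} yields that $U$ is relatively compact, i.e.\ $\overline{U}$ (the closure in the ambient subspace) is compact. Combined with (iii), which says $U = \overline{U}$, we conclude that $U$ itself is compact.

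There is no real obstacle here; the only minor bookkeeping point is to be careful that the closure in condition (iii) is taken in the same ambient space ($\widetilde{S}^m$, $S^m$, or $E^m$ respectively) as the one in which compactness is asserted, but this is consistent with the convention already set up just before Theorem \ref{urction}. Given that Theorem \ref{subfre} has already done all the substantive work, the proof reduces to a single short deduction.
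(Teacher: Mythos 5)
Your proof is correct and follows essentially the same route as the paper, which likewise deduces the result immediately from Theorem \ref{subfre} via the standard fact that a subset of a metric space is compact iff it is closed and relatively compact. Your remark about taking the closure in the correct ambient subspace is a sensible precaution but raises no actual difficulty.
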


\begin{proof} \ The desired conclusions follow immediately from Theorem \ref{subfre}.   \end{proof}

\section{Conclusions}

 The first theme
of
 this paper is the relationships of the endograph metric and the $\Gamma$-convergence.
In general, the   endograph metric   convergence is stronger than
the
$\Gamma$-convergence
on
 $F_{USC} (\mathbb{R}^m)$.
We
show
 that an endograph metric
  convergent sequence in $F_{USCG} (\mathbb{R}^m)$ is exactly
a
$\Gamma$-convergent sequence in $F_{USCG} (\mathbb{R}^m)$ satisfying the condition that
 the union of $\al$-cuts of all its elements is a bounded set in $\mathbb{R}^m$ when $\al>0$ (Section \ref{rghc}).
 Moreover, we
find
that
the
 endograph metric metrizes the $\Gamma$-convergence on $F_{USCGCON} (\mathbb{R}^m) \backslash \widehat{\emptyset}$,
which includes common fuzzy sets
  such as
fuzzy numbers (compact and noncompact), fuzzy star-shaped
numbers (compact and noncompact), and general fuzzy star-shaped numbers (compact and noncompact).
This means
that
the Hausdorff metric metrizes the Fell topology on the endographs of these fuzzy sets (Subsection \ref{sbp}).
We also discuss
the
relationships among
the $d_p$ metric, the endograph metric and the $\Gamma$-convergence (Subsection \ref{regp}).

The second theme is the level characterizations of the endograph metric convergence and   the $\Gamma$-convergence.
This theme involves the level characterizations of fuzzy set itself, which is discussed in Section \ref{lcfu}.
Based on this,
  we show
that
the endograph metric convergence and   the $\Gamma$-convergence
have
 level
 decomposition properties.
Namely,
the endograph metric convergence on $F_{USCG} ( \mathbb{R}^m )$ can be broken down to the Hausdorff metric convergence on some of the corresponding $\al$-cuts,
   and
   the $\Gamma$-convergence on $F_{USC} ( \mathbb{R}^m )$ can be broken down to the Fell topology convergence (Kuratowski convergence) on some of the corresponding $\al$-cuts (Subsection \ref{lcge}).
We also give the forms of these level characterizations
when
restricted on  $F_{USCGCON} (\mathbb{R}^m) \backslash \widehat{\emptyset}$ and common fuzzy sets, respectively (Subsection \ref{sbp}).

The third theme is
characterizations of relative compactness, total boundedness, and compactness in subspaces of $(F_{USCG} (\mathbb{R}^m), H_{\rm end})$ and the relationships among these spaces.
We
clarify the relationships among
various
subspaces
of
$(F_{USCG} (\mathbb{R}^m), H_{\rm end})$
including several general fuzzy set spaces and all common fuzzy set spaces mentioned here.
One of the conclusions
is
that
the fuzzy set spaces of noncompact type   are exactly the completions of their compact counterparts under the endograph metric.
All conclusions on the relationships are summarized in a figure (Subsection \ref{resucon}).
We
 give
characterizations of relative compactness, total boundedness, and compactness
in all
these   fuzzy set spaces (Sections \ref{uscgpre} and \ref{uscbpy} and Subsection \ref{csun}).
A basic
conclusion
is that
a set in   $(F_{USCG} (\mathbb{R}^m), H_{\rm end})$
is relatively compact
  if and only if it is
totally bounded
if and only if
the union of $\al$-cuts of all its elements is a bounded set in $\mathbb{R}^m$ when $\al>0$.

Each theme is closely related to the other two. The discussions of these three themes are carried out alternately.
We first give
the relationship
of
endograph metric
  convergence and $\Gamma$-convergence on
$F_{USCG} (\mathbb{R}^m)$.
From this result and the level decomposition property of $\Gamma$-convergence,
we
obtain the level decomposition property of endograph metric.
Alternately, by using these
 level decomposition properties, we find the endograph metric and the $\Gamma$-convergence
are
coincide on   $F_{USCGCON} (\mathbb{R}^m) \backslash \widehat{\emptyset}$.
This is a further understanding of the relationships between these two convergence structures.

Using the level decomposition property of   endograph metric,
we
 present characterizations of relative compactness, total
boundedness, and compactness
in $(F_{USCG} (\mathbb{R}^m), H_{\rm end})$.
Conversely,
from these characterizations,
we
 can also deduce the relationship of the endograph metric and the $\Gamma$-convergence on
$F_{USCG} (\mathbb{R}^m)$ (See Remark \ref{trm} for details).

The results in this paper
can be used
to
theoretical research
and
real world applications of fuzzy set
such as
 the approximations of fuzzy sets,  the solutions of fuzzy differential equations, the analysis and design of fuzzy systems, and so on.

\section*{Acknowledgement}

The author would like to thank the anonymous referees for their invaluable suggestions and comments
that improve the presentation of this paper.

\appendix

\section{Proof of Theorem \ref{fusdctn}}

We first introduce
the
following auxiliary function.

Let $u\in F(\mathbb{R}^m)$, $t\in \mathbb{R}^m$ and $r$ be a positive number in $\mathbb{R}$.
Define a function
$S_{u,t,r}(\cdot, \cdot): \mathbb{S}^{m-1} \times [0,1] \to \{ -\infty \} \cup \mathbb{R}$
by
$$
S_{u,t,r}(e, \al)= \left\{
                        \begin{array}{ll}
      -\infty,    &     \hbox{if} \ [u]_\al \cap \overline{ B(t, r)}  =\emptyset,
\\
\sup \{    \langle e, x-t \rangle : x\in [u]_\al \cap \overline{ B(t, r)}   \} ,  &  \hbox{if} \ [u]_\al \cap \overline{ B(t, r)}  \not=\emptyset,
                        \end{array}
                      \right.
  $$
where  $\overline{ B(t, r)}$ denote
the closed ball
$\{x\in \mathbb{R}^m: d(t,x) \leq r \}$.

We can see that $S_{u,t,r}(e, \cdot)$ is a monotone function on $[0,1]$, and
$ \lim_{\beta \to \alpha+}S_{u,t,r}(e, \beta) \leq  S_{u,t,r}(e, \alpha)  \leq   \lim_{\beta \to \alpha-}S_{u,t,r}(e, \beta) $.

We say $\alpha\in (0,1)$ is a discontinuous point of $S_{u,t,r}(e, \cdot)$
if
\\
(\romannumeral1)  $S_{u,t,r}(e, \alpha) \in \mathbb{R}$, and
\\
  (\romannumeral2) $S_{u,t,r}(e, \beta) = -\infty$ for all $\beta>\alpha$
or
$-\infty < \lim_{\beta \to \alpha+}S_{u,t,r}(e, \beta)  <   \lim_{\beta \to \alpha-}S_{u,t,r}(e, \beta) $.
\\
Denote the set of all discontinuous points of $S_{u,t,r}(e, \cdot)$ by $D_{u, t, r, e}$.
Then
from
 the monotonicity of $S_{u,t,r}(e, \cdot)$, $D_{u,t,r,e}$ is at most countable.

\begin{lm} \label{dutrcoun}
  Let $u$ be a fuzzy set on $\mathbb{R}^m$, $t$ be a point in $\mathbb{R}^m$, and $r$ be a positive real number.
  Then
  $D_{u,t,r}:=\bigcup_{e\in \mathbb{S}^{m-1}} D_{u,t,r,e}$
  is at most countable.
\end{lm}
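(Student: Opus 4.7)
The plan is to bundle all the direction-dependent discontinuities into the discontinuity set of a single bounded monotone scalar function of $\alpha$; since such a function has at most countably many discontinuities, the result will follow. Write $A_\alpha := [u]_\alpha \cap \overline{B(t,r)}$ and let $\alpha_0 := \sup\{\alpha \in (0,1) : A_\alpha \neq \emptyset\}$. The definition of discontinuity in the lemma splits into two sub-cases: the ``top'' case where $S_{u,t,r}(e,\beta) = -\infty$ for all $\beta > \alpha$, and the ``finite jump'' case where both one-sided limits are finite but differ. The first sub-case is independent of $e$ and can occur only at $\alpha = \alpha_0$, so it contributes at most one point to $D_{u,t,r}$. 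The real work is to show that the finite-jump discontinuities, unioned over all $e \in \mathbb{S}^{m-1}$, form a countable set.

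First I would verify that $S_{u,t,r}(e,\cdot)$ is left-continuous on $(0,\alpha_0]$. Because $u$ is upper semi-continuous and $\overline{B(t,r)}$ is compact, the identity $A_\alpha = \bigcap_{\beta<\alpha} A_\beta$ combined with a selection-plus-compactness argument (extract a cluster point of maximizers $x_n \in A_{\beta_n}$ in $\overline{B(t,r)}$ for any $\beta_n \uparrow \alpha$) shows that such a cluster point lies in $A_\alpha$ and attains the one-sided limit. Hence in the finite-jump case the left limit simply equals $S_{u,t,r}(e,\alpha)$. Next, fix a countable dense subset $\{e_k\}_{k\geq 1} \subset \mathbb{S}^{m-1}$ and define
$$m(\alpha) := \sum_{k=1}^\infty 2^{-k}\, S_{u,t,r}(e_k,\alpha), \qquad \alpha \in (0,\alpha_0].$$
Since $|S_{u,t,r}(e,\alpha)|$ is bounded (by $\|t\|+r$) and non-increasing in $\alpha$ for each $e$, the function $m$ is bounded and non-increasing on $(0,\alpha_0]$, hence has at most countably many discontinuities.

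The heart of the argument is to show that every finite-jump discontinuity $\alpha$ of $S_{u,t,r}(e,\cdot)$, for any $e \in \mathbb{S}^{m-1}$, is already a discontinuity of $m$. The right limit $\lim_{\beta\to\alpha+}S_{u,t,r}(e,\beta)$ is the support function (shifted by $-\langle e,t\rangle$) of the compact convex set $C_\alpha^+ := \mathrm{conv}\bigl(\overline{\{u>\alpha\}}\cap\overline{B(t,r)}\bigr)$, while $S_{u,t,r}(e,\alpha)$ is the analogous support function of $C_\alpha := \mathrm{conv}(A_\alpha)$. A jump in direction $e$ thus means these two continuous functions on $\mathbb{S}^{m-1}$ differ strictly at $e$; by continuity, the strict inequality persists on an open neighborhood of $e$, which by density contains some $e_{k_0}$. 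Consequently the $k_0$-th term of the series defining $m$ has a strictly positive jump at $\alpha$, and the monotonicity of all remaining terms prevents cancellation, so $m$ is discontinuous at $\alpha$. The main obstacle I anticipate is precisely this step: one must cleanly identify the one-sided limits of $S_{u,t,r}(e,\cdot)$ with support functions of honest compact convex subsets of $\overline{B(t,r)}$, so that continuity in $e$ can upgrade a single-direction jump to a jump on an open arc of directions, which is exactly what allows the countable dense family $\{e_k\}$ to witness every such $\alpha$.
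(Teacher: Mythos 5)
Your overall strategy is sound and, at its core, the same as the paper's: both proofs reduce the union over all $e\in\mathbb{S}^{m-1}$ to a countable dense set of directions by showing that a jump of $S_{u,t,r}(e,\cdot)$ at $\alpha$ forces a jump of $S_{u,t,r}(f,\cdot)$ at the same $\alpha$ for every $f$ sufficiently close to $e$. The paper then simply writes $D_{u,t,r}=\bigcup_{e\in\varpi}D_{u,t,r,e}$ for a countable dense $\varpi\subset\mathbb{S}^{m-1}$ and uses that each $D_{u,t,r,e}$ is at most countable by monotonicity; your aggregation of the countably many directions into the single bounded monotone function $m(\alpha)=\sum_k 2^{-k}S_{u,t,r}(e_k,\alpha)$ is a valid alternative packaging of the same idea (the interchange of one-sided limits with the sum is justified by the uniform bound $|S_{u,t,r}(e,\alpha)|\le r$, and the non-negativity of every term's jump prevents cancellation). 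Your observation that the ``$S_{u,t,r}(e,\beta)=-\infty$ for all $\beta>\alpha$'' case is direction-independent and can occur at only one level is also correct.

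The genuine gap is in what you call the heart of the argument: you invoke upper semi-continuity of $u$ (to get attained maximizers, left-continuity of $S_{u,t,r}(e,\cdot)$, and the identification of the left limit with the support function of $\mathrm{conv}([u]_\alpha\cap\overline{B(t,r)})$), but the lemma --- and Theorem \ref{fusdctn}, which it serves --- is stated for an \emph{arbitrary} fuzzy set $u$ on $\mathbb{R}^m$, with no semicontinuity hypothesis. Without upper semi-continuity the cut sets need not be closed, the suprema need not be attained, and the left limit $\lim_{\beta\to\alpha-}S_{u,t,r}(e,\beta)$ can be strictly larger than $S_{u,t,r}(e,\alpha)$, so your support-function identification of the left limit fails. (There is also a small slip on the right: that limit is the support function of $\mathrm{conv}\bigl(\overline{\{u>\alpha\}\cap\overline{B(t,r)}}\bigr)$, not of $\mathrm{conv}\bigl(\overline{\{u>\alpha\}}\cap\overline{B(t,r)}\bigr)$, which can be strictly larger.) The repair is to drop the support-function detour entirely: the only property you actually need is continuity in $e$ of the two one-sided limits, and this holds for any $u$ by the elementary estimate $|S_{u,t,r}(e,\beta)-S_{u,t,r}(f,\beta)|\le \|e-f\|\cdot r$ (valid whenever $[u]_\beta\cap\overline{B(t,r)}\ne\emptyset$), which passes to the one-sided limits and shows that a jump of size $\xi$ at $e$ yields a jump of size at least $\xi/2$ at every $f$ with $\|e-f\|<\xi/(4r)$. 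That Lipschitz estimate is precisely the device used in the paper's proof, and with it your argument goes through for general $u$.
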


\begin{proof} \ Let $\varpi$ be a countable dense subset of $\mathbb{S}^{m-1}$.
Then
\begin{equation}\label{couren}
 D_{u,t,r} =\bigcup_{e\in \varpi} D_{u,t,r,e}
\end{equation}
In fact,
suppose that $\al\in D_{u,t,r}$.
Then there exists
$e\in \mathbb{S}^{m-1}$ such that
 $\alpha \in D_{u,t,r, e}  $.
Hence
$S(u,t,r)(e, \alpha)>-\infty$.
This is equivalent to
$[u]_\alpha \cap \overline{B(t,r)} \not=\emptyset.$
Therefore
\begin{equation}\label{sar}
S(u,t,r)(f, \alpha)>-\infty \   \hbox{for all} \  f\in \mathbb{S}^{m-1}.
\end{equation}
To show $\al \in \bigcup_{e\in \varpi} D_{u,t,r,e}$,
we divide the proof into two cases.

Case 1. \ $S(u,t,r)(e, \beta)=-\infty$
for all
$\beta > \alpha$.

In this case,
$[u]_\beta \cap \overline{ B(t, r)}  =\emptyset$ for any
$\beta > \alpha$,
and so
$S(u,t,r)(f, \beta)=-\infty$ when $f\in \mathbb{S}^{m-1}$ and $\beta > \alpha$.
Combined with \eqref{sar}, we know
$\al \in D_{u,t,r,f}$ for each $f\in \mathbb{S}^{m-1}$.
Thus
$\al \in \bigcup_{e\in \varpi} D_{u,t,r,e}$.

Case 2. \ $-\infty < \lim_{\beta \to \alpha+}S_{u,t,r}(e, \beta)  <   \lim_{\beta \to \alpha-}S_{u,t,r}(e, \beta) $.

In this case, there is an $\al_0 > \al $ such that $[u]_{\lambda} \cap  \overline{B(t,r)} \not= \emptyset $
 when $\lambda \in [0, \al_0]$.
Set
\begin{equation}\label{den}
  \xi : =  \lim_{\beta \to \alpha-}S_{u,t,r}(e, \beta)  -   \lim_{\beta \to \alpha+}S_{u,t,r}(e, \beta)  >0.
\end{equation}
 Notice that, for all $\beta\in [0,1]$ with $[u]_\beta \cap \overline{B(t,r)} \not= \emptyset$,
 \begin{align*}
    | S_{u,t,r}(e, \beta) &-  S_{u,t,r}(f, \beta)|    \\
&=
  | \sup \{    \langle e, x-t \rangle : x\in [u]_\beta  \cap \overline{B(t,r)}  \} - \sup \{    \langle f, x-t \rangle : x\in [u]_\beta  \cap \overline{B(t,r)}  \} |
   \\
    & \leq  \sup \{  \mid \langle  e-f,   x-t \rangle \mid : x\in [u]_\beta \cap \overline{B(t,r)} \}
    \\
    & \leq  \|  e-f \|  \cdot r,
 \end{align*}
hence, for any $\lambda \in [0, \al_0]$,
$$ \mid  S_{u,t,r}(e, \lambda) -  S_{u,t,r}(f, \lambda)\mid \leq  \|  e-f \|  \cdot r,$$
and so, combined with \eqref{den},
we know
there exists $\delta>0$ such that, for all $f\in  \mathbb{S}^{m-1} \cap B(e,\delta)$,
 $$ \lim_{\beta \to \alpha-}S_{u,t,r}(f, \beta)  -   \lim_{\beta \to \alpha+}S_{u,t,r}(f, \beta) > \xi/2. $$
 This means that $\alpha \in D_{u,t,r,f}$ when $f\in  \mathbb{S}^{m-1} \cap B(e,\delta)$.
Thus there exists $g\in \varpi$
such that
$\al\in D_{u,t,r,g}$,
i.e.,
$\al \in \bigcup_{e\in \varpi} D_{u,t,r,e}$.

Now we obtain \eqref{couren}. Since $\varpi$ is countable and $D_{u,t,r,e}$ is at most countable,
we know
$D_{u,t,r}$ is at most countable.
\end{proof}

\begin{re}{\rm
  In the proof of Lemma \ref{dutrcoun}, in order to show $ D_{u,t,r}$ is at most countable,
we prove
that
 $D_{u,t,r}=\bigcup_{e\in \varpi} D_{u,t,r,e}$.
This kind of trick was used in the proof of
Lemma 4 in \cite{trutschnig} to show a set is at most countable.
It is claimed in Lemma 13 of \cite{trutschnig}
that this trick
can be used to prove that
$P(u)$ is at most countable when $u\in E_{nc}^m$.
}
\end{re}

  \noindent \textbf{Theorem 5.1 } \ For each fuzzy set $u$ on $\mathbb{R}^m$,
  the set
 $D(u)=\{\al\in (0,1):  [u]_\al \nsubseteq  \overline{ \{u> \al\}}  \;\}$ is at most countable.

\vspace{3mm}

\noindent\textbf{The proof of Theorem
\ref{fusdctn}}. \
 If
 $\alpha \in D(u)$, then
 there is a
$y\in \mathbb{R}^m$ such that $y\in [u]_\al$
but
$y \notin \overline{ \{u>\al\}}$.
Thus,
\begin{equation}\label{dgvn}
  d(y, \overline{ \{u>\al\}}) > \varepsilon>0.
\end{equation}
Choose a $q \in \mathbb{Q}^m = \{(z_1, z_2,\ldots, z_m) \in \mathbb{R}^m :   z_i \in \mathbb{Q}, \ i=1,2,\ldots, m \}   $ which
satisfies that
 $\|y-q\|>0$. We assure  that $\al \in D_{u,q, r}$ for some $r\in Q$ with $r\geq \|y-q\|$.

In fact, let $e= (y-q)/\|y-q\|  $. Then
\begin{equation}\label{suqrae}
  S_{u,q,r}(e, \al)\geq \langle e, y-q \rangle = \|y-q\|.
\end{equation}
If $[u]_\beta \cap \overline{B(q,r)} = \emptyset$ for   any $\beta>\alpha$,
then
$S_{u,q,r}(e, \beta)=-\infty$ for all $\beta > \alpha$,
and thus
$\al \in D_{u,q, r,e}$.

If    there exists $\beta>\alpha$ such
that
$[u]_\beta \cap \overline{B(q,r)} \not= \emptyset$.
Pick an arbitrary
$x\in \overline{\{u>\al\}} \cap \overline{B(q,r)} $.
Then, by \eqref{dgvn},
\begin{gather*}
  \|x-q\| \leq r,
 \\
\|x-y\| >\varepsilon.
\end{gather*}
If $x=q$, then $ \langle  e, x-q \rangle =0$.
Suppose that $x \not= q$.
Notice that
$$ \frac{\langle y-q , x-q\rangle}{ \|y-q\| \cdot \|x-q \|}=\cos \alpha
=
 \frac{\|x-q\|^2 +  \|y-q\|^2 -\|x-y\|^2 }{2\|y-q\| \cdot   \|x-q\| },$$
where
$\al$ is the angle between two vectors
$x-q$ and $y-q$.
Thus
\begin{align*}
  \langle  e, x-q \rangle & =\frac{1}{2}
\left(     \frac{\|x-q\|^2}{ \|y-q\|} +   \|y-q\|  -  \frac{ \|x-y\|^2}{ \|y-q\|}   \right )
\\
   & \leq   \frac{1}{2} \left(    \frac{r^2}{ \|y-q\|}   +   \|y-q\|  -  \frac{ \varepsilon^2}{ \|y-q\|}   \right ),
\end{align*}
and
so
 there exists a $\delta>0$ such that for all $r \in [ \|y-q\|,  \|y-q\|+\delta  )$,
\begin{equation}\label{suqrbe}
   \langle  e, x-q \rangle   \leq    \|y-q\| -   \frac{1}{4} \frac{ \varepsilon^2}{ \|y-q\|}.
\end{equation}
Combined with \eqref{suqrae} and \eqref{suqrbe},
it then follows from the arbitrariness
of
$x\in \overline{\{u>\al\}} \cap \overline{B(q,r)} $
that
$$\lim_{\beta \to \al+} S_{u,q,r}(e,\beta) <  S_{u,q,r}(e,\alpha)$$
when
$r \in [ \|y-q\|,  \|y-q\|+\delta  )$.
This implies
that
there exists $r\in \mathbb{Q}$
such that
$\al \in D_{u,q,r,e} \subset  D_{u,q,r}$.

Now we know
$$D(u) \subseteq   \bigcup_{q\in \mathbb{Q}^m, r\in \mathbb{Q}} D_{u,q,r}.$$
It then follows from Lemma \ref{dutrcoun}
that
 $D(u)$
is at most countable.

\hfill $\square$

\end{document}